\newtheorem{theo}{Theorem}[section]
\newtheorem{prop}[theo]{Proposition}
\newtheorem{lemm}[theo]{Lemma}
\newtheorem{coro}[theo]{Corollary}
\newtheorem{conj}[theo]{Conjecture}
\theoremstyle{definition}
\newtheorem{defi}[theo]{Definition}
\newtheorem{exam}[theo]{Example}
\newtheorem*{ackn}{Acknowledgments}
\newtheorem{rema}[theo]{Remark}
\newcommand{\Hom}{\mathop{\mathrm{Hom}}\nolimits}
\newcommand{\End}{\mathop{\mathrm{End}}\nolimits}
\newcommand{\Spec}{\mathop{\mathrm{Spec}}\nolimits}
\newcommand{\supp}{\mathop{\mathrm{supp}}\nolimits}
\newcommand{\Frac}{\mathop{\mathrm{Frac}}\nolimits}
\newcommand{\tdeg}{\mathop{\mathrm{tr.deg}}\nolimits}
\newcommand{\Aut}{\mathop{\mathrm{Aut}}\nolimits}
\newcommand{\aus}{\mathop{\mathrm{Aut}_{\sigma}}\nolimits}
\newcommand{\Mat}{\mathop{\mathrm{Mat}}\nolimits}
\newcommand{\GL}{\mathop{\mathrm{GL}}\nolimits}
\newcommand{\im}{\mathop{\mathrm{im}}\nolimits}
\newcommand{\Rep}{\mathop{\mathbf{Rep}}\nolimits}
\newcommand{\vsp}{\mathop{\mathbf{Vec}}\nolimits}
\newcommand{\Mod}{\mathop{\mathbf{Mod}}\nolimits}
\newcommand{\id}{\mathrm{id}}
\newcommand{\Gal}{\mathop{\mathrm{Gal}}\nolimits}
\newcommand{\ev}{\mathrm{ev}}
\newcommand{\Lie}{\mathop{\mathrm{Lie}}\nolimits}
\newcommand{\mfa}{\mathfrak{a}}
\newcommand{\mfb}{\mathfrak{b}}
\newcommand{\mfm}{\mathfrak{m}}
\newcommand{\mfp}{\mathfrak{p}}
\newcommand{\mfq}{\mathfrak{q}}
\newcommand{\mbf}{\mathbf{f}}
\newcommand{\mbh}{\mathbf{h}}
\newcommand{\mbm}{\mathbf{m}}
\newcommand{\mbn}{\mathbf{n}}
\newcommand{\mbu}{\mathbf{u}}
\newcommand{\mbx}{\mathbf{x}}
\newcommand{\Fq}{\mathbb{F}_{q}}
\newcommand{\ftv}{\mathbb{F}_{q}(t)_{v}}
\newcommand{\ktv}{K(t)_{v}}
\newcommand{\kstv}{K^{\mathrm{sep}}(t)_{v}}
\newcommand{\kstl}{K^{\mathrm{sep}}(\!(t-\lambda_{l})\!)}
\newcommand{\ks}{K^{\mathrm{sep}}}
\newcommand{\kv}{K_{v(\theta)}}
\newcommand{\kll}{K_{\lambda_{l}}}
\newcommand{\cll}{\mathbb{C}_{\lambda_{l}}}
\newcommand{\phm}{\Phi {\rm M}}
\newcommand{\epm}{\Phi {\rm M}^{{\rm \acute{e}t}}}
\newcommand{\TM}{\mathcal{T}_{M}}
\newcommand{\FX}{F[X, \Delta^{-1}]}
\newcommand{\EX}{E[X, \Delta^{-1}]}
\newcommand{\SX}{\Sigma[X, \Delta^{-1}]}
\newcommand{\LX}{L[X, \Delta^{-1}]}
\newcommand{\SlX}{\Sigma_{l}[X, \Delta^{-1}]}
\newcommand{\LlX}{L_{l}[X, \Delta^{-1}]}
\newcommand{\X}{[X, \Delta^{-1}]}
\newcommand{\vp}{\varphi}
\newcommand{\Sl}{\Sigma_{l}}
\newcommand{\Slz}{\Sigma_{l_{0}}}
\newcommand{\ali}{\alpha_{l}^{-1}}
\newcommand{\rar}{\rightarrow}
\newcommand{\La}{\Lambda}
\newcommand{\Lal}{\Lambda_{l}}
\newcommand{\Af}{\mathbb{A}^{1}}
\newcommand{\RR}{^{(R)}}
\title{On $v$-adic periods of $t$-motives}
\author{Yoshinori Mishiba\footnote{Graduate School of Mathematics, Kyushu University,
744, Motooka, Nishi-ku, Fukuoka, 819-0395, JAPAN
\endgraf e-mail: y-mishiba@math.kyushu-u.ac.jp}}
\begin{document}
\maketitle

\begin{abstract}
In this paper,
we prove the equality between
the transcendental degree of the field generated by the $v$-adic periods of a $t$-motive $M$
and the dimension of the Tannakian Galois group for $M$,
where $v$ is a \lq\lq finite'' place of the rational function field over a finite field.
As an application, we prove the algebraic independence of certain \lq\lq formal'' polylogarithms. 
\end{abstract}

\tableofcontents

\section{Introduction}
Let $\Fq$ be the finite field with $q$ elements,
$\theta$ and $t$ be variables independent from each other,
and $v \in \Fq[t]$ a fixed monic irreducible polynomial of degree $d$.
Let $M$ be a rigid analytically trivial $t$-motive over $\overline{\Fq(\theta)}$.
Then there exists an $\infty$-adic period matrix for the Betti realization of $M$.
Set $\Lambda$ to be the field generated by the components of this matrix over $\overline{\Fq(\theta)}(t)$.
Set $\Gamma$ to be the Tannakian Galois group of $M$ with respect to the Betti realization.
Papanikolas \cite{Papa} shows that the transcendental degree of $\Lambda$ over $\overline{\Fq(\theta)}(t)$
coincides with the dimension of $\Gamma$.
In this paper, we prove the $v$-adic analogue of this theorem.

Let $K / \Fq$ be a regular extension of fields.
We set $\ks[t]_{v} := \varprojlim(\ks[t]/v^n)$ and $\kstv := \Fq(t) \otimes_{\Fq[t]} \ks[t]_{v}$,
where $\ks$ is a separable closure of $K$.
We also define $\ftv$ and $\ktv$ by the same way.
Let $\sigma$ be the ring endomorphism $\sum a_i t^i \mapsto \sum a_i^q t^i$ of $\ks[t]$.
Then $\sigma$ naturally extends to an endomorphism of $\kstv$, also denoted by $\sigma$.
A $\vp$-module over $\ktv$ is a pair $(M, \vp)$ (or simply $M$)
where $M$ is a $\ktv$-vector space and $\vp : M \rar M$ is an additive map
such that $\vp(ax) = \sigma(a) \vp(x)$ for all $a \in \ktv$ and $x \in M$.
A morphism of $\vp$-modules is a $\ktv$-linear map which is compatible with the $\vp$'s.
A tensor product of two $\vp$-modules is defined naturally.

For any $\vp$-module $M$, we define the $v$-adic realization of $M$:
$$V(M) := (\kstv \otimes_{\ktv} M)^{\vp},$$
where $\vp$ acts on $\kstv \otimes_{\ktv} M$ by $\sigma \otimes \vp$ and $(-)^{\vp}$ is the $\vp$-fixed part.
Then there exists a natural map
$$\iota_{M} : \kstv \otimes_{\ftv} V(M) \rar \kstv \otimes_{\ktv} M.$$
We can prove that $\iota_{M}$ is injective for each $\vp$-module $M$.
A $\vp$-module $M$ is said to be $\kstv$-trivial
if $M$ is finite-dimensional over $\ktv$ and $\iota_{M}$ is an isomorphism.
Then the category of $\kstv$-trivial $\vp$-modules over $\ktv$
equipped with the functor $V$ forms a neutral Tannakian category over $\ftv$.
For any $\kstv$-trivial $\varphi$-module $M$, we denote by $\Gamma_{M}$
the Tannakian Galois group of the Tannakian subcategory of $\kstv$-trivial $\vp$-modules generated by $M$
(see Subsections \ref{phi_l} and \ref{phi_v}).

Let $M$ be a finite-dimensional $\vp$-module and $\mbm \in \Mat_{r \times 1}(M)$ a $\ktv$-basis of $M$.
Then there exists a matrix $\Phi \in \Mat_{r \times r}(\ktv)$ such that $\vp \mbm = \Phi \mbm$.
If $M$ is $\kstv$-trivial, we can take a matrix $\Psi = (\Psi_{ij})_{i,j} \in \GL_{r}(\kstv)$
such that $\Psi^{-1} \mbm$ forms an $\ftv$-basis of $V(M)$.
The entries of this matrix are called $v$-adic periods of $M$,
which are our main objects of study in this paper.
We set
$$\Sigma := \ktv[\Psi,1/\det\Psi] := \ktv[\Psi_{11},\Psi_{12},\dots,\Psi_{rr},1/\det\Psi] \subset \kstv.$$
Then $\Sigma$ is stable under the $\sigma$-action.
For any $\ftv$-algebra $R$ and $S$, we set $S\RR := R \otimes_{\ftv} S$.
If $\sigma$ acts on $S$, we define the $\sigma$-action on $S\RR$ by $\id \otimes \sigma$.
Set $\Gamma(R) := \aus(\Sigma\RR / \ktv\RR)$ the group of automorphisms of $\Sigma\RR$ over $\ktv\RR$ that commute with $\sigma$.
Then $\Gamma$ forms a functor from the category of $\ftv$-algebras to the category of groups.
If we factorize $v = \prod_{l \in \mathbb{Z}/d}(t-\lambda_{l})$
in $K^{\mathrm{sep}}[t]$ with $\lambda_{l}^{q} = \lambda_{l+1}$,
then we can write $\kstv = \prod_{l} \kstl$ and $\Psi_{ij} = (\Psi_{ijl})_{l}$ where $\Psi_{ijl} \in \kstl$.
We set
$$\Lal := \ktv(\Psi_{11l},\dots,\Psi_{rrl}) \subset \kstl$$
for each $l \in \mathbb{Z}/d$.
Our main result in this paper is (see Lemma \ref{gal_g=a} and Theorems \ref{gal_smooth} and \ref{p-d_v-isom}):
\begin{theo}\label{int_main}
The functor $\Gamma$ is representable by a smooth affine algebraic variety over $\ftv$, also denoted by $\Gamma$.
We have an equality $\dim \Gamma = \tdeg_{\ktv} \Lal$ for each $l \in \mathbb{Z}/d$
and there exists a natural isomorphism $\Gamma \rar \Gamma_{M}$ of affine group schemes over $\ftv$.
\end{theo}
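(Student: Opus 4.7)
The plan is to mirror Papanikolas's Galois-theoretic proof of the $\infty$-adic analogue in \cite{Papa}, with the main new feature being that $\kstv = \prod_{l \in \mathbb{Z}/d} \kstl$ is a product of $d$ fields (rather than a single field) and that $\sigma$ cyclically permutes the factors via $\sigma(\lambda_l) = \lambda_{l+1}$.

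\textbf{Step 1 (representability).} Adjoin a matrix $X = (X_{ij})$ of $r^2$ indeterminates, set $\Delta = \det X$, and extend $\sigma$ to $\SX$ by $\sigma(X) = X$. Consider the $\ktv$-algebra homomorphism
\[
\mu \colon \Sigma \otimes_{\ktv} \Sigma \longrightarrow \SX,\qquad a \otimes b \longmapsto a \cdot \bigl(b|_{\Psi \mapsto \Psi X}\bigr),
\]
and let $L \subset \ftv\X$ be the $\sigma$-fixed part of the image. For every $\ftv$-algebra $R$, any $\nu \in \Gamma(R)$ is determined by the matrix $X_\nu := \Psi^{-1} \nu(\Psi)$, and the computation $\sigma(X_\nu) = (\Phi \Psi)^{-1}(\Phi \nu(\Psi)) = X_\nu$ forces $X_\nu \in \Mat_r(R)$. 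The polynomial relations defining $\Sigma$ over $\ktv$, read off via $\mu$, translate exactly into those defining $L$, so $\nu \mapsto X_\nu$ identifies $\Gamma(R)$ with $\Hom_{\ftv}(L, R)$, proving $\Gamma \cong \Spec L$.

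\textbf{Step 2 (torsor isomorphism).} The heart of the proof is to establish the $\Sigma$-algebra isomorphism
\[
\Sigma \otimes_{\ftv} L \xrightarrow{\ \sim\ } \Sigma \otimes_{\ktv} \Sigma,
\]
paralleling Papanikolas's Theorem~4.3.1. From this one deduces simultaneously that $L$ is geometrically reduced over $\ftv$ (hence $\Gamma$ is smooth), that $\Sigma$ is a trivial $\Gamma$-torsor, and that $\mu$ is faithfully flat. I expect this to be the main technical obstacle: in Papanikolas's setting $\Sigma$ is an integral domain, whereas here $\Sigma \subset \prod_l \kstl$ carries multiple minimal primes, and one must exploit that $\sigma$ acts transitively on them to carry out the Galois-descent argument identifying $\sigma$-invariants in $\mu(\Sigma \otimes_{\ktv} \Sigma)$ with enough elements to generate it over $\Sigma$.

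\textbf{Step 3 (dimension).} Projecting the torsor isomorphism via $\Sigma \twoheadrightarrow \Sl$ and localizing to the fraction field $\Lal$ yields
\[
\Lal \otimes_{\ftv} L \;\cong\; \Lal \otimes_{\ktv} \Sl.
\]
The right-hand side is a $\Lal$-algebra of Krull dimension $\tdeg_{\ktv} \Sl = \tdeg_{\ktv} \Lal$, so the left-hand side has the same dimension, forcing $\dim \Gamma = \dim L = \tdeg_{\ktv} \Lal$. Independence from $l$ is automatic because $\sigma$ induces a $\ktv$-isomorphism $\Sl \xrightarrow{\sim} \Sigma_{l+1}$ and hence $\Lal \cong \Lambda_{l+1}$.

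\textbf{Step 4 (Tannakian identification).} Define a functor from the Tannakian subcategory of $\kstv$-trivial $\vp$-modules generated by $M$ to $\Rep_{\ftv} \Gamma$ by sending an object $N$ with period matrix $\Psi_N$ to $V(N)$ equipped with the $\Gamma$-action $\nu \cdot m := \Psi_N^{-1} \nu(\Psi_N) \, m$; by functoriality this descends from $M$ to all tensor constructions. Compatibility with $V$ as fiber functor is immediate, and the functor is an equivalence because both categories are generated (in the Tannakian sense) by $M$ and its universal representation. Tannakian duality then produces the claimed natural isomorphism $\Gamma \xrightarrow{\sim} \Gamma_M$.
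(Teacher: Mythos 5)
Your Steps 1--3 follow the same route as the paper (and as Papanikolas): the coordinate ring of $\Gamma$ is cut out by the entries of $\widetilde{\Psi}=\Psi_{1}^{-1}\Psi_{2}$, the key technical input is the torsor isomorphism $Z\times_{\ktv}Z\cong Z\times_{\ktv}\Gamma_{\ktv}$ (Proposition \ref{gal_tri2}), and smoothness plus the dimension formula follow from it. You correctly flag the descent of $\sigma_{0}$-stable ideals across the product $\kstv=\prod_{l}\kstl$ as the main obstacle, but you do not carry it out; in the paper this occupies Lemmas \ref{gal_bij1}--\ref{gal_SFS} and Lemma \ref{gal_maximal}, where one averages over the $d$ factors by applying $\sum_{j}\sigma_{0}^{j}$ and makes a delicate choice of a unit $f$ so that $\sigma(a_{i_2}^{-1})-a_{i_2}^{-1}$ is invertible. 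One inaccuracy: geometric reducedness of $\Gamma$ does not follow from the torsor isomorphism alone; it comes from the separability of $\kstl/\ktv$, which makes $Z$ absolutely reduced, and is then transported to $\Gamma$ via $\Gamma_{\bar{E}}\cong Z_{\bar{E}}$.

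The genuine gap is in Step 4. Sending $N\mapsto V(N)$ with the action you describe gives a tensor functor $\xi_{M}\colon\TM\rar\Rep(\Gamma,\ftv)$ compatible with the fiber functors, and the fact that every representation of $\Gamma$ is a subquotient of some $\xi_{M}(N)$ (because $\rho_{M}$ is faithful) yields only that $\pi_{M}\colon\Gamma\rar\Gamma_{M}$ is a \emph{closed immersion}. The assertion that ``both categories are generated by $M$'' does not make $\xi_{M}$ an equivalence: one must also prove that $\xi_{M}$ is fully faithful and that every $\Gamma$-subrepresentation of $\xi_{M}(N)$ arises from a $\vp$-submodule of $N$ (Propositions \ref{p-d_f.f.} and \ref{p-d_sub}). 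Both rest on the fixed-point computation $(\Lambda')^{\Gamma(F')}=E'$ of Theorem \ref{gal_gammafix}, which requires either that $\Lal/\ftv$ be regular --- false in general in the $v$-adic case, e.g.\ $r=1$, $v=t$, $\Psi=\Phi^{1/(q-1)}\notin\ktv$ --- or that $\Gamma(\ftv)$ be Zariski dense in $\Gamma$. The latter is precisely the new $v$-adic ingredient your proposal omits: via the equivalence of $\phm_{\ktv}^{\kstv}$ with $\Rep(G_{K},\ftv)$ (Theorem \ref{phi_main-equiv}), the image of $G_{K}$ lands in $\Gamma(\ftv)$ and is Zariski dense in $\Gamma_{M}$ (Proposition \ref{p-d_gal-im}); only then do Theorems \ref{p-d_pi-isom} and \ref{p-d_v-isom} show that the closed immersion $\pi_{M}$ is also faithfully flat, hence an isomorphism. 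Without this density argument (or a substitute for it) the final assertion of the theorem remains unproved.
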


This theorem is a $v$-adic analogue of Papanikolas's Theorem 4.3.1 and 4.5.10 in \cite{Papa}, which treated $\infty$-adic objects.
The proof of this theorem follows \cite{Papa} closely,
but since $\kstv$ is not a field if $d > 1$, several arguments here are more complicated than in \cite{Papa}.
Let $K = \Fq(\theta)$ where $\theta$ is a variable independent of $t$.
Papanikolas shows the equality of the transcendental degree of the field of periods (specialized at $t = \theta$) over $K$
and the dimension of the Tannakian Galois group using the so-called ABP-criterion proved by Anderson, Brownawell and Papanikolas in \cite{ABP}.
In fact he proved an algebraic independence of Carlitz logarithms.
On the other hand, Anderson and Thakur \cite{AnTh} shows that the relation between the Carlitz zeta values and Carlitz logarithms.
Then using these results, Chang and Yu \cite{ChYu} determined the all algebraic relations among the Carlitz zeta values.
These applications are our motivation of this paper,
but in this paper, we can only prove a $v$-adic analogue of the ABP-criterion for the rank one case.

In Section \ref{sec_phi-t}, first we review a theory of $\vp$-modules in a general setting and construct a Tannakian category.
In the $v$-adic case, we show that this category is equivalent to the category of Galois representations.
In Section \ref{sec_gal}, we consider Frobenius equations in our situation, and construct $\Gamma$.
In Section \ref{sec_p-d}, we discuss the relation between $\Gamma$ and $\Gamma_{M}$,
and prove that these are isomorphic in the $v$-adic case.
This uses the fact that the set of $\ftv$-valued points $\Gamma(\ftv)$ is Zariski dense in $\Gamma$.
Since $\Gamma(\ftv)$ contains the Galois image, this is large enough in $\Gamma$.
This is an essentially different point from Papanikolas's proof for the $\infty$-adic case,
in which the Zariski density is not proved and other facts are used to show this isomorphism.
In Section \ref{sec_abp}, we discuss a $v$-adic analogue of the ABP-criterion.
In Section \ref{sec_log}, we prove the algebraic independence of certain \lq\lq formal'' polylogarithms.

\begin{ackn}
The author thanks Yuichiro Taguchi for many helpful discussions on the contents of this paper
and for reading preliminary manuscripts of this paper carefully.
\end{ackn}

\section{Notations and terminology}
\subsection{Table of symbols}
\begin{tabular}{lcp{29em}}
$\Fq$ & := & the finite field of $q$ elements \\
$\bar{k}$ & := & an algebraic closure of a field $k$ \\
$k^{\mathrm{sep}}$ & := & the separable closure of a field $k$ in $\bar{k}$ \\
$\# S$ & := & the cardinality of a set $S$ \\
$\Mat_{r \times s}(R)$ & := & the set of $r$ by $s$ matrices with entries in a ring or module $R$ \\
$\GL_{r}(R)$ & := & the group of invertible $r$ by $r$ matrices with entries in a ring $R$ \\
$\vsp(k)$ & := & the category of finite-dimensional vector spaces over a field $k$ \\
$\Rep(G, R)$ & := & for a ring $R$ the category of finitely generated $R$-representations of an affine group scheme $G$ over $R$,
or for a topological ring $R$ the category of finitely generated continuous $R$-representations of a topological group $G$
\end{tabular}

\subsection{Action}
Let $R$ be a ring or module and $f : R \rar R$ a map.
For a matrix $A = (A_{ij})_{ij}\in \Mat_{r \times s}(R)$, we denote by $f(A)$ the matrix $(f(A_{ij}))_{ij}$.

Let $S$ be a set and $H$ a set of maps from $S$ to itself.
Then we denote by $S^{H}$ the subset of $S$ consisting of elements which are fixed by $H$.
For a map $f : S \rar S$, we set $S^{f} := S^{\{f\}}$.

\subsection{Base change}
Let $R \rar S$ be a homomorphism of commutative rings and $X$ a scheme over $R$.
We denote by $X_{S} := X \times_{\Spec R} \Spec S$ the base change from $R$ to $S$ of $X$.
We also denote by $X(S) := \Hom_{\Spec R}(\Spec S, X)$ the set of $S$-valued points of $X$ over $R$.
When $R$ and $S$ are fields, we have a natural injection $X(R) \hookrightarrow X(S) \cong X_{S}(S)$.
We always consider $X(R)$ as a subset of $X_{S}(S)$ via this injection.

\section{$\vp$-modules}\label{sec_phi-t}

\subsection{\'etale $\vp$-modules}
In this subsection, we recall the definitions and properties of \'etale $\vp$-modules (cf.\ \cite{Font}).
Let $A$ be a commutative ring and $\sigma$ an endomorphism of $A$.
For any $A$-module $M$, we put $M^{(\sigma)} := A \otimes_{A} M$, the scalar extension of $M$ by $\sigma$.
A map $\vp:M \rar M$ is said to be $\sigma$\textit{-semilinear} if
$\vp(x+y) = \vp(x)+\vp(y)$ and $\vp(ax) = \sigma(a)\vp(x)$ for all $x, y \in M$ and $a \in A$.
Then it is clear that to give a $\sigma$-semilinear map $\vp : M \rar M$ is equivalent
to giving an $A$-linear map $\vp_{\sigma} : M^{(\sigma)} \rar M$.

\begin{defi}
A \textit{$\vp$-module} $(M, \vp)$ over $(A, \sigma)$ (or simply, $M$ over $A$) is an $A$-module $M$
endowed with a $\sigma$-semilinear map $\vp : M \rar M$.
A \textit{morphism} of $\vp$-modules is an $A$-linear map which is compatible with the $\vp$'s.
When $A$ is a noetherian ring, a $\vp$-module $(M, \vp)$ is said to be \textit{\'etale}
if $M$ is a finitely generated $A$-module and $\vp_{\sigma} : M^{(\sigma)} \rar M$ is bijective.
\end{defi}

We denote by $\phm_{A}$ the category of $\vp$-modules over $A$
and $\epm_{A}$ its full subcategory consisting of all \'etale $\vp$-modules.
For any $\vp$-modules $M$ and $N$, we denote by $\Hom_{\vp}(M, N)$ the set of morphisms of $M$ to $N$ in $\phm_{A}$.

Let $A_{\sigma}[\vp]$ be the ring (non commutative if $\sigma \neq \id_{A}$)
generated by $A$ and an element $\vp$ with the relation
$$\vp a = \sigma(a) \vp$$
for each $a \in A$.
Then it is clear that the category $\phm_{A}$ and the category of $A_{\sigma}[\vp]$-modules
are naturally identified.
Hence, the category $\phm$ is an $A^{\sigma}$-linear abelian category.

For each $\vp$-module $M$ and $N$, we denote by $M \otimes N$ the \textit{tensor product} of $M$ and $N$,
which is $M \otimes_{A} N$ as an $A$-module and has a $\vp$-action defined by $\vp \otimes \vp$.
Then the functor $\otimes$ is a bi-additive functor
and $(A, \sigma)$ is an identity object in $\phm_{A}$ for this tensor product.
Therefore the category $\phm_{A}$ is an abelian tensor category (\cite{DeMi}).

\begin{prop}\label{phi_endo}
There exists a natural isomorphism $A^{\sigma} \cong \End_{\vp}(A) := \Hom_{\vp}(A, A)$.
\end{prop}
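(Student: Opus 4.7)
The plan is to exhibit the isomorphism explicitly via evaluation at $1 \in A$. First I would observe that the identity object $(A,\sigma)$ in $\phm_{A}$ has $\vp$-structure given by $\sigma$ itself, so an endomorphism in $\phm_{A}$ is an $A$-linear map $f: A \rar A$ satisfying $f \circ \sigma = \sigma \circ f$. Since every $A$-linear endomorphism of $A$ is of the form $f_{a}(x) = ax$ for a unique $a = f(1) \in A$, the set $\End_{A}(A)$ is canonically identified with $A$, and I only need to characterize which elements $a \in A$ yield $\vp$-module morphisms.

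Next I would translate the compatibility condition: for $f = f_{a}$, applying both sides to an arbitrary $x \in A$ gives $f_{a}(\sigma(x)) = a \sigma(x)$ and $\sigma(f_{a}(x)) = \sigma(a)\sigma(x)$. Specializing to $x = 1$ forces $a = \sigma(a)$, i.e.\ $a \in A^{\sigma}$; conversely, if $a \in A^{\sigma}$, the two expressions agree for all $x$, so $f_{a}$ is indeed a morphism in $\phm_{A}$. This produces a bijection $A^{\sigma} \to \End_{\vp}(A)$, $a \mapsto f_{a}$, with inverse $f \mapsto f(1)$.

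Finally I would check that this bijection is a ring isomorphism: for $a, b \in A^{\sigma}$, the composite $f_{a} \circ f_{b}$ sends $x$ to $a(bx) = (ab)x$, so it equals $f_{ab}$, and $f_{1} = \id_{A}$. Naturality with respect to endomorphisms of the pair $(A,\sigma)$ (or, more precisely, functoriality in the data) is immediate from the definition. There is no real obstacle here; the argument is entirely formal once one recognizes that the $\vp$-action on the unit object is $\sigma$ itself and that $A$-linearity reduces endomorphisms to multiplication by a scalar.
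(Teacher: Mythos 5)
Your proof is correct and follows essentially the same route as the paper's: both identify an endomorphism $f$ with multiplication by $f(1)$, use the compatibility with $\vp = \sigma$ (evaluated at $1$) to force $f(1) \in A^{\sigma}$, and check the converse directly. The extra verifications you include (the ring structure and the explicit check that $f_a$ commutes with $\sigma$ for all $x$) are details the paper leaves as "clear."
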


\begin{proof}
For any endomorphism $f \in \End_{\vp}(A)$,
we have $\sigma(f(1)) = \vp(f(1)) = f(\vp(1)) = f(\sigma(1)) = f(1)$.
Hence $f(1) \in A^{\sigma}$.
Conversely for any element $a \in A^{\sigma}$,
we have a map $f_{a} : A \rar A ; x \mapsto ax$.
It is clear that $f_{a} \in \End_{\vp}(A)$.
These are inverse to each other.
\end{proof}

\begin{prop}\label{phi_etale}
Assume that $A$ is noetherian and $\sigma$ is flat.
Then the category $\epm_{A}$ is an abelian $A^{\sigma}$-linear tensor category.
\end{prop}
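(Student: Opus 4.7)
The plan is to exhibit $\epm_{A}$ as a full subcategory of the abelian tensor category $\phm_{A}$ that is closed under kernels, cokernels, direct sums, and tensor products, and that contains the unit $(A,\sigma)$. The $A^{\sigma}$-linearity will then be inherited from the ambient category via Proposition \ref{phi_endo}, and direct sums of \'etale modules are obviously \'etale, so the real content is (i) closure under kernels and cokernels and (ii) closure under tensor products.

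The key step, and the one that uses both hypotheses, is (i). Let $f\colon M\rar N$ be a morphism in $\epm_{A}$. Since $A$ is noetherian, $\ker f$ and $\coker f$ are finitely generated $A$-modules, so the only thing to verify is that $\vp_{\sigma}$ is bijective on each of them. Flatness of $\sigma$ makes the functor $(-)^{(\sigma)}=A\otimes_{A}-$ exact, so applying it to $0\rar\ker f\rar M\rar N\rar\coker f\rar 0$ again yields an exact sequence. I would then finish by a direct diagram chase rather than by the five lemma: using that $\vp_{\sigma,M}$ and $\vp_{\sigma,N}$ are bijective and intertwine $f$ with $f^{(\sigma)}$, one computes
\[\vp_{\sigma,M}^{-1}(\ker f)=\ker(f\circ\vp_{\sigma,M})=\ker(\vp_{\sigma,N}\circ f^{(\sigma)})=\ker(f^{(\sigma)})=(\ker f)^{(\sigma)},\]
so $\vp_{\sigma,M}$ restricts to a bijection $(\ker f)^{(\sigma)}\cong\ker f$. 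Symmetrically, $\vp_{\sigma,N}$ sends $(\im f)^{(\sigma)}=\im f^{(\sigma)}$ bijectively onto $\im f$, hence descends to a bijection $(\coker f)^{(\sigma)}\cong\coker f$.

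For (ii), the unit $(A,\sigma)$ is \'etale because $A^{(\sigma)}$ is free of rank one and $\vp_{\sigma}(1\otimes 1)=\sigma(1)=1$. If $M$ and $N$ are \'etale, then $M\otimes_{A}N$ is finitely generated, and the standard base-change isomorphism $(M\otimes_{A}N)^{(\sigma)}\cong M^{(\sigma)}\otimes_{A}N^{(\sigma)}$ identifies $\vp_{\sigma}$ on $M\otimes_{A}N$ with $\vp_{\sigma,M}\otimes\vp_{\sigma,N}$, which is a bijection as a tensor product of two bijections. The entire proof is formal once the exactness of $(-)^{(\sigma)}$ furnished by flatness of $\sigma$ is in hand; I expect no serious obstacle beyond carrying out the kernel/cokernel diagram chase cleanly, which is the single place where both hypotheses really enter.
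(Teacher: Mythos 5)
Your proposal is correct and follows essentially the same route as the paper: reduce to showing kernels and cokernels of morphisms of \'etale modules are \'etale, use flatness of $\sigma$ to make $(-)^{(\sigma)}$ exact, and conclude bijectivity of the induced $\vp_{\sigma}$'s (the paper phrases this as a diagram chase on the four-term exact sequence where you instead write out the explicit kernel/image computation, but the content is identical). The remarks on the unit object, direct sums, and tensor products likewise match the paper's treatment.
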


\begin{proof}
It is clear that $\epm_{A}$ is closed under finite sums and tensor products,
and the identity object $(A, \sigma)$ is \'etale.
Therefore it is enough to show that for each \'etale $\vp$-modules $M$ and $N$ and a morphism $f : M \rar N$,
the kernel and cokernel of $f$ in $\phm_{A}$ are \'etale.
Since $M$ and $N$ are \'etale and $\sigma$ is flat, we have the commutative diagram
\[\xymatrix{
0 \ar[r] & (\ker f)^{(\sigma)} \ar[r] \ar[d]_{\vp'_{\sigma}} & M^{(\sigma)} \ar[r] \ar[d]_{\vp_{M, \sigma}}
& N^{(\sigma)} \ar[r] \ar[d]_{\vp_{N, \sigma}} & (\im f)^{(\sigma)} \ar[r] \ar[d]_{\vp''_{\sigma}} & 0 \\
0 \ar[r] & \ker f \ar[r] & M \ar[r] & N \ar[r] &  \im f \ar[r] & 0, \\
}\]
where $\vp_{M, \sigma}$ and $\vp_{N, \sigma}$ are isomorphisms and the rows are exact.
Then we have that $\vp'_{\sigma}$ and $\vp''_{\sigma}$ are isomorphism by a diagram chasing.
\end{proof}

Let $(M, \vp_{M})$ and $(N, \vp_{N})$ be $\vp$-modules over $A$.
If $\vp_{M,\sigma} : M^{(\sigma)} \rar M$ is an isomorphism, we define a $\vp$-module $\Hom(M, N)$,
whose underlying $A$-module is the space $\Hom_{A}(M, N)$ of $A$-module homomorphisms
and a $\vp$-action is defined by
$$\Hom_{A}(M, N)^{(\sigma)} \rar \Hom_{A}(M^{(\sigma)}, N^{(\sigma)}) \rar \Hom_{A}(M, N),$$
where the first map is the natural map and the second map is defined by
$f \mapsto \vp_{N, \sigma} \circ f \circ \vp_{M, \sigma}^{-1}$.
There exists a natural morphism of $\vp$-modules $\ev_{M, N} : \Hom(M, N) \otimes M \rar N$.
For each $M$ such that $\vp_{M,\sigma}$ is an isomorphism, we set $M^{\vee} := \Hom(M, A)$ the dual of $M$.

\begin{prop}\label{phi_i-hom}
Assume that $A$ is noetherian and $\sigma$ is flat.
Then for any objects $M$ and $N$ in $\epm_{A}$, the $\vp$-module $\Hom(M, N)$ is \'etale,
the contravariant functor
$$\epm_{A} \rar \mathbf{Set} ; \ T \mapsto \Hom_{\vp}(T \otimes M, N)$$
is representable by $\Hom(M, N)$
and $\ev_{M, N}$ corresponds to $\id_{\Hom(M, N)}$.
\end{prop}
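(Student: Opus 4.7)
The plan is to decompose the statement into three parts --- étaleness of $\Hom(M,N)$, representability of the functor, and $\ev_{M,N}$ being the image of the identity --- and verify them in turn.

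For étaleness, I would first argue that $\Hom_A(M,N)$ is finitely generated as an $A$-module: a surjection $A^n \twoheadrightarrow M$ (possible since $M$ is finitely generated) yields an $A$-module embedding $\Hom_A(M,N) \hookrightarrow \Hom_A(A^n, N) \cong N^n$, and since $A$ is noetherian and $N$ is finitely generated, $N^n$ is noetherian, so $\Hom_A(M,N)$ is finitely generated. For the $\varphi$-structure, the fact that $M$ is finitely presented (by noetherianity of $A$) together with flatness of $\sigma$ makes the standard base change map $\Hom_A(M,N)^{(\sigma)} \to \Hom_A(M^{(\sigma)}, N^{(\sigma)})$ an isomorphism; composing with the bijection $f \mapsto \vp_{N,\sigma} \circ f \circ \vp_{M,\sigma}^{-1}$ --- which is bijective because $\vp_{M,\sigma}$ and $\vp_{N,\sigma}$ are isomorphisms by the étale hypothesis on $M$ and $N$ --- gives $\vp_\sigma$ for $\Hom(M,N)$, which is therefore an isomorphism, so $\Hom(M,N) \in \epm_A$.

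For representability, the starting point is the classical tensor-Hom adjunction $\Hom_A(T \otimes_A M, N) \cong \Hom_A(T, \Hom_A(M,N))$ sending $f$ to $g$ with $g(t)(m) = f(t \otimes m)$; what must be checked is that this bijection restricts to a bijection on the subsets of $\varphi$-compatible maps. The key observation is that the $\varphi$-action on $\Hom(M,N)$ admits the concrete description $\vp(h) \circ \vp_M = \vp_N \circ h$: tracing $1 \otimes h$ through the defining composite produces the $A$-linear map $M^{(\sigma)} \to N^{(\sigma)}$, $a \otimes m \mapsto a \otimes h(m)$, and the final applications of $\vp_{N,\sigma}$ and $\vp_{M,\sigma}^{-1}$ yield exactly this identity on elements in the image of $\vp_M$. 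A short direct computation then shows that $f(\vp_T(t) \otimes \vp_M(m)) = \vp_N(f(t \otimes m))$ for all $t, m$ is equivalent to $g(\vp_T(t)) = \vp(g(t))$ for all $t$. Finally, tracing $\id_{\Hom(M,N)}$ through the adjunction produces the map $h \otimes m \mapsto h(m)$, which is precisely $\ev_{M,N}$.

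The only genuine technical subtlety is the unwinding of the abstract definition of $\varphi$ on $\Hom(M,N)$ into the pointwise formula $\vp(h) \circ \vp_M = \vp_N \circ h$; once this is in hand, the whole verification reduces to formal manipulation of the ordinary tensor-Hom adjunction. Note the double role played by the étale hypothesis on $M$: invertibility of $\vp_{M,\sigma}$ is what allows $\varphi$ to be defined on $\Hom(M,N)$ in the first place, while finite presentation of $M$ together with the flatness of $\sigma$ is what makes the base change step on $\Hom_A$ an isomorphism.
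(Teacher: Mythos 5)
Your proof is correct and follows essentially the same route as the paper: finite generation from noetherianity, the base-change isomorphism $\Hom_{A}(M,N)^{(\sigma)} \cong \Hom_{A}(M^{(\sigma)},N^{(\sigma)})$ from flatness of $\sigma$ and finite presentation of $M$, and the tensor-Hom adjunction restricted to $\vp$-compatible maps. The only difference is that you spell out the step the paper dismisses with ``we can calculate,'' namely the pointwise identity $\vp(h)\circ\vp_{M}=\vp_{N}\circ h$ (which determines $\vp(h)$ since $\vp_{M,\sigma}$ is surjective) and the resulting equivalence of the two compatibility conditions --- a worthwhile addition.
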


\begin{proof}
Since $M$ and $N$ are finitely generated and $A$ is noetherian, $\Hom(M, N)$ is also finitely generated.
Since $\sigma$ is flat and $M$ is finitely presented,
the map $\Hom_{A}(M, N)^{(\sigma)} \rar \Hom_{A}(M^{(\sigma)}, N^{(\sigma)})$ is an isomorphism (\cite{Bour}, Chap.\ I, Sect.\ 2, Prop.\ 11).
Since $\vp_{M, \sigma}$ and $\vp_{N, \sigma}$ are bijective, the $\vp$-module $\Hom(M, N)$ is \'etale.
It is clear that there exists a natural isomorphism
$\Hom_{A}(T \otimes M, N) \cong \Hom_{A}(T, \Hom(M, N))$
which is functorial in $T$.
Then we can calculate that the subspaces $\Hom_{\vp}(T \otimes M, N)$ and $\Hom_{\vp}(T, \Hom(M, N))$ are corresponding with this isomorphism.
The last assertion is clear.
\end{proof}

\begin{prop}\label{phi_rigid}
Assume that $A$ is a field.
Then the category $\epm_{A}$ is a rigid abelian $A^{\sigma}$-linear tensor category.
\end{prop}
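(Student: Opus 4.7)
The plan is to reduce the proof to the classical rigidity of finite-dimensional vector spaces. Since $A$ is a field, $A$ is noetherian and $\sigma \colon A \to A$ is flat (every module over a field is flat), so the hypotheses of Propositions \ref{phi_etale} and \ref{phi_i-hom} are satisfied. In particular, $\epm_A$ is already known to be an abelian $A^\sigma$-linear tensor category admitting internal Homs, and the evaluation $\ev_{M, N} \colon \Hom(M, N) \otimes M \to N$ is a morphism in $\epm_A$.

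For each $M \in \epm_A$, I would set $M^\vee := \Hom(M, A)$ (\'etale by Proposition \ref{phi_i-hom}) and use $\ev_{M, A} \colon M^\vee \otimes M \to A$ as the evaluation. The main step is to show that for every $N \in \epm_A$ the canonical $A$-linear map
$$\alpha_{N} \colon M^\vee \otimes N \longrightarrow \Hom(M, N), \qquad f \otimes n \longmapsto \bigl(m \mapsto f(m)\, n\bigr),$$
is an isomorphism in $\epm_A$. Because $M$ is a finite-dimensional $A$-vector space, $\alpha_{N}$ is an $A$-linear isomorphism by standard linear algebra, so it remains to check $\vp$-equivariance. Using the explicit description of the $\vp$-action on $\Hom(M, N)$ given just before Proposition \ref{phi_i-hom}, a direct unwinding shows that both sides of $\alpha_{N}$ send the symbol $m \mapsto f(m)\,n$ to the map carrying $\vp_{M}(m)$ to $\sigma(f(m))\,\vp_{N}(n)$, so $\alpha_{N}$ is indeed a morphism in $\epm_A$.

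Specializing to $N = M$, the identity endomorphism $\id_{M} \in \End_{\vp}(M) = \Hom(M, M)^{\vp}$ corresponds under $\alpha_{M}^{-1}$ to a $\vp$-fixed element of $M^\vee \otimes M$, i.e., to a morphism $A \to M^\vee \otimes M$ in $\epm_A$; composing with the commutativity constraint yields the coevaluation $\mathrm{coev}_{M} \colon A \to M \otimes M^\vee$. The triangle identities for the pair $(\ev_{M, A}, \mathrm{coev}_{M})$ reduce to the analogous identities for finite-dimensional vector spaces over $A$, which are classical; since the forgetful functor $\epm_A \to \vsp(A)$ is faithful, they transfer to $\epm_A$.

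The only real obstacle is the verification that $\alpha_{N}$ is $\vp$-equivariant, and this is a brief direct computation with the definitions. Everything else is a formal consequence of the internal-Hom structure already provided by Proposition \ref{phi_i-hom} together with the classical rigidity of $\vsp(A)$.
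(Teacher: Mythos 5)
Your proof is correct, but it takes a genuinely different route from the paper. The paper invokes the Deligne--Milne criterion for rigidity: after establishing the abelian tensor structure (Proposition \ref{phi_etale}) and internal Homs (Proposition \ref{phi_i-hom}), it checks that the natural maps $\otimes_{i}\Hom(M_{i},N_{i}) \rar \Hom(\otimes_{i}M_{i},\otimes_{i}N_{i})$ and $M \rar M^{\vee\vee}$ are isomorphisms, both of which hold because the underlying objects are finite-dimensional vector spaces over the field $A$. You instead verify the definition of rigidity directly: you prove the key isomorphism $M^{\vee}\otimes N \cong \Hom(M,N)$ in $\epm_{A}$ (an $A$-linear isomorphism by linear algebra, upgraded to a $\vp$-isomorphism by the equivariance computation you sketch, which does check out since both sides send $\vp_{M}(m)$ to $\sigma(f(m))\,\vp_{N}(n)$ and $\vp_{M,\sigma}$ is bijective), extract the coevaluation from $\id_{M}\in\End_{\vp}(M)$ as a $\vp$-fixed element of $M^{\vee}\otimes M$, and transfer the triangle identities from $\vsp(A)$ via faithfulness of the forgetful functor. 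The two arguments carry the same mathematical content -- everything reduces to classical duality for finite-dimensional vector spaces, with the $\vp$-structure riding along -- but yours is more self-contained (it does not lean on the criterion from the Tannakian formalism), at the cost of a longer verification; the paper's is shorter because it outsources the formal part to the cited criterion.
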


\begin{proof}
By Proposition \ref{phi_etale}, $\epm_{A}$ is an abelian $A^{\sigma}$-linear tensor category.
By Proposition \ref{phi_i-hom}, $\epm_{A}$ has internal homs.
Therefore it is enough to show that the natural map
$$\otimes_{i \in I} \Hom(M_{i}, N_{i}) \rar \Hom(\otimes_{i \in I} M_{i}, \otimes_{i \in I} N_{i})$$
is an isomorphism for any finite families of objects $(M_{i})_{i \in I}$ and $(Y_{i})_{i \in I}$,
and the natural map
$$M \rar M^{\vee \vee}$$
is an isomorphism for any object $M$ (\cite{DeMi}).
These are true because $A$ is a field.
\end{proof}

\subsection{$L$-triviality}\label{phi_l}
Let $d$ be a positive integer and $F \subset E \subset L$ ring extensions where $F$, $E$ are fields
and $L = \prod_{l \in \mathbb{Z}/d} L_{l}$ is a finite product of fields.
For each $l$, we sometimes consider $L_{l}$ as a subset of $L$ in an obvious way.
Let $\sigma : L \rar L$ be a ring endomorphism.
We assume that the triple $(F, E, L)$ satisfies the following properties:
\begin{itemize}
\item $\sigma(E) \subset E$ and $\sigma(L_{l}) \subset L_{l + 1}$ for all $l$,
\item $F = E^{\sigma} = L^{\sigma}$,
\item $L$ is a separable extension over $E$.
\end{itemize}
Such a triple $(F, E, L)$ is called \textit{$\sigma$-admissible}.
See Lemma \ref{phi_v-triple} for our main example.
Another example can be found in \cite{Papa}.

Note that the separability of $L$ over $E$ is used to prove the smoothness of some algebraic groups
(see Theorem \ref{gal_smooth}), and not used in this section.

\begin{rema}\label{phi_papa}
In \cite{Papa}, the term $\sigma$-admissible triple is defined
only in the case where $L$ is a field and $\sigma$ is an isomorphism.
Thus our general setting urges us to argue with greater care than in \cite{Papa} at several points,
and hence we decided not to avoid repeating similar arguments.
\end{rema}

In this subsection, we consider $\vp$-modules over $(E, \sigma|_{E})$.
For any $\vp$-module $M$ over $E$, we set
$$V(M) := (L \otimes_{E} M)^{\vp}$$
where $\vp$ acts on $L \otimes_{E} M$ by $\sigma \otimes \vp$.
Then $V(M)$ is an $F$-vector space and $V$ forms a functor.
We have natural maps
$$\iota_{M} : L \otimes_{F} V(M) \rar L \otimes_{E} M,$$
$$\iota_{M,l} : L_{l} \otimes_{F} V(M) \rar L_{l} \otimes_{E} M \textrm{ for all } l.$$

\begin{lemm}\label{phi_l-ind}
Let $M$ be a $\vp$-module, and let $\mu_{1}, \dots, \mu_{m} \in V(M)$.
If $\mu_{1}, \dots, \mu_{m}$ are linearly independent over $F$,
then they are linearly independent over $L$ $($in $L \otimes_{E} M)$.
\end{lemm}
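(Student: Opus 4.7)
The plan is to argue by induction on $m$. The base case $m = 1$ is handled by an annihilator-ideal argument: if $a\mu_{1} = 0$ for some nonzero $\mu_{1} \in V(M)$, then the annihilator $I = \{b \in L : b\mu_{1} = 0\}$ is $\sigma$-stable, because $\sigma(b)\mu_{1} = \sigma(b)\vp(\mu_{1}) = \vp(b\mu_{1}) = 0$. Every ideal of the product of fields $L = \prod_{l} L_{l}$ has the form $e_{T}L$ for some subset $T \subset \mathbb{Z}/d$, and $\sigma$-stability combined with the cyclic action of $\sigma$ on the orthogonal idempotents $\{e_{l}\}$ forces $T = \emptyset$ or $T = \mathbb{Z}/d$; the latter would give $\mu_{1} = 0$, contradicting $\mu_{1} \neq 0$, so $a = 0$.

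For $m \geq 2$, take a minimal counterexample $\sum_{i=1}^{m} a_{i}\mu_{i} = 0$. By the inductive hypothesis, no proper subset of the $\mu_{i}$ admits a nontrivial $L$-linear relation, so every $a_{i}$ must be nonzero. The key move is to apply $\vp^{d}$ rather than $\vp$: since $\vp$ fixes each $\mu_{i}$, so does $\vp^{d} = \sigma^{d} \otimes \vp_{M}^{d}$, giving $\sum \sigma^{d}(a_{i})\mu_{i} = 0$. Forming the combination $a_{1} \cdot \bigl(\sum \sigma^{d}(a_{i})\mu_{i}\bigr) - \sigma^{d}(a_{1}) \cdot \bigl(\sum a_{i}\mu_{i}\bigr) = 0$ cancels the $i = 1$ term, leaving an $(m{-}1)$-term relation whose coefficients must vanish by induction. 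This yields the key identity $a_{1}\sigma^{d}(a_{i}) = \sigma^{d}(a_{1})a_{i}$ for every $i$.

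The reason for using $\sigma^{d}$ rather than $\sigma$ is that $\sigma^{d}$ stabilizes each factor field $L_{l}$, so division becomes legal inside $L_{l}$. Pick any $l$ with $a_{1,l} \neq 0$ and set $c_{i,l} := a_{i,l}/a_{1,l} \in L_{l}$; the identity above, read at component $l$, becomes $\sigma^{d}(c_{i,l}) = c_{i,l}$, placing $c_{i,l}$ in $F_{l} := L_{l}^{\sigma^{d}}$. Taking the $l$-component of the original relation and dividing by $a_{1,l}$ yields $\sum c_{i,l}\mu_{i,l} = 0$ in $L_{l} \otimes_{E} M$. The map $\mu \mapsto \mu_{l}$ is an $F$-linear isomorphism $V(M) \cong (L_{l} \otimes_{E} M)^{\sigma^{d} \otimes \vp_{M}^{d}}$, and the hypothesis $L^{\sigma} = F$ forces projection $L \rar L_{l}$ to restrict to an isomorphism $F \cong F_{l}$; it follows that $F$-independence of the $\mu_{i}$ transports to $F_{l}$-independence of the $\mu_{i,l}$ in $L_{l} \otimes_{E} M$, forcing every $c_{i,l} = 0$. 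But $c_{1,l} = 1$, a contradiction. The main obstacle---flagged in Remark \ref{phi_papa}---is that division by $a_{1}$ is illegal in the product ring $L$; the fix is precisely to pass from $\sigma$ to $\sigma^{d}$ so that the division can be performed componentwise in each field $L_{l}$.
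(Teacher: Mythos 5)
Your proof is correct. It follows the same basic skeleton as the paper's (take a minimal/shortest nontrivial relation, hit it with a power of $\vp$, subtract to kill one term, and use minimality to force the coefficients into the $\sigma$-fixed field), but the way you get around the fact that $L=\prod_l L_l$ is not a field is genuinely different. The paper stays inside the product ring: it first multiplies the relation so that the leading coefficient becomes an idempotent $e_{l_0}$, then applies the averaging operator $\sum_{j=0}^{d-1}\vp^j$ to turn that idempotent into $1$, and finally applies $\vp-\id$ and invokes minimality to conclude $f_i\in L^\sigma=F$ directly. You instead pass to $\vp^d$, cross-multiply to obtain $a_1\sigma^d(a_i)=\sigma^d(a_1)a_i$, and then project to a single factor $L_l$ where $a_{1,l}\neq 0$, so that division is legal and the quotients land in $F_l:=L_l^{\sigma^d}$; this requires the two auxiliary identifications $V(M)\cong(L_l\otimes_E M)^{\vp^d}$ and $F\cong F_l$ via projection, both of which you correctly justify from $\sigma(L_l)\subset L_{l+1}$ and $L^\sigma=F$ (and both of which hold by the same "spread one component around the cycle" argument). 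Your base case via the $\sigma$-stable annihilator ideal is also a clean replacement for the paper's idempotent normalization. The trade-off: the paper's argument is shorter and never leaves $L$, while yours isolates explicitly the descent to a single factor field, which is essentially the same mechanism the paper uses later (e.g.\ in Proposition \ref{phi_equiv} and Lemma \ref{phi_fix}) and arguably makes the role of $\sigma^d$ and $F\cong L_l^{\sigma^d}$ more transparent.
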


\begin{proof}
Assume that the lemma is not true.
Then there exist $m \geq 1$, $\mu_{1}, \dots, \mu_{m} \in V(M)$ and $f_{1}, \dots, f_{m} \in L$ such that,
$\mu_{1}, \dots, \mu_{m}$ are linearly independent over $F$,
$(f_{i})_{i} \neq 0$ and $\sum_{i} f_{i} \mu_{i} = 0$.
We may assume that $m$ is minimal among the integers which satisfy the above properties.
We also assume that $f_{1} = (a_{l})_{l} \in \prod_{l} L_{l}$ is non-zero.
Let $a_{l_0} \neq 0$.
Then there exists an element $f' \in L$ such that $f' f = e_{l_0}$,
where $e_{l_0} \in L$ is the element such that the $l_{0}$-th component is one and the other components are zero.
Then we have $\sum_{i} f' f_{i} \mu_{i} = 0$.
Therefore we may assume that $f_{1} = e_{l_0}$.
Then we have
$$0 = \sum_{j=0}^{d-1} \vp^{j} (\sum_{i=1}^{m} f_{i} \mu_{i}) = \sum_{i=1}^{m} \sum_{j=0}^{d-1} \vp^{j} (f_{i} \mu_{i})
= \sum_{i=1}^{m} (\sum_{j=0}^{d-1} \sigma^{j}(f_{i})) \mu_{i} = \mu_{1} + \sum_{i=2}^{m} (\sum_{j=0}^{d-1} \sigma^{j}(f_{i})) \mu_{i}.$$
Therefore we may assume that $f_{1} = 1$.
Then we have
$$0 = \vp(\sum_{i=1}^{m} f_{i} \mu_{i}) - \sum_{i=1}^{m} f_{i} \mu_{i}
= \sum_{i=1}^{m} (\sigma(f_{i}) - f_{i}) \mu_{i} = \sum_{i=2}^{m} (\sigma(f_{i}) - f_{i}) \mu_{i}.$$
By the minimality of $m$, we have $f_{i} \in L^{\sigma} = F$ for all $i$.
This contradicts the linear independence of $(\mu_{i})_{i}$ over $F$.
\end{proof}

\begin{coro}\label{phi_inj}
For any $\vp$-module $M$, the maps $\iota_{M}$ and $\iota_{M,l}$ are injective and we have $\dim_{F} V(M) \leq \dim_{E} M$.
\end{coro}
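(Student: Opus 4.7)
My plan is to derive all three assertions from Lemma~\ref{phi_l-ind} by the standard ``extract an $F$-linearly independent family'' device, and then invoke the ring decomposition $L = \prod_{l \in \mathbb{Z}/d} L_{l}$ both for the individual factors $\iota_{M,l}$ and for the dimension count.

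First I would prove injectivity of $\iota_{M}$. Given $\xi \in \ker \iota_{M}$, I would pick a finite $F$-linearly independent tuple $\mu_{1}, \dots, \mu_{n} \in V(M)$ whose $F$-span contains every element of $V(M)$ occurring in any presentation of $\xi$; then $\xi$ can be written uniquely as $\xi = \sum_{i=1}^{n} f_{i} \otimes \mu_{i}$ with $f_{i} \in L$. The hypothesis $\iota_{M}(\xi) = 0$ becomes $\sum_{i=1}^{n} f_{i} \mu_{i} = 0$ in $L \otimes_{E} M$, and Lemma~\ref{phi_l-ind} then forces $f_{i} = 0$ for every $i$, so $\xi = 0$. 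Injectivity of each $\iota_{M, l}$ follows immediately, because the decomposition $L = \prod_{l} L_{l}$ as $E$-algebras induces compatible product decompositions of the source and target, under which $\iota_{M}$ is precisely the product of the $\iota_{M, l}$.

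For the bound $\dim_{F} V(M) \leq \dim_{E} M$, I may assume $\dim_{E} M < \infty$. The composite $F \subset E \hookrightarrow L \twoheadrightarrow L_{l}$ is injective (it is a nonzero ring homomorphism since $1 \mapsto 1$, and $F$ is a field), so $L_{l}$ is a field extension of $F$. Hence $L_{l} \otimes_{F} V(M)$ is $L_{l}$-free of rank $\dim_{F} V(M)$ and $L_{l} \otimes_{E} M$ is $L_{l}$-free of rank $\dim_{E} M$, and the injection $\iota_{M, l}$ of $L_{l}$-modules gives the desired rank inequality. The only genuine wrinkle is that $L$ itself is not a field (as flagged in Remark~\ref{phi_papa}), which I handle by passing to the component fields $L_{l}$ whenever a dimension argument is needed; no deeper obstacle arises, since Lemma~\ref{phi_l-ind} already supplies the substance.
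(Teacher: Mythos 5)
Your proof is correct and follows essentially the same route as the paper: injectivity of $\iota_{M}$ from Lemma~\ref{phi_l-ind} via an $F$-linearly independent presentation, injectivity of the $\iota_{M,l}$ from the product decomposition, and the dimension bound by comparing $L_{l}$-ranks through the injection $\iota_{M,l}$. The extra details you supply (e.g.\ that $L_{l}$ is a field extension of $F$ and of $E$) are exactly the steps the paper leaves implicit.
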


\begin{proof}
By Lemma \ref{phi_l-ind}, $\iota_{M}$ is injective.
It is clear that $\iota_{M}$ is injective if and only if $\iota_{M,l}$ are injective for all $l$.
Therefore $\iota_{M,l}$ is injective
and we have an inequality
$\dim_{F} V(M) = \dim_{L_l} (L_{l} \otimes_{F} V(M)) \leq \dim_{L_l} (L_{l} \otimes_{E} M) = \dim_{E} M$.
\end{proof}

\begin{defi}\label{phi_l-triv}
Let $M$ be a finite-dimensional $\vp$-module over $E$.
We say that $M$ is $L$\textit{-trivial} if the map $\iota_{M}$ is an isomorphism.
\end{defi}

We denote by $\phm_{E}^{L}$ the full subcategory of $\phm_{E}$ consisting of all $L$-trivial $\vp$-modules.
Let $M$ be a finite-dimensional $\vp$-module over $E$ and $\mbm \in \Mat_{r \times 1}(M)$ its $E$-basis.
Then there exists a matrix $\Phi \in \Mat_{r \times r}(E)$ such that $\vp \mbm = \Phi \mbm$.

\begin{prop}\label{phi_equiv}
The following conditions are equivalent:
\begin{description}
\item[$(1)$] $M$ is $L$-trivial,
\item[$(2)$] $\iota_{M,l}$ is an isomorphism for each $l$,
\item[$(3)$] $\iota_{M,l}$ is an isomorphism for some $l$,
\item[$(4)$] $\dim_{F} V(M) = \dim_{E} M$,
\item[$(5)$] there exists a matrix $\Psi \in \GL_{r}(L)$ such that $\sigma \Psi = \Phi \Psi$.
\end{description}
\end{prop}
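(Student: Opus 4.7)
The plan is to prove the chain $(1)\Leftrightarrow(2)\Rightarrow(3)\Rightarrow(4)\Rightarrow(2)$ by elementary dimension/base-change arguments, and then treat $(1)\Leftrightarrow(5)$ by translating the $L$-triviality into the matrix equation via a change-of-basis from $\mathbf{m}$ to an $F$-basis of $V(M)$.

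For $(1)\Leftrightarrow(2)$, I would use the decomposition $L=\prod_{l}L_{l}$ of $E$-algebras, which gives $L\otimes_{E}M=\prod_{l}L_{l}\otimes_{E}M$ and $L\otimes_{F}V(M)=\prod_{l}L_{l}\otimes_{F}V(M)$. Under these identifications $\iota_{M}$ becomes the product $\prod_{l}\iota_{M,l}$, so $\iota_{M}$ is an isomorphism iff each $\iota_{M,l}$ is. The implication $(2)\Rightarrow(3)$ is trivial. For $(3)\Rightarrow(4)$, an isomorphism $\iota_{M,l}$ between $L_{l}$-vector spaces gives $\dim_{F}V(M)=\dim_{L_{l}}(L_{l}\otimes_{F}V(M))=\dim_{L_{l}}(L_{l}\otimes_{E}M)=\dim_{E}M$. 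For $(4)\Rightarrow(2)$, Corollary \ref{phi_inj} guarantees each $\iota_{M,l}$ is injective, and under hypothesis (4) its source and target are $L_{l}$-vector spaces of the same finite dimension, so injectivity upgrades to bijectivity.

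For $(1)\Rightarrow(5)$, I would pick an $F$-basis $\mu_{1},\dots,\mu_{r}$ of $V(M)$; since $\iota_{M}$ is an isomorphism this is simultaneously an $L$-basis of $L\otimes_{E}M$. Writing $(\mu_{1},\dots,\mu_{r})^{T}=C\mathbf{m}$ for a unique $C\in\GL_{r}(L)$, I would set $\Psi:=C^{-1}$, so that $\mathbf{m}=\Psi(\mu_{1},\dots,\mu_{r})^{T}$. Applying $\varphi$ to both sides and using $\varphi\mu_{i}=\mu_{i}$ together with $\varphi\mathbf{m}=\Phi\mathbf{m}$ yields $\Phi\mathbf{m}=\sigma(\Psi)\cdot\Psi^{-1}\mathbf{m}$, hence $\sigma\Psi=\Phi\Psi$ since $\mathbf{m}$ is an $L$-basis. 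For $(5)\Rightarrow(1)$, define $\nu:=\Psi^{-1}\mathbf{m}\in\Mat_{r\times 1}(L\otimes_{E}M)$; a direct calculation using $\sigma(\Psi^{-1})\Phi=\Psi^{-1}$ (which follows from $\sigma\Psi=\Phi\Psi$ after inverting) gives $\varphi\nu=\nu$, so the components of $\nu$ lie in $V(M)$. They are $L$-linearly independent in $L\otimes_{E}M$ (as $\Psi^{-1}\in\GL_{r}(L)$), hence $F$-linearly independent, so $\dim_{F}V(M)\ge r$. Combined with the bound $\dim_{F}V(M)\le\dim_{E}M=r$ from Corollary \ref{phi_inj}, we get (4), and hence (1) by what was already proved.

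The main subtlety, and really the only point where the more general setting (where $L$ is a product of fields rather than a field, and $\sigma$ is not an automorphism of $L$) might cause trouble, is justifying the manipulation $\sigma\Psi\in\GL_{r}(L)$ needed both to recover $\Phi$ and to invert $\sigma\Psi$ in the $(5)\Rightarrow(1)$ direction. I would argue this as follows: each restriction $\sigma|_{L_{l}}\colon L_{l}\to L_{l+1}$ is a ring homomorphism between fields, hence injective, so $\sigma$ is injective on $L$; therefore $\sigma(\det\Psi)=\det(\sigma\Psi)$ has nonzero components in every factor $L_{l+1}$ and is a unit in $L$, whence $\sigma\Psi\in\GL_{r}(L)$. (As a byproduct, $\det\Phi=\sigma(\det\Psi)/\det\Psi$ is a unit in $E$, so $\Phi\in\GL_{r}(E)$, which is implicit in the statement.) Once this point is addressed, the matrix computations are purely formal.
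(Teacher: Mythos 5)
Your proof is correct and follows essentially the same route as the paper: the cycle $(1)\Leftrightarrow(2)\Rightarrow(3)\Rightarrow(4)\Rightarrow(2)$ by dimension counting plus the injectivity from Corollary \ref{phi_inj}, then $(1)\Rightarrow(5)$ via the change-of-basis matrix between $\mathbf{m}$ and an $F$-basis of $V(M)$, and $(5)\Rightarrow(4)$ by exhibiting the entries of $\Psi^{-1}\mathbf{m}$ as $F$-linearly independent elements of $V(M)$. Your explicit justification that $\sigma\Psi\in\GL_{r}(L)$ (via injectivity of each $\sigma|_{L_{l}}\colon L_{l}\to L_{l+1}$) is a point the paper leaves implicit, and is a welcome addition.
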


\begin{proof}
It is clear that $(1) \iff (2) \Rightarrow (3) \Rightarrow (4)$.
Assume that the condition $(4)$ is true.
Then for each $l$, we have
$\dim_{L_l} (L_{l} \otimes_{F} V(M)) = \dim_{F} V(M) = \dim_{E} M = \dim_{L_l} (L_{l} \otimes_{E} M)$.
Therefore $\iota_{M,l}$ is an isomorphism.
This means that the condition $(4)$ implies the condition $(2)$.

Assume that the condition $(1)$ is true.
Let $\mbx$ be an $F$-basis of $V(M)$.
Since the natural map $\iota_{M} : L \otimes_{F} V(M) \rar L \otimes_{E} M$ is an isomorphism,
there exists a matrix $\Psi \in \GL_{r}(L)$ such that $\Psi \mbx = 1 \otimes \mbm$.
Then we have
$$(\sigma \Psi) \mbx = (\sigma \Psi) (\vp \mbx) = \vp(\Psi \mbx) = \vp(1 \otimes \mbm)
= 1 \otimes \vp \mbm = 1 \otimes \Phi \mbm = \Phi (1 \otimes \mbm) = \Phi \Psi \mbx.$$
By Lemma \ref{phi_l-ind}, we have $\sigma \Psi = \Phi \Psi$ and the condition $(5)$ is true.
Conversely, assume that the condition $(5)$ is true.
Then we have
$$\vp(\Psi^{-1}(1 \otimes \mbm)) = (\sigma \Psi)^{-1} (1 \otimes \vp \mbm)
= (\Phi \Psi)^{-1} (1 \otimes \Phi \mbm) = \Psi^{-1} (1 \otimes \mbm).$$
This means that $\Psi^{-1} (1 \otimes \mbm) \in \Mat_{r \times 1}(V(M))$.
Thus we have an inequality $\dim_{F} V(M) \geq \dim_{E} M$ and the condition $(4)$ is true.
\end{proof}

\begin{coro}\label{phi_l-et}
Let $M$ be a finite-dimensional $\vp$-module over $E$.
If $M$ is $L$-trivial then $M$ is \'etale.
\end{coro}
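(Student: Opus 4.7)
The plan is to show directly that the linearization $\vp_{\sigma} : M^{(\sigma)} \rar M$ is bijective; since $M$ is finite-dimensional over the field $E$ it is automatically finitely generated over the noetherian ring $E$, so bijectivity of $\vp_\sigma$ is the only remaining requirement in the definition of \'etale. Pick a basis $\mbm \in \Mat_{r \times 1}(M)$ and write $\vp \mbm = \Phi \mbm$ with $\Phi \in \Mat_{r \times r}(E)$. Then $1 \otimes \mbm$ is an $E$-basis of $M^{(\sigma)}$ and the matrix of $\vp_{\sigma}$ with respect to the bases $1 \otimes \mbm$ and $\mbm$ is precisely $\Phi$, so $\vp_{\sigma}$ is bijective if and only if $\Phi \in \GL_{r}(E)$.

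By hypothesis $M$ is $L$-trivial, so Proposition \ref{phi_equiv} (condition $(5)$) gives a matrix $\Psi \in \GL_{r}(L)$ with $\sigma \Psi = \Phi \Psi$, whence $\Phi = (\sigma \Psi) \Psi^{-1}$ in $\Mat_{r \times r}(L)$ and consequently $\det \Phi = \sigma(\det \Psi) \cdot (\det \Psi)^{-1}$. Since each restriction $\sigma|_{L_{l}} : L_{l} \rar L_{l+1}$ is a ring homomorphism from a field, it is injective, so $\sigma : L \rar L$ is injective and $\sigma(\det \Psi) \in L^{\times}$. Hence $\det \Phi \in L^{\times}$, and since $\det \Phi \in E$ and $E$ is a field, this element is a unit in $E$. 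Therefore $\Phi \in \GL_{r}(E)$, so $\vp_{\sigma}$ is bijective and $M$ is \'etale.

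There is no real obstacle here; the only point requiring a moment's care is that $L$ is a product of fields rather than a field, so one has to check separately that $\sigma$ is injective on $L$ (which follows componentwise from $\sigma(L_{l}) \subset L_{l+1}$) in order to conclude that $\det \Psi$ stays a unit under $\sigma$ and thus that $\det \Phi$, a priori only an element of $E$, is in fact a nonzero element of $E$.
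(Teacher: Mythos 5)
Your proof is correct and follows essentially the same route as the paper's: invoke Proposition \ref{phi_equiv}(5) to obtain $\Psi \in \GL_{r}(L)$ with $\sigma\Psi = \Phi\Psi$, and deduce $\det\Phi = \sigma(\det\Psi)(\det\Psi)^{-1} \neq 0$ from the injectivity of $\sigma$. You merely spell out two points the paper leaves implicit (that \'etaleness reduces to $\Phi \in \GL_{r}(E)$, and that $\sigma$ is injective on $L$ because it is componentwise a field homomorphism), which is fine.
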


\begin{proof}
By Proposition \ref{phi_equiv}, there exists a matrix $\Psi \in \GL_{r}(L)$ such that $\sigma \Psi = \Phi \Psi$.
Since $\sigma$ is injective, we have $\det \Phi = \sigma(\det \Psi) \det \Psi^{-1} \neq 0$.
\end{proof}

Let $M$ be an $L$-trivial $\vp$-module over $E$,
$\mbm \in \Mat_{r \times 1}(M)$ an $E$-basis of $M$
and $\Phi \in \GL_{r}(E)$ a matrix such that $\vp \mbm = \Phi \mbm$.
By Proposition \ref{phi_equiv}, there exists a matrix $\Psi \in \GL_{r}(L)$ such that $\sigma \Psi = \Phi \Psi$.
\begin{defi}\label{phi_period}
The matrix $\Psi$ is called a \textit{period matrix} of $M$ in $L$ or \textit{fundamental matrix} of $\Phi$,
and the entries of $\Psi$ are called \textit{periods} of $M$ in $L$.
\end{defi}

Note that $\Psi' \in \GL_{r}(L)$ is another fundamental matrix of $\Phi$
if and only if $\Psi' = \Psi \delta$ for some $\delta \in \GL_{r}(F)$.
Indeed, if $\sigma(\Psi') = \Phi \Psi'$ then
$\sigma(\Psi^{-1} \Psi') = \sigma(\Psi)^{-1} \sigma(\Psi') = (\Phi \Psi)^{-1} (\Phi \Psi') = \Psi^{-1} \Psi'$,
hence $\Psi^{-1} \Psi' \in \GL_{r}(L^{\sigma}) = \GL_{r}(F)$, and the converse is clear.

\begin{prop}\label{phi_psi}
The period matrix $\Psi$ of $M$ is well-defined from $M$ as an element of $\GL_{r}(E) \backslash \GL_{r}(L) / \GL_{r}(F)$.
\end{prop}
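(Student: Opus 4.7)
The plan is to isolate the two independent sources of ambiguity in the construction of $\Psi$ and show that each corresponds to one side of the double-coset action. The data defining $\Psi$ from $M$ is: (i) a choice of $E$-basis $\mbm \in \Mat_{r \times 1}(M)$, which determines the representing matrix $\Phi \in \GL_{r}(E)$ via $\vp\mbm = \Phi\mbm$; and (ii) a choice of fundamental matrix $\Psi \in \GL_{r}(L)$ for this $\Phi$, i.e.\ $\sigma\Psi = \Phi\Psi$.

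The ambiguity (ii), for a fixed $\Phi$, is already handled by the remark immediately preceding the proposition: if $\Psi'$ is another fundamental matrix for $\Phi$, then $\sigma(\Psi^{-1}\Psi') = \Psi^{-1}\Psi'$, so $\Psi^{-1}\Psi' \in \GL_{r}(L^{\sigma}) = \GL_{r}(F)$. Thus $\Psi' = \Psi\delta$ for some $\delta \in \GL_{r}(F)$, which is precisely the right action by $\GL_{r}(F)$.

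For the ambiguity (i), I would take a second $E$-basis $\mbm' = A\mbm$ with $A \in \GL_{r}(E)$ and compute
$$\vp\mbm' = \vp(A\mbm) = \sigma(A)\,\vp\mbm = \sigma(A)\Phi A^{-1}\mbm',$$
so the new representing matrix is $\Phi' = \sigma(A)\Phi A^{-1}$. A quick check shows that $A\Psi$ is a fundamental matrix for $\Phi'$, since
$$\sigma(A\Psi) = \sigma(A)\sigma(\Psi) = \sigma(A)\Phi\Psi = \bigl(\sigma(A)\Phi A^{-1}\bigr)(A\Psi) = \Phi'(A\Psi).$$
Thus one natural choice of period matrix associated to $\mbm'$ is $A\Psi$, i.e.\ the original $\Psi$ left-multiplied by $A \in \GL_{r}(E)$.

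Combining (i) and (ii), any period matrix of $M$ obtained from a different pair of choices has the form $A\Psi\delta$ with $A \in \GL_{r}(E)$ and $\delta \in \GL_{r}(F)$, so the class of $\Psi$ in $\GL_{r}(E)\backslash\GL_{r}(L)/\GL_{r}(F)$ depends only on $M$. There is no genuine obstacle here; the argument is bookkeeping, and the two actions commute automatically because they act on opposite sides of $\Psi$.
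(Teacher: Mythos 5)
Your proof is correct and follows essentially the same route as the paper: both compute $\Phi' = \sigma(A)\Phi A^{-1}$ for a change of basis $\mbm' = A\mbm$, verify that $A\Psi$ is a fundamental matrix for $\Phi'$, and invoke the preceding remark that fundamental matrices for a fixed $\Phi'$ differ by right multiplication by $\GL_{r}(F)$. Nothing to add.
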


\begin{proof}
Let $\mbm' \in \Mat_{r \times 1}(M)$ be another $E$-basis of $M$,
$\Phi' \in \GL_{r}(E)$ and $\Psi' \in \GL_{r}(L)$ matrices such that
$\vp \mbm' = \Phi' \mbm'$ and $\sigma \Psi' = \Phi' \Psi'$.
There exists a matrix $A \in \GL_{r}(E)$ which satisfies $\mbm' = A \mbm$.
Then we have
$\vp \mbm' = \vp (A \mbm) = \sigma(A) \vp \mbm = \sigma(A) \Phi \mbm = \sigma(A) \Phi A^{-1} \mbm'$.
Thus $\Phi' = \sigma(A) \Phi A^{-1}$.
We also have
$\sigma(A \Psi) = \sigma(A) \sigma(\Psi) = \sigma(A) \Phi \Psi = \Phi' (A \Psi)$.
Hence we conclude that
$\Psi' \in A \Psi \cdot \GL_{r}(F)$.
\end{proof}

\begin{prop}\label{phi_basis}
The set of entries of $\Psi^{-1} (1 \otimes \mbm)$ forms an $F$-basis of $V(M)$.
\end{prop}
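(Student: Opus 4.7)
The plan is to verify two things for the column $\Psi^{-1}(1 \otimes \mbm)$: that its entries lie in $V(M) = (L \otimes_E M)^{\vp}$, and that they are $F$-linearly independent. Once this is done, a dimension count from Proposition \ref{phi_equiv} will force them to be a basis.

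For the first point, I would simply repeat the computation already carried out in the proof of the implication $(5) \Rightarrow (4)$ of Proposition \ref{phi_equiv}: using $\sigma\Psi = \Phi\Psi$ and $\vp\mbm = \Phi\mbm$, one checks
\[
\vp\bigl(\Psi^{-1}(1 \otimes \mbm)\bigr) = (\sigma\Psi)^{-1}(1 \otimes \vp\mbm) = (\Phi\Psi)^{-1}(1 \otimes \Phi\mbm) = \Psi^{-1}(1 \otimes \mbm),
\]
so each entry is fixed by $\vp$ and therefore lies in $V(M)$.

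For the second point, note that $1 \otimes \mbm$ is an $L$-basis of $L \otimes_E M$ and $\Psi^{-1} \in \GL_r(L)$, so the entries of $\Psi^{-1}(1 \otimes \mbm)$ form another $L$-basis of $L \otimes_E M$; in particular they are $L$-linearly independent, hence $F$-linearly independent. Alternatively, one can invoke Lemma \ref{phi_l-ind} directly to pass from a hypothetical $F$-linear relation to an $L$-linear one. Since $M$ is $L$-trivial, Proposition \ref{phi_equiv} gives $\dim_F V(M) = \dim_E M = r$, so these $r$ linearly independent elements of $V(M)$ must constitute an $F$-basis. There is no real obstacle here; the proposition is essentially a bookkeeping consequence of the equivalences already established.
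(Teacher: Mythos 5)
Your proof is correct and follows essentially the same route as the paper: membership in $V(M)$ via the computation from Proposition \ref{phi_equiv}, then the dimension count $\dim_F V(M) = \dim_E M = r$. You make explicit the $F$-linear independence step (via invertibility of $\Psi$ or Lemma \ref{phi_l-ind}), which the paper leaves implicit; this is a harmless and slightly more careful version of the same argument.
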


\begin{proof}
By the proof of Proposition \ref{phi_equiv}, we have that $\Psi^{-1} (1 \otimes \mbm) \in \Mat_{r \times 1}(V(M))$.
Since $\dim_{F} V(M) = \dim_{E} M = r$, this is an $F$-basis of $V(M)$.
\end{proof}

\begin{prop}\label{phi_l-identity}
The $\vp$-module $(E, \sigma)$ is $L$-trivial.
\end{prop}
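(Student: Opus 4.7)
The plan is to reduce this to condition (5) of Proposition \ref{phi_equiv}. Take the canonical one-element basis $\mbm = (1) \in \Mat_{1 \times 1}(E)$ of $E$ over itself. Then the $\vp$-action, which is $\sigma$ by definition of the $\vp$-module $(E, \sigma)$, satisfies $\vp(1) = \sigma(1) = 1$, so the matrix $\Phi \in \GL_{1}(E)$ with $\vp \mbm = \Phi \mbm$ is simply $\Phi = (1)$. Taking the candidate $\Psi = (1) \in \GL_{1}(L)$, we have $\sigma \Psi = (1) = \Phi \Psi$, which verifies condition (5). By Proposition \ref{phi_equiv} this gives condition (1), namely $L$-triviality of $(E, \sigma)$.

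As an alternative, essentially equivalent route, one can verify condition (4) or condition (1) of Proposition \ref{phi_equiv} directly. Under the canonical identification $L \otimes_{E} E \cong L$, the $\vp$-action $\sigma \otimes \vp$ on the left-hand side corresponds to $\sigma$ on $L$. Hence
\[
V(E) = (L \otimes_{E} E)^{\vp} = L^{\sigma} = F,
\]
where the last equality is part of the $\sigma$-admissibility hypothesis on $(F, E, L)$. Then $\iota_{E} : L \otimes_{F} F \to L \otimes_{E} E$ becomes, after these identifications, the identity map $L \to L$, which is obviously an isomorphism.

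There is no real obstacle: the statement is a formal consequence of the defining property $L^{\sigma} = F$ of a $\sigma$-admissible triple, expressing that the unit object of the tensor category $\phm_{E}^{L}$ has the expected periods (namely $\Psi = 1$) and the expected realization ($V$ of the unit is the unit $F$).
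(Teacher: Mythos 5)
Your proposal is correct, and your second route is exactly the paper's proof: the paper verifies $V(E) = (L \otimes_E E)^{\vp} = L^{\sigma} = F$ and concludes by the dimension count, i.e.\ condition (4) of Proposition \ref{phi_equiv}. Your first route via condition (5) with $\Phi = \Psi = (1)$ is a trivially equivalent variant, so there is nothing substantive to add.
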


\begin{proof}
We have equalities $V(E) = (L \otimes_{E} E)^{\vp} = L^{\sigma} = F$.
Therefore $\dim_{F} V(E) = 1 = \dim_{E} E$.
\end{proof}

\begin{prop}\label{phi_l-op}
Let $M$ and $N$ be $L$-trivial $\vp$-modules.
Then $M \oplus N$, $M \otimes N$ and $\Hom(M, N)$ are also $L$-trivial.
\end{prop}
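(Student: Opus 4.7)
The plan is to verify condition (5) of Proposition \ref{phi_equiv} in each case by exhibiting an explicit fundamental matrix built from those of $M$ and $N$. I fix $E$-bases $\mbm \in \Mat_{r \times 1}(M)$ and $\mbn \in \Mat_{s \times 1}(N)$ with Frobenius matrices $\Phi_{M} \in \GL_{r}(E)$, $\Phi_{N} \in \GL_{s}(E)$ (invertibility comes from Corollary \ref{phi_l-et}) and fundamental matrices $\Psi_{M} \in \GL_{r}(L)$, $\Psi_{N} \in \GL_{s}(L)$.

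For $M \oplus N$, the concatenated basis has block-diagonal Frobenius matrix, and the block-diagonal matrix with blocks $\Psi_{M}$ and $\Psi_{N}$ immediately satisfies the corresponding Frobenius equation in $\GL_{r+s}(L)$. For $M \otimes N$, the tensor basis $(m_{i} \otimes n_{j})_{i,j}$ has Kronecker-product Frobenius matrix $\Phi_{M} \otimes \Phi_{N}$, and since $\sigma$ acts entrywise the Kronecker product $\Psi_{M} \otimes \Psi_{N}$ satisfies
$$\sigma(\Psi_{M} \otimes \Psi_{N}) = \sigma(\Psi_{M}) \otimes \sigma(\Psi_{N}) = (\Phi_{M}\Psi_{M}) \otimes (\Phi_{N}\Psi_{N}) = (\Phi_{M} \otimes \Phi_{N})(\Psi_{M} \otimes \Psi_{N}),$$
so it is a fundamental matrix in $\GL_{rs}(L)$.

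For $\Hom(M, N)$, I would invoke the rigidity of $\epm_{E}$ (Proposition \ref{phi_rigid}, applicable since $E$ is a field and $M, N$ are \'etale by Corollary \ref{phi_l-et}) to identify $\Hom(M, N) \cong M^{\vee} \otimes N$, reducing to the tensor case once $M^{\vee}$ is shown to be $L$-trivial. Unwinding the definition of $\vp$ on $M^{\vee} = \Hom(M, E)$ given before Proposition \ref{phi_i-hom} shows that the Frobenius matrix of $M^{\vee}$ in the dual basis $\mbm^{*}$ equals $(\Phi_{M}^{T})^{-1}$; then transposing and inverting the equation $\sigma(\Psi_{M}) = \Phi_{M}\Psi_{M}$ yields
$$\sigma\bigl((\Psi_{M}^{T})^{-1}\bigr) = \bigl((\Phi_{M}\Psi_{M})^{T}\bigr)^{-1} = (\Phi_{M}^{T})^{-1}(\Psi_{M}^{T})^{-1},$$
so $(\Psi_{M}^{T})^{-1} \in \GL_{r}(L)$ is the desired fundamental matrix for $M^{\vee}$. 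The only mildly non-routine step is the identification of the dual Frobenius matrix, which is a direct calculation from the formula $f \mapsto \vp_{N,\sigma} \circ f \circ \vp_{M,\sigma}^{-1}$; everything else is a short matrix manipulation.
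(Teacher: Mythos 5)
Your proposal is correct and follows essentially the same route as the paper: block-diagonal and Kronecker-product fundamental matrices for $M \oplus N$ and $M \otimes N$, and for $\Hom(M,N)$ the reduction via rigidity to $M^{\vee} \otimes N$ with $(\Phi_{M}^{\mathrm{tr}})^{-1}$ and $(\Psi_{M}^{\mathrm{tr}})^{-1}$ as the dual Frobenius and fundamental matrices. All the matrix identities check out, so nothing further is needed.
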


\begin{proof}
Let $\mbm \in \Mat_{r \times 1}(E)$ be an $E$-basis of $M$,
$\Phi_M \in \GL_{r}(E)$ the matrix such that $\vp \mbm = \Phi_M \mbm$
and $\Psi_M \in \GL_{r}(L)$ a matrix which satisfies $\sigma \Psi_M = \Phi_M \Psi_M$.
We also set $\mbn \in \Mat_{s \times 1}(E)$ an $E$-basis of $N$
and $\Phi_N \in \GL_{s}(E), \Psi_N \in \GL_{s}(L)$ matrices which satisfy
$\vp \mbn = \Phi_N \mbn$ and $\sigma \Psi_{N} = \Phi_{N} \Psi_{N}$.
We set
$$\mbm \oplus \mbn := \begin{bmatrix} \mbm \\ \mbn \\ \end{bmatrix},
\Phi_{M} \oplus \Phi_{N} := \begin{bmatrix} \Phi_{M} & 0 \\ 0 & \Phi_{N} \\ \end{bmatrix}
\textrm{ and } \Psi_{M} \oplus \Psi_{N} := \begin{bmatrix} \Psi_{M} & 0 \\ 0 & \Psi_{N} \\ \end{bmatrix}.$$
Then it is clear that $\mbm \oplus \mbn$ is an $E$-basis of $M \oplus N$,
$\vp (\mbm \oplus \mbn) = (\Phi_{M} \oplus \Phi_{N}) (\mbm \oplus \mbn)$
and $\sigma (\Psi_{M} \oplus \Psi_{N}) = (\Phi_{M} \oplus \Phi_{N}) (\Psi_{M} \oplus \Psi_{N})$.
Therefore $M \oplus N$ is $L$-trivial.

Set $\mbm \otimes \mbn$ to be an $E$-basis of $M \otimes N$ naturally obtained from $\mbm$ and $\mbn$.
Let $\Phi_{M} \otimes \Phi_{N}$ be the Kronecker product of $\Phi_{M}$ and $\Phi_{N}$,
and $\Psi_{M} \otimes \Psi_{N}$ be the Kronecker product of $\Psi_{M}$ and $\Psi_{N}$.
Then it is clear that $\vp (\mbm \otimes \mbn) = (\Phi_{M} \otimes \Phi_{N}) (\mbm \otimes \mbn)$
and $\sigma (\Psi_{M} \otimes \Psi_{N}) = (\Phi_{M} \otimes \Phi_{N}) (\Psi_{M} \otimes \Psi_{N})$.
Therefore $M \otimes N$ is $L$-trivial.

Let $\mbm^{\vee}$ be the dual basis of $\mbm$ for $M^{\vee}$.
Then we have equalities $\vp \mbm^{\vee} = (\Phi_{M}^{-1})^{\mathrm{tr}} \mbm^{\vee}$
and $\sigma (\Psi_{M}^{-1})^{\mathrm{tr}} = (\Phi_{M}^{-1})^{\mathrm{tr}} (\Psi_{M}^{-1})^{\mathrm{tr}}$,
where $A^{\mathrm{tr}}$ is the transpose of a matrix $A$.
Therefore $M^{\vee}$ is $L$-trivial.
Since $\epm_{E}$ is a rigid tensor category, we have an isomorphism $M^{\vee} \otimes N \cong \Hom(M, N)$.
Therefore $\Hom(M, N)$ is $L$-trivial.
\end{proof}

\begin{prop}\label{phi_l-subq}
Let $0 \rar M' \rar M \rar M'' \rar 0$ be an exact sequence in $\phm_{E}$.
If $M$ is $L$-trivial, then $M'$ and $M''$ are also $L$-trivial.
\end{prop}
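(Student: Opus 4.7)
The plan is to use a dimension count based on Proposition \ref{phi_equiv}, condition $(4)$: an object $N$ of $\phm_E$ is $L$-trivial iff $\dim_F V(N) = \dim_E N$. By Corollary \ref{phi_inj} we always have $\dim_F V(N) \leq \dim_E N$, so the strategy is to sandwich the sum $\dim_F V(M') + \dim_F V(M'')$ between equal quantities, forcing equality on each side.

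First, I would observe that $L$ is flat over $E$ (by the separability assumption, but really flatness alone is what we need here — $L$ is even a localization-type extension as a product of separable field extensions, and in any case tensoring a short exact sequence of $E$-modules with $L$ preserves exactness). Hence applying $L \otimes_E (-)$ to the given exact sequence yields an exact sequence
\[
0 \rar L \otimes_E M' \rar L \otimes_E M \rar L \otimes_E M'' \rar 0
\]
of $\vp$-modules over $L$. Next, the functor $V$ is left exact because $V(N) = \ker\bigl((\sigma \otimes \vp_N) - \id \colon L \otimes_E N \rar L \otimes_E N\bigr)$, a kernel construction. Thus we obtain an exact sequence of $F$-vector spaces
\[
0 \rar V(M') \rar V(M) \rar V(M'').
\]

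From this left-exact sequence, $\dim_F V(M) \leq \dim_F V(M') + \dim_F V(M'')$, since the image of $V(M) \rar V(M'')$ has dimension at most $\dim_F V(M'')$ and the kernel is $V(M')$. Combining with the hypothesis $\dim_F V(M) = \dim_E M = \dim_E M' + \dim_E M''$ and the inequalities $\dim_F V(M') \leq \dim_E M'$, $\dim_F V(M'') \leq \dim_E M''$ from Corollary \ref{phi_inj}, I get
\[
\dim_E M' + \dim_E M'' = \dim_F V(M) \leq \dim_F V(M') + \dim_F V(M'') \leq \dim_E M' + \dim_E M''.
\]
Equality throughout forces $\dim_F V(M') = \dim_E M'$ and $\dim_F V(M'') = \dim_E M''$, so both $M'$ and $M''$ are $L$-trivial by Proposition \ref{phi_equiv}.

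There is really no serious obstacle: the only non-formal input is flatness of $L$ over $E$ (to preserve exactness on the left end $L \otimes_E M' \hookrightarrow L \otimes_E M$), and the argument is otherwise a standard sandwich of dimensions. One minor point worth stating carefully is that because $L$ is not a field, the equality $\dim_F V(N) = \dim_E N$ should be read via Proposition \ref{phi_equiv}; but since this is exactly condition $(4)$ there, no additional care beyond citing that proposition is needed.
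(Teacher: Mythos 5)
Your proof is correct. It rests on the same two pillars as the paper's argument --- left exactness of $V$ (which indeed holds for arbitrary $\vp$-modules, since $V(N)$ is a kernel, and does not require $L$-triviality of $M'$ or $M''$) and the hypothesis $\iota_M$ is an isomorphism --- but you package the conclusion as a dimension sandwich via condition $(4)$ of Proposition \ref{phi_equiv} together with Corollary \ref{phi_inj}, whereas the paper sets up the commutative diagram comparing the exact rows $0 \rar L \otimes_F V(M') \rar L \otimes_F V(M) \rar \im(\kappa) \rar 0$ and $0 \rar L \otimes_E M' \rar L \otimes_E M \rar L \otimes_E M'' \rar 0$ and concludes by a diagram chase that $\iota_{M'}$ and the restriction of $\iota_{M''}$ are isomorphisms. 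Your numerical route is slightly more economical: it never needs to identify $\im(\kappa)$ or chase surjectivity of $\iota_{M''}$ explicitly, only the standing inequalities $\dim_F V(N) \leq \dim_E N$. One cosmetic remark: flatness of $L$ over $E$ is automatic because $E$ is a field, so no appeal to separability is needed for the exactness of $L \otimes_E (-)$.
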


\begin{proof}
Let $\kappa : L \otimes_{F} V(M) \rar L \otimes_{F} V(M'')$ be the natural map
and $\iota'' : \im(\kappa) \rar L \otimes_{E} M''$ be the restriction of the map $\iota_{M''}$.
Then we have the commutative diagram
\[\xymatrix{
0 \ar[r] & L \otimes_{F} V(M') \ar[r] \ar[d]_{\iota_{M'}} & L \otimes_{F} V(M) \ar[r] \ar[d]_{\iota_{M}} & \im(\kappa) \ar[r] \ar[d]_{\iota''} & 0 \\
0 \ar[r] & L \otimes_{E} M' \ar[r] & L \otimes_{E} M \ar[r] & L \otimes_{E} M'' \ar[r] & 0, \\
}\]
where the rows are exact, $\iota_{M}$ is an isomorphism and $\iota_{M'}, \iota''$ are injective.
Then we have that $\iota_{M'}$ and $\iota''$ are isomorphism by a diagram chasing.
Hence $M'$ and $M''$ are $L$-trivial.
\end{proof}

\begin{prop}\label{phi_l-rigid}
The category $\phm_{E}^{L}$ is a rigid abelian $F$-linear tensor category.
\end{prop}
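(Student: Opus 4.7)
The plan is to realize $\phm_{E}^{L}$ as a full subcategory of $\epm_{E}$ closed under every structural operation, and then invoke Proposition \ref{phi_rigid}. First I would note that since $E$ is a field, the endomorphism $\sigma|_{E}$ is automatically flat (the induced field extension is flat), so Proposition \ref{phi_rigid} gives that $\epm_{E}$ is a rigid abelian $F$-linear tensor category, where $F = E^{\sigma}$ by $\sigma$-admissibility. Corollary \ref{phi_l-et} guarantees that every $L$-trivial $\vp$-module is étale, so $\phm_{E}^{L}$ is indeed a full subcategory of $\epm_{E}$, and the $F$-linearity of morphism sets is inherited via the identification $F \cong \End_{\vp}(E)$ of Proposition \ref{phi_endo}.

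Next I would verify that this full subcategory is closed under the relevant constructions. The identity object $(E, \sigma)$ lies in $\phm_{E}^{L}$ by Proposition \ref{phi_l-identity}, and closure under direct sums, tensor products, internal Homs, and duals is precisely the content of Proposition \ref{phi_l-op}. For the abelian structure, given a morphism $f : M \rar N$ between $L$-trivial objects, I would apply Proposition \ref{phi_l-subq} twice: first to the short exact sequence $0 \rar \ker f \rar M \rar \im f \rar 0$ to deduce that $\ker f$ and $\im f$ are $L$-trivial, and then to $0 \rar \im f \rar N \rar \coker f \rar 0$ to conclude that $\coker f$ is $L$-trivial. Hence kernels and cokernels computed in $\epm_{E}$ actually lie in $\phm_{E}^{L}$.

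Finally, the rigidity axioms — the bijectivity of the canonical maps $\otimes_{i \in I} \Hom(M_{i}, N_{i}) \rar \Hom(\otimes_{i \in I} M_{i}, \otimes_{i \in I} N_{i})$ and $M \rar M^{\vee\vee}$ — transfer automatically from $\epm_{E}$ to $\phm_{E}^{L}$: both sides of each map lie in $\phm_{E}^{L}$ by the closure results just established, and the maps are already isomorphisms in the ambient category $\epm_{E}$. Since $\phm_{E}^{L}$ is a full subcategory, being an isomorphism there coincides with being an isomorphism in $\epm_{E}$.

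There is no genuine obstacle at this stage; the proposition is a formal consolidation of Propositions \ref{phi_l-identity}, \ref{phi_l-op}, and \ref{phi_l-subq} together with Proposition \ref{phi_rigid}, with all the substantive work having been done earlier — notably in Lemma \ref{phi_l-ind}, which underlies both the closure of $V$ under exact sequences and the fact that $L$-triviality detects étaleness. The only point that warrants a brief comment in the write-up is the caveat that $\phm_{E}^{L}$ is not full in $\phm_{E}$ as a \emph{closed} subcategory in the strongest sense until one has checked closure under subobjects and quotients, which is exactly what Proposition \ref{phi_l-subq} provides.
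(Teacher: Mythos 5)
Your proposal is correct and follows essentially the same route as the paper: reduce to Proposition \ref{phi_rigid} via Corollary \ref{phi_l-et}, then check closure of $\phm_{E}^{L}$ under the identity object, direct sums, tensor products, internal homs, and subquotients using Propositions \ref{phi_l-identity}, \ref{phi_l-op}, and \ref{phi_l-subq}. Your extra remarks (applying Proposition \ref{phi_l-subq} twice to handle kernels and cokernels, and noting that the rigidity isomorphisms transfer along a full subcategory) merely make explicit what the paper leaves implicit.
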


\begin{proof}
By Proposition \ref{phi_rigid} and Corollary \ref{phi_l-et},
it is enough to show that the category $\phm_{E}^{L}$ is closed under
direct sum, subquotient, tensor product and internal hom,
and has an identity object for the tensor product.
By Propositions \ref{phi_l-identity}, \ref{phi_l-op} and \ref{phi_l-subq}, these are true.
\end{proof}

By Corollary \ref{phi_inj}, we can consider $V$ as a functor from $\phm_{E}^{L}$
to the category of finite-dimensional $F$-vector spaces $\vsp(F)$.

\begin{prop}\label{phi_exact}
The functor $V : \phm_{E}^{L} \rar \vsp(F)$ is $F$-linear and exact.
\end{prop}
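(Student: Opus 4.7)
My plan is to establish $F$-linearity directly from the construction and then prove exactness in two steps: left exactness via a flatness argument, and surjectivity on the right via a dimension count. $F$-linearity is immediate: since $F = L^{\sigma}$ sits inside the $\vp$-fixed part, each $V(M)$ carries a natural $F$-module structure, and for any morphism $f : M \rar N$ in $\phm_{E}^{L}$ the map $\id_{L} \otimes f$ is $L$-linear and commutes with $\sigma \otimes \vp$, hence restricts to an $F$-linear map $V(f)$.

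For left exactness, given a short exact sequence $0 \rar M' \rar M \rar M'' \rar 0$ in $\phm_{E}^{L}$, I would use that $L$ is an $E$-vector space (since $E$ is a field), hence free and in particular flat over $E$. Therefore the sequence $0 \rar L \otimes_{E} M' \rar L \otimes_{E} M \rar L \otimes_{E} M'' \rar 0$ remains exact. Taking $\vp$-fixed points, being the kernel of the additive endomorphism $(\sigma \otimes \vp) - \id$, is a left exact operation, so $0 \rar V(M') \rar V(M) \rar V(M'')$ is exact.

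The only remaining point is surjectivity of $V(M) \rar V(M'')$. By Proposition \ref{phi_l-subq}, both $M'$ and $M''$ are $L$-trivial, so Proposition \ref{phi_equiv} gives $\dim_{F} V(X) = \dim_{E} X$ for each of $X \in \{M', M, M''\}$. Additivity of $\dim_{E}$ in short exact sequences of $E$-vector spaces then forces $\dim_{F} V(M) = \dim_{F} V(M') + \dim_{F} V(M'')$. Combined with the left exact sequence above, the image of $V(M) \rar V(M'')$ has $F$-dimension equal to $\dim_{F} V(M'')$, and therefore fills all of $V(M'')$. I do not foresee any substantive obstacle, since the argument is essentially a combination of the triviality criterion of Proposition \ref{phi_equiv} with the trivial fact that vector spaces over a field are flat.
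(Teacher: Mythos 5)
Your proof is correct and follows essentially the same route as the paper: left exactness of $V$ (flatness of $L$ over $E$ plus taking $\vp$-fixed points) followed by a dimension count to force surjectivity onto $V(M'')$. The only cosmetic difference is that you invoke Proposition \ref{phi_l-subq} to get $\dim_F V(M') = \dim_E M'$ and $\dim_F V(M'') = \dim_E M''$ directly, whereas the paper squeezes the same equalities out of the inequality $\dim_F V \leq \dim_E$ from Corollary \ref{phi_inj}.
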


\begin{proof}
It is clear that $V$ is $F$-linear.
Let $0 \rar M' \rar M \rar M'' \rar 0$ be an exact sequence in $\phm_{E}^{L}$.
It is clear that $0 \rar V(M') \rar V(M) \rar V(M)$ is exact.
This means that $\dim_{F} V(M) \leq \dim_{F} V(M') + \dim_{F} V(M'')$.
On the other hand, we have
$\dim_{F} V(M) = \dim_{E} M = \dim_{E} M' + \dim_{E} M'' \geq \dim_{F} V(M') + \dim_{F} V(M'')$.
\end{proof}

\begin{prop}\label{phi_faithful}
The functor $V : \phm_{E}^{L} \rar \vsp(F)$ is faithful.
\end{prop}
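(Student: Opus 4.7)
The plan is to exploit exactness of $V$ (Proposition \ref{phi_exact}) together with the dimension equality for $L$-trivial objects (Proposition \ref{phi_equiv}) via the standard abelian-category criterion for faithfulness.

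First I would reduce the problem to a statement about objects. For an exact additive functor between abelian categories, faithfulness is equivalent to the condition that $V(M)=0$ forces $M=0$. Indeed, given a morphism $f:M\rar N$ in $\phm_{E}^{L}$ with $V(f)=0$, factor $f$ through its image $I := \im(f)$ inside $\phm_{E}^{L}$, which exists because $\phm_{E}^{L}$ is abelian by Proposition \ref{phi_l-rigid} and is stable under subquotients by Proposition \ref{phi_l-subq}. The exact sequence $0 \rar \ker(f) \rar M \rar I \rar 0$ maps under $V$ to the exact sequence $0 \rar V(\ker f) \rar V(M) \rar V(I) \rar 0$, so $V(I)$ is a subobject of the image of $V(f)$, hence zero; then the inclusion $I \hookrightarrow N$ gives $V(I)=0$ implying $I=0$, so $f=0$.

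Next I would verify the reduced statement. Suppose $M$ is an $L$-trivial $\vp$-module with $V(M)=0$. By the equivalence $(1) \Leftrightarrow (4)$ of Proposition \ref{phi_equiv}, one has $\dim_{E} M = \dim_{F} V(M) = 0$, so $M=0$.

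There is no genuine obstacle here once one sees that the dimension formula from Proposition \ref{phi_equiv} bridges the gap: faithfulness follows from two already-established facts, the exactness of $V$ on $\phm_{E}^{L}$ and the preservation of dimension. The only small point requiring care is that $\phm_{E}^{L}$ really is abelian (so that images exist inside the subcategory), which is guaranteed by Proposition \ref{phi_l-rigid} and the closure of $L$-triviality under subquotients (Proposition \ref{phi_l-subq}).
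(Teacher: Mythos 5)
Your proof is correct and rests on the same mechanism as the paper's: exactness of $V$ plus the dimension equality $\dim_F V(-)=\dim_E(-)$ for $L$-trivial modules. The paper argues directly with $\ker\phi$ (using only left-exactness and the inequality of Corollary \ref{phi_inj}) rather than reducing to the object-level statement via the image, but this is a cosmetic difference.
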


\begin{proof}
Let $M$ and $N$ be $L$-trivial $\vp$-modules and $\phi : M \rar N$ a morphism in $\phm_{E}$.
Then we have an exact sequence
\[\xymatrix{
0 \ar[r] & V(\ker \phi) \ar[r] & V(M) \ar[r]^{V(\phi)} & V(N). \\
}\]
Therefore if $V(\phi) = 0$ then $V(\ker \phi) = V(M)$.
Since $M$ is $L$-trivial,
we have an inequality $\dim_{E} \ker \phi \geq \dim_{F} V(\ker \phi) = \dim_{F} V(M) = \dim_{E} M$.
This means that $\ker \phi = M$ and $\phi = 0$.
\end{proof}

\begin{prop}\label{phi_t-func}
Let $M$ and $N$ be $L$-trivial $\vp$-modules.
Then there exists a natural isomorphism $V(M) \otimes_{F} V(N) \rar V(M \otimes N)$.
The functor $V : \phm_{E}^{L} \rar \vsp(F)$ is a tensor functor with respect to this isomorphism.
\end{prop}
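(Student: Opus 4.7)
My plan is to first construct the comparison map explicitly, then verify bijectivity via a dimension count aided by period matrices, and finally sketch the checks for the tensor-functor axioms.

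The natural map arises from the $L$-module structure on $L \otimes_{E} M$ and $L \otimes_{E} N$. Concretely, the multiplication map $L \otimes_{F} L \to L$ induces an $F$-bilinear pairing
\[
(L \otimes_{E} M) \times (L \otimes_{E} N) \longrightarrow L \otimes_{E} (M \otimes_{E} N), \qquad (a \otimes m,\, b \otimes n) \longmapsto ab \otimes (m \otimes n),
\]
which factors through $\otimes_{F}$. Restricting to $V(M) \otimes_{F} V(N)$ and noting that $\vp$ on $M \otimes N$ is $\vp_{M} \otimes \vp_{N}$, one sees the image of any $\mu \otimes \nu$ with $\mu \in V(M)$, $\nu \in V(N)$ is $\vp$-fixed. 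This defines a natural $F$-linear map $\beta_{M,N} : V(M) \otimes_{F} V(N) \to V(M \otimes N)$.

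Next I would show $\beta_{M,N}$ is an isomorphism by producing matching bases. Choose $E$-bases $\mbm$ and $\mbn$ of $M$ and $N$ and period matrices $\Psi_{M} \in \GL_{r}(L)$, $\Psi_{N} \in \GL_{s}(L)$ as in Proposition~\ref{phi_equiv}. By Proposition~\ref{phi_basis}, the entries of $\Psi_{M}^{-1}(1 \otimes \mbm)$ and of $\Psi_{N}^{-1}(1 \otimes \mbn)$ form $F$-bases of $V(M)$ and $V(N)$ respectively. By Proposition~\ref{phi_l-op}, the Kronecker product $\Psi_{M} \otimes \Psi_{N}$ is a period matrix for the basis $\mbm \otimes \mbn$ of $M \otimes N$, so the entries of $(\Psi_{M}^{-1} \otimes \Psi_{N}^{-1})(1 \otimes (\mbm \otimes \mbn))$ form an $F$-basis of $V(M \otimes N)$. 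A direct calculation shows
\[
\beta_{M,N}\!\left((\Psi_{M}^{-1}(1\otimes\mbm))_{i} \otimes (\Psi_{N}^{-1}(1\otimes\mbn))_{k}\right) = \bigl((\Psi_{M}^{-1} \otimes \Psi_{N}^{-1})(1 \otimes \mbm \otimes \mbn)\bigr)_{(i,k)},
\]
so $\beta_{M,N}$ sends a basis to a basis and is therefore an isomorphism. (Alternatively, since $\dim_{F} V(M) \otimes_{F} V(N) = rs = \dim_{E}(M\otimes N) = \dim_{F} V(M \otimes N)$ by $L$-triviality and Proposition~\ref{phi_l-op}, it suffices to check surjectivity, which the same explicit computation provides.)

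For the tensor-functor statement, I would verify the standard compatibilities (naturality, associativity, commutativity, and unitality). Naturality in $M$ and $N$ is immediate from the construction of $\beta_{M,N}$, since it is the restriction of an obviously functorial map. The associativity and symmetry constraints on both $\phm_{E}^{L}$ and $\vsp(F)$ are induced by those on the underlying modules, so the required hexagon/triangle diagrams commute after composing with the injections $V(-) \hookrightarrow L \otimes_{E} (-)$ by associativity and commutativity of multiplication in $L$. For the unit, Proposition~\ref{phi_l-identity} identifies $V(E)$ with $F$, and under this identification $\beta_{E,M}$ and $\beta_{M,E}$ are the canonical unit isomorphisms of $V(M)$. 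I do not expect any genuine obstacle here: the only point requiring attention is that the map $(L \otimes_{E} M) \otimes_{F} (L \otimes_{E} N) \to L \otimes_{E}(M \otimes_{E} N)$ need not itself be injective, so one cannot deduce injectivity of $\beta_{M,N}$ abstractly and must run the explicit basis argument above.
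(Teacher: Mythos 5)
Your proposal is correct and follows essentially the same route as the paper: the same canonical map induced by multiplication in $L$, with bijectivity obtained from the equality of $F$-dimensions guaranteed by $L$-triviality of $M$, $N$ and $M\otimes N$. The only cosmetic difference is that the paper establishes injectivity abstractly via Lemma \ref{phi_l-ind} (linear independence over $F$ implies linear independence over $L$) and then counts dimensions, whereas you verify bijectivity by matching the explicit period-matrix bases of Proposition \ref{phi_basis}; both computations are valid and your closing remark that the intermediate map to $L\otimes_{E}(M\otimes N)$ is not itself injective correctly identifies why some such argument is needed.
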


\begin{proof}
It is clear that there exists a natural isomorphism
$(L \otimes_{E} M) \otimes_{L} (L \otimes_{E} N) \cong L \otimes_{E} (M \otimes N)$ which preserves $\vp$-actions.
By Lemma \ref{phi_l-ind}, the natural map
$V(M) \otimes_{F} V(N) \rar (L \otimes_{E} M) \otimes_{L} (L \otimes_{E} N)$ is injective.
Therefore we have a natural injection
$$V(M) \otimes_{F} V(N) \hookrightarrow ((L \otimes_{E} M) \otimes_{L} (L \otimes_{E} N))^{\vp}
\cong (L \otimes_{E} (M \otimes N))^{\vp} = V(M \otimes N).$$
Since $\dim_{F} (V(M) \otimes_{F} V(N)) = \dim_{F} V(M \otimes N)$, this map is a bijection.
It is clear that this isomorphism is compatible with the associativity and the commutativity of tensor functors.
It is also clear that $V(E) = F$.
Thus the functor $V$ is a tensor functor (\cite{DeMi}, Definition 1.8).
\end{proof}

Recall that a \textit{neutral Tannakian category} over a field $k$ is
a rigid abelian $k$-linear tensor category $\mathcal{C}$ for which
$k \xrightarrow{\sim} \End(\mathbf{1})$
and there exists an exact faithful $k$-linear tensor functor $\omega : \mathcal{C} \rar \vsp(k)$,
where $\mathbf{1}$ is the unit object of $\mathcal{C}$ (\cite{DeMi}, Definition 2.19).
Any such functor $\omega$ is said to be a \textit{fiber functor} for $\mathcal{C}$.

\begin{theo}\label{phi_tan}
The category $\phm_{E}^{L}$ is a neutral Tannakian category over $F$.
The functor $V : \phm_{E}^{L} \rar \vsp(F)$ is a fiber functor for $\phm_{E}^{L}$.
\end{theo}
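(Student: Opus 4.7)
The plan is to assemble this theorem directly from the preceding propositions; no new work is required beyond checking that every clause in the definition of a neutral Tannakian category is already in hand. Recall that such a category over $F$ must be (a) a rigid abelian $F$-linear tensor category $\mathcal{C}$, (b) satisfy $F \xrightarrow{\sim} \End(\mathbf{1})$ where $\mathbf{1}$ is the unit object, and (c) admit an exact faithful $F$-linear tensor functor to $\vsp(F)$.

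For (a), Proposition \ref{phi_l-rigid} already asserts that $\phm_{E}^{L}$ is a rigid abelian $F$-linear tensor category, so nothing further is needed. For (b), Proposition \ref{phi_l-identity} identifies the unit object of $\phm_{E}^{L}$ as $(E, \sigma)$, and Proposition \ref{phi_endo} computes $\End_{\vp}(E) \cong E^{\sigma} = F$, which supplies the required isomorphism on endomorphisms of the unit.

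For (c), I take $\omega = V$. Corollary \ref{phi_inj} guarantees that $V$ indeed takes finite-dimensional values, so $V$ lands in $\vsp(F)$; Proposition \ref{phi_exact} gives $F$-linearity and exactness; Proposition \ref{phi_faithful} gives faithfulness; and Proposition \ref{phi_t-func} supplies the tensor structure, including the natural isomorphisms $V(M) \otimes_{F} V(N) \cong V(M \otimes N)$ and the identification $V(E) = F$ on the unit object. There is essentially no obstacle at this stage, since each ingredient has been prepared step by step in the preceding subsection; the statement is best viewed as a capstone summarizing the work just completed. The only mild point worth flagging is the verification that $V$ carries the unit of $\phm_{E}^{L}$ to the unit of $\vsp(F)$, but this was already recorded inside the proof of Proposition \ref{phi_t-func}.
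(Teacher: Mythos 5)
Your proposal is correct and matches the paper's proof essentially verbatim: both cite Proposition \ref{phi_l-rigid} for the rigid abelian $F$-linear tensor structure, Proposition \ref{phi_endo} for $\End(\mathbf{1}) \cong F$, and Propositions \ref{phi_exact}, \ref{phi_faithful}, \ref{phi_t-func} for $V$ being an exact faithful $F$-linear tensor functor. The extra remarks about Corollary \ref{phi_inj} and the unit object are harmless elaborations of the same argument.
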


\begin{proof}
By Proposition \ref{phi_endo}, we have $\End_{\vp}(E) \cong E^{\sigma} = F$.
By Proposition \ref{phi_l-rigid}, the category $\phm_{E}^{L}$ is a rigid abelian $F$-linear tensor category.
By Propositions \ref{phi_exact}, \ref{phi_faithful} and \ref{phi_t-func},
the functor $V : \phm_{E}^{L} \rar \vsp(F)$ is an exact faithful $F$-linear tensor functor.
\end{proof}

Let $M$ be an $L$-trivial $\vp$-module over $E$.
We set $\TM$ to be the Tannakian subcategory of $\phm_{E}^{L}$ generated by $M$,
and $V_{M} : \TM \rar \vsp(F)$ the restriction of the functor $V$.
We denote by $\Gamma_{M}$ the Tannakian Galois group of $(\TM, V_{M})$.
For any $F$-algebra $R$, we define the functor $V_{M}^{R} : \TM \rar \Mod(R)$ by $N \mapsto R \otimes_{F} V(N)$,
where $\Mod(R)$ is the category of finitely generated $R$-modules.
Then by the definition of $\Gamma_{M}$, we have
$$\Gamma_{M}(R) = \Aut^{\otimes}(V_{M}^{R})$$
where $\Aut^{\otimes}(V_{M}^{R})$ is the group of invertible natural transformations from $V_{M}^{R}$ to itself
which is compatible with the tensor products.
Therefore we have a canonical injection $\Gamma_{M}(R) \hookrightarrow \GL(R \otimes_{F} V(M))$.

\subsection{$v$-adic case}\label{phi_v}
Let $t$ be a variable and $v \in \Fq[t]$ a fixed monic irreducible polynomial of degree $d$.
For any field $k$ containing $\Fq$, we set $k[t]_{v} := \varprojlim(k[t]/v^n)$
and $k(t)_{v} := \Fq(t) \otimes_{\Fq[t]} k[t]_{v}$.

Let $\sigma$ be the ring endomorphism of $k[t]$
$$\sum a_i t^i \mapsto \sum a_i^q t^i.$$
Then $\sigma$ naturally extends to an endomorphism of $k(t)_{v}$, also denoted by $\sigma$.
Let $k'$ be a splitting field of $v$ over $k$ in $\bar{k}$,
and we factorize $v=\prod_{l \in \mathbb{Z}/d} (t-\lambda_{l})$ in $k'[t]$
with $\lambda_{l}^{q} = \lambda_{l + 1}$ for all $l \in \mathbb{Z}/d$.
Then we have $k'(t)_{v} = \prod_{l \in \mathbb{Z}/d} k'(\!(t-\lambda_{l})\!)$,
and for any $a=(\sum_{i} a_{l, i} (t-\lambda_{l})^{i})_{l} \in k'(t)_{v}$,
$$\sigma(a) = (\sum a_{l - 1, i}^{q} (t-\lambda_{l})^{i})_{l}.$$

\begin{lemm}\label{phi_fix}
For any field $k$ containing $\Fq$, we have $(k(t)_{v})^{\sigma} = \ftv$.
\end{lemm}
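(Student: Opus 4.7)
The plan is to treat the two inclusions separately. The inclusion $\ftv \subseteq (k(t)_{v})^{\sigma}$ is immediate: $\sigma$ acts as the identity on $\Fq$ and on $t$, hence fixes every element of $\Fq[t]$ and, by continuity, of $\ftv$. For the reverse inclusion I will first enlarge $k$ to a splitting field $k'$ of $v$ over $k$; since $k(t)_{v} \hookrightarrow k'(t)_{v}$ is $\sigma$-equivariant, one has $(k(t)_{v})^{\sigma} \subseteq (k'(t)_{v})^{\sigma}$, so it suffices to prove $(k'(t)_{v})^{\sigma} \subseteq \ftv$. Equivalently, I may assume $\mathbb{F}_{q^{d}} \subseteq k$.

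Under that assumption the Chinese Remainder Theorem yields $k(t)_{v} = \prod_{l \in \mathbb{Z}/d} k(\!(t-\lambda_{l})\!)$. Writing $a = (a_{l})_{l}$ with $a_{l} = \sum_{i} a_{l,i}(t-\lambda_{l})^{i}$, the formula $\sigma(a)_{l} = \sum_{i} a_{l-1,i}^{q}(t-\lambda_{l})^{i}$ given in the excerpt shows that $\sigma(a)=a$ forces $a_{l,i} = a_{0,i}^{q^{l}}$ and $a_{0,i} = a_{0,i}^{q^{d}}$, so $a_{0,i} \in \mathbb{F}_{q^{d}}$. Projection to the $0$-th factor therefore identifies $(k(t)_{v})^{\sigma}$ as a ring with $\mathbb{F}_{q^{d}}(\!(t-\lambda_{0})\!)$.

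It will then remain to check that the composite
$$\ftv \hookrightarrow (k(t)_{v})^{\sigma} \xrightarrow{\sim} \mathbb{F}_{q^{d}}(\!(t-\lambda_{0})\!)$$
is surjective. Both sides are complete equal-characteristic discrete valuation fields with residue field $\mathbb{F}_{q^{d}}$. The composite sends $t \mapsto \lambda_{0}+(t-\lambda_{0})$, inducing on residue fields the isomorphism $\Fq[t]/(v) \xrightarrow{\sim} \mathbb{F}_{q^{d}}$, $t \mapsto \lambda_{0}$; and it sends the uniformizer $v$ of $\ftv$ to $(t-\lambda_{0})\prod_{l \neq 0}(t-\lambda_{l})$, again a uniformizer of the target since the second factor is a unit at $\lambda_{0}$. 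Applying Hensel's lemma to $X^{q^{d}}-X$ over $\ftv$ will lift $\mathbb{F}_{q^{d}}$ into $\ftv$ mapping isomorphically to the residue field of the target, and combined with the uniformizer $v$ plus $v$-adic completeness this will force the map to be a ring isomorphism. The main obstacle is precisely this last step: the product-decomposition computation is essentially dictated by the excerpt's formula, whereas identifying the composite with an isomorphism requires the structural fact that a complete equal-characteristic DVR is determined up to isomorphism by its residue field together with a choice of uniformizer.
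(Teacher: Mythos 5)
Your proof is correct and follows essentially the same route as the paper: pass to the splitting field $k'$ of $v$, use the explicit formula for $\sigma$ on $\prod_{l}k'(\!(t-\lambda_{l})\!)$ to identify the fixed ring with $\mathbb{F}_{q^{d}}(\!(t-\lambda_{l})\!)$ via a projection, and compare with $\ftv$. The only difference is that you spell out (via residue fields, the uniformizer $v$, Hensel's lemma and completeness) why the composite $\ftv \rar \mathbb{F}_{q^{d}}(\!(t-\lambda_{0})\!)$ is surjective, a step the paper compresses into the assertion $\ftv \cong \mathbb{F}_{q^{d}}(\!(t-\lambda_{l})\!)$; this is a legitimate and welcome filling-in of detail, not a different argument.
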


\begin{proof}
Clearly, $\ftv = (\ftv)^{\sigma} \subset (k(t)_{v})^{\sigma} \subset (k'(t)_{v})^{\sigma}$.
By the explicit description of the $\sigma$-action as above, we have
$(k'(t)_{v})^{\sigma} =
\{(\sum a_{l, i} (t-\lambda_{l})^{i})_{l} \in \mathbb{F}_{q^d}(t)_{v} | a_{l, i}^{q} = a_{l + 1, i}$ for all $l$ and $i \}$.
This set is isomorphic to $\mathbb{F}_{q^d}(\!(t-\lambda_{l})\!)$ via the $l$-th projection for any $l$.
On the other hand, we have $\ftv \cong \mathbb{F}_{q^d}(\!(t-\lambda_{l})\!)$.
Thus the above inclusions are all equalities.
\end{proof}

Fix a field $K$ containing $\Fq$ and assume that $K \cap \overline{\Fq} = \Fq$.
Note that if $\Fq$ is not algebraically closed in $K$,
then $\ktv$ may not be a field and the situation becomes more complicated.
Thus in this paper, we always assume that $K \cap \overline{\Fq} = \Fq$.

\begin{lemm}\label{phi_v-triple}
The triple $(\ftv, \ktv, \kstv)$ is $\sigma$-admissible.
\end{lemm}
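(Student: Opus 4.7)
The plan is to verify the three conditions of $\sigma$-admissibility for $(\ftv, \ktv, \kstv)$ in turn. The standing hypothesis $K \cap \fqb = \Fq$ guarantees the structural set-up: $v$ remains irreducible over $K$ so $\ktv$ is a field (with residue field $K(\lambda_0) := K[t]/(v)$), while $v$ splits as $\prod_l(t-\lambda_l)$ over $\ks$ so $\kstv = \prod_l \kstl$ is a product of $d$ fields.

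The two stability conditions and the fixed-subring equalities are immediate. The inclusion $\sigma(\ktv) \subset \ktv$ follows because the defining rule $\sum a_i t^i \mapsto \sum a_i^q t^i$ preserves $K[t]$ and $\sigma(v) = v$, so $\sigma$ descends to $K[t]_v$ and then to $\ktv = K[t]_v[v^{-1}]$; the cyclic shift $\sigma(\kstl) \subset \ks(\!(t-\lambda_{l+1})\!)$ is a direct consequence of the explicit componentwise formula stated just before the lemma. The equalities $(\ktv)^\sigma = \ftv = (\kstv)^\sigma$ are applications of Lemma \ref{phi_fix} with $k = K$ and $k = \ks$ respectively.

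The substantive condition is separability. Since $\kstv$ is a finite product of fields, separability as a $\ktv$-algebra (geometric reducedness) reduces to each $L_l := \kstl$ being a separable field extension of $\ktv$. I would first identify the image of $\ktv$ in $L_l$ with the Laurent series field $K(\lambda_0)(\!(t-\lambda_l)\!)$: for each $M$, the natural map $K[t]/(v^{M+1}) \to K(\lambda_0)[t]/(t-\lambda_l)^{M+1}$ has kernel $(v^{M+1})$ (an element of $K[t]$ vanishing at $\lambda_l$ to order $\geq M+1$ is Galois-stable under $\mathrm{Gal}(K(\lambda_0)/K)$, hence vanishes to that order at each root $\lambda_m$ of $v$, and so is divisible by $v^{M+1}$), while both sides have $K$-dimension $d(M+1)$; the map is therefore an isomorphism, and passing to the inverse limit over $M$ followed by localization at $v$ yields the identification. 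Under it, $L_l/\ktv$ becomes $\ks(\!(s)\!)/K(\lambda_0)(\!(s)\!)$ with $s := t-\lambda_l$, and the problem reduces to: if $k'/k$ is a separable extension of characteristic-$p$ fields, then $k'(\!(s)\!)/k(\!(s)\!)$ is separable. This follows from MacLane's criterion: given $b = \sum c_n s^n \in k'(\!(s)\!)$ with $b^{p^i} = \sum c_n^{p^i} s^{p^i n} \in k(\!(s)\!)$ for some $i$, each $c_n^{p^i}$ lies in $k$, so $c_n \in k' \cap k^{1/p^\infty} = k$ by separability of $k'/k$, hence $b \in k(\!(s)\!)$. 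Applying this with $k = K(\lambda_0)$ and $k' = \ks$ --- the extension $\ks/K(\lambda_0)$ is separable because $\ks/K$ is separable by construction of $\ks$ while $K(\lambda_0)/K$ is finite separable (generated by a root of the separable polynomial $v \in \Fq[t]$) --- completes the argument.

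The main obstacle is the identification $\ktv \cong K(\lambda_0)(\!(t-\lambda_l)\!)$ inside $L_l$; naive inspection might suggest the image is strictly smaller, since elements of $K(t) \subset \ktv$ have coefficients in $K$ rather than in $K(\lambda_0)$. The resolution is the dimension count plus Galois-orbit argument above, which secures the required density of $K(t)$ in $K(\lambda_0)(\!(t-\lambda_l)\!)$ for the $(t-\lambda_l)$-adic topology. Once that technical point is settled, the reduction to separability of the coefficient-extension is a short MacLane-criterion computation.
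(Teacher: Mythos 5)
Your overall route is the same as the paper's: the stability and fixed-ring conditions are dispatched via Lemma \ref{phi_fix}, and the substantive point is reduced to showing that $\kstl$ is separable over $\ktv \cong K'(\!(t-\lambda_l)\!)$ with $K' = K(\lambda_0)$. Your justification of that identification (Galois-orbit argument on the kernel plus a dimension count, then passage to the limit) is correct and is a welcome expansion of what the paper dismisses as ``clear.''

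The separability step, however, has a genuine gap. What you verify is that if $b \in \ks(\!(s)\!)$ satisfies $b^{p^i} \in K'(\!(s)\!)$ then $b \in K'(\!(s)\!)$, i.e.\ that $K'(\!(s)\!)$ is radically closed in $\ks(\!(s)\!)$. That is \emph{not} MacLane's criterion. MacLane's criterion requires $\ks(\!(s)\!)$ to be \emph{linearly disjoint} from $K'(\!(s)\!)^{1/p}$ over $K'(\!(s)\!)$ (equivalently, $\ks(\!(s)\!) \otimes_{K'(\!(s)\!)} K'(\!(s)\!)^{1/p}$ must be reduced), and for a non-algebraic extension --- which $\ks(\!(s)\!)/K'(\!(s)\!)$ is, since a power series field over an infinite algebraic extension is far larger than the corresponding algebraic extension of $K'(\!(s)\!)$ --- the trivial-intersection condition is strictly weaker than linear disjointness. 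The standard example: for $K = \mathbb{F}_p(u,v)$ and $L = K(x,y)$ with $y^p = ux^p+v$, one has $L \cap K^{1/p^{\infty}} = K$ yet $L/K$ is inseparable, because $1, x, y$ are $K$-independent but satisfy $u^{1/p}x + v^{1/p} - y = 0$ over $K^{1/p}$. So your single-element computation does not rule out a relation of this kind among several power series. The statement you need --- that $k'/k$ separable implies $k'(\!(s)\!)/k(\!(s)\!)$ separable --- is true, and it is exactly what the paper invokes by citing Matsumura, Exercise 26.2; to make your proof self-contained you would have to establish the linear disjointness of $\ks(\!(s)\!)$ from $K'^{1/p}(\!(s^{1/p})\!)$ over $K'(\!(s)\!)$, not merely the absence of new radicals.
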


\begin{proof}
Since $v$ is irreducible in $K[t]$, $\ktv$ is a field.
By Lemma \ref{phi_fix}, we have $\ftv = (\ktv)^{\sigma} = (\kstv)^{\sigma}$.
We need to check the separability.
Fix an $l$. We need to show that $\kstl/K(t)_{v}$ is a separable field extension.
It is clear that $\ktv = K'(\!(t-\lambda_{l})\!)$ where $K' = K(\lambda_{l})$.
On the other hand, $\kstl/K'(\!(t-\lambda_{l})\!)$ is separable since $K^{\mathrm{sep}}/K'$ is separable
(\cite{Mats}, Exercise 26.2).
\end{proof}

Let $G_{K} := \Gal(K^{\mathrm{sep}}/K)$ be the absolute Galois group of $K$.
Then $G_{K}$ acts on $K^{\mathrm{sep}}[t]$ in an obvious way.
This action naturally extends to an action on $\kstv$.
For each $\tau \in G_{K}$ and $a = (\sum_{i} a_{l,i} (t-\lambda_{l})^{i})_{l} \in \prod_{l} \kstl$, we have
$$\tau a = (\sum_{i} \tau a_{l+n,i} (t-\lambda_{l})^{i})_{l},$$
where $n \in \mathbb{Z}/d$ is an element such that $\tau|_{\mathbb{F}_{q^d}} = \sigma|_{\mathbb{F}_{q^d}}^{-n}$.
It is clear that this action is compatible with $\sigma$.

From now on, we consider $\vp$-modules over the $\sigma$-admissible triple $(\ftv,$ $\ktv,$ $\kstv)$.
Let $M$ be an \'etale $\vp$-module over $\ktv$.
The Galois group $G_{K}$ acts on $\kstv \otimes M$ continuously by $\tau \otimes \id$ for each $\tau \in G_{K}$.
Since this action is compatible with $\sigma$, the $\ftv$-subspace $V(M)$ is $G_{K}$-stable.
We denote by $V_{K}(M)$ this Galois representation.
Conversely for any object $V$ of $\Rep(G_{K}, \ftv)$, we set
$$D(V) := (\kstv \otimes_{\ftv} V)^{G_K},$$
where $G_{K}$ acts on $\kstv \otimes_{\ftv} V$ by $\tau \otimes \tau$ for $\tau \in G_{K}$.
Then we can define a $\vp$-action on $D(V)$ by $\sigma \otimes \id$.

Let $M_{0}$ be an \'etale $\vp$-module over $K[t]_{v}$.
Then we can define an $\Fq[t]_{v}$-representation of $G_{K}$
$$V_{0}(M_{0}) := (K^{\mathrm{sep}}[t]_{v} \otimes_{K[t]_{v}} M_{0})^{\vp},$$
where $\vp$ acts on $K^{\mathrm{sep}}[t]_{v} \otimes_{K[t]_{v}} M_{0}$ by $\sigma \otimes \vp$
and $G_{K}$ acts on $V_{0}(M_{0})$ by $\tau \otimes \id$ for $\tau \in G_{K}$.
Conversely for any object $T$ of $\Rep(G_{K}, \Fq[t]_{v})$, we set
$$D_{0}(T) := (K^{\mathrm{sep}}[t]_{v} \otimes_{\Fq[t]_{v}} T)^{G_K},$$
where $G_{K}$ acts on $K^{\mathrm{sep}}[t]_{v} \otimes_{\Fq[t]_{v}} T$ by $\tau \otimes \tau$ for $\tau \in G_{K}$.
Then we can define a $\vp$-action on $D_{0}(T)$ by $\sigma \otimes \id$.

\begin{theo}[\cite{Goss}, Appendix]\label{phi_int-equiv}
$(1)$ For any \'etale $\vp$-module $M_{0}$ over $K[t]_{v}$, the natural map
$$K^{\mathrm{sep}}[t]_{v} \otimes_{\Fq[t]_{v}} V_{0}(M_{0}) \rar K^{\mathrm{sep}}[t]_{v} \otimes_{K[t]_{v}} M_{0}$$
is bijective.

$(2)$ For any $\Fq[t]_{v}[G_{K}]$-module $T$ of finite type over $\Fq[t]_{v}$, the natural map
$$K^{\mathrm{sep}}[t]_{v} \otimes_{K[t]_{v}} D_{0}(T) \rar K^{\mathrm{sep}}[t]_{v} \otimes_{\Fq[t]_{v}} T$$
is bijective and the $\vp$-module $D_{0}(T)$ is \'etale.

$(3)$ The functor $V_{0} : \epm_{K[t]_{v}} \rar \Rep(G_{K}, \Fq[t]_{v})$ is a tensor equivalence,
with a quasi-inverse $D_{0} : \Rep(G_{K}, \Fq[t]_{v}) \rar \epm_{\Fq[t]_{v}}$.
\end{theo}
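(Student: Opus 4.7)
The plan is to mimic Fontaine's classical equivalence between étale $\vp$-modules and $p$-adic Galois representations, with the role of the absolute unramified extension played by $\ks[t]_{v}/K[t]_{v}$. The argument proceeds in three layers: reduction to finite length (mod $v^{n}$), dévissage down to the residue level (mod $v$), and a Lang/Hilbert~90 type triviality at the residue.

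First I would prove assertions (1) and (2) by reduction to finite length. Since $K[t]_{v}$ is noetherian and $v$-adically complete, any finitely generated $M_{0}$ satisfies $M_{0} = \varprojlim_{n} M_{0}/v^{n} M_{0}$, and $\vp$ preserves the filtration by $v^{n}$; the analogue holds for $T$. The formation of $V_{0}$ and $D_{0}$ commutes with these inverse limits (using continuity of the Galois action and the compactness of the mod-$v^{n}$ representations), so it suffices to prove each assertion mod $v^{n}$. A straightforward dévissage along the short exact sequences $0 \rar v^{n-1}M_{0}/v^{n}M_{0} \rar M_{0}/v^{n}M_{0} \rar M_{0}/v^{n-1}M_{0} \rar 0$, combined with the five lemma, reduces further to the residue case $n=1$.

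For the residue-level base case, use the factorization $v = \prod_{l}(t-\lambda_{l})$ in $\ks[t]$ to identify $\ks[t]/v \cong \prod_{l \in \mathbb{Z}/d} \ks$, on which $\sigma$ acts by cyclically shifting factors and raising coordinates to the $q$-th power. An étale $\vp$-module $\bar{M}$ over the residue field $K[t]/v = K(\lambda_{0})$ then gives, after base change, a tuple $(W_{l})_{l}$ of equidimensional $\ks$-vector spaces together with $\vp$-induced bijections $W_{l} \rar W_{l+1}$; the composition $\vp^{d}$ is a $\sigma^{d}$-semilinear automorphism of $W_{0}$. Since the fixed field of $\sigma^{d}$ on $\ks$ is $\mathbb{F}_{q^{d}} \cong \Fq[t]/v$, Lang's theorem applied to $\GL(W_{0})$ (equivalently, Hilbert 90 for the $\sigma^{d}$-cyclic torsor) furnishes a $\vp^{d}$-fixed $\ks$-basis of $W_{0}$; propagating it around the cycle by $\vp$ produces a $\vp$-fixed basis of $\ks[t]/v \otimes \bar{M}$, which is an $\mathbb{F}_{q^{d}}$-basis of $V_{0}(\bar{M})$ trivializing the comparison map in (1) mod $v$. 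Starting instead from a finite $\Fq[t]/v[G_{K}]$-module $T$, the same argument applied to $D_{0}$ yields (2) mod $v$; the étaleness of $D_{0}(T)$ follows automatically from the resulting isomorphism, since $\sigma \otimes \id$ is bijective on $\ks[t]_{v} \otimes_{\Fq[t]_{v}} T$.

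Finally (3) is a formal consequence: the adjunction units $M_{0} \rar D_{0}(V_{0}(M_{0}))$ and $T \rar V_{0}(D_{0}(T))$ become isomorphisms after tensoring with $\ks[t]_{v}$ by parts (1) and (2), and faithful flatness of $\ks[t]_{v}$ over both $K[t]_{v}$ and $\Fq[t]_{v}$ ensures that they are already isomorphisms; compatibility with tensor products is built into the constructions. The main obstacle is the residue-level step: because $\ks[t]/v$ is a product of $d$ fields rather than a single field, one must carefully reorganize the semilinear $\vp$-structure as a single $\sigma^{d}$-semilinear endomorphism on one component before invoking Lang, and the continuity of the $G_{K}$-action on each $V_{0}(M_{0}/v^{n}M_{0})$ must be verified in order to justify passing to the inverse limit.
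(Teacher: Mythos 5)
The paper itself does not prove Theorem \ref{phi_int-equiv}: it is quoted from Taguchi's appendix to \cite{Goss}, so there is no internal argument to compare with. Your outline is the standard Fontaine-style proof, and its architecture (completion and d\'evissage to the residue level, triviality of \'etale $\vp$-modules over the separably closed residue situation, Galois descent for $(2)$, formal adjunction plus faithful flatness for $(3)$) is sound; in particular, reorganizing the cyclic system $(W_l)_l$ into a single $\sigma^{d}$-semilinear automorphism of $W_0$ with $\mathbb{F}_{q^d}\cong\Fq[t]/v$ as fixed ring is exactly the right way to handle the fact that $\ks[t]/v$ is a product of $d$ fields. Three points need tightening. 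First, ``Lang's theorem'' in its usual form concerns connected groups over finite fields and their $\overline{\mathbb{F}_q}$-points, whereas the matrix $A$ of $\vp^{d}$ on $W_0$ has entries in $\ks$, which is separably closed but in general imperfect; what you actually need is the Artin--Schreier-type statement that $\sigma^{d}(U)=AU$ has a solution in $\GL(W_0)$, proved either by observing that the torsor it defines is finite \'etale over $K(\lambda_0)$, or by the inductive fixed-vector argument of Katz and Fontaine using that the relevant additive polynomials are separable. Second, your justification of the \'etaleness of $D_0(T)$ --- that ``$\sigma\otimes\id$ is bijective on $\ks[t]_v\otimes_{\Fq[t]_v}T$'' --- is literally false when $K$ is imperfect, since $\sigma$ is then not surjective on $\ks[t]_v$; \'etaleness is a condition on the linearization $\vp_{\sigma}$, which for $\sigma\otimes\id$ on $\ks[t]_v\otimes_{\Fq[t]_v}T$ is indeed bijective because $\sigma$ restricts to the identity on $\Fq[t]_v$, and the \'etaleness of $D_0(T)$ then descends along the faithfully flat extension $K[t]_v\rar\ks[t]_v$ via the comparison isomorphism of $(2)$. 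Third, the residue-level input for $(2)$ is not ``the same argument'' as for $(1)$: for $(1)$ you trivialize the $\vp$-structure (Lang/Artin--Schreier), while for $(2)$ you trivialize the semilinear $G_{K}$-structure, which is classical Galois descent (Speiser's theorem, i.e.\ Hilbert 90 for the finite quotient through which the action on $T/vT$ factors); both descents are needed and neither substitutes for the other. With these repairs the proof is complete and is, as far as one can tell, the argument of the cited appendix.
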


For any $\vp$-module $M_{0}$ over $K[t]_{v}$,
we can define a $\vp$-action on $\ktv \otimes_{K[t]_{v}} M_{0}$ by $\sigma \otimes \vp$.
\begin{theo}\label{phi_main-equiv}
$(1)$ A $\vp$-module $M$ over $\ktv$ is $\kstv$-trivial if and only if
there exists a subspace $M_{0}$ of $M$ which is an \'etale $\vp$-module over $K[t]_{v}$
such that $M = \ktv \otimes_{K[t]_{v}} M_{0}$.

$(2)$ For any object $V$ in $\Rep(G_{K}, \ftv)$, the $\vp$-module $D(V)$ is $\kstv$-trivial.

$(3)$ The functor $V : \phm_{\ktv}^{\kstv} \rar \Rep(G_{K}, \ftv)$ is a tensor equivalence,
with a quasi-inverse $D : \Rep(G_{K}, \ftv) \rar \phm_{\ktv}^{\kstv}$.
\end{theo}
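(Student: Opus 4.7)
The plan is to bootstrap the rational statement from the integral equivalence of Theorem \ref{phi_int-equiv}, using the flat localization $\Fq[t]_{v} \hookrightarrow \ftv$ (so that $\ftv = \Fq[t]_{v}[1/v]$ and $\kstv = \ks[t]_{v}[1/v]$) together with the construction of a $G_{K}$-stable $\Fq[t]_{v}$-lattice inside any continuous representation of $G_{K}$ on a finite-dimensional $\ftv$-vector space. The underlying principle is Galois descent, which by the separability furnished by Lemma \ref{phi_v-triple} yields $(\kstv \otimes_{\ktv} M)^{G_{K}} = M$ for any $\ktv$-vector space $M$.

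For the ``if'' direction of (1) and for (2), the key observation is that, for any \'etale $\vp$-module $M_{0}$ over $K[t]_{v}$, flatness of $\ftv$ over $\Fq[t]_{v}$ commutes with taking $\vp$-invariants, giving a natural isomorphism $\ftv \otimes_{\Fq[t]_{v}} V_{0}(M_{0}) \cong V(\ktv \otimes_{K[t]_{v}} M_{0})$. Tensoring the isomorphism of Theorem \ref{phi_int-equiv}(1) with $\ftv$ then shows that $\iota_{\ktv \otimes M_{0}}$ is bijective. For (2), given $V \in \Rep(G_{K}, \ftv)$, continuity of the action together with compactness of $G_{K}$ make the stabilizer of any $\Fq[t]_{v}$-lattice $T_{0} \subset V$ open of finite index, so the finite sum of translates $T := \sum_{g \in G_{K}} g T_{0}$ is a $G_{K}$-stable lattice. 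Setting $M_{0} := D_{0}(T)$, Theorem \ref{phi_int-equiv}(2) makes it \'etale over $K[t]_{v}$, and an analogous base-change calculation identifies $D(V)$ with $\ktv \otimes_{K[t]_{v}} M_{0}$, which the ``if'' direction shows is $\kstv$-trivial.

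The main obstacle is the ``only if'' direction of (1). Given $M$ $\kstv$-trivial with period matrix $\Psi \in \GL_{r}(\kstv)$, I would first verify that the $G_{K}$-action on $V(M)$ is continuous, since $G_{K}$ acts on the entries of $\Psi^{-1}(1 \otimes \mbm)$ through its natural continuous action on $\kstv$, and hence has open stabilizer on any finite tuple. Choose a $G_{K}$-stable $\Fq[t]_{v}$-lattice $T \subset V(M)$ as above and set $M_{0} := D_{0}(T)$. Compose the inclusion $M_{0} \hookrightarrow \ks[t]_{v} \otimes_{\Fq[t]_{v}} T$ (from the definition of $D_{0}$) with the $v$-adic localization $\ks[t]_{v} \otimes_{\Fq[t]_{v}} T \hookrightarrow \kstv \otimes_{\ftv} V(M)$ and with the isomorphism $\iota_{M}$ to obtain a $G_{K}$-equivariant map $M_{0} \rar \kstv \otimes_{\ktv} M$. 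By Galois descent its image lies in $(\kstv \otimes_{\ktv} M)^{G_{K}} = M$, yielding a $K[t]_{v}$-linear embedding $M_{0} \hookrightarrow M$. The induced $\ktv$-linear map $\ktv \otimes_{K[t]_{v}} M_{0} \rar M$ is an isomorphism because after applying $V$ it becomes the identity on $V(M)$, and $V$ detects isomorphisms on $\phm_{\ktv}^{\kstv}$ by the exactness and faithfulness of Propositions \ref{phi_exact} and \ref{phi_faithful}.

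Part (3) then follows formally. The natural transformations $M \rar D(V(M))$ and $V(D(V)) \rar V$ are isomorphisms since on the integral level they are the unit and counit of the Goss equivalence in Theorem \ref{phi_int-equiv}(3), and both constructions commute with base change to $\ftv$. The tensor compatibility of $V$ is Proposition \ref{phi_t-func}, and that of $D$ follows either formally from being a quasi-inverse of a tensor functor or directly from the tensor compatibility of $D_{0}$ via localization.
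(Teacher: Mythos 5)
Your proposal is correct and follows essentially the same route as the paper: both bootstrap from the integral equivalence of Theorem \ref{phi_int-equiv} via a $G_{K}$-stable $\Fq[t]_{v}$-lattice, flat localization to $\ftv$, and taking $\vp$- and $G_{K}$-fixed parts of the trivialization isomorphisms. The only difference is organizational --- the paper establishes the quasi-inverse property (3) first and reads off the ``only if'' half of (1) as a consequence, while you prove that half directly and then note (3) is formal --- but the underlying lemmas and descent arguments are identical.
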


\begin{proof}
Let $M$ be a $\vp$-module over $\ktv$ such that there exists a subspace $M_{0}$
which is an \'etale $\vp$-module over $K[t]_{v}$ and $M = \ktv \otimes_{K[t]_{v}} M_{0}$.
Then by Theorem \ref{phi_int-equiv} $(1)$, we have an isomorphism
$K^{\mathrm{sep}}[t]_{v} \otimes_{\Fq[t]_{v}} V_{0}(M_{0}) \cong K^{\mathrm{sep}}[t]_{v} \otimes_{K[t]_{v}} M_{0}$.
By tensoring $\kstv$ to the both sides of this isomorphism, we conclude that $M$ is $\kstv$-trivial.

Let $V$ be an object in $\Rep(G_{K}, \ftv)$.
Then there exists a $G_{K}$-stable $\Fq[t]_{v}$-lattice $T$ for $V$.
It is clear that $D_{0}(T)$ is free over $K[t]_{v}$ and $D(V) = \ktv \otimes_{K[t]_{v}} D_{0}(T)$.
Thus $D(V)$ is $\kstv$-trivial from the above argument and this proves $(2)$.
By Theorem \ref{phi_int-equiv} $(2)$, we have an isomorphism
$\kstv \otimes_{\ktv} D(V) \cong \kstv \otimes_{\ftv} V$.
By taking the $\vp$-fixed parts of the both sides of this isomorphism, we have an isomorphism $V_{K}(D(V)) \cong V$.

Let $M$ be a $\kstv$-trivial $\vp$-module over $\ktv$.
Then we have an isomorphism $\kstv \otimes_{\ftv} V_{K}(M) \cong \kstv \otimes_{\ktv} M$.
By taking the $G_{K}$-fixed parts of the both sides of this isomorphism, we have an isomorphism $D(V_{K}(M)) \cong M$,
and this proves $(3)$.
Therefore $M$ comes from \'etale $\vp$-module over $K[t]_{v}$ and this proves $(1)$.
\end{proof}

\section{Frobenius equations}\label{sec_gal}

Throughout this section, we fix a $\sigma$-admissible triple $(F, E, L)$.

\begin{exam}
The case $(F, E, L) = (\mathbb{F}_{q}(t)_{v}, K(t)_{v}, K^{\mathrm{sep}}(t)_{v})$
is our main example of a $\sigma$-admissible triple,
where the notation and the $\sigma$-action are as in Subsection \ref{phi_v}.
\end{exam}

\begin{exam}\label{gal_sigma-inv}
Let $(F, E, L) = (\mathbb{F}_{q}(t)_{v}, K^{\mathrm{rad}}(t)_{v}, \bar{K}(t)_{v})$
where $K^{\mathrm{rad}} := \cup_{n} K^{1/q^{n}}$, the maximal radical extension of $K$ in $\bar{K}$.
The automorphism of $K^{\mathrm{rad}}(t)$
$$\sum_{i} a_{i} t^{i} \mapsto \sum_{i} a_{i}^{1/q} t^{i}$$
is naturally extends to an automorphism of $\bar{K}(t)_{v}$.
We define $\sigma$ to be this action.
Then $(\mathbb{F}_{q}(t)_{v}, K^{\mathrm{rad}}(t)_{v}, \bar{K}(t)_{v})$ is a $\sigma$-admissible triple.
Note that, in this case we need to put $L_{l} = \bar{K}(\!(t-\lambda_{-l})\!)$.
Note also that we do not use this type in this paper.
However, the $\sigma$-action of this type is used in \cite{Papa} and \cite{ChYu}.
\end{exam}

\subsection{The group $\Gamma$}\label{gal_gal-gp}

Let $r$ be a positive integer.
Fix matrices $\Phi = (\Phi_{ij}) \in \GL_{r}(E)$ and $\Psi = (\Psi_{ij})_{i,j} \in \GL_{r}(L)$
such that $\Psi$ is a fundamental matrix for $\Phi$.
Thus we have an equation $$\sigma(\Psi) = \Phi \Psi.$$
This means that the matrices $\Phi$ and $\Psi$ come from an $L$-trivial $\vp$-module over $E$.
Since $L = \prod_{l} L_{l}$, we can write $\Psi_{ij} = (\Psi_{ijl})_{l}$ for each $i$ and $j$.
We set $\Psi_{l} := (\Psi_{ijl})_{i,j} \in \GL_{r}(L_{l})$.
Then we have $\sigma(\Psi_{l}) = \Phi \Psi_{l+1}$ for all $l$.

Let $X := (X_{ij})$ be an $r \times r$ matrix of independent variables $X_{ij}$, and set $\Delta := \det(X)$.
We set $\EX := E[X_{11}, X_{12}, \dots , X_{rr}, \Delta^{-1}]$.
Similarly $E[\Psi, \Delta(\Psi)^{-1}]$ and $E[\Psi_{l}, \Delta(\Psi_{l})^{-1}]$ are defined.
We define $E$-algebra homomorphisms
$\nu : \EX \rar L ; \ X_{ij} \mapsto \Psi_{ij}$ and
$\nu_{l} : \EX \rar L_{l} ; \ X_{ij} \mapsto \Psi_{ijl}$.
Set $\mfp := \ker \nu$, $\Sigma := E[\Psi, \Delta(\Psi)^{-1}] \cong \EX/\mfp$, $Z := \Spec \Sigma$,
$\mfp_{l} := \ker \nu_{l}$, $\Sigma_{l} := E[\Psi_{l}, \Delta(\Psi_{l})^{-1}] \cong \EX/\mfp_{l}$
and $Z_{l} := \Spec \Sigma_{l}$.
Then $Z_{l}$ are closed subschemes of $Z$ and $Z = \cup_{l} Z_{l}$.
Let $\La := \Frac(\Sigma)$ and $\Lal := \Frac(\Sl)$, the total rings of fractions.

Set $\Psi_{1} := (\Psi_{ij} \otimes 1)_{i,j}$, $\Psi_{2} := (1 \otimes \Psi_{ij})_{i,j}$ and
$\widetilde{\Psi} = (\widetilde{\Psi}_{ij})_{i,j} := \Psi_{1}^{-1} \Psi_{2}$ in $\GL_{r}(L \otimes_{E} L)$.
Since $L \otimes_{E} L = \prod_{l,m} L_{l} \otimes_{E} L_{m}$,
we can write $\widetilde{\Psi}_{ij} = (\widetilde{\Psi}_{ijlm})_{l,m}$
with $\widetilde{\Psi}_{ijlm} \in L_{l} \otimes_{E} L_{m}$ for each $i$ and $j$.
We define $F$-algebra homomorphisms
$\mu : \FX \rar L \otimes_{E} L ; \ X_{ij} \mapsto \widetilde{\Psi}_{ij}$ and
$\mu_{lm} : \FX \rar L_{l} \otimes_{E} L_{m} ; \ X_{ij} \mapsto \widetilde{\Psi}_{ijlm}$.
Set $\mfq := \ker \mu$, $\Gamma := \Spec \FX/\mfq$,
$\mfq_{lm} := \ker \mu_{lm}$ and $\Gamma_{lm} := \Spec \FX/\mfq_{lm}$.
Then $\Gamma_{lm}$ are closed subschemes of $\Gamma$ and $\Gamma = \cup_{l,m} \Gamma_{lm}$.
By the next lemma, we can set $\mfq_{m} := \mfq_{0,m} = \mfq_{1,m+1} = \cdots$
and $\Gamma_{m} := \Gamma_{0,m} = \Gamma_{1,m+1} = \cdots$.

\begin{lemm}
For any $l, m \in \mathbb{Z}/d$, we have $\mfq_{lm} = \mfq_{l+1,m+1} = \mfq_{l+2,m+2} = \cdots$.
\end{lemm}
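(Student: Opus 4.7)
The plan is to reduce the claim to a single matrix identity relating $\widetilde{\Psi}_{l+1,m+1}$ to $(\sigma\otimes\sigma)(\widetilde{\Psi}_{lm})$, and then exploit the cyclic structure of the indices modulo $d$ to convert a one-sided inclusion of ideals into an equality.

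First, I would establish
$$(\sigma\otimes\sigma)(\widetilde{\Psi}_{lm})=\widetilde{\Psi}_{l+1,m+1}$$
as matrices in $\Mat_{r\times r}(L_{l+1}\otimes_{E}L_{m+1})$, where $\sigma\otimes\sigma$ denotes the natural map $L_{l}\otimes_{E}L_{m}\to L_{l+1}\otimes_{E}L_{m+1}$ induced by $\sigma$; this is well-defined because $\sigma(E)\subset E$ by $\sigma$-admissibility. The Frobenius equation $\sigma(\Psi)=\Phi\Psi$ read componentwise gives $\sigma(\Psi_{l})=\Phi\Psi_{l+1}$. Substituting this into $(\sigma(\Psi_{l})^{-1}\otimes 1)(1\otimes\sigma(\Psi_{m}))$ and using the crucial observation that $\Phi\in\GL_{r}(E)$ has entries in the base of the tensor product (so that $\Phi^{-1}\otimes 1=1\otimes\Phi^{-1}$ in $L_{l+1}\otimes_{E}L_{m+1}$), the interior factors of $\Phi$ and $\Phi^{-1}$ cancel, leaving exactly $(\Psi_{l+1}^{-1}\otimes 1)(1\otimes\Psi_{m+1})=\widetilde{\Psi}_{l+1,m+1}$.

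Second, for any $f\in\FX$ I would deduce
$$\mu_{l+1,m+1}(f)=(\sigma\otimes\sigma)(\mu_{lm}(f)).$$
Since the coefficients of $f$ lie in $F=L^{\sigma}$ and $\sigma\otimes\sigma$ is a ring homomorphism, this is just the substitution of the matrix identity into the polynomial $f$. From this the inclusion $\mathfrak{q}_{lm}\subset\mathfrak{q}_{l+1,m+1}$ is immediate.

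The step one might naively expect to be the main obstacle is showing $\sigma\otimes\sigma$ injective, which would upgrade the inclusion to an equality directly; this is genuinely delicate because $\sigma$ is not $E$-linear, so ordinary flatness arguments do not apply. I would avoid it altogether by iterating the one-sided inclusion:
$$\mathfrak{q}_{lm}\subset\mathfrak{q}_{l+1,m+1}\subset\mathfrak{q}_{l+2,m+2}\subset\cdots\subset\mathfrak{q}_{l+d,m+d}=\mathfrak{q}_{lm},$$
where the final equality is forced by $l+d\equiv l$ and $m+d\equiv m$ in $\mathbb{Z}/d$. All inclusions in the chain must then be equalities. With this cyclic trick in place, the only actual work is the matrix bookkeeping in the first step.
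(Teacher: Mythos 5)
Your proof is correct. The heart of it --- the identity $(\sigma\otimes\sigma)(\widetilde{\Psi}^{(lm)})=\widetilde{\Psi}^{(l+1,m+1)}$, obtained from $\sigma(\Psi_{l})=\Phi\Psi_{l+1}$ and the cancellation of $\Phi\otimes 1=1\otimes\Phi$ because $\Phi$ has entries in $E$ --- is exactly the computation in the paper, as is the observation that the coefficients of $h\in\FX$ lie in $F=L^{\sigma}$ and are therefore fixed, giving $\sigma(h(\widetilde{\Psi}^{(lm)}))=h(\widetilde{\Psi}^{(l+1,m+1)})$ and hence $\mfq_{lm}\subset\mfq_{l+1,m+1}$. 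Where you diverge is in closing the argument: the paper upgrades the inclusion to an equivalence by showing that $\sigma\otimes\sigma$ is injective on $L\otimes_{E}L$, which it does by embedding $L$ into the inductive limit $\tilde{L}$ of $L\xrightarrow{\sigma}L\xrightarrow{\sigma}\cdots$, where $\sigma$ becomes an automorphism, and restricting back. You instead iterate the one-sided inclusion $d$ times around the cycle $\mathbb{Z}/d$ and observe that a chain of inclusions returning to its starting point must consist of equalities. Your route is more elementary: it avoids the auxiliary inductive-limit construction entirely and, as you note, sidesteps the injectivity question, which is the only genuinely delicate point since $\sigma\otimes\sigma$ is not $E$-linear. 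The paper's approach buys a slightly stronger intermediate statement (injectivity of $\sigma$ on $L\otimes_{E}L$), but that statement is not reused elsewhere, so nothing is lost by your shortcut.
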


\begin{proof}
Let $\tilde{L}$ be the inductive limit of the inductive system
$L \rar L \rar L \rar \cdots$, where the transition maps are $\sigma$.
Then $L$ is a subring of $\tilde{L}$ and $\sigma$ is naturally extends to an automorphism of $\tilde{L}$.
We can define a $\sigma$-action on $\tilde{L} \otimes_{E} \tilde{L}$ by $\sigma \otimes \sigma$.
This is an isomorphism and $L \otimes_{E} L$ is stable under this action.
Thus we obtain an injective endomorphism $\sigma$ of $L \otimes_{E} L$.
It is clear that $\sigma(L_{l} \otimes_{E} L_{m}) \subset L_{l+1} \otimes_{E} L_{m+1}$.

Write $\Psi_1 = (\Psi_{1,lm})_{l,m}$ and $\Psi_2 = (\Psi_{2,lm})_{l,m}$ with $\Psi_{i,lm} \in \GL_{r}(L_{l} \otimes_{E} L_{m})$,
and set $\widetilde{\Psi}^{(lm)} := (\widetilde{\Psi}_{ijlm})_{i,j} \in \GL_{r}(L_{l} \otimes_{E} L_{m})$ for each $l$ and $m$.
Then we obtain the equality
$\sigma(\widetilde{\Psi}^{(lm)}) = \sigma(\Psi_{1,lm})^{-1} \sigma(\Psi_{2,lm})
= (\Phi \Psi_{1,l+1,m+1})^{-1} (\Phi \Psi_{2,l+1,m+1}) = \widetilde{\Psi}^{(l+1,m+1)}$.
For any $h(X) \in \FX$, we have
$h(\widetilde{\Psi}^{(lm)}) = 0$ if and only if $h(\widetilde{\Psi}^{(l+1,m+1)}) = 0$
since $\sigma(h(\widetilde{\Psi}^{(lm)})) = h(\widetilde{\Psi}^{(l+1,m+1)})$
and $\sigma$ is injective on $L \otimes_{E} L$.
This proves the lemma.
\end{proof}

For any $h(X) \in \LX$, we denote by $h^{\sigma}(X)$
the polynomial obtained by applying $\sigma$ to the coefficients of $h(X)$.
We define two endomorphisms
$$\sigma_{0} : \LX \rar \LX ; \ h(X) \mapsto h^{\sigma}(X),$$
$$\sigma_{1} : \LX \rar \LX ; \ h(X) \mapsto h^{\sigma}(\Phi X).$$
Then $\sigma_{0}(\LlX) \subset L_{l+1}\X$ and $\sigma_{1}(\LlX) \subset L_{l+1}\X$.

\begin{lemm}\label{gal_sigma}
We have $\sigma_{1} \mfp \subset \mfp$, $\sigma_{1} \mfp_{l} \subset \mfp_{l+1}$,
$\sigma_{0} \mfq = \mfq$, $\sigma_{0} \mfq_{m} = \mfq_{m}$,
$\sigma \nu = \nu \sigma_{1}|_{\EX}$ and $\sigma \nu_{l} = \nu_{l+1} \sigma_{1}|_{\EX}$
for each $l$ and $m$.
\end{lemm}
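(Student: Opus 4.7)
The plan is to first establish the two commutation identities $\sigma \nu = \nu \sigma_{1}|_{\EX}$ and $\sigma \nu_{l} = \nu_{l+1} \sigma_{1}|_{\EX}$ by direct computation on generators; the four ideal-theoretic statements will then drop out. Both $\sigma \nu$ and $\nu \sigma_{1}|_{\EX}$ are ring homomorphisms from $\EX$ to $L$, so to prove they agree it suffices to compare them on the $E$-algebra generators $E \cup \{X_{ij}\}$. On scalars $a \in E$ each returns $\sigma(a)$, and on $X_{ij}$ one gets $\sigma(\Psi_{ij}) = (\Phi \Psi)_{ij}$ versus $\nu((\Phi X)_{ij}) = (\Phi \Psi)_{ij}$, so they coincide by the Frobenius equation $\sigma(\Psi) = \Phi \Psi$. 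The same calculation with $\nu$ replaced by $\nu_{l}$ (target $L_{l+1}$) uses the componentwise identity $\sigma(\Psi_{l}) = \Phi \Psi_{l+1}$, which was already noted in the text before the lemma and is obtained by projecting $\sigma(\Psi) = \Phi \Psi$ onto the $(l+1)$-st factor of $L = \prod_{l} L_{l}$ together with $\sigma(L_{l}) \subset L_{l+1}$.

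With the commutation identities in hand, the $\mfp$-inclusions are formal. For $h \in \mfp = \ker \nu$ one computes $\nu(\sigma_{1}(h)) = \sigma(\nu(h)) = 0$, hence $\sigma_{1}(h) \in \ker \nu = \mfp$; the same argument with $\nu_{l}, \nu_{l+1}$ yields $\sigma_{1} \mfp_{l} \subset \mfp_{l+1}$. I should remark that $\sigma_{1}$ does map $\EX$ into itself (so that both inclusions live in the correct ring), which holds because $\Phi \in \Mat_{r \times r}(E)$ and $\sigma(E) \subset E$.

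The equalities $\sigma_{0} \mfq = \mfq$ and $\sigma_{0} \mfq_{m} = \mfq_{m}$ are essentially tautological. By definition $\sigma_{0}$ applies $\sigma$ only to the coefficients of a polynomial, and by the $\sigma$-admissibility of $(F, E, L)$ we have $F = L^{\sigma}$; thus $\sigma_{0}$ restricts to the identity on $\FX$, and both $\mfq$ and $\mfq_{m}$ are ideals of $\FX$, so they are fixed.

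There is no real obstacle here: the lemma is a collection of compatibilities whose only content is careful bookkeeping of the source and target rings of $\sigma_{0}$ and $\sigma_{1}$, and recognizing that the index shift $l \mapsto l+1$ on the right-hand sides is forced by $\sigma(L_{l}) \subset L_{l+1}$.
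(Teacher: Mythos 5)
Your proof is correct and follows essentially the same route as the paper: the paper also first verifies the commutation identity $\nu_{l+1}(\sigma_{1}(h)) = \sigma(\nu_{l}(h))$ (computing with a general $h(X)$ rather than on generators, a cosmetic difference), deduces the $\mfp$-inclusions from it, and disposes of the $\mfq$-equalities by noting $\sigma_{0}|_{\FX} = \id$.
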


\begin{proof}
For any $h(X) \in \EX$, we have
$\nu_{l+1}(\sigma_{1}(h(X))) = (\sigma_{1} h)(\Psi_{l+1}) = h^{\sigma}(\Phi \Psi_{l+1})
= h^{\sigma}(\sigma \Psi_{l}) = \sigma(h(\Psi_{l})) = \sigma(\nu_{l}(h(X)))$.
If $h \in \mfp_{l}$, then $(\sigma_{1}h)(\Psi_{l+1}) = \sigma(h(\Psi_{l})) = 0$,
and hence $\sigma_{1}h \in \mfp_{l+1}$.
Since $\mfq_{l} \subset \FX$ and $\sigma_{0}|_{\FX} = \id$,
we have $\sigma_{0}\mfq_{l} = \mfq_{l}$.
The other assertions are proved similarly.
\end{proof}

For any ring homomorphism $R \rar S$ and any ideal $\mfa \subset R\X$,
we set $\mfa_S := \mfa \cdot S\X$, the extension ideal of $\mfa$.

\begin{lemm}\label{gal_bij1}
There exists a bijection between the set of ideals of $\FX$ and the set of ideals of $\LX$ which are $\sigma_{0}$-stable,
via the extension and the restriction of ideals.
\end{lemm}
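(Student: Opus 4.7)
The plan is to show that the extension map $\mfa \mapsto \mfa_{L} := \mfa \cdot \LX$ and the restriction map $\mfb \mapsto \mfb \cap \FX$ are mutually inverse bijections. Extension produces a $\sigma_{0}$-stable ideal because $\FX = (\LX)^{\sigma_{0}}$, so every generator of $\mfa_{L}$ is $\sigma_{0}$-fixed. The identity $\mfa_{L} \cap \FX = \mfa$ will follow from the faithful flatness of $\LX$ over $\FX$, which holds because $F$ is a field: $L$ is an $F$-free module and hence $\LX = \FX \otimes_{F} L$ is $\FX$-free.

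The substantive step is to show, for every $\sigma_{0}$-stable ideal $\mfb \subset \LX$, that $\mfb = (\mfb \cap \FX)_{L}$. I split this into a reduction to the single-field case followed by a minimal-expression induction. For the reduction, write $L = \prod_{l} L_{l}$, so that $\LX = \prod_{l} \LlX$ and $\mfb = \prod_{l} \mfb_{l}$ for ideals $\mfb_{l} \subset \LlX$. The condition $\sigma_{0}(\mfb) \subset \mfb$ translates to $\sigma(\mfb_{l}) \subset \mfb_{l+1}$, which iterates to $\sigma^{d}(\mfb_{l}) \subset \mfb_{l}$ inside $\LlX$. The embedding $\iota_{l} \colon F \hookrightarrow L_{l}$ induced from $F \subset L$ has image $L_{l}^{\sigma^{d}}$, and the compatibility $\iota_{l+1} = \sigma \circ \iota_{l}$ yields the identification $\iota_{l}(\mfb \cap \FX) = \mfb_{l} \cap \iota_{l}(F)\X$. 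The desired equality therefore reduces, component by component, to the single-field version of the lemma for the pair $(\iota_{l}(F), L_{l})$ with endomorphism $\sigma^{d}$.

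In the single-field case I may assume $L$ itself is a field, $\sigma \colon L \to L$ an endomorphism with $L^{\sigma} = F$, and $\mfb \subset \LX$ is $\sigma_{0}$-stable. For $f \in \mfb$, write $f = \sum_{i=1}^{n} c_{i} h_{i}$ uniquely with $c_{1},\dots,c_{n} \in L$ that are $F$-linearly independent and $h_{1},\dots,h_{n} \in \FX$ nonzero, and induct on $n$ to prove $h_{i} \in \mfb \cap \FX$ for all $i$. The base case $n = 1$ uses that $c_{1}$ is invertible in the field $L$: $h_{1} = c_{1}^{-1} f$ lies in $\mfb \cap \FX$. For $n \geq 2$, the element
$$g := \sigma_{0}(f) - \frac{\sigma(c_{1})}{c_{1}}\,f \;=\; \sum_{i=2}^{n} \Bigl(\sigma(c_{i}) - \frac{\sigma(c_{1})}{c_{1}}\,c_{i}\Bigr) h_{i}$$
lies in $\mfb$ with the $h_{1}$-term eliminated, and the new coefficients remain $F$-linearly independent: any nontrivial $F$-relation among them produces a nonzero $c \in \mathrm{span}_{F}\{c_{2},\dots,c_{n}\}$ satisfying $\sigma(c/c_{1}) = c/c_{1}$, so $c/c_{1} \in L^{\sigma} = F$, contradicting the $F$-linear independence of the $c_{i}$. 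The induction hypothesis gives $h_{i} \in \mfb \cap \FX$ for $i \geq 2$, and then $c_{1} h_{1} = f - \sum_{i \geq 2} c_{i} h_{i} \in \mfb$ reduces to the already-handled base case.

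The main obstacle I anticipate is the reduction step: in the product setting the naive inversion of $c_{1}$ fails, and one must carefully track how $\sigma_{0}$-stability cascades through the factors of $L = \prod_{l} L_{l}$, in particular verifying that $\mfb \cap \FX$ recovers the coherent family $(\mfb_{l} \cap \iota_{l}(F)\X)_{l}$ so that the descent problem genuinely localizes. Once this bookkeeping is complete and the problem is reduced to a single field, the minimal-expression induction proceeds along classical Galois-descent lines.
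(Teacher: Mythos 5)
Your proof is correct, but it follows a genuinely different route from the paper's. For the hard inclusion $\mfb \subset (\mfb \cap \FX)_{L}$, the paper works directly in the product ring $\LX$: it fixes an $F$-basis $(g_{i})$ of $\FX$, inducts on the support size of $h = \sum_{i} b_{i} g_{i} \in \mfb$, normalizes a coefficient to $1$ by multiplying by a suitable element and averaging over $\sigma_{0}^{0},\dots,\sigma_{0}^{d-1}$, and then --- because coefficients in $L = \prod_{l} L_{l}$ need not be units --- constructs an auxiliary element $\bar{h}$ whose relevant coefficient $a$ satisfies $\sigma(a^{-1}) - a^{-1} \in L^{\times}$, which forces a case split on $\# L_{l} \leq 3$ versus $\# L_{l} \geq 4$ and a careful componentwise choice of a correction term. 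You instead decompose $\mfb = \prod_{l} \mfb_{l}$, observe that $\sigma_{0}\mfb_{l} \subset \mfb_{l+1}$ gives $\sigma_{0}^{d}$-stability of each $\mfb_{l}$, identify $\iota_{l}(F) = L_{l}^{\sigma^{d}}$, and run the classical Artin/Galois-descent induction (eliminate one term via $\sigma_{0}(f) - \frac{\sigma(c_{1})}{c_{1}} f$, using $L^{\sigma} = F$ to preserve independence of coefficients) inside the single field $L_{l}$, where every nonzero coefficient is invertible and no case analysis is needed. The coherence step you flag --- that $\mfb_{l} \cap \FX$ is independent of $l$, so that the componentwise descent data glue back to $\mfb \cap \FX$ --- is exactly the paper's Lemma \ref{gal_bij3} and follows in one line from $\pi_{l+1}\iota = \sigma_{0}\pi_{l}\iota$ and the cyclicity of $\mathbb{Z}/d$, so it is not an obstacle. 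Your reduction buys a substantially cleaner argument that avoids the unit-chasing and the $\# L_{l}$ dichotomy entirely; the paper's direct approach has the mild advantage of proving the product-ring statement in one pass without invoking the component decomposition of Lemma \ref{gal_bij2}, and its machinery (the averaging trick, the maximality arguments) is reused almost verbatim in Lemmas \ref{gal_maximal} and \ref{gal_SFS}, where a componentwise reduction is less readily available.
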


\begin{proof}
For any ideal $\mfa \subset \FX$,
it is clear that $\sigma_{0} \mfa_{L} \subset \mfa_{L}$.
Because of the faithfully flatness of the inclusion $\FX \hookrightarrow \LX$,
we have $\mfa = \mfa_{L} \cap \FX$.

Conversely, we take any ideal $\mfb \subset \LX$ with $\sigma_{0} \mfb \subset \mfb$,
and set $\mfa := \mfb \cap \FX$.
It is clear that $\mfb \supset \mfa_L$;
thus we need to show that the converse inclusion $\mfb \subset \mfa_L$.

Take an $F$-basis $(g_{i})_{i \in I}$ of $\FX$.
Then this is an $L$-basis of $\LX$.
For each $h = \sum_{i} b_{i} g_{i} \in \LX$,
we set $\supp(h) := \{i \in I | b_{i} \neq 0\}$ and $l(h) := \# \supp(h)$.
We take $h \in \mfb$ and show that $h \in \mfa_{L}$ by induction on $l(h)$.
If $l(h) = 0$, then $h = 0 \in \mfa_{L}$.
Now suppose that $l(h) > 0$, and assume that
if $\tilde{h} \in \mfb$ and $l(\tilde{h}) < l(h)$ then $\tilde{h} \in \mfa_{L}$.
Let $e_{l} \in L$ be the element such that the $l$-th component is one and the other components are all zero.
Then it is clear that $\sigma e_{l} = e_{l+1}$.
We write $h = \sum_{i} b_{i} g_{i}$ and take $i_1$ such that $b_{i_1} \neq 0$.
Take $l_0$ such that the $l_0$-th component of $b_{i_1}$ is non-zero.
Then there exists an element $b' \in L$ such that $b'b_{i_1} = e_{l_0}$.
Since $\mfb$ is an ideal and $\sigma_{0}$-stable, we have
$$\mfb \ni \sum_{j=0}^{d-1} \sigma_{0}^{j}(b'h) = \sum_{j=0}^{d-1} \sigma_{0}^{j}(b'\sum_{i} b_{i} g_{i})
= \sum_{i} \sum_{j=0}^{d-1} \sigma^{j}(b'b_{i}) g_{i} =: \sum_{i} c_{i} g_{i} =: h',$$
$c_{i_1} = \sum_{j} \sigma^{j}(b'b_{i_1}) = 1$ and $\supp(h') \subset \supp(h)$.
Therefore $h - b_{i_1} h' \in \mfb$ and $l(h - b_{i_1} h') < l(h)$.
By induction hypothesis, we have $h - b_{i_1} h' \in \mfa_{L}$.
Hence it is enough to show that $h' \in \mfa_{L}$.
If $c_{i} \in F$ for all $i$,
then $h' \in \mfb \cap \FX = \mfa \subset \mfa_{L}$.
If $\# L_{l} \leq 3$ for some (hence for all) $l$,
we can write the $\sigma$ action on $L$ by $(x_{l})_{l} \mapsto (x_{l-1})_{l}$.
Hence $c_{i} \in F$ for all $i$.
Thus we assume that $c_{i_2} \in L \smallsetminus F$ for some $i_2$ and $\# L_{l} \geq 4$ for all $l$.

We claim that, we can construct an element $\bar{h} = \sum_{i} a_{i} g_{i} \in \mfa_{L}$
which has the properties that $\supp(\bar{h}) \subset \supp(h')$ and $a_{i_1} = 1$.
We first show that the claim implies $h' \in \mfa_{L}$.
Since $h' - \bar{h} \in \mfb$ and $l(h' - \bar{h}) < l(h') \leq l(h)$,
we have $h' - \bar{h} \in \mfa_{L}$ by induction hypothesis.
Thus we have $h' \in \mfa_{L}$ since $\bar{h} \in \mfa_{L}$.

Now we prove the claim.
First, we construct an element $\bar{h} = \sum_{i} a_{i} g_{i} \in \mfb$
which has the properties that $\supp(\bar{h}) \subset \supp(h')$, $a_{i_1} = 1$,
$a_{i_2} \in L^{\times}$ and $\sigma(a_{i_2}^{-1})-a_{i_2}^{-1} \in L^{\times}$.
If $d = 1$, then we can take $\bar{h} = h'$ since $L$ is a field and $\sigma(c_{i_2}^{-1}) - c_{i_2}^{-1} \neq 0$.
Thus we suppose that $d \geq 2$.
Since $c_{i_2} = (c_{i_2, l})_{l} \not\in F$, there exists an $l_{0}$ such that $\sigma c_{i_2, l_0 - 1} \neq c_{i_2, l_0}$.
Thus there is an element $c' \in L$ such that $c'(\sigma c_{i_2} - c_{i_2}) = e_{l_0}$.
We set
$$h'' := \sum_{i} \alpha_{i} g_{i} := \sum_{i} \sum_{j=0}^{d-1} \sigma_{0}^{j}(c'(\sigma c_{i} - c_{i})) g_{i}
= \sum_{j=0}^{d-1} \sigma_{0}^{j}(c'(\sigma_{0} h' - h')) \in \mfb.$$
Then we have $\alpha_{i_1} = 0$, $\alpha_{i_2} = 1$ and $\supp(h'') \subset \supp(h')$.
For $f = (f_{l})_{l} \in L$, consider the element
$\bar{h} := h' - f h'' = g_{i_1} + (c_{i_2, l} - f_{l})_{l} g_{i_2} + \cdots \in \mfb$.
For any $x = (x_{l})_{l} \in L^{\times}$, $\sigma x^{-1} - x^{-1} \in L^{\times}$
if and only if $\sigma x_{l} \neq x_{l+1}$ for all $l$.
Therefore, it is enough to take an element $f$ such that
$c_{i_2, l} \neq f_{l}$ and $\sigma(c_{i_2, l-1} - f_{l-1}) \neq c_{i_2, l} - f_{l}$ for all $l$.
Since $\# L_{l} \geq 4$, we can take $(f_{l})_{l}$ inductively so that
$f_{1} \in L_{1} \smallsetminus \{c_{i_2, 1}\}$,
$f_{l} \in L_{l} \smallsetminus \{c_{i_2, l}, c_{i_2, l} - \sigma(c_{i_2, l-1} - f_{l-1})\}$ for $2 \leq l < d$ and
$f_{d} \in L_{d} \smallsetminus (\{c_{i_2, d}, c_{i_2, d} - \sigma(c_{i_2, d-1} - f_{d-1})\} \cup \sigma^{-1}(\sigma(c_{i_2, d}) - c_{i_2, 1} + f_{1}))$.
Then such $(f_{l})_{l}$ satisfies the above properties.
Next, we show that $\bar{h} \in \mfa_{L}$.
Since $\sigma_{0} \bar{h} - \bar{h} \in \mfb$ and $l(\sigma_{0} \bar{h} - \bar{h}) < l(\bar{h}) \leq l(h)$,
we have $\sigma_{0} \bar{h} - \bar{h} \in \mfa_{L}$ by induction hypothesis.
Similarly, we can show that $\sigma_{0}(a_{i_2}^{-1} \bar{h}) - a_{i_2}^{-1} \bar{h} \in \mfa_{L}$.
Therefore we have
$(\sigma(a_{i_2}^{-1}) - a_{i_2}^{-1}) \bar{h}
= (\sigma_{0}(a_{i_2}^{-1} \bar{h}) - a_{i_2}^{-1} \bar{h}) - \sigma(a_{i_2}^{-1})(\sigma_{0} \bar{h} - \bar{h}) \in \mfa_{L}$.
Since $(\sigma(a_{i_2}^{-1}) - a_{i_2}^{-1}) \in L^{\times}$, we have $\bar{h} \in \mfa_{L}$.
\end{proof}

\begin{lemm}\label{gal_bij2}
The map $\prod_{l} \mfb_{l} \mapsto (\mfb_{l})_{l}$ is a bijection between
the set of ideals of $\LX$ which are $\sigma_{0}$-stable,
and the set of families $(\mfb_{l})_{l}$ where
$\mfb_{l}$ is an ideal of $\LlX$ and $\sigma_{0} \mfb_{l} \subset \mfb_{l+1}$ for all $l$.
\end{lemm}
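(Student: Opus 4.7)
The plan is to reduce this to the elementary observation that ideals in a finite product of rings decompose componentwise; the only nontrivial input from the hypotheses is that $\sigma_{0}$ shifts the index $l$ by one, which converts the global $\sigma_{0}$-stability condition into the componentwise compatibility $\sigma_{0}\mfb_{l} \subset \mfb_{l+1}$.

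First I would identify $\LX = \prod_{l} \LlX$ as a direct product of rings, which is immediate from $L = \prod_{l} L_{l}$ on passing to polynomials in $X$ and inverting $\Delta$. Letting $e_{l} \in L$ denote the primitive orthogonal idempotents (so $e_{l}e_{m} = 0$ for $l \neq m$ and $\sum_{l} e_{l} = 1$), the standard bijection for ideals in a finite product of rings gives a canonical correspondence between ideals $\mfb \subset \LX$ and families $(\mfb_{l})_{l}$ of ideals $\mfb_{l} \subset \LlX$, via $\mfb \mapsto (e_{l}\mfb)_{l}$ with inverse $(\mfb_{l})_{l} \mapsto \sum_{l} \mfb_{l} = \prod_{l} \mfb_{l}$.

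Next I would translate the $\sigma_{0}$-stability condition through this correspondence. By $\sigma$-admissibility of $(F, E, L)$ we have $\sigma(L_{l}) \subset L_{l+1}$, and since $\sigma_{0}$ acts on $\LX$ coefficient-wise, it sends $\LlX$ into $L_{l+1}[X,\Delta^{-1}]$. Hence for a decomposed ideal $\mfb = \sum_{l} \mfb_{l}$, the decomposition $\sigma_{0}(\mfb) = \sum_{l} \sigma_{0}(\mfb_{l})$ places $\sigma_{0}(\mfb_{l})$ entirely in the $(l+1)$-th factor, and matching components shows that $\sigma_{0}(\mfb) \subset \mfb$ if and only if $\sigma_{0}(\mfb_{l}) \subset \mfb_{l+1}$ for every $l$. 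No serious obstacle is expected; in contrast to the preceding Lemma~\ref{gal_bij1}, which needed a delicate descent induction on the support size to pass from $L$ down to $F$, the literal product decomposition of $L$ trivializes the present argument, and the only step that truly deserves explicit verification is $\sigma_{0}(\LlX) \subset L_{l+1}[X,\Delta^{-1}]$, which is immediate from the definition of $\sigma_{0}$.
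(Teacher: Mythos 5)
Your argument is correct and is precisely the elementary verification the paper omits (its entire proof of this lemma reads ``This is clear''): the identification $\LX = \prod_{l}\LlX$, the standard componentwise correspondence for ideals in a finite product of rings, and the fact that $\sigma_{0}$ shifts the factor index by one, which turns global $\sigma_{0}$-stability into the condition $\sigma_{0}\mfb_{l} \subset \mfb_{l+1}$ for all $l$. Nothing further is needed.
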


\begin{proof}
This is clear.
\end{proof}

\begin{lemm}\label{gal_bij3}
For each $l$, we give an ideal $\mfb_{l} \subset \LlX$
such that $\sigma_{0} \mfb_{l} \subset \mfb_{l+1}$.
Then the restriction $\mfb_{l} \cap \FX$ is independent of $l$.
The same is also true if we replace $L_{l}$ by $\Sigma_{l}$.
\end{lemm}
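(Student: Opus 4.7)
The plan is to exploit the fact that $\sigma$ fixes $F$ pointwise, so $\sigma_{0}$ acts as the identity on $\FX$; combined with the hypothesis $\sigma_{0}\mfb_{l}\subset \mfb_{l+1}$, this will force an inclusion $\mfb_{l}\cap\FX\subset \mfb_{l+1}\cap\FX$ that cycles back on itself modulo $d$.

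\medskip

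\noindent\textbf{Step 1 (Triviality of $\sigma_{0}$ on $\FX$).} By the definition of a $\sigma$-admissible triple we have $F=L^{\sigma}$, so every element of $F$ is fixed by $\sigma$. Since $\sigma_{0}$ acts by applying $\sigma$ to each coefficient of a polynomial in $X$, it follows that $\sigma_{0}|_{\FX}=\id$.

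\medskip

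\noindent\textbf{Step 2 (Inclusion $\mfb_{l}\cap\FX\subset\mfb_{l+1}\cap\FX$).} Take $h\in \mfb_{l}\cap\FX$. By hypothesis, $\sigma_{0}(h)\in \mfb_{l+1}$, and by Step~1, $\sigma_{0}(h)=h$. Hence $h\in \mfb_{l+1}$, and since $h\in \FX$ already, $h\in\mfb_{l+1}\cap\FX$.

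\medskip

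\noindent\textbf{Step 3 (Cyclicity forces equality).} Indices run over $\mathbb{Z}/d$, so Step~2 yields the chain
\[
\mfb_{0}\cap\FX\subset \mfb_{1}\cap\FX\subset\cdots\subset \mfb_{d-1}\cap\FX\subset \mfb_{0}\cap\FX,
\]
where the last inclusion uses that $\sigma_{0}\mfb_{d-1}\subset\mfb_{d}=\mfb_{0}$. All inclusions are therefore equalities, which proves the first claim.

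\medskip

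\noindent\textbf{Step 4 (The $\Sigma_{l}$ case).} The same argument works verbatim provided $\sigma_{0}$ does map $\Sl\X$ into $\Sigma_{l+1}\X$, so that the hypothesis $\sigma_{0}\mfb_{l}\subset\mfb_{l+1}$ makes sense. This in turn reduces to checking $\sigma(\Sl)\subset\Sigma_{l+1}$: from $\sigma(\Psi_{l})=\Phi\Psi_{l+1}$ (which holds since $\sigma(\Psi)=\Phi\Psi$ componentwise), the entries of $\sigma(\Psi_{l})$ lie in $E[\Psi_{l+1}]\subset \Sigma_{l+1}$, and
\[
\sigma\bigl(\Delta(\Psi_{l})^{-1}\bigr)=\bigl(\det\Phi\bigr)^{-1}\Delta(\Psi_{l+1})^{-1}\in \Sigma_{l+1}.
\]
With this verified, Steps~1--3 apply to $\Sl$ in place of $L_{l}$. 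There is no real obstacle here; the only mild subtlety is the well-definedness check in Step~4, and the rest is a two-line argument leveraging the cyclic structure of $\mathbb{Z}/d$.
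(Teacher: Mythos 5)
Your proof is correct and takes essentially the same route as the paper's: the identity $\sigma_{0}|_{\FX}=\id$ (equivalently, componentwise, $\pi_{l+1}\iota=\sigma_{0}\pi_{l}\iota$ for the natural maps $\iota:\FX\hookrightarrow\LX$ and $\pi_{l}:\LX\twoheadrightarrow\LlX$) gives $\mfb_{l}\cap\FX\subset\mfb_{l+1}\cap\FX$, and cyclicity of $\mathbb{Z}/d$ forces equality. The one point to make explicit is that $\mfb_{l}\cap\FX$ means the preimage of $\mfb_{l}$ under the $l$-dependent composite $\pi_{l}\iota$ (since $\FX$ is not literally a subring of $\LlX$), so that "$\sigma_{0}(h)=h$" is an equality in $\LX$ whose $l$-th component reading is exactly what moves $h$ from $\mfb_{l}\cap\FX$ into $\mfb_{l+1}\cap\FX$; your Step 4 verification that $\sigma(\Sigma_{l})\subset\Sigma_{l+1}$ is a reasonable addition that the paper leaves implicit.
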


\begin{proof}
We only prove the case of $L_{l}$.
Let $\iota$ be the natural injection $\FX \hookrightarrow \LX$
and $\pi_{l}$ the natural projection $\LX \twoheadrightarrow \LlX$ for each $l$.
For each $l$, $\sigma_{0}$ induces a morphism
$\LlX \rar L_{l+1}\X$; we also denote this by $\sigma_{0}$.
Then we have an equality $\pi_{l+1} \iota = \sigma_{0} \pi_{l} \iota$.
For any $h \in \mfb_{l} \cap \FX = (\pi_{l} \iota)^{-1} \mfb_{l}$,
we have $\pi_{l+1}(\iota(h)) = \sigma_{0}(\pi_{l}(\iota(h))) \in \sigma_{0} \mfb_{l} \subset \mfb_{l+1}$.
Hence $h \in (\pi_{l+1}\iota)^{-1}\mfb_{l+1} = \mfb_{l+1} \cap \FX$.
Therefore we obtain $\mfb_{l} \cap \FX \subset \mfb_{l+1} \cap \FX$.
Since the index set $\mathbb{Z}/d$ is a finite cyclic group, this inclusion is an equality.
\end{proof}

For any ring $R$, we denote by $\GL_{r/R}$ the $R$-group scheme of $r \times r$ invertible matrices.

\begin{prop}\label{gal_tri1}
$(1)$ Let $\phi : Z_{L} \rar \GL_{r/L}$ be the morphism of affine $L$-schemes defined by $u \mapsto \Psi^{-1} u$
for any $L$-algebra $S$ and any $S$-valued point $u \in Z(S)$.
Then $\phi$ factors through an isomorphism $\phi' : Z_{L} \rar \Gamma_{L}$ of affine $L$-schemes.

$(2)$ For any $l$ and $m$, let $\phi_{lm} : Z_{m,L_l} \rar \GL_{r/L_l}$ be
the morphism of affine $L_l$-schemes defined by $u \mapsto \Psi_{l}^{-1} u$
for any $L_l$-algebra $S$ and any $S$-valued point $u \in Z_{m}(S)$.
Then $\phi_{lm}$ factors through an isomorphism $\phi_{lm}' : Z_{m,L_l} \rar \Gamma_{m-l,L_l}$ of affine $L_l$-schemes.
\[\xymatrix{
Z_{L} \ar[rr]^{\phi ; u \mapsto \Psi^{-1} u} \ar@{.>}[dr]_{\phi'} & & \GL_{r/L} &
Z_{m,L_l} \ar[rr]^{\phi_{lm} ; u \mapsto \Psi_{l}^{-1} u} \ar@{.>}[dr]_{\phi_{lm}'} & & \GL_{r/L_l} \\
& \Gamma_{L} \ar@{^(->}[ru]_{natural} & &
& \Gamma_{m-l,L_l} \ar@{^(->}[ru]_{natural} & \\
}\]
\end{prop}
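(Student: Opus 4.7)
My plan is to work ring-theoretically and translate everything into statements about ideals of polynomial rings. The morphism $\phi_{lm}$ corresponds to the $L_{l}$-algebra map $\phi_{lm}^{*} : L_{l}\X \rar L_{l} \otimes_{E} \Sl$ sending $X \mapsto \Psi_{l}^{-1}(1 \otimes \Psi_{l})$, and likewise $\phi^{*} : \LX \rar L \otimes_{E} \Sigma$ sends $X \mapsto \Psi^{-1}(1 \otimes \Psi)$ (I write $\Sl$ for $\Sigma_{m}$ with the matching index). The key device is the injection $L_{l} \otimes_{E} \Sl \hookrightarrow L_{l} \otimes_{E} L_{m}$ (respectively $L \otimes_{E} \Sigma \hookrightarrow L \otimes_{E} L$), which is injective because $L_{l}$, $L$ are flat over the field $E$. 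Under this embedding, $\phi_{lm}^{*}(X)$ becomes precisely $\widetilde{\Psi}^{(lm)} = \Psi_{1, lm}^{-1} \Psi_{2, lm}$, and $\phi^{*}(X)$ becomes $\widetilde{\Psi}$. I plan to treat (2) in detail and note that (1) follows either by running the same argument globally or by assembling the components.

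Step 1 (factorization). For $h \in \mfq_{lm} = \mfq_{m - l}$, one has $h(\widetilde{\Psi}^{(lm)}) = 0$ by the definition of $\mfq_{lm}$, so $\phi_{lm}^{*}(\mfq_{m - l, L_{l}}) = 0$, producing a well-defined $\phi_{lm}' : L_{l}\X / \mfq_{m - l, L_{l}} \rar L_{l} \otimes_{E} \Sl$. Step 2 (inverse). I construct $\psi_{lm}' : L_{l} \otimes_{E} \Sl \rar L_{l}\X / \mfq_{m - l, L_{l}}$ from the assignment $1 \otimes \Psi_{m} \mapsto \Psi_{l} X$. Mutual inverseness on the generators $X$ and $1 \otimes \Psi_{m}$ is then immediate, so the substance is to show $\psi_{lm}'$ is well-defined, i.e.\ that $h(\Psi_{l} X) \in \mfq_{m - l, L_{l}}$ for every $h \in \mfp_{m}$. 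Plugging $X = \widetilde{\Psi}^{(lm)}$ into $L_{l} \otimes_{E} L_{m}$ gives $h(\Psi_{l}\widetilde{\Psi}^{(lm)}) = h(\Psi_{2, lm}) = 1 \otimes h(\Psi_{m}) = 0$, so $h(\Psi_{l} X)$ lies in $\mathfrak{c}_{lm} := \ker(L_{l}\X \rar L_{l} \otimes_{E} L_{m},\ X \mapsto \widetilde{\Psi}^{(lm)})$. Thus everything reduces to the identification $\mathfrak{c}_{lm} = \mfq_{m - l, L_{l}}$.

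This identification is the main obstacle. The inclusion $\mfq_{m - l, L_{l}} \subset \mathfrak{c}_{lm}$ is immediate from the definition of $\mfq_{lm}$; the converse will use the three bijection lemmas together in an essential way. Fix $k := m - l$ and work with the family $(\mathfrak{c}_{l, l + k})_{l \in \mathbb{Z}/d}$. A direct calculation using $\sigma \Psi_{l} = \Phi \Psi_{l + 1}$ shows $\sigma(\widetilde{\Psi}^{(l, l + k)}) = \widetilde{\Psi}^{(l + 1, l + 1 + k)}$, whence $\sigma_{0} \mathfrak{c}_{l, l + k} \subset \mathfrak{c}_{l + 1, l + 1 + k}$. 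By Lemma \ref{gal_bij2} the family assembles into a $\sigma_{0}$-stable ideal $\mathfrak{c} := \prod_{l} \mathfrak{c}_{l, l + k}$ of $\LX$; Lemma \ref{gal_bij1} then gives $\mathfrak{c} = (\mathfrak{c} \cap \FX) \cdot \LX$; and Lemma \ref{gal_bij3} together with the equality $\mathfrak{c}_{l, l + k} \cap \FX = \mfq_{l, l + k} = \mfq_{k}$ identifies $\mathfrak{c} \cap \FX = \mfq_{k}$. Projecting onto the $l$-th factor of $\LX = \prod_{l} L_{l}\X$ yields $\mathfrak{c}_{l, l + k} = \mfq_{k, L_{l}}$, i.e.\ $\mathfrak{c}_{lm} = \mfq_{m - l, L_{l}}$, completing (2). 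For (1), the same four steps apply verbatim with $L, \Sigma, \Gamma, \widetilde{\Psi}$ in place of $L_{l}, \Sl, \Gamma_{m - l}, \widetilde{\Psi}^{(lm)}$; here the kernel identification is even more direct, since $\sigma \widetilde{\Psi} = \widetilde{\Psi}$ (from $\sigma \Psi_{i} = \Phi \Psi_{i}$ for $i = 1, 2$) makes $\ker(\LX \rar L \otimes_{E} L,\ X \mapsto \widetilde{\Psi})$ immediately $\sigma_{0}$-stable, and Lemma \ref{gal_bij1} alone gives it as $\mfq_{L}$.
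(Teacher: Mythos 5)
Your proof is correct and takes essentially the same route as the paper: your ideal $\mathfrak{c}_{lm}$ is exactly the paper's $\alpha_{l}^{-1}\mfp_{m,L_l}$, and both arguments reduce the proposition to the identity $\mathfrak{c}_{lm}=\mfq_{m-l,L_l}$, proved by combining the $\sigma$-equivariance $\sigma(\widetilde{\Psi}^{(l,m)})=\widetilde{\Psi}^{(l+1,m+1)}$ with Lemmas \ref{gal_bij1}--\ref{gal_bij3} and the identification $\mathfrak{c}_{lm}\cap\FX=\mfq_{m-l}$. (One typo: $\phi_{lm}^{*}$ should send $X\mapsto\Psi_{l}^{-1}(1\otimes\Psi_{m})$, not $\Psi_{l}^{-1}(1\otimes\Psi_{l})$.)
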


\begin{proof}
We prove only $(2)$.
Then $(1)$ can be proved by the same argument.
We define two $L_{l}$-algebra homomorphisms:
\begin{align}
\alpha_{l} &: \LlX \rar \LlX ; \ X \mapsto \Psi_{l}^{-1} X, \label{eq_alpha} \\
\bar{\alpha}_{lm} &: \LlX \xrightarrow{\alpha_{l}} \LlX
\twoheadrightarrow \LlX/\mfp_{m,L_l} = L_{l} \otimes_{E} \EX/\mfp_{m}. \label{eq_alphabar}
\end{align}
Then $\phi_{lm}$ corresponds to $\bar{\alpha}_{lm}$ on the level of coordinate rings.
Thus it is enough to show that $\alpha_{l}^{-1} \mfp_{m,L_l} = \mfq_{m-l,L_l}$.

For any $h(X) \in \LlX$, we have
$$\sigma_{1}\alpha_{l}h = \sigma_{1}(h(\Psi_{l}^{-1} X)) = h^{\sigma}((\sigma\Psi_{l}^{-1}) \Phi X)
= h^{\sigma}(\Psi_{l+1}^{-1} X) = \alpha_{l+1}h^{\sigma}(X) = \alpha_{l+1}\sigma_{0}h.$$
Therefore we have
$$\alpha_{l+1}^{-1} \sigma_{1} = \sigma_{0} \alpha_{l}^{-1}.$$
Since $\sigma_{1} \mfp_{m} \subset \mfp_{m+1}$ by Lemma \ref{gal_sigma}, we have an inclusion
$\sigma_{0} \alpha_{l}^{-1} \mfp_{m,L_l}
= \alpha_{l+1}^{-1} \sigma_{1} \mfp_{m,L_l} \subset \alpha_{l+1}^{-1} \mfp_{m+1,L_{l+1}}$.
Replacing $m$ by $m+l$, we obtain
$\sigma_{0} \alpha_{l}^{-1} \mfp_{m+l,L_l} \subset \alpha_{l+1}^{-1} \mfp_{m+l+1,L_{l+1}}$.
We consider the family of ideals $(\alpha_{l}^{-1} \mfp_{m+l,L_l})_{l}$.
Then for each $l$ and $m$, we have
$(\alpha_{l}^{-1} \mfp_{m+l,L_l} \cap \FX)_{L_l} = \alpha_{l}^{-1} \mfp_{m+l,L_l}$
by Lemmas \ref{gal_bij1}, \ref{gal_bij2} and \ref{gal_bij3}.
Again, replacing $m$ by $m-l$, we obtain an equality
$(\alpha_{l}^{-1} \mfp_{m,L_l} \cap \FX)_{L_l} = \alpha_{l}^{-1} \mfp_{m,L_l}$.

We consider $L_{l} \otimes_{E} L_{m}$ as an $L_{l}$-algebra via $f \mapsto f \otimes 1$,
and define an $L_{l}$-algebra homomorphism
$\widetilde{\mu} : \LlX \rar L_{l} \otimes_{E} L_{m} ; X_{ij} \mapsto 1 \otimes \Psi_{ijm}$.
Then $\mu_{lm} = \widetilde{\mu} \circ \alpha_{l}|_{\FX}$ and the map
$$\LlX \twoheadrightarrow \LlX/\mfp_{m,L_l}
= L_{l} \otimes_{E} \EX/\mfp_{m} \xhookrightarrow{\id \otimes \mu_{m}} L_{l} \otimes_{E} L_{m}$$
coincides with $\widetilde{\mu}$.
Therefore,
$$\mfq_{m-l} = \mfq_{l,m} = \ker \mu_{l,m} = \alpha_{l}|_{\FX}^{-1}(\ker \tilde{\mu})
= \alpha_{l}^{-1} \mfp_{m,L_l} \cap \FX.$$
Thus we have $\mfq_{m-l,L_l} = (\alpha_{l}^{-1} \mfp_{m,L_l} \cap \FX)_{L_l}
= \alpha_{l}^{-1} \mfp_{m,L_l}$.
\end{proof}

\begin{lemm}\label{gal_maximal}
$(1)$ The ideal $\mfp \subsetneq \EX$ is maximal among the proper $\sigma_{1}$-invariant ideals.

$(2)$ The family of ideals $(\mfp_{l})_{l}$ is maximal among the families of proper ideals $(\mfm_{l})_{l}$ of $\EX$
which satisfies $\sigma_{1} \mfm_{l} \subset \mfm_{l+1}$ for all $l$.
\end{lemm}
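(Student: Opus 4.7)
The plan is to reduce the statement to a single-field, single-ideal maximality result of the kind proved in \cite{Papa} by iterating the $\sigma_{1}$-action $d$ times. Observe that $\sigma_{1}^{d}(h)(X) = h^{\sigma^{d}}(\Phi_{d}X)$, where $\Phi_{d} := \sigma^{d-1}(\Phi)\cdots\sigma(\Phi)\Phi \in \GL_{r}(E)$, and iterating $\sigma(\Psi_{l}) = \Phi\Psi_{l+1}$ yields $\sigma^{d}(\Psi_{l}) = \Phi_{d}\Psi_{l}$. Under the standing hypothesis $K \cap \overline{\mathbb{F}_{q}} = \mathbb{F}_{q}$ one checks that $E^{\sigma^{d}} = F = L_{l}^{\sigma^{d}}$, so the triple $(F, E, L_{l})$ with the $\sigma^{d}$-action is $\sigma^{d}$-admissible, but now with $L_{l}$ a single field (the complication caused by $L$ being a product has disappeared on each component). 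In this auxiliary triple, $\Psi_{l}$ is a fundamental matrix for $\Phi_{d}$ and $\mfp_{l}$ plays the role of ``$\mfp$''.

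For part (2), iterating $\sigma_{1}\mfm_{l} \subset \mfm_{l+1}$ around the cycle of length $d$ gives $\sigma_{1}^{d}\mfm_{l} \subset \mfm_{l}$, so each $\mfm_{l}$ is a proper $\sigma_{1}^{d}$-stable ideal of $\EX$ containing $\mfp_{l}$. I would then invoke the single-ideal maximality in the auxiliary triple---which is the field case of statement (1)---provable by combining Proposition \ref{gal_tri1} with the single-field specializations of Lemmas \ref{gal_bij1}--\ref{gal_bij3}, recovering essentially the argument of \cite{Papa}. This yields $\mfm_{l} = \mfp_{l}$ for each $l$.

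For part (1), I would deduce it from (2). Given $\mfm \supset \mfp$ proper and $\sigma_{1}$-invariant, form the family $\mfm_{l} := \mfm + \mfp_{l}$. Then $\mfp_{l} \subset \mfm_{l}$ and $\sigma_{1}\mfm_{l} = \sigma_{1}\mfm + \sigma_{1}\mfp_{l} \subset \mfm + \mfp_{l+1} = \mfm_{l+1}$. If every $\mfm_{l}$ is proper, part (2) yields $\mfm_{l} = \mfp_{l}$, whence $\mfm \subset \bigcap_{l}\mfp_{l} = \mfp$ and $\mfm = \mfp$. Otherwise, because $\sigma_{1}(1) = 1$ and $\sigma_{1}\mfm_{l} \subset \mfm_{l+1}$, the equality $\mfm_{l_{0}} = \EX$ propagates to $\mfm_{l} = \EX$ for all $l$, and one must still derive a contradiction with the properness of $\mfm$.

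The main obstacle I anticipate is this exceptional case of part (1): the ideals $\mfp_{l}$ need not be pairwise coprime in $\EX$ (for instance, in degenerate situations where the $\Psi_{l}$'s coincide formally), so a naive Chinese Remainder Theorem does not immediately yield $\mfm + \mfp = \EX$. One likely must exploit the $\sigma$-cyclic structure of $\Sigma = \EX/\mfp \hookrightarrow \prod_{l}\Sigma_{l}$ directly, showing that no proper $\sigma$-stable ideal of $\Sigma$ can surject onto every component $\Sigma_{l}$ simultaneously. A secondary difficulty is writing out the field-case maximality used in (2) with the semilinearity twisted by $(\sigma^{d}, \Phi_{d})$, since the simplifications of Lemmas \ref{gal_bij1}--\ref{gal_bij3} in the one-factor setting must still be combined carefully with Proposition \ref{gal_tri1}.
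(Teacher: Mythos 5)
Your reduction of part (2) to $d$ independent single-field problems is sound as far as it goes: the identities $\sigma_{1}^{d}(h)(X)=h^{\sigma^{d}}(\Phi_{d}X)$ and $\sigma^{d}(\Psi_{l})=\Phi_{d}\Psi_{l}$ are correct, each $\mfm_{l}$ is indeed a proper $\sigma_{1}^{d}$-invariant ideal containing $\mfp_{l}$, and in fact $E^{\sigma^{d}}=F=L_{l}^{\sigma^{d}}$ holds for \emph{any} $\sigma$-admissible triple (not only under $K\cap\overline{\mathbb{F}_{q}}=\mathbb{F}_{q}$), since $L_{l}^{\sigma^{d}}=\pi_{l}(L^{\sigma})$ and $\pi_{l}|_{E}$ is injective. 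This is a genuinely different organization from the paper, which keeps the whole cyclic family and treats it at once. The problem is that the step carrying the entire mathematical content of the lemma is missing: you never prove the single-field maximality, but only assert it follows ``by combining Proposition \ref{gal_tri1} with Lemmas \ref{gal_bij1}--\ref{gal_bij3}, recovering essentially the argument of \cite{Papa}.'' Those results only show that $\ali\mfm_{l,L_{l}}$ is extended from an ideal of $\FX$; they do not force that ideal to vanish. The missing argument (which is the heart of the paper's proof) is: take a maximal ideal $\mfa\subset\FX$ containing $\ali\mfm_{l,L_{l}}\cap\FX$, set $M:=\FX/\mfa$, and observe that reduction of $\ali(X)$ modulo $\mfa$ yields a matrix $\pi(X)\in\GL_{r}(L\otimes_{F}M)$ with $\sigma(\pi(X))=\Phi\,\pi(X)$; hence $\delta:=\pi(X)^{-1}\Psi\in\GL_{r}((L\otimes_{F}M)^{\sigma})=\GL_{r}(M)$, one identifies $\ker\pi_{l}'$ with $\delta\cdot(\mfp_{l}\otimes_{F}M)$, and noetherianity plus faithful flatness give $\mfm_{l}=\mfp_{l}$. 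Nothing of this ``second solution'' construction appears in your proposal, and you cannot simply cite \cite{Papa} for it: his admissible triples assume $\sigma$ is an \emph{automorphism} of a \emph{field}, whereas your auxiliary $\sigma^{d}$ on $L_{l}$ is in general only an injective endomorphism (cf.\ Remark \ref{phi_papa}), so the argument must at least be rechecked --- which is exactly why the paper rewrites it in full.

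By contrast, the ``main obstacle'' you flag in part (1) is not an obstacle. If $\mfm+\mfp_{l}=\EX$ for every $l$, then the standard inclusion $(I+J_{1})(I+J_{2})\subset I+J_{1}J_{2}$ gives $\mfm+\prod_{l}\mfp_{l}=\EX$, and since $\prod_{l}\mfp_{l}\subset\bigcap_{l}\mfp_{l}=\mfp\subset\mfm$ this forces $\mfm=\EX$, contradicting properness; no pairwise comaximality of the $\mfp_{l}$ is needed. So your deduction of (1) from (2) closes easily, and the one genuine gap is the unproved field-case maximality on which all of part (2) rests.
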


\begin{proof}
We prove only $(2)$.
Then $(1)$ can be proved by the same argument.
Let $(\mfm_{l})_{l}$ be a family of proper ideals of $\EX$ such that
$\mfp_{l} \subset \mfm_{l}$ and $\sigma_{1} \mfm_{l} \subset \mfm_{l+1}$ for all $l$.
Let $\alpha_{l}$ be the homomorphism (\ref{eq_alpha}).
We consider the family of ideals $(\alpha_{l}^{-1} \mfm_{l,L_l})_{l}$.
Since $\sigma_{0} \alpha_{l}^{-1} \mfm_{l,L_l}
= \alpha_{l+1}^{-1} \sigma_{1} \mfm_{l,L_l} \subset \alpha_{l+1}^{-1} \mfm_{l+1,L_{l+1}}$,
we can apply Lemma \ref{gal_bij2} to $(\alpha_{l}^{-1} \mfm_{l,L_l})_{l}$.
Then $\alpha_{l}^{-1} \mfm_{l,L_l} \cap \FX$ is independent of $l$,
and we take a maximal ideal $\mfa \subset \FX$ which contains this ideal.
We also have $(\alpha_{l}^{-1} \mfm_{l,L_l} \cap \FX)_{L_l} = \alpha_{l}^{-1} \mfm_{l,L_l}$
by Lemmas \ref{gal_bij1}, \ref{gal_bij2} and \ref{gal_bij3}.
Thus we obtain an inclusion $\alpha_{l}^{-1} \mfm_{l,L_l} \subset \mfa_{L_l}$.
We put $M := \FX / \mfa$ and define a morphism
$$\pi_{l} : \EX \hookrightarrow L_{l}\X \xrightarrow[\alpha_{l}^{-1}]{} L_{l}\X
\xrightarrow[\rho_{l}]{} L_{l}\X / \mfa_{L_l} \xrightarrow[\beta_{l}]{} L_{l} \otimes_{F} M,$$
where $\rho_{l}$ is the natural projection and
$\beta_{l} : L_{l}\X / \mfa_{L_l} \cong L_{l} \otimes_{F} \FX / \mfa = L_{l} \otimes_{F} M$.
Then we have $\mfm_{l} \subset \ker \pi_{l}$.

We define a $\sigma$-action on $L \otimes_{F} M$ by $\sigma \otimes \id$.
In $\GL_{r}(L_{l+1} \otimes_{F} M)$, we have
\begin{align*}
\sigma(\pi_{l}(X)) &= \sigma(\beta_{l}(\rho_{l}(\alpha_{l}^{-1}(X))))
= \beta_{l+1}(\rho_{l+1}(\sigma_{0}(\alpha_{l}^{-1}(X)))) = \beta_{l+1}(\rho_{l+1}(\alpha_{l+1}^{-1}(\sigma_{1}(X)))) \\
&= \pi_{l+1}(\sigma_{1}(X)) = \pi_{l+1}(\Phi X) = \Phi \pi_{l+1}(X).
\end{align*}
Set $\pi(X) := (\pi_{l}(X))_{l} \in \GL_{r}(L \otimes_{F} M)$.
Then we have $\sigma(\pi(X)) = \Phi \pi(X)$.
Since $(L \otimes_{F} M)^{\sigma} = M$,
we obtain $\delta := \pi(X)^{-1} \Psi \in \GL_{r}((L \otimes_{F} M)^{\sigma}) = \GL_{r}(M)$.
We define a $\delta$-action on $(E \otimes_{F} M)\X$ by $\delta \cdot h(X) := h(X \delta)$.
We extend $\pi_{l}$ to
$$\pi_{l}' : (E \otimes_{F} M)\X = \EX \otimes_{F} M \xrightarrow[\pi_{l} \otimes \id]{} L_{l} \otimes_{F} M.$$
Then we have $\mfp_{l} \otimes_{F} M \subset \mfm_{l} \otimes_{F} M \subset \ker \pi_{l}'
= \delta \cdot \ker(\nu_{l} \otimes \id_{M}) = \delta \cdot (\mfp_{l} \otimes_{F} M)$,
where the first equality is proved as follows:
For any $h(X) \in (E \otimes_{F} M)\X$,
$(\nu_{l} \otimes \id_{M})(h(X \delta^{-1})) = h(\Psi_{l} \Psi^{-1} \pi(X)) = h(\pi_{l}(X)) = \pi_{l}'(h(X))$.
Thus $h \in \ker \pi_{l}'$ is equivalent to $\delta^{-1} \cdot h \in \ker(\nu_{l} \otimes \id_{M})$.
Since $(E \otimes_{F} M)\X$ is a noetherian ring,
$\mfp_{l} \otimes_{F} M \subset \delta \cdot (\mfp_{l} \otimes_{F} M)$ implies
$\mfp_{l} \otimes_{F} M = \delta \cdot (\mfp_{l} \otimes_{F} M)$.
Therefore we have $\mfp_{l} \otimes_{F} M = \mfm_{l} \otimes_{F} M$.
Since $(E \otimes_{F} M)\X$ is faithfully flat over $\EX$, we have $\mfp_{l} = \mfm_{l}$.
\end{proof}

\begin{lemm}\label{gal_SFS}
$(1)$ Let $\mfb \subset \SX$ be an ideal which is $\sigma_{0}$-invariant.
Then we have $\mfb = (\mfb \cap \FX)_{\Sigma}$.

$(2)$ Let $\mfb_{l} \subset \SlX$ be ideals which satisfy $\sigma_{0} \mfb_{l} \subset \mfb_{l+1}$ for all $l$.
Then we have $\mfb_{l} = (\mfb_{l} \cap \FX)_{\Sigma_l}$.
\end{lemm}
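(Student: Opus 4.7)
\emph{Plan.} Both inclusions $(\mfb \cap \FX)_{\Sigma} \subset \mfb$ in (1) and $(\mfb_{l} \cap \FX)_{\Sigma_{l}} \subset \mfb_{l}$ in (2) are immediate from the definition of extension of ideals, so the entire content is the reverse inclusions. The key structural observation making Lemma \ref{gal_bij1}-style arguments available here is that $\SX = \Sigma \otimes_{F} \FX$ and $\SlX = \Sigma_{l} \otimes_{F} \FX$: thus any $F$-basis $(g_{i})_{i \in I}$ of $\FX$ is automatically a $\Sigma$-basis of $\SX$ and a $\Sigma_{l}$-basis of $\SlX$, so the $l(h) := \#\supp(h)$ filtration of the proof of Lemma \ref{gal_bij1} makes sense verbatim in our setting.

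The strategy for (1) is to extend $\mfb$ to the $\sigma_{0}$-stable ideal $\mfb^{L} := \mfb \cdot \LX$ of $\LX$ and apply Lemma \ref{gal_bij1} to get $\mfb^{L} = (\mfb^{L} \cap \FX)_{L}$. The proof then reduces to the two claims (a) $\mfb^{L} \cap \FX = \mfb \cap \FX$ and (b) the restriction of $\mfb^{L}$ to $\SX$ recovers $\mfb$; combining these with Lemma \ref{gal_bij1} gives $\mfb = (\mfb \cap \FX)_{\Sigma}$. For (2) the strategy is the same applied componentwise: extend each $\mfb_{l}$ to $\mfb_{l}^{L_{l}} := \mfb_{l} \cdot \LlX$, so that $\sigma_{0} \mfb_{l}^{L_{l}} \subset \mfb_{l+1}^{L_{l+1}}$ is automatic, and then invoke Lemmas \ref{gal_bij2} and \ref{gal_bij3} to produce a single ideal $\mfa \subset \FX$, independent of $l$, with $\mfb_{l}^{L_{l}} = \mfa_{L_{l}}$ for every $l$. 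The analogues of (a) and (b) then give $\mfa = \mfb_{l} \cap \FX$ and $\mfb_{l} = \mfa_{\Sigma_{l}}$.

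The main obstacle is the descent step (a)/(b): the proof of Lemma \ref{gal_bij1} relies crucially on the idempotents $e_{l} \in L$ and on the ability to invert arbitrary non-zero elements of $L$, and neither of these is available in $\Sigma$ or in $\Sigma_{l}$. I expect to handle this by running the induction on $l(h)$ directly inside $\SX$ (respectively $\SlX$) rather than inside $\LX$ (respectively $\LlX$): given $h \in \mfb$ of minimal $l(h)$ among elements not in $(\mfb \cap \FX)_{\Sigma}$, one analyses the $\sigma_{0}$-orbit $\{\sigma_{0}^{j}(h)\}_{j \geq 0} \subset \mfb$ and produces $F$-coefficient elements by $\sigma_{0}$-orbit sums applied to suitable $\SX$-multiples of $h$, using $\Sigma^{\sigma} = F$. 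The faithful flatness of $\SX$ over $\FX$ (which follows from $\Sigma$ being a free $F$-module) is then used to ensure that the reductions do not escape from $\SX$ into $\LX$, and to pin down the resulting ideal as the extension of $\mfb \cap \FX$. This is precisely the point at which the present lemma is genuinely more delicate than Lemma \ref{gal_bij1}, and it is where the bulk of the work will have to go.
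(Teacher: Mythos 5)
Your reduction correctly isolates the crux, but the crux is not resolved, and the method you sketch for it will not work. The whole content of the lemma is your step (b): that $\mfb\cdot\LX\cap\SX=\mfb$ (equivalently, that an element of $\mfb_l$ with support in a complement of $\mfa=\mfb_l\cap\FX$ must vanish). You propose to settle this by rerunning the induction of Lemma \ref{gal_bij1} inside $\SlX$, producing $F$-coefficients via $\sigma_0$-orbit sums and invoking $\Sigma^{\sigma}=F$. Two things go wrong. First, orbit sums $\sum_{j=0}^{d-1}\sigma_0^{j}(-)$ do not produce $\sigma$-invariant coefficients: $\sigma^{d}$ is not the identity on $\Sigma_l$, and in the proof of Lemma \ref{gal_bij1} the orbit sum only normalizes the \emph{one} coefficient that was first arranged to be the idempotent $e_{l_0}$ --- which required inverting an arbitrary nonzero component of a coefficient in the field $L_{l_0}$. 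Second, and decisively, every normalization step of that proof (producing $e_{l_0}$, and later producing $f$ with $\sigma(a_{i_2}^{-1})-a_{i_2}^{-1}\in L^{\times}$) inverts elements that are merely nonzero; in $\Sigma_l$ such elements need not be units, and faithful flatness of $\SX$ over $\FX$ gives you no handle on this. So the "bulk of the work" you defer is exactly the part that remains unproved.

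The paper closes this gap with an ingredient absent from your plan: Lemma \ref{gal_maximal}, the maximality of $(\mfp_l)_l$ among families of proper ideals of $\EX$ compatible with $\sigma_1$. Concretely, after fixing a basis $(g_i)$ of $\FX$ adapted to $\mfa$ and a minimal finite support set $J$ for a putative element of $\mfb_{l}\smallsetminus\mfa_{\Sigma_l}$, one forms the ideal $\mfm_l\subset\Sigma_l$ of $j$-th coefficients of elements of $\mfb_l$ supported on $J$; the family $(\mfm_l)_l$ is nonzero and $\sigma$-compatible, so its preimage in $\EX$ strictly contains $(\mfp_l)_l$, whence $\mfm_l=\Sigma_l$ by maximality. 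This is what replaces "inverting a coefficient": it hands you an element $h_l\in\mfb_l$ with $j$-th coefficient exactly $1$ for every $l$, after which $\sigma_0 h_l-h_{l+1}$ has strictly smaller support, minimality of $J$ forces $\sigma b_{l,i}=b_{l+1,i}$, hence all coefficients lie in $F=L^{\sigma}$ via the product $\prod_l\Sigma_l$, contradicting $J\cap I_{\mfa}=\emptyset$. Without Lemma \ref{gal_maximal} (or an equivalent device for manufacturing a unit leading coefficient inside $\Sigma_l$), your induction cannot get off the ground, so as it stands the proposal has a genuine gap.
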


\begin{proof}
We prove only $(2)$.
Then $(1)$ can be proved by the same argument.
Set $\mfa := \mfb_{l} \cap \FX$, which is independent of $l$ by Lemma \ref{gal_bij3}.
Suppose that $\mfa_{\Slz} \subsetneq \mfb_{l_0}$ for some $l_{0}$.
Let $(g_{i})_{i \in I}$ be an $F$-basis of $\FX$ such that
$I = I_{1} \amalg I_{\mfa}$ and $\mfa = \oplus_{i \in I_{\mfa}} F g_{i}$.
Then we have
$$\SlX = (\oplus_{i \in I_{1}} \Sigma_{l} g_{i}) \oplus (\oplus_{i \in I_{\mfa}} \Sigma_{l} g_{i})
= (\oplus_{i \in I_{1}} \Sigma_{l} g_{i}) \oplus \mfa_{\Sl}.$$
Since $\mfa_{\Slz} \subsetneq \mfb_{l_0}$, we can take a minimal finite set $J \subset I_{1}$
so that $\mfb_{l_0} \cap (\oplus_{i \in J} \Slz g_{i}) \neq 0$.
By the injectivity of $\sigma_{0}$ and the inclusion
$\sigma_{0}(\mfb_{l} \cap \oplus_{i \in J} \Sl g_{i}) \subset \mfb_{l+1} \cap \oplus_{i \in J} \Sigma_{l+1} g_{i}$,
$J$ has the same properties for all $l$.
We fix $j \in J$ and consider the ideal of $\Sl \cong \EX / \mfp_{l}$:
$$\mfm_{l} := \{b \in \Sl | \textrm{ there exists }
\sum_{i \in J} b_{i} g_{i} \in \mfb_{l} \cap (\oplus_{i \in J} \Sl g_{i}) \textrm{ such that } b_{j} = b \}.$$
Then $\mfm_{l}$ is a non-zero ideal by the minimality of $J$,
and it is clear that $\sigma \mfm_{l} \subset \mfm_{l+1}$.
By Lemma \ref{gal_sigma} we have $\sigma \nu_{l} = \nu_{l+1} \sigma_{1}|_{\EX}$.
Hence we can apply Lemma \ref{gal_maximal} to the inverse image of $(\mfm_{l})_{l}$ in $\EX$.
Therefore we have $\mfm_{l} = \Sl$.
Thus for each $l$, there exists an element
$h_{l} = \sum_{i \in J} b_{li} g_{i} \in \mfb_{l} \cap (\oplus_{i \in J} \Sl g_{i})$ such that $b_{lj} = 1$.
Then we have
$\mfb_{l+1} \ni \sigma_{0} h_{l} - h_{l+1} = \sum_{i \in J \setminus \{j\}} (\sigma b_{li} - b_{l+1,i}) g_{i}$.
By the minimality of $J$, $\sigma b_{li} = b_{l+1,i}$ for all $i$ and $l$.
We put $b_{i} := (b_{li})_{l} \in \prod_{l} \Sl$.
Then we have $\sigma b_{i} = (\sigma b_{l-1,i})_{l} = (b_{li})_{l} = b_{i}$.
Hence $b_{i} \in F$ and $b_{li} \in F$ via the $l$-th projection.
Then $0 \neq h_{l} = \sum_{i \in J} b_{li} g_{i} \in \mfb_{l} \cap \FX = \mfa = \oplus_{i \in I_{\mfa}} F g_{i}$.
This contradicts $J \cap I_{\mfa} = \emptyset$.
\end{proof}

\begin{prop}\label{gal_tri2}
$(1)$ Let $\psi : Z \times_{E} Z \rar Z \times_{E} \GL_{r/E}$ be
the morphism of affine $E$-schemes defined by $(u,v) \mapsto (u,u^{-1}v)$
for any $E$-algebra $S$ and any $S$-valued point $(u,v) \in Z(S) \times Z(S)$.
Then $\psi$ factors through an isomorphism
$\psi' : Z \times_{E} Z \rar Z \times_{E} \Gamma_{E}$ of affine $E$-schemes.

$(2)$ For any $l$ and $m$,
let $\psi_{lm} : Z_{l} \times_{E} Z_{l+m} \rar Z_{l} \times_{E} \GL_{r/E}$ be
the morphism of affine $E$-schemes defined by $(u,v) \mapsto (u,u^{-1}v)$
for any $E$-algebra $S$ and any $S$-valued point $(u,v) \in Z_{l}(S) \times Z_{l+m}(S)$.
Then $\psi$ factors through an isomorphism
$\psi_{lm}' : Z_{l} \times_{E} Z_{l+m} \rar Z_{l} \times_{E} \Gamma_{m,E}$ of affine $E$-schemes.
\[\xymatrix{
Z \! \times \! Z \ar[rr]^{\psi ; (u,v) \mapsto (u,u^{-1}v)} \ar@{.>}[dr]_{\psi'} & &
Z \! \times \! \GL_{r/E} & Z_{l} \! \times \! Z_{l+m}
\ar[rr]^{\psi_{lm} ; (u,v) \mapsto (u,u^{-1}v)} \ar@{.>}[dr]_{\psi_{lm}'} & & Z_{l} \! \times \! \GL_{r/E} \\
& Z \! \times \! \Gamma_{E} \ar@{^(->}[ru]_{natural} & &
& Z_{l} \! \times \! \Gamma_{m,E} \ar@{^(->}[ru]_{natural} & \\
}\]
\end{prop}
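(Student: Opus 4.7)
The plan is to mirror the proof of Proposition~\ref{gal_tri1}, with $\Sigma$ (respectively $\Sl$) playing the role that $L$ (respectively $L_{l}$) played there, and with Lemma~\ref{gal_SFS} replacing the pair of Lemmas~\ref{gal_bij1}--\ref{gal_bij2}. I will focus on part~(2); part~(1) is obtained by an identical argument with the ``global'' objects $\Sigma$, $\mfp$, $\mfq$ in place of the indexed $\Sl$, $\mfp_{l+m}$, $\mfq_{m}$, invoking Lemma~\ref{gal_SFS}(1) rather than (2).

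The pullback $\psi_{lm}^{*}$ is the unique $E$-algebra homomorphism $\Sl \otimes_{E} \EX \rar \Sl \otimes_{E} \Sigma_{l+m}$ which is the identity on the $\Sl$-factor and sends $1 \otimes X_{ij}$ to $\widetilde{\Psi}^{(l,l+m)}_{ij}$. Surjectivity is immediate from the identity $1 \otimes \Psi_{l+m} = (\Psi_{l} \otimes 1)\,\widetilde{\Psi}^{(l,l+m)}$. To analyse its kernel, I would factor it as
$$\psi_{lm}^{*} : \SlX \xrightarrow{\tilde{\beta}_{l}} \SlX \twoheadrightarrow \SlX / (\mfp_{l+m})_{\Sl} \cong \Sl \otimes_{E} \Sigma_{l+m},$$
where $\tilde{\beta}_{l}$ is the $\Sl$-algebra automorphism defined by $X \mapsto \Psi_{l}^{-1} X$. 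Establishing the factorisation $\psi_{lm}'$ and its isomorphism property then reduces to verifying the ideal equality
$$\tilde{\beta}_{l}^{-1}\bigl((\mfp_{l+m})_{\Sl}\bigr) = (\mfq_{m})_{\Sl}$$
in $\SlX$.

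To prove this equality, I would first check the intertwining relation $\sigma_{1} \tilde{\beta}_{l} = \tilde{\beta}_{l+1} \sigma_{0}$ by a direct calculation: both sides send $h(X) \mapsto h^{\sigma}(\Psi_{l+1}^{-1} X)$, since $\sigma(\Psi_{l}^{-1}) = \Psi_{l+1}^{-1} \Phi^{-1}$. Combined with the inclusion $\sigma_{1} \mfp_{l+m} \subset \mfp_{l+m+1}$ from Lemma~\ref{gal_sigma}, rewriting $\sigma_{0} \tilde{\beta}_{l}^{-1} = \tilde{\beta}_{l+1}^{-1} \sigma_{1}$ yields $\sigma_{0} \mfb_{l} \subset \mfb_{l+1}$ for the family $\mfb_{l} := \tilde{\beta}_{l}^{-1}\bigl((\mfp_{l+m})_{\Sl}\bigr)$. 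Lemma~\ref{gal_bij3} then tells us that $\mfa := \mfb_{l} \cap \FX$ is independent of $l$, and Lemma~\ref{gal_SFS}(2) upgrades this to $\mfb_{l} = \mfa_{\Sl}$, so the problem reduces to identifying $\mfa$.

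Finally, for $h \in \FX$ the condition $\tilde{\beta}_{l}(h) \in (\mfp_{l+m})_{\Sl}$ is equivalent, after passing to the quotient $\Sl \otimes_{E} \Sigma_{l+m}$, to the vanishing of $h(\widetilde{\Psi}^{(l,l+m)})$ there. Because $E$ is a field, the inclusions $\Sl \hookrightarrow L_{l}$ and $\Sigma_{l+m} \hookrightarrow L_{l+m}$ are $E$-split, so the natural map $\Sl \otimes_{E} \Sigma_{l+m} \hookrightarrow L_{l} \otimes_{E} L_{l+m}$ is injective; consequently the vanishing is equivalent to $\mu_{l,l+m}(h) = 0$, i.e.\ to $h \in \mfq_{m}$. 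This gives $\mfa = \mfq_{m}$ and completes (2). I expect the principal subtlety, as in Proposition~\ref{gal_tri1}, to be the careful bookkeeping of the $\sigma$-intertwining across the decomposition $L = \prod_{l} L_{l}$ so that Lemma~\ref{gal_SFS} may be applied to the family $\mfb_{l}$; once this is in hand, the descent to ideals of $\FX$ and the $E$-flatness step are formal.
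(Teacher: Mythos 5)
Your proposal is correct and follows the same overall strategy as the paper: reduce the statement to the ideal identity $\tilde{\beta}_{l}^{-1}\bigl((\mfp_{l+m})_{\Sl}\bigr) = (\mfq_{m})_{\Sl}$ (your $\tilde{\beta}_{l}$ is the paper's $\alpha_{l}$ restricted to $\SlX$), verify the intertwining $\sigma_{0}\tilde{\beta}_{l}^{-1} = \tilde{\beta}_{l+1}^{-1}\sigma_{1}$, and then apply Lemma \ref{gal_bij3} and Lemma \ref{gal_SFS}(2) to conclude $\mfb_{l} = \mfa_{\Sl}$ with $\mfa = \mfb_{l} \cap \FX$. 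The one place where you genuinely diverge is the identification of $\mfa$ with $\mfq_{m}$: the paper extends scalars to $L_{l}$, invokes Proposition \ref{gal_tri1} to get $\ali\mfp_{m+l,L_l} = \mfq_{m,L_l}$, and then descends by faithful flatness of $\LlX$ over $\FX$; you instead compute $\mfa$ directly as the kernel of the evaluation $h \mapsto h(\widetilde{\Psi}^{(l,l+m)})$ on $\FX$, using that $\Sl \otimes_{E} \Sigma_{l+m} \hookrightarrow L_{l} \otimes_{E} L_{l+m}$ is injective because $E$ is a field. Both are valid; your route is self-contained (it redoes the $L$-level evaluation computation at the $\Sigma$ level and never needs Proposition \ref{gal_tri1} for this step), while the paper's reuses the already-established $L$-level statement at the cost of one more flat-descent argument. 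No gap either way.
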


\begin{proof}
We prove only $(2)$.
Then $(1)$ can be proved by the same argument.
Let $\alpha_{l}$ and $\bar{\alpha}_{lm}$ be the homomorphisms (\ref{eq_alpha}) and (\ref{eq_alphabar}).
We restrict the domain of $\bar{\alpha}_{lm}$ to $\SlX \cong \EX / \mfp_{l} \otimes_{E} \EX$
and the target of $\bar{\alpha}_{lm}$ to $\Sl \otimes_{E} \EX / \mfp_{m} \cong \EX / \mfp_{l} \otimes_{E} \EX / \mfp_{m}$. 
Then $\psi_{lm}$ corresponds to $\bar{\alpha}_{l,l+m}$ on the level of coordinate rings.
Hence it is enough to show that $\ali \mfp_{m+l,\Sl} = \mfq_{m,\Sl}$.

Since we have an inclusion
$\sigma_{0} \ali \mfp_{m+l,\Sl} = \alpha_{l+1}^{-1} \sigma_{1} \mfp_{m+l,\Sl} \subset \alpha_{l+1}^{-1} \mfp_{m+l+1,\Sigma_{l+1}}$,
the ideal $\mfa_{m} := \ali \mfp_{m+l,\Sl} \cap \FX$ is independent of $l$ by Lemma \ref{gal_bij3},
and we can apply Lemma \ref{gal_SFS} to $(\ali \mfp_{m+l,\Sl})_{l}$.
Then we have $\ali \mfp_{m+l,\Sl} = (\ali \mfp_{m+l,\Sl} \cap \FX)_{\Sl} = \mfa_{m,\Sl}$.
On the other hand we have $\ali \mfp_{m+l,L_l} = \mfq_{m,L_l}$ by Proposition \ref{gal_tri1}.
Therefore $\mfq_{m,L_l} = \ali \mfp_{m+l,L_l} = \ali((\mfp_{m+l,\Sl})_{L_l})
= (\ali \mfp_{m+l,\Sl})_{L_l} = (\mfa_{m,\Sl})_{L_l} = \mfa_{m,L_l}$.
Then we have $\mfq_{m} = \mfa_{m}$ since $\LlX$ is faithfully flat over $\FX$.
Thus we obtain $\ali \mfp_{m+l,\Sl} = \mfa_{m,\Sl} = \mfq_{m,\Sl}$.
\end{proof}

The next lemma is proved by an elementary argument.
Thus we omit the proof.
This lemma is applied to the $S$-valued points of the diagrams in Proposition \ref{gal_tri2}
where $S$ is an $\bar{E}$-algebra.
\begin{lemm}\label{gal_gp}
$(1)$ Let $G$ be a group, $A$ and $B$ be non-empty subsets of $G$ such that the map
$$\psi : A \times A \rar A \times G ; \ (u,v) \mapsto (u,u^{-1}v)$$
factors through a bijection $\psi' : A \times A \rar A \times B$.
Then $B$ is a subgroup of $G$,
$A$ is stable under right-multiplication by elements of $B$
and $A$ becomes a $B$-torsor.

$(2)$ Let $G$ be a group, $A_{l}$ and $B_{m}$ be non-empty subsets of $G$ such that the map
$$\psi_{lm} : A_{l} \times A_{l+m} \rar A_{l} \times G ; \ (u,v) \mapsto (u,u^{-1}v)$$
factors through a bijection $\psi_{lm}' : A_{l} \times A_{l+m} \rar A_{l} \times B_{m}$ for each $l$ and $m$.
Then $B_{0}$ is a subgroup of $G$,
$A_{l}$ is stable under right-multiplication by elements of $B_{0}$
and $A_{l}$ becomes a $B_{0}$-torsor for each $l$.
Moreover, for any $u \in A_{l}$ and $v \in A_{l+m}$ there exists an element $y \in B_{m}$ such that $v = u y$.
The multiplication in $G$ induces maps $A_{l} \times B_{m} \rar A_{l+m}$ and $B_{m} \times B_{m'} \rar B_{m+m'}$,
and the inversion in $G$ induces a map $B_{m} \rar B_{-m}$.
\end{lemm}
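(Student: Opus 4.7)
The plan is to unpack the factorisation hypothesis on $\psi$ (resp.\ $\psi_{lm}$) into two concrete statements, and then deduce everything by direct algebraic manipulation inside $G$.

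For part (1), the factorisation through the bijection $\psi':A\times A\rar A\times B$ is equivalent to the conjunction of (a) $u^{-1}v\in B$ for every $u,v\in A$, and (b) for every $u\in A$ and every $y\in B$ there exists a unique $v\in A$ with $u^{-1}v=y$; in particular (b) says $A\cdot B\subset A$. Fix any $u_{0}\in A$. Then $e=u_{0}^{-1}u_{0}\in B$ by (a). For closure under multiplication, given $y_{1},y_{2}\in B$ one has $u_{0}y_{1}\in A$ and then $(u_{0}y_{1})y_{2}\in A$ by two applications of (b), whence $y_{1}y_{2}=u_{0}^{-1}\bigl((u_{0}y_{1})y_{2}\bigr)\in B$ by (a). For inversion, $u_{0}y\in A$ and $(u_{0}y)^{-1}u_{0}=y^{-1}\in B$ by (a). Thus $B$ is a subgroup and $A$ is stable under right multiplication by $B$; the torsor property is then automatic, since for any $u_{1},u_{2}\in A$ the unique $y\in B$ with $u_{2}=u_{1}y$ is $y=u_{1}^{-1}u_{2}$.

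For part (2), the identical unpacking of $\psi'_{lm}$ yields (a') $u^{-1}v\in B_{m}$ whenever $u\in A_{l}$, $v\in A_{l+m}$, and (b') $A_{l}\cdot B_{m}\subset A_{l+m}$. Applying the argument of part (1) with $m=0$ to the pair $(A_{l},B_{0})$ shows that $B_{0}$ is a subgroup of $G$ and that each $A_{l}$ is a $B_{0}$-torsor under right multiplication. The additional statement that for any $u\in A_{l}$, $v\in A_{l+m}$ there exists $y\in B_{m}$ with $v=uy$ is exactly the surjectivity part of $\psi'_{lm}$. For the multiplication $B_{m}\times B_{m'}\rar B_{m+m'}$, pick $u\in A_{l}$ and $y_{1}\in B_{m}$, $y_{2}\in B_{m'}$; then $uy_{1}\in A_{l+m}$ by (b'), $(uy_{1})y_{2}\in A_{l+m+m'}$ by (b') applied with index $l+m$, so $y_{1}y_{2}=u^{-1}\bigl((uy_{1})y_{2}\bigr)\in B_{m+m'}$ by (a'). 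For the inversion $B_{m}\rar B_{-m}$, pick $y\in B_{m}$ and $u\in A_{l}$; then $uy\in A_{l+m}$ by (b'), and $y^{-1}=(uy)^{-1}u\in B_{-m}$ by (a') applied to $(uy,u)\in A_{l+m}\times A_{l}$.

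The argument is genuinely elementary; the only thing demanding attention is bookkeeping of the indices $l$ and $m$, which is why conclusions such as $y^{-1}\in B_{-m}$ need the map $\psi_{l+m,\,-m}'$ rather than $\psi_{l,m}'$. There is no real analytic or algebraic obstacle, which is consistent with the author's decision to omit the proof.
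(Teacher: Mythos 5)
Your proof is correct and is precisely the elementary argument the paper alludes to when it omits the proof: unpacking the factorisation into the two conditions $u^{-1}v\in B_m$ (for $u\in A_l$, $v\in A_{l+m}$) and $A_l\cdot B_m\subset A_{l+m}$, and then combining them with careful index bookkeeping. One cosmetic remark: the existence of $y\in B_m$ with $v=uy$ for given $u\in A_l$, $v\in A_{l+m}$ is not the surjectivity of $\psi'_{lm}$ but simply the fact that $\psi_{lm}$ factors through $A_l\times B_m$ (your condition (a'), since the only candidate is $y=u^{-1}v$); this does not affect the validity of the argument.
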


By Proposition \ref{gal_tri2} and Lemma \ref{gal_gp}, we have surjective maps
\begin{align}
Z_{l}(S) \times Z_{l+m}(S) \rar \Gamma_{m}(S) ; \ (u,v) \mapsto u^{-1} v, \label{eq_gp1} \\
Z_{l}(S) \times \Gamma_{m}(S) \rar Z_{l+m}(S) ; \ (x,y) \mapsto x y \label{eq_gp2}
\end{align}
for any $\bar{E}$-algebra $S$.

\begin{theo}\label{gal_torsor}
$(1)$ The $F$-scheme $\Gamma$ is a closed $F$-subgroup scheme of $\GL_{r/F}$,
the $E$-scheme $Z$ is stable under right multiplication by $\Gamma_{E}$ and is a $\Gamma_{E}$-torsor.

$(2)$ The $F$-scheme $\Gamma_{0}$ is a closed $F$-subgroup schemes of $\GL_{r/F}$,
the $E$-scheme $Z_{l}$ is stable under right multiplication by $\Gamma_{0,E}$
and is a $\Gamma_{0,E}$-torsor for each $l$.

$(3)$ The $F$-scheme $\Gamma_{m}$ is stable under right and left multiplications by $\Gamma_{0}$
and is a $\Gamma_{0}$-torsor for each $m$.
\end{theo}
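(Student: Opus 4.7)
The plan is to combine the scheme-theoretic isomorphisms of Proposition \ref{gal_tri2} with the elementary group-theoretic output of Lemma \ref{gal_gp}, applied at the level of $S$-valued points, and then to pass from pointwise to scheme-theoretic statements via Yoneda and faithfully flat descent.

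For Part (1), fix an $\bar{E}$-algebra $S$ with $Z(S) \neq \emptyset$; since $Z$ is a nonempty finite-type $E$-scheme (the ideal $\mfp$ being proper as $1 \notin \mfp$) such $S$ exists. Proposition \ref{gal_tri2}(1) applied at $S$ gives a bijection $\psi'(S)\colon Z(S) \times Z(S) \to Z(S) \times \Gamma(S)$. Lemma \ref{gal_gp}(1) with $G = \GL_r(S)$, $A = Z(S)$, $B = \Gamma(S)$ then yields that $\Gamma(S)$ is a subgroup of $\GL_r(S)$ and that $Z(S)$ is a $\Gamma(S)$-torsor under right multiplication. Running this for all $\bar{E}$-algebras $S$ (after further faithfully flat base change if needed to secure $Z(S) \neq \emptyset$), Yoneda shows that the multiplication and inversion of $\GL_{r/\bar{E}}$ restrict to morphisms $\Gamma_{\bar{E}} \times_{\bar{E}} \Gamma_{\bar{E}} \to \Gamma_{\bar{E}}$ and $\Gamma_{\bar{E}} \to \Gamma_{\bar{E}}$; faithful flatness of $\bar{E}/F$ descends these to morphisms over $F$, making $\Gamma$ a closed subgroup scheme of $\GL_{r/F}$. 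The torsor structure on $Z$ can be read off directly from $\psi'$: its inverse composed with the second projection yields the action morphism $Z \times_E \Gamma_E \to Z$, and $\psi'$ itself is precisely the shear isomorphism $Z \times_E \Gamma_E \to Z \times_E Z$ witnessing torsor-ness.

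Part (2) proceeds identically with Proposition \ref{gal_tri2}(2) in the case $m = 0$, producing bijections $Z_l(S) \times Z_l(S) \to Z_l(S) \times \Gamma_0(S)$ and invoking Lemma \ref{gal_gp}(2). For Part (3), the \emph{Moreover} clause of Lemma \ref{gal_gp}(2) provides multiplication maps $\Gamma_m(S) \times \Gamma_{m'}(S) \to \Gamma_{m+m'}(S)$ and inversion $\Gamma_m(S) \to \Gamma_{-m}(S)$; specializing to $m' = 0$ (and noting that $\Gamma_m$ is nonempty since $\mfq_m$ is proper) gives that $\Gamma_m$ is stable under both left and right multiplication by $\Gamma_0$. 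The $\Gamma_0$-torsor structure on $\Gamma_m$ follows because on $S$-points the action is free and transitive, and the scheme-theoretic isomorphism $\Gamma_0 \times \Gamma_m \to \Gamma_m \times \Gamma_m$ can be deduced from $\psi_{lm}'$ combined with $\psi_{l0}'$, which together identify both sides with suitable subschemes of $Z_l \times Z_{l+m}$ and $Z_l \times Z_l$ respectively.

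The main obstacle will be the passage from pointwise group-theoretic information to scheme-theoretic statements, which requires some care with faithful flatness (to descend from $\bar{E}$-algebras to $F$) and with non-emptiness of the various schemes (so that Lemma \ref{gal_gp} is applicable in the first place). The underlying group laws are forced on us by Proposition \ref{gal_tri2}; the descent machinery just packages this information correctly.
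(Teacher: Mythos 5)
Your proposal is correct and follows essentially the same route as the paper: combine the isomorphisms of Proposition \ref{gal_tri2} with Lemma \ref{gal_gp} on $S$-valued points for $\bar{E}$-algebras $S$, then descend the group-scheme structure to $F$ and the torsor structure to $E$ by faithful flatness. Your extra care about securing $Z(S)\neq\emptyset$ via further faithfully flat base change, and your explicit derivation of part (3) from $\psi_{lm}'$ and $\psi_{l0}'$, only fill in details the paper leaves implicit ("$(1)$ and $(3)$ can be proved by the same argument").
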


\begin{proof}
We prove only $(2)$.
Then $(1)$ and $(3)$ can be proved by the same argument.
By Proposition \ref{gal_tri2}, we have a bijection
$Z_{l}(S) \times Z_{l+m}(S) \rar Z_{l}(S) \times \Gamma_{m}(S) ; (u,v) \mapsto (u,u^{-1}v)$
for any $\bar{E}$-algebra $S$.
Since $Z_{l}(S)$ is non-empty,
Lemma \ref{gal_gp} implies that $\Gamma_{0,\bar{E}}$ is a closed subgroup scheme of $\GL_{r/\bar{E}}$
and $Z_{l,\bar{E}}$ is a $\Gamma_{0,\bar{E}}$-torsor.
Therefore $\Gamma_{0}$ is a closed subgroup scheme of $\GL_{r/F}$
by the faithfully flatness of the inclusion $F \rar \bar{E}$.
Similarly, $Z_{l}$ is a $\Gamma_{0,E}$-torsor by the faithfully flatness of the inclusion $E \rar \bar{E}$.
\end{proof}

\begin{theo}\label{gal_smooth}
$(a)$ The $E$-schemes $Z$ and $Z_{l}$ are smooth.

$(b)$ The $F$-schemes $\Gamma$ and $\Gamma_{m}$ are smooth.

$(c)$ If $E$ is algebraically closed in the fraction field $\Lambda_{l_0}$ of $\Slz$ for some $l_{0}$,
then $Z_{l}$ and $\Gamma_{m}$ are absolutely irreducible.

$(d)$ $\dim \Gamma = \dim \Gamma_{m} = \tdeg_{E}\Lambda_{l}$.
\end{theo}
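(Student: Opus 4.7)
The strategy is to leverage the isomorphism $Z_{l, L_l} \cong \Gamma_{0, L_l}$ coming from Proposition \ref{gal_tri1}(2) with $m = l$, the torsor descriptions of Theorem \ref{gal_torsor}, the separability of $L_l/E$ built into the $\sigma$-admissibility of $(F,E,L)$, and faithfully flat descent. The key move is to prove smoothness of $\Gamma_0$ over $F$ first, and then propagate everything via the torsor structure.

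Since $\Sigma_l$ is a subring of the field $L_l$ and $L_l/E$ is separable, $\Sigma_l \otimes_E \bar E$ embeds in the reduced ring $L_l \otimes_E \bar E$, so $Z_l$ is geometrically reduced and integral with function field $\Lambda_l \subseteq L_l$; the analogous statement applies to $Z$. Taking $m = l$ in Proposition \ref{gal_tri1}(2) gives $Z_{l, L_l} \cong \Gamma_{0, L_l}$, which is therefore a geometrically reduced group scheme of finite type over the field $L_l$, hence smooth over $L_l$. Since $L_l/F$ is a faithfully flat field extension, smoothness descends, giving $\Gamma_0$ smooth over $F$. To upgrade this to smoothness of $\Gamma$, I verify that the identity $I \in \GL_r$ lies in $\Gamma_0$: for any $h \in \mathfrak{q}_0$, applying the multiplication $L_l \otimes_E L_l \to L_l$ to the relation $h((\Psi_l^{-1} \otimes 1)(1 \otimes \Psi_l)) = 0$ yields $h(I) = 0$. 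Hence $\Gamma$ is smooth at $I$, and smooth everywhere by homogeneity. The torsor structures of Theorem \ref{gal_torsor} then give smoothness of $Z$ (as a $\Gamma_E$-torsor), of $Z_l$ (as a $\Gamma_{0,E}$-torsor), and of $\Gamma_m$ (as a $\Gamma_0$-torsor), settling (a) and (b).

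For (c), the hypothesis that $E$ is algebraically closed in $\Lambda_{l_0}$, combined with the separability of $\Lambda_{l_0}/E$, gives $Z_{l_0}$ geometrically integral over $E$. Transferring this via $Z_{l_0, L_{l_0}} \cong \Gamma_{0, L_{l_0}}$, then using surjectivity of $\Spec \bar L_{l_0} \to \Spec \bar F$, one concludes that $\Gamma_{0, \bar F}$ is irreducible, i.e., $\Gamma_0$ is geometrically irreducible over $F$. Any torsor under a smooth connected algebraic group becomes trivial after base change to a separably closed field, so $\Gamma_{m, \bar F} \cong \Gamma_{0, \bar F}$ and $Z_{l, \bar E} \cong \Gamma_{0, \bar E}$ as schemes, whence $\Gamma_m$ and $Z_l$ are absolutely irreducible. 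For (d), the same torsor observation yields $\dim Z_l = \dim \Gamma_{0, E} = \dim \Gamma_0 = \dim \Gamma_m$; since $Z_l$ is integral of finite type over $E$ with function field $\Lambda_l$, this common value equals $\tdeg_E \Lambda_l$. Finally $\dim \Gamma = \dim \Gamma_0$ because $\Gamma = \bigcup_m \Gamma_m$ is a finite union of subschemes all of dimension $\dim \Gamma_0$.

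The main technical hurdle is the faithfully flat descent of smoothness, and in (c) of geometric irreducibility, from the field $L_l$ back down to $F$; the remaining steps rely on standard properties of torsors under smooth group schemes, together with the explicit calculation placing $I$ in $\Gamma_0$.
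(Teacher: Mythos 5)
Your proof is correct and follows essentially the same route as the paper: geometric reducedness of $Z_{l}$ from the separability of $L_{l}/E$, transfer to the group side via the trivialization isomorphisms of Proposition \ref{gal_tri1} and Theorem \ref{gal_torsor}, the fact that a geometrically reduced group scheme of finite type over a field is smooth, and the torsor structure to propagate smoothness, irreducibility and dimension. The one step you should justify more carefully is the passage from ``$\Gamma_{0}$ is smooth and contains $I$'' to ``$\Gamma$ is smooth at $I$'': since $\Gamma=\bigcup_{m}\Gamma_{m}$, you must rule out that some component $\Gamma_{m}\neq\Gamma_{0}$ also passes through $I$ (a union of two smooth subschemes through a point need not be smooth there); this follows from the coset structure in Theorem \ref{gal_torsor}$(3)$, since $I\in\Gamma_{m}(\bar{F})$ would force $\Gamma_{m,\bar{F}}=I\cdot\Gamma_{0,\bar{F}}=\Gamma_{0,\bar{F}}$, so the components through $I$ all coincide with $\Gamma_{0}$ and $\Gamma$ agrees with $\Gamma_{0}$ near $I$. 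The paper avoids this issue by showing all of $Z$ (hence all of $\Gamma$, via $Z_{\bar{E}}\cong\Gamma_{\bar{E}}$) is geometrically reduced and applying the smoothness criterion to the group scheme $\Gamma$ itself rather than to $\Gamma_{0}$.
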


\begin{proof}
$(a)$, $(b)$ Since $L_{l} / E$ is a separable extension,
$\Lambda / E$ is also a separable extension,
where $\Lambda = \Frac(\Sigma)$ the total ring of fractions of $\Sigma$.
Thus for any field extension $\Omega / E$, $\Lambda \otimes_{E} \Omega$ is reduced.
Therefore $\Sigma \otimes_{E} \Omega$ is reduced and $Z = \Spec \Sigma$ is absolutely reduced.
Since $\Gamma_{\bar{E}} \cong Z_{\bar{E}}$, $\Gamma$ is absolutely reduced.
Since $\Gamma$ is an algebraic group,
the property that $\Gamma$ is absolutely reduced implies that $\Gamma$ is smooth.
Again since $\Gamma_{\bar{E}} \cong Z_{\bar{E}}$, we have that $Z$ is smooth.
The statements of $Z_{l}$ and $\Gamma_{m}$ are proved similarly.

$(c)$ For any field extension $\Omega / E$,
$\Lambda_{l_0} \otimes_{E} \Omega$ is an integral domain by the assumption.
Therefore $\Sigma_{l_0} \otimes_{E} \Omega$ is an integral domain and $Z_{l_0}$ is absolutely integral.
Since $Z_{l,\bar{E}} \cong \Gamma_{m,\bar{E}}$ for all $l$ and $m$,
$Z_{l}$ and $\Gamma_{m}$ are all absolutely integral.

$(d)$ We have an equality
$\dim \Gamma = \dim \Gamma_{m} = \dim \Gamma_{0} = \dim Z_{l} = \tdeg_{E}\Lambda_{l}$.
\end{proof}

\begin{coro}
$(1)$ There exists a divisor $d'$ of $d$ such that if $l \equiv l' \pmod {d'}$ then $Z_{l} = Z_{l'}$
and if $l \not\equiv l' \pmod {d'}$ then $Z_{l} \cap Z_{l'} = \emptyset$.

$(2)$ If $m \equiv m' \pmod {d'}$ then $\Gamma_{m} = \Gamma_{m'}$
and if $m \not\equiv m' \pmod {d'}$ then $\Gamma_{m} \cap \Gamma_{m'} = \emptyset$.
\end{coro}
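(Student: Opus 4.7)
The strategy is to isolate the subgroup $H := \{m \in \mathbb{Z}/d : \Gamma_m = \Gamma_0\}$ of $\mathbb{Z}/d$, show it governs both parts, and then read off $d' := [\mathbb{Z}/d : H]$. By Theorem \ref{gal_torsor}(3) each $\Gamma_m$ is stable under both left and right multiplication by $\Gamma_0$ and is a $\Gamma_0$-torsor, so on $\bar{E}$-points $\Gamma_0(\bar{E})$ is a normal subgroup of $\Gamma(\bar{E}) \subseteq \GL_{r}(\bar{E})$ and each $\Gamma_m(\bar{E})$ is a coset. Combining with Lemma \ref{gal_gp}(2) yields $\Gamma_m(\bar{E}) \cdot \Gamma_{m'}(\bar{E}) = \Gamma_{m+m'}(\bar{E})$ and $\Gamma_m(\bar{E})^{-1} = \Gamma_{-m}(\bar{E})$. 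Since the $\Gamma_m$ are geometrically reduced by smoothness (Theorem \ref{gal_smooth}(b)), equalities of $\bar{E}$-point sets lift to scheme-theoretic equalities, so $H$ is a subgroup of $\mathbb{Z}/d$; set $d' := [\mathbb{Z}/d : H]$, whence $d' \mid d$ and $H = d' \mathbb{Z}/d$.

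For (2), the group law on $\bar{E}$-points gives $\Gamma_m(\bar{E}) = \Gamma_{m'}(\bar{E}) \Longleftrightarrow \Gamma_m(\bar{E}) \cdot \Gamma_{m'}(\bar{E})^{-1} = \Gamma_0(\bar{E}) \Longleftrightarrow \Gamma_{m-m'}(\bar{E}) = \Gamma_0(\bar{E})$, and hence $\Gamma_m = \Gamma_{m'} \Longleftrightarrow m \equiv m' \pmod{d'}$. For disjointness when $m \not\equiv m' \pmod{d'}$: $\Gamma$ is smooth (Theorem \ref{gal_smooth}(b)), so its irreducible components coincide with its connected components and are pairwise disjoint; each $\Gamma_m$ is irreducible of dimension $\dim \Gamma_0 = \dim \Gamma$ and $\Gamma = \bigcup_m \Gamma_m$, so the distinct $\Gamma_m$'s are exactly these irreducible components.

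For (1), Proposition \ref{gal_tri2}(2) with the first index taken as $0$ gives the $E$-scheme isomorphism $Z_0 \times_E Z_l \xrightarrow{\sim} Z_0 \times_E \Gamma_{l,E}$, $(u,v) \mapsto (u, u^{-1} v)$. Fixing $u_0 \in Z_0(\bar{E})$ (nonempty by smoothness), the induced bijection $Z_l(\bar{E}) \to \Gamma_l(\bar{E})$, $v \mapsto u_0^{-1} v$, shows $Z_l(\bar{E}) = u_0 \cdot \Gamma_l(\bar{E})$. Hence $Z_l(\bar{E}) = Z_{l'}(\bar{E})$ iff $\Gamma_l(\bar{E}) = \Gamma_{l'}(\bar{E})$, and by reducedness (Theorem \ref{gal_smooth}(a)) together with (2), $Z_l = Z_{l'} \Longleftrightarrow l \equiv l' \pmod{d'}$. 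Disjointness of distinct $Z_l$'s follows from smoothness of $Z$ by the same irreducible-equals-connected-component argument. The only nontrivial point is the systematic passage from $\bar{E}$-points to $E$- or $F$-schemes, which is routine because smoothness yields geometric reducedness and the objects in sight are closed subschemes of $\GL_{r}$.
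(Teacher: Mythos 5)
Your overall strategy is sound and, at bottom, the same as the paper's: everything rests on the coset/torsor structure of the $\Gamma_m$ and $Z_l$ coming from Theorem \ref{gal_torsor} and Lemma \ref{gal_gp}, together with geometric reducedness to pass from $\bar{E}$-points back to schemes. Your bookkeeping is slightly different: you extract the subgroup $H=\{m:\Gamma_m=\Gamma_0\}\subset\mathbb{Z}/d$ and set $d'=[\mathbb{Z}/d:H]$, then transfer to the $Z_l$ via Proposition \ref{gal_tri2}; the paper instead takes $d'$ to be the minimal period of $(Z_l)_l$ and then checks that the period of $(\Gamma_m)_m$ equals it, using the surjections (\ref{eq_gp1}) and (\ref{eq_gp2}). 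These are equivalent, and your version makes the divisibility $d'\mid d$ slightly more transparent.

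There is, however, one step that is wrong as stated: you justify disjointness of distinct $\Gamma_m$'s (and of distinct $Z_l$'s) by asserting that each $\Gamma_m$ is irreducible, so that the distinct $\Gamma_m$'s are the irreducible $=$ connected components of the smooth scheme $\Gamma$. Irreducibility of $\Gamma_m$ and $Z_l$ is \emph{not} available in general: it is exactly the content of Theorem \ref{gal_smooth}$(c)$, which requires the extra hypothesis that $E$ be algebraically closed in $\Lambda_{l_0}$, and the corollary is stated without that hypothesis. (For instance $\Gamma_0$ is merely a smooth affine group scheme over $F$ and may well be disconnected, in which case every $\Gamma_m$ is disconnected.) Fortunately the detour is unnecessary: you have already shown that the $\Gamma_m(\bar{E})$ are cosets of the subgroup $\Gamma_0(\bar{E})$ in $\Gamma(\bar{E})$, and that $Z_l(\bar{E})=u_0\cdot\Gamma_l(\bar{E})$ is a translate of such a coset; two cosets (or translates of cosets) are either equal or disjoint, and since the schemes in question are geometrically reduced and of finite type, emptiness of $\Gamma_m(\bar{E})\cap\Gamma_{m'}(\bar{E})$ gives $\Gamma_m\cap\Gamma_{m'}=\emptyset$ as schemes. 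This is also how the paper argues (``Since $Z_l$ is a $\Gamma_{0,E}$-torsor and absolutely reduced\dots it is clear that $Z_l=Z_{l'}$ or $Z_l\cap Z_{l'}=\emptyset$''). Replace the irreducible-components argument by this coset argument and your proof is complete.
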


Therefore we can write $Z = \amalg_{l \in \mathbb{Z}/d'} Z_{l}$,
$\Sigma = \prod_{l \in \mathbb{Z}/d'} \Sl$, $\Lambda = \prod_{l \in \mathbb{Z}/d'} \Lambda_{l}$
and $\Gamma = \amalg_{m \in \mathbb{Z}/d'} \Gamma_{m}$.

\begin{proof}
$(1)$ Since $Z_l$ is a $\Gamma_{0,E}$-torsor and absolutely reduced for all $l$,
it is clear that $Z_{l} = Z_{l'}$ or $Z_{l} \cap Z_{l'} = \emptyset$.
We have the surjective map (\ref{eq_gp2}) : $Z_{l}(\bar{E}) \times \Gamma_{1}(\bar{E}) \rar Z_{l+1}(\bar{E})$.
Therefore if $Z_{l} =Z_{l'}$, then $Z_{l+1}(\bar{E}) = Z_{l'+1}(\bar{E})$.
Hence if we take $d'$ to be the minimum positive integer such that $Z_{0} = Z_{d'}$,
then $d'$ satisfies the desired properties.

$(2)$ By the same argument of the proof of $(1)$,
there exists a divisor $d''$ of $d$ which is the period of $(\Gamma_{m})_{m}$.
Then by the map (\ref{eq_gp1}), we have
$$\Gamma_{m}(\bar{E}) = Z_{l}(\bar{E})^{-1} Z_{l+m}(\bar{E}) = Z_{l}(\bar{E})^{-1} Z_{l+m+d'}(\bar{E}) = \Gamma_{m+d'}(\bar{E}).$$
This means that $d'' | d'$.
By the map (\ref{eq_gp2}), we have
$$Z_{l}(\bar{E}) = Z_{l}(\bar{E}) \Gamma_{0}(\bar{E}) = Z_{l}(\bar{E}) \Gamma_{d''}(\bar{E}) = Z_{l+d''}(\bar{E}).$$
This means that $d' | d''$.
\end{proof}

\subsection{$\Gamma$-action}\label{gal_action}

For any $F$-algebras $R$ and $S$, we set $S\RR := R \otimes_{F} S$.
In particular, if $R = F'$ is a field, we set $S' := S^{(F')}$.
If $\sigma$ acts on $S$, we define the $\sigma$-action on $S\RR$ by $\id \otimes \sigma$.
Note that, if $S^{\sigma} = F$, then we have $(S\RR)^{\sigma} = R$.
Let $\aus(\Sigma\RR / E\RR)$ denote the group of automorphisms of $\Sigma\RR$ over $E\RR$ that commute with $\sigma$.
Similarly we define $\aus(\Lambda\RR/ E\RR)$.
For any $\gamma \in \Gamma(R)$, we obtain an automorphism $Z_{E\RR} \rar Z_{E\RR} ; x \mapsto x\gamma$.
On the level of coordinate rings, this corresponds to an automorphism
$\Sigma\RR \rar \Sigma\RR ; h(\Psi) \mapsto \gamma . h(\Psi) := h(\Psi \gamma)$.
Note that $\Sigma\RR = E\RR \otimes_{E} \Sigma = E\RR [\Psi,\Delta(\Psi)^{-1}] \cong E\RR\X / \mfp_{E\RR}$.
Thus we have a group homomorphism $\kappa_{R} : \Gamma(R) \rar \Aut(\Sigma\RR / E\RR)$.

\begin{lemm}\label{gal_g=a}
$(1)$ For any $F$-algebra $R$, the map $\kappa_{R}$ induces an isomorphism
$\Gamma(R) \xrightarrow{\sim} \aus(\Sigma\RR / E\RR)$.
Its inverse is the map $\alpha \mapsto \Psi^{-1} (\alpha \Psi_{ij})_{ij}$.

$(2)$ $\aus(\Sigma / E) \cong \aus(\Lambda / E)$.

$(3)$ If $\Lambda_{l} / F$ is a regular extension
$($i.e. separable extension and $F$ is algebraically closed in $\Lambda_{l})$
for all $l$ and $F' / F$ is an algebraic extension of fields,
then we have $\aus(\Sigma' / E') \cong \aus(\Lambda' / E')$.
\end{lemm}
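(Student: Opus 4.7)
The plan for (1) is to construct and verify the inverse maps. For the forward direction, given $\gamma \in \Gamma(R) \subseteq \GL_{r}(R)$, the substitution $X \mapsto X\gamma$ on $E\RR\X$ descends to an $E\RR$-algebra endomorphism of $\Sigma\RR \cong E\RR\X/\mfp_{E\RR}$: the key point is that $\Psi\gamma$ remains an $\Sigma\RR$-valued point of $Z_{E\RR}$ thanks to Theorem \ref{gal_torsor}, so any relation $h \in \mfp$ vanishing at $\Psi$ also vanishes at $\Psi\gamma$. This endomorphism is invertible (apply the same construction to $\gamma^{-1}$) and commutes with $\sigma$, since the entries of $\gamma$ lie in $R \subseteq (E\RR)^{\sigma}$ while $\sigma\Psi = \Phi\Psi$ gives $\sigma(\Psi\gamma) = \Phi(\Psi\gamma)$.

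For the inverse, given $\alpha \in \aus(\Sigma\RR/E\RR)$, set $\gamma := \Psi^{-1}\alpha(\Psi)$, which lies in $\Mat_{r}(\Sigma\RR)$ because $\Sigma$ contains $\det(\Psi)^{-1}$. The identity $\sigma\gamma = (\Phi\Psi)^{-1}\alpha(\Phi\Psi) = \gamma$ shows $\gamma$ is $\sigma$-fixed. Since $F$ is a field, an $F$-linear splitting $\Sigma = F \oplus W$ makes $\sigma - \id$ an $F$-linear injection $W \hookrightarrow \Sigma$ that remains injective after tensoring with $R$, so $(\Sigma\RR)^{\sigma} = R$ and $\gamma \in \Mat_{r}(R)$. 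To promote $\gamma$ from $\GL_{r}(R)$ to $\Gamma(R)$, I invoke Proposition \ref{gal_tri2}(1): the pair of $\Sigma\RR$-valued points $(\id, \alpha)$ of $Z_{E\RR}$ corresponds under $\psi'$ to an $\Sigma\RR$-valued point of $\Gamma_{E\RR}$, which unpacks to $\gamma$; since the entries of $\gamma$ lie in $R$, it descends to an $R$-valued point of $\Gamma$. That the two constructions are mutually inverse is then immediate from $\alpha(\Psi) = \Psi\gamma$.

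Parts (2) and (3) share the same strategy. Any $E$-algebra (resp.\ $E'$-algebra) automorphism of $\Sigma$ (resp.\ $\Sigma'$) extends uniquely to the total ring of fractions by functoriality of localization. Conversely, a $\sigma$-commuting automorphism $\alpha$ of $\Lambda/E$ (resp.\ $\Lambda'/E'$) preserves $\Sigma$ (resp.\ $\Sigma'$) by the same trick: $\gamma := \Psi^{-1}\alpha(\Psi)$ is $\sigma$-fixed, so lies in $\GL_{r}(\Lambda^{\sigma}) = \GL_{r}(F)$ (resp.\ $\GL_{r}((\Lambda')^{\sigma}) = \GL_{r}(F')$), whence $\alpha(\Psi) = \Psi\gamma \in \Mat_{r}(\Sigma)$ (resp.\ $\Mat_{r}(\Sigma')$). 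For (3), two additional structural facts are needed: $(\Lambda')^{\sigma} = F'$, which follows from flatness of the field extension $F'/F$ applied to $0 \to F \to \Lambda \xrightarrow{\sigma - \id} \Lambda$; and that $\Lambda'_{l} = F' \otimes_{F} \Lambda_{l}$ is the total ring of fractions of $\Sigma'_{l}$, which uses regularity of $\Lambda_{l}/F$ (so $F' \otimes_{F} \Lambda_{l}$ is a domain) together with algebraicity of $F'/F$ (so this domain, being integral over the field $\Lambda_{l}$, is itself a field).

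The main obstacle will be the careful bookkeeping in Part (1) that identifies $\gamma$ as an $R$-valued, rather than merely an $\Sigma\RR$-valued, point of $\Gamma$. This relies on the fixed-point formula $(\Sigma\RR)^{\sigma} = R$ for arbitrary $F$-algebras $R$, which is precisely where the hypothesis that $F$ is a field becomes essential.
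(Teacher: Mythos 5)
Your proposal is correct and follows essentially the same route as the paper: the forward map is the torsor action $X\mapsto X\gamma$ on $Z_{E\RR}$, the inverse is $\alpha\mapsto\Psi^{-1}(\alpha\Psi_{ij})_{ij}$ with $\sigma$-fixedness plus $(\Sigma\RR)^{\sigma}=R$ forcing the entries into $R$, and (2)–(3) use $\alpha(\Psi)\in\Psi\cdot\GL_r(F)$ (resp.\ $\GL_r(F')$) to see that $\Sigma$ (resp.\ $\Sigma'$) is preserved. The only differences are cosmetic: you supply proofs of $(\Sigma\RR)^{\sigma}=R$ and of $\Lambda'_l=\Frac(\Sigma'_l)$ that the paper states without detail, and you cite Proposition \ref{gal_tri2}(1) where the paper cites Theorem \ref{gal_torsor}.
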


\begin{proof}
$(1)$ For any $\gamma \in \Gamma(R)$ and $h(\Psi) \in \Sigma\RR$, we have
$\sigma(\gamma . h(\Psi)) = \sigma(h(\Psi \gamma)) = h^{\sigma}(\sigma(\Psi \gamma)) = h^{\sigma}(\Phi \Psi \gamma)
= \gamma . (h^{\sigma}(\Phi \Psi)) = \gamma . (h^{\sigma}(\sigma \Psi)) = \gamma . (\sigma(h(\Psi)))$.
Hence $\kappa_{R}(\gamma)$ commutes with $\sigma$.
Suppose that $\kappa_{R}(\gamma)$ is the identity.
Then $h(\Psi \gamma) = h(\Psi)$ for any $h(\Psi) \in \Sigma\RR$.
In particular if we take $h(\Psi) = \Psi_{ij}$ for each $i$ and $j$,
then we obtain $\Psi \gamma = \Psi$ in $\GL_{r}(\Sigma\RR)$.
Therefore $\gamma = 1$ and this means that $\kappa_{R}$ is injective.
Conversely, let $\alpha \in \aus(\Sigma\RR / E\RR)$ be any element.
Then $\alpha$ corresponds to an automorphism $\bar{\alpha} : Z_{E\RR} \rar Z_{E\RR}$,
and $\bar{\alpha}$ maps the $\Sigma\RR$-valued point $\Psi$ to $(\alpha \Psi_{ij})_{ij}$.
By Theorem \ref{gal_torsor}, there exists an element $\gamma \in \Gamma(\Sigma\RR)$
such that $\Psi \gamma = (\alpha \Psi_{ij})_{ij}$.
Then for any $h(\Psi) \in \Sigma\RR$,
we have $\alpha(h(\Psi)) = h((\alpha \Psi_{ij})_{ij}) = h(\Psi \gamma)$.
Thus we obtain
$\sigma(\gamma . h(\Psi)) = \sigma(\alpha(h(\Psi))) = \alpha(\sigma(h(\Psi)))
= \alpha(h^{\sigma}(\Phi \Psi)) = \gamma . h^{\sigma}(\Phi \Psi) = h^{\sigma}(\Phi \Psi \gamma)$.
On the other hand,
$\sigma(\gamma . h(\Psi)) = \sigma(h(\Psi \gamma)) = h^{\sigma}((\sigma \Psi)(\sigma \gamma)) = h^{\sigma}(\Phi \Psi (\sigma \gamma))$.
If we take $h(\Psi) = \Psi_{ij}$ for each $i$ and $j$,
we obtain $\Phi \Psi (\sigma \gamma) = \Phi \Psi \gamma$.
Hence $\sigma \gamma = \gamma$.
Therefore we have $\gamma \in \Gamma(R)$ and $\kappa_{R}(\gamma) = \alpha$.

$(2)$ Since $\Lambda = \Frac(\Sigma)$,
any automorphism of $\Sigma$ extends uniquely to an automorphism of $\Lambda$.
Conversely if $\alpha \in \aus(\Lambda / E)$,
then $\sigma(\alpha(\Psi)) = \alpha(\sigma \Psi) = \alpha(\Phi \Psi) = \Phi \cdot \alpha(\Psi)$ in $\GL_{r}(L)$.
Thus we have $\alpha(\Psi) \in \Psi \cdot \GL_{r}(F)$.
This implies that $\alpha(\Sigma) \subset \Sigma$.
Similarly, we have $\alpha^{-1}(\Sigma) \subset \Sigma$.
Therefore $\alpha(\Sigma) = \Sigma$.

$(3)$ Since $\Lambda_{l} / F$ is a regular extension, so is $E / F$.
Since $F' / F$ is an algebraic extension,
$E'$ and $\Lambda_{l}'$ are fields and $\Lambda_{l}' = \Frac(\Sl')$.
Therefore $\Lambda' = \prod_{l \in \mathbb{Z}/d'} \Lambda_{l}'$ is a finite product of fields and $\Lambda' = \Frac(\Sigma')$.
Then, the proof is the same as $(2)$.
\end{proof}

We prepare some lemmas about Zariski density.
\begin{lemm}\label{gal_closure}
Let $\Omega / k$ be a field extension such that $\Omega$ is an algebraically closed field,
$X$ an algebraic variety over $k$ and $Y$ a closed subvariety of $X_{\Omega}$.
If $X(k) \cap Y(\Omega)$ is Zariski dense in $Y$, then $Y$ is defined over $k$,
i.e. there exists some algebraic variety $Y_{0}$ over $k$ such that $Y = Y_{0,\Omega}$.
\end{lemm}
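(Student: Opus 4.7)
The plan is to reduce to the case $X$ affine and then show that the defining ideal of $Y$ is generated by elements of the coordinate ring of $X$. First, cover $X$ by affine opens $\{U_i\}$ defined over $k$ and set $Y_i := Y \cap U_{i,\Omega}$, a closed subvariety of $U_{i,\Omega}$. Since $Y_i$ is open in $Y$, openness preserves Zariski density in a topological space (if $S$ is dense in $Y$ and $T \subset Y_i$ is closed containing $S \cap Y_i$, then $T \cup (Y \setminus Y_i)$ is closed in $Y$ and contains $S$, hence equals $Y$, forcing $T = Y_i$), so $U_i(k) \cap Y_i(\Omega)$ is Zariski dense in $Y_i$. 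If each $Y_i$ descends to a closed $Y_{0,i} \subset U_i$, the descents glue uniquely (since for an affine open $V \subset U_i \cap U_{i'}$ with coordinate ring $A_V$, the $k$-ideal defining the descent is forced to be $I \cap A_V$ by faithful flatness of $A_V \hookrightarrow A_V \otimes_k \Omega$). Hence we may assume $X = \Spec(A)$ affine.

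Write $Y = V(I)$ for the radical defining ideal $I \subset A \otimes_k \Omega$, set $I_0 := I \cap A$ (via $a \mapsto a \otimes 1$), and let $Y_0 := V(I_0) \subset X$. Since $I_0 \cdot (A \otimes_k \Omega) \subset I$ is immediate, it suffices to show the reverse inclusion. Fix a $k$-basis $\{e_j\}_{j \in J}$ of $\Omega$, so any $f \in I$ has a unique decomposition $f = \sum_{j} a_j \otimes e_j$ with $a_j \in A$, almost all zero. The goal is to show that each $a_j$ lies in $I_0$.

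For any point $x \in X(k) \cap Y(\Omega)$, the corresponding $k$-algebra homomorphism $\phi_x : A \to k$ extends to $\phi_x \otimes \id : A \otimes_k \Omega \to \Omega$, and this extension factors through the quotient $B := (A \otimes_k \Omega)/I$ (the coordinate ring of $Y$) precisely because $x \in Y(\Omega)$. Applying it to $f$ yields $0 = \sum_{j} \phi_x(a_j) e_j$; since $\phi_x(a_j) \in k$ and the $\{e_j\}$ are $k$-linearly independent in $\Omega$, we conclude $\phi_x(a_j) = 0$ for every $j$. Thus the image $\bar{a}_j \in B$ of $a_j$ vanishes at every point of $X(k) \cap Y(\Omega)$.

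By the density hypothesis, the closed subscheme $V(\bar{a}_j) \subset Y$ contains a Zariski dense subset of $Y$, so $V(\bar{a}_j) = Y$ as topological spaces. Since $Y$ is a variety, $B$ is a reduced, finitely generated algebra over the algebraically closed field $\Omega$; by Hilbert's Nullstellensatz its Jacobson radical vanishes, and the set-theoretic equality promotes to the algebraic identity $\bar{a}_j = 0$ in $B$. Hence $a_j \in I \cap A = I_0$, so $f \in I_0 \cdot (A \otimes_k \Omega)$, and $Y = Y_{0,\Omega}$. The one step requiring genuine care is this Nullstellensatz argument — it is where the hypotheses that $\Omega$ is algebraically closed and that $Y$ is a reduced subvariety are essential; everything else is formal bookkeeping around the basis decomposition.
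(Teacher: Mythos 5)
Your proof is correct and follows essentially the same approach as the paper's: decompose an element of the defining ideal over a $k$-basis of $\Omega$, evaluate at the dense set of $k$-rational points, use $k$-linear independence of the basis to kill each coefficient pointwise, and conclude via density and the Nullstellensatz that each coefficient lies in $I \cap A$. The only differences are cosmetic — you argue directly that each $a_j \in I_0$ rather than by contradiction with a basis of $A$ adapted to $\mfa_k$, and you spell out the reduction to the affine case and the Nullstellensatz step, both of which the paper leaves implicit.
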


\begin{proof}
We may assume that $X$ is affine.
Let $k[X]$ be the coordinate ring of $X$ and $\Omega[X]$ the coordinate ring of $X_{\Omega}$.
Then we have $\Omega[X] = \Omega \otimes_{k} k[X]$.
Let $\mfa \subset \Omega[X]$ be the defining ideal of $Y$,
$\mfa_{k} := \mfa \cap k[X]$ and $\mfa_{k,\Omega} := \mfa_{k} \cdot \Omega[X]$.
We need to show that $\mfa_{k,\Omega} = \mfa$.
Thus we assume that $\mfa_{k,\Omega} \subsetneq \mfa$.
Let $(g_{i})_{i \in I'}$ be a $k$-basis of $k[X]$
such that $(g_{i})_{i \in I}$ is a $k$-basis of $\mfa_{k}$ for some $I \subset I'$.
We also take $(c_{j})_{j \in J}$ to be a $k$-basis of $\Omega$.
Since $\mfa_{k,\Omega} \subsetneq \mfa$,
there exists a non-zero element $f = \sum_{i \in I' \setminus I} a_{i} g_{i} \in \mfa$ where $a_{i} \in \Omega$.
Write $a_{i} = \sum_{j} \alpha_{ij}c_{j} \ (\alpha_{ij} \in k)$.
Then we can write $f = \sum_{j} c_{j} \sum_{i \in I' \setminus I} \alpha_{ij} g_{i}$.
For any $x \in X(k) \cap Y(\Omega)$,
we have $\sum_{j} c_{j} \sum_{i \in I' \setminus I} \alpha_{ij} g_{i}(x) = f(x) = 0$.
Since $(c_{j})_{j}$ is linearly independent over $k$,
we have $\sum_{i \in I' \setminus I} \alpha_{ij} g_{i}(x) = 0$ for all $j$.
By the density assumption,
we obtain $\sum_{i \in I' \setminus I} \alpha_{ij} g_{i}(x) = 0$ for all $x \in Y(\Omega)$.
Therefore $\sum_{i \in I' \setminus I} \alpha_{ij} g_{i} \in \mfa \cap k[X] = \mfa_{k} = \oplus_{i \in I} k g_{i}$ for all $j$.
Since $(g_{i})_{i \in I'}$ is linearly independent over $k$, we have $\alpha_{ij} = 0$.
Thus $f = 0$, which is a contradiction.
\end{proof}

\begin{coro}\label{gal_ext}
Let $\Omega / k$ be a field extension such that $\Omega$ is an algebraically closed field
and $X$ an algebraic variety over $k$.
If $X(k)$ is Zariski dense in $X$, then $X(k)$ is Zariski dense in $X_{\Omega}$.
\end{coro}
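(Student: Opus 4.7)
The plan is to deduce this corollary directly from Lemma \ref{gal_closure} by taking $Y$ to be the Zariski closure of $X(k)$ inside $X_{\Omega}$. First I would let $Y \subset X_{\Omega}$ denote this Zariski closure, viewed as a closed subvariety of $X_{\Omega}$. Recall that we have the natural inclusion $X(k) \hookrightarrow X(\Omega) = X_{\Omega}(\Omega)$, so it makes sense to speak of the closure of $X(k)$ inside $X_{\Omega}$. By construction $X(k) \subset Y(\Omega)$, so the set $X(k) \cap Y(\Omega)$ equals $X(k)$, which is tautologically Zariski dense in $Y$.

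Next I would apply Lemma \ref{gal_closure} to the pair $Y \subset X_{\Omega}$. The hypothesis of the lemma is satisfied by the observation above, so the conclusion yields a closed subvariety $Y_{0} \subset X$ defined over $k$ with $Y = Y_{0,\Omega}$. Then I would check that $X(k) \subset Y_{0}(k)$: indeed, a $k$-valued point $x \in X(k)$ lies in $Y(\Omega) = Y_{0,\Omega}(\Omega)$, and since $Y_{0}$ is a $k$-subscheme of $X$, the fact that $x$ is $k$-rational and factors through $Y_{0,\Omega}$ forces $x \in Y_{0}(k)$.

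Finally, I would invoke the hypothesis that $X(k)$ is Zariski dense in $X$: since $Y_{0}$ is a closed subvariety of $X$ containing $X(k)$, the density of $X(k)$ in $X$ forces $Y_{0} = X$. Consequently $Y = Y_{0,\Omega} = X_{\Omega}$, which is exactly the assertion that $X(k)$ is Zariski dense in $X_{\Omega}$.

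There is no real obstacle here; the only point that demands a moment of care is the identification $X(k) \subset X_{\Omega}(\Omega)$ used when forming the closure and the descent step $X(k) \subset Y_{0}(\Omega) \Rightarrow X(k) \subset Y_{0}(k)$, both of which are immediate from the definitions of base change and $S$-valued points as recalled in the ``Base change'' subsection.
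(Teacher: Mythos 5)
Your argument is correct and is essentially identical to the paper's own proof: both take $Y$ to be the Zariski closure of $X(k)$ in $X_{\Omega}$, apply Lemma \ref{gal_closure} to descend $Y$ to a $k$-variety $Y_{0}$, and conclude $Y_{0}=X$ from the density of $X(k)$ in $X$. The extra care you take with the identifications $X(k)\subset X_{\Omega}(\Omega)$ and $X(k)\subset Y_{0}(k)$ is welcome but does not change the argument.
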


\begin{proof}
We take $Y$ to be the Zariski closure of $X(k)$ in $X_{\Omega}$.
Then there exists some algebraic variety $Y_{0}$ over $k$
such that $Y = Y_{0,\Omega}$ by Lemma \ref{gal_closure}.
It is clear that $X(k) \subset Y_{0}(k)$.
Hence we have $Y_{0} = X$ since $X(k)$ is Zariski dense in $X$.
Therefore we have $Y = X_{\Omega}$.
\end{proof}

\begin{lemm}\label{gal_prod}
Let $\Omega / k$ be a field extension, $X_{1}$ an algebraic variety over $\Omega$
and $X_{2}$ an algebraic varieties over $k$.
If $X_{2}(k)$ is Zariski dense in $X_{2}$,
then $X_{1}(\bar{\Omega}) \times X_{2}(k)$ is Zariski dense in $X_{1} \times_{\Omega} X_{2,\Omega}$,
where $\bar{\Omega}$ is an algebraic closure of $\Omega$.
\end{lemm}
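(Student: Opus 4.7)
The plan is to reduce the statement to Corollary \ref{gal_ext} by fibering the scheme $Y := X_1 \times_\Omega X_{2,\Omega}$ over $X_1$. Suppose $Z \subset Y$ is a closed subvariety containing the image of $X_1(\bar{\Omega}) \times X_2(k)$; since the extension $\Omega \hookrightarrow \bar{\Omega}$ is faithfully flat, it is enough to prove $Z_{\bar{\Omega}} = Y_{\bar{\Omega}}$, where $Y_{\bar{\Omega}} \cong X_{1,\bar{\Omega}} \times_{\bar{\Omega}} X_{2,\bar{\Omega}}$.

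First I would fix an arbitrary point $x \in X_1(\bar{\Omega}) = X_{1,\bar{\Omega}}(\bar{\Omega})$ and inspect the fiber over $x$ of the projection $\pi : Y_{\bar{\Omega}} \to X_{1,\bar{\Omega}}$. Because $\pi$ is the base change of the structure morphism $X_{2,\bar{\Omega}} \to \Spec \bar{\Omega}$, the fiber $\pi^{-1}(x)$ is canonically isomorphic to $X_{2,\bar{\Omega}}$, and the closed subscheme $Z_{\bar{\Omega}} \cap \pi^{-1}(x) \subset X_{2,\bar{\Omega}}$ contains every element of $X_2(k)$ (viewed inside $X_{2,\bar{\Omega}}(\bar{\Omega})$ via the convention fixed in the preliminaries) thanks to the assumption on $Z$.

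Next, applying Corollary \ref{gal_ext} to the variety $X_2$ over $k$ and the algebraically closed field $\bar{\Omega}$ upgrades the density of $X_2(k)$ in $X_2$ to density in $X_{2,\bar{\Omega}}$. Combining this with the previous step forces $\pi^{-1}(x) \subset Z_{\bar{\Omega}}$ for every $x \in X_1(\bar{\Omega})$. Taking the union over all such $x$, the entire set of $\bar{\Omega}$-rational points of $Y_{\bar{\Omega}}$ lies in $Z_{\bar{\Omega}}$; since the closed points of any algebraic variety over an algebraically closed field form a Zariski dense subset, we conclude $Z_{\bar{\Omega}} = Y_{\bar{\Omega}}$, whence $Z = Y$.

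The only technical subtlety is bookkeeping across the three base fields $k \subset \Omega \subset \bar{\Omega}$ and interpreting Zariski density for an $\Omega$-scheme through its base change to $\bar{\Omega}$, precisely as in Lemma \ref{gal_closure} and Corollary \ref{gal_ext}; I do not expect any deeper obstacle, since the fibrewise argument over $X_{1,\bar{\Omega}}$ immediately separates the roles of the two factors.
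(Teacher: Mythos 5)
Your proof is correct and is essentially the paper's argument: both reduce to Corollary \ref{gal_ext} to upgrade the density of $X_{2}(k)$ to $X_{2,\bar{\Omega}}$ and then argue fiberwise over the points of $X_{1}(\bar{\Omega})$, the only difference being that the paper phrases this as a proof by contradiction (exhibiting a point $(x,y)$ outside the closure and deriving a contradiction in the fiber over $x$) while you argue directly that every fiber lies in the closed set. No gap.
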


\begin{proof}
Let $V$ be the Zariski closure of $X_{1}(\bar{\Omega}) \times X_{2}(k)$ in $X_{1} \times_{\Omega} X_{2,\Omega}$.
We assume that $V \subsetneq X_{1} \times_{\Omega} X_{2,\Omega}$.
Then we have an element
$(x,y) \in (X_{1}(\bar{\Omega}) \times X_{2}(\bar{\Omega})) \setminus V(\bar{\Omega})$.
Therefore we have $(\{x\} \times X_{2,\bar{\Omega}}) \cap V_{\bar{\Omega}} \subsetneq \{x\} \times X_{2,\bar{\Omega}}$ and
$\{x\} \times X_{2}(k) \subset (\{x\} \times X_{2}(\bar{\Omega})) \cap V(\bar{\Omega})$.
On the other hand, since $X_{2}(k)$ is Zariski dense in $X_{2}$,
$X_{2}(k)$ is Zariski dense in $X_{2,\bar{\Omega}}$ by Corollary \ref{gal_ext}.
Then $\{x\} \times X_{2}(k)$ is Zariski dense in $\{x\} \times_{\bar{\Omega}} X_{2,\bar{\Omega}}$.
This is a contradiction.
\end{proof}

\begin{theo}\label{gal_gammafix}
Let $F' / F$ be an algebraic extension of fields such that $\Gamma(F')$ is Zariski dense in $\Gamma_{F'}$.
Assume that $F' = F$ or $\Lal / F$ is a regular extension for all $l$.
Then we have $(\La')^{\Gamma(F')} = E'$ and $\La \cap (\La')^{\Gamma(F')} = E$.
\end{theo}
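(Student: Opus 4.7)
The plan is to upgrade $\Gamma(F')$-invariance of an element of $\Lambda'$ to full coaction-invariance by Zariski density, and then to descend via the $\Gamma_{E}$-torsor structure of $Z$.

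The second equality will reduce to the first via $\Lambda \cap E' = E$, which I expect to obtain from the regularity of $\Lambda_{l}/F$ (hence of $E/F$, as noted in the proof of Lemma~\ref{gal_g=a}(3)) by a linear-disjointness argument inside $\Lambda_{l} \otimes_{E} E' \cong \Lambda_{l}'$: writing any element in the $\Lambda_{l}$-basis of $\Lambda_{l}'$ coming from an $E$-basis of $E'$ that starts with $1$ forces the element to lie in $E$. The inclusion $E' \subseteq (\Lambda')^{\Gamma(F')}$ is immediate from Lemma~\ref{gal_g=a}, so only the reverse inclusion is substantive. To attack it I would base-change the torsor isomorphism $\psi' : Z \times_{E} Z \xrightarrow{\sim} Z \times_{E} \Gamma_{E}$ of Proposition~\ref{gal_tri2} from $F$ to $F'$, obtaining a coordinate-ring isomorphism
$$\alpha : \Sigma' \otimes_{F'} \mathcal{O}(\Gamma_{F'}) \xrightarrow{\sim} \Sigma' \otimes_{E'} \Sigma'$$
sending $s \otimes 1 \mapsto s \otimes 1$ and carrying the right-translation coaction $c : \Sigma' \to \Sigma' \otimes_{F'} \mathcal{O}(\Gamma_{F'})$, characterized in matrix form by $c(\Psi) = \Psi \cdot X$, to the second inclusion $s \mapsto 1 \otimes s$. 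The crucial compatibility is that evaluating $c(s)$ at any $\gamma \in \Gamma(F')$ reproduces the action $\gamma \cdot s$ of Lemma~\ref{gal_g=a}(1).

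Given this setup I take $\lambda \in (\Lambda')^{\Gamma(F')}$, write $\lambda = a/b$ with $a, b \in \Sigma'$ and $b$ a non-zero-divisor, and form
$$\xi := (a \otimes 1)\, c(b) - (b \otimes 1)\, c(a) \in \Sigma' \otimes_{F'} \mathcal{O}(\Gamma_{F'}).$$
Evaluation of $\xi$ at any $\gamma \in \Gamma(F')$ is $a(\gamma \cdot b) - b(\gamma \cdot a)$, which vanishes because $\gamma \cdot \lambda = \lambda$. Writing $\xi = \sum_{i} s_{i} \otimes f_{i}$ with the $s_{i} \in \Sigma'$ chosen $F'$-linearly independent, the vanishing at every $F'$-point forces each $f_{i}$ to vanish on $\Gamma(F')$; the Zariski-density hypothesis then yields $f_{i} = 0$, so $\xi = 0$. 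Applying $\alpha$ this reads $a \otimes b = b \otimes a$ in $\Sigma' \otimes_{E'} \Sigma'$, and clearing $b$ in $\Lambda' \otimes_{E'} \Lambda'$ gives $\lambda \otimes 1 = 1 \otimes \lambda$. Under the standing hypotheses $E'$ is a field and $\Lambda'$ a non-zero (hence faithfully flat) $E'$-module, so the equalizer of the two structure maps $\Lambda' \rightrightarrows \Lambda' \otimes_{E'} \Lambda'$ is $E'$, giving $\lambda \in E'$.

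The hard part is the compatibility setup: verifying the matrix identity $c(\Psi) = \Psi \cdot X$ and matching the evaluation of $c$ at $\gamma \in \Gamma(F')$ with the action of Lemma~\ref{gal_g=a}(1). A secondary concern — that $E'$ is a field, $\Sigma'$ a product of domains, and $\Lambda'$ the corresponding total ring of fractions, so that the final descent is legitimate — is forced by the regularity of $\Lambda_{l}/F$ together with the algebraicity of $F'/F$.
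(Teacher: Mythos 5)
Your proof is correct, and it is essentially the paper's argument translated from geometry into Hopf-algebra language: both hinge on the torsor isomorphism of Proposition \ref{gal_tri2} (your $\alpha$ is exactly the coordinate-ring incarnation of $\psi'$ base-changed to $E'$), both use Zariski density of $\Gamma(F')$ to promote pointwise invariance to the identity $f\pi_1 = f\pi_2$ (your $a \otimes b = b \otimes a$), and both finish by the equalizer/descent property of the faithfully flat map $E' \to \La'$, which the paper compresses into ``by considering on the level of coordinate rings, it is clear that $f \in E'$ since $E'$ is a field.'' The compatibilities you flag as the remaining work --- $c(\Psi) = \Psi\cdot X$ and $(\id\otimes\ev_\gamma)\circ c = \kappa(\gamma)$ --- do hold: both sides are $E'$-algebra maps agreeing on the generators $\Psi_{ij}$ and $(\det\Psi)^{-1}$ of $\Sigma'$, with the right action itself supplied by Theorem \ref{gal_torsor}; and the structural facts ($E'$, $\Lal'$ fields, $\La' = \Frac(\Sigma')$) are Lemma \ref{gal_g=a}(3) under the regularity hypothesis, or trivial when $F' = F$. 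One genuine (minor) divergence: the paper works with the rational function $f$ on open sets $U_l \subset Z_{l,E'}$ and needs Lemma \ref{gal_prod} to see that $Z(S)\times\Gamma(F')$ is dense in $Z_{E'}\times_{E'}\Gamma_{E'}$, whereas your version evaluates only the second tensor factor at points of $\Gamma(F')$ and uses $F'$-linear independence of the $s_i$ in the first factor, so you need density of $\Gamma(F')$ in $\Gamma_{F'}$ alone (together with reducedness of $\Gamma_{F'}$, which is Theorem \ref{gal_smooth}); your handling of denominators via the total ring of fractions likewise replaces the paper's shrinking to $U_l$. Your reduction of the second equality to $\La \cap E' = E$ via the free $\Lal$-basis of $\Lal \otimes_F F'$ coming from an $F$-basis of $F'$ containing $1$ is also fine and fills in a step the paper leaves to the reader.
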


\begin{proof}
The second part follows from the first part and the assumptions.
Thus we prove the first part.
In the proof of this theorem, we regard $l$ as an element of the index set $\mathbb{Z}/d'$.
We take any element $f = (f_{l})_{l} \in (\La')^{\Gamma(F')} \subset \prod_{l \in \mathbb{Z}/d'} \Lal'$,
and consider $f_{l}$ as a rational function of $Z_{l,E'}$ to $\Af_{E'}$.
Then, for some non-empty open affine set $U_{l} \subset Z_{l,E'}$,
$f_{l}$ can be regarded as a morphism $f_{l} : U_{l} \rar \Af_{E'}$.
By Proposition \ref{gal_tri2}, we have an isomorphism
$Z_{E'} \times_{E'} \Gamma_{E'} \rar Z_{E'} \times_{E'} Z_{E'} ; (x,y) \mapsto (x,xy)$.
We set $U \subset Z_{E'} \times_{E'} \Gamma_{E'}$ to be the open subset corresponding to
$\amalg_{l} U_{l} \times_{E'} \amalg_{l} U_{l}$ via this isomorphism,
and consider the two maps
$$
\begin{CD}
g_{i} : U @>>> \amalg U_{l} \times_{E'} \amalg U_{l} @>\pi_{i}>> \amalg U_{l} @>f=(f_{l})>> \Af_{E'}
\end{CD}
$$
where $i=1,2$ and $\pi_{i}$ is the $i$-th projection.
Let $S$ be an algebraic closure of $E'$.
Then for any $(x,y) \in (Z(S) \times \Gamma(F')) \cap U(S)$,
we have $g_{1}(x,y) = f(\pi_{1}(x,xy)) = f(x)$ and $g_{2}(x,y) = f(\pi_{2}(x,xy)) = f(xy) = f(x)$
since $f$ is fixed by $\Gamma(F')$.
Since $\Gamma(F')$ is Zariski dense in $\Gamma_{F'}$,
$Z(S) \times \Gamma(F')$ is Zariski dense in $Z_{E'} \times_{E'} \Gamma_{E'}$ by Lemma \ref{gal_prod}.
Then $(Z(S) \times \Gamma(F')) \cap U(S)$ is Zariski dense in $U$.
Thus we have $g_{1} = g_{2}$, and this means $f \pi_{1} = f \pi_{2}$.
By considering on the level of coordinate rings, it is clear that $f \in E'$ since $E'$ is a field.
\end{proof}

\begin{coro}\label{gal_loc}
If $F$ is a local field, and each connected component of $\Gamma$ has an $F$-valued point,
then $\La^{\Gamma(F)} = E$.
\end{coro}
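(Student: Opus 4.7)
The plan is to reduce the statement to Theorem \ref{gal_gammafix} applied with $F' = F$. The conclusion of that theorem with $F' = F$ reads exactly $\Lambda^{\Gamma(F)} = E$, so the only thing requiring proof is the hypothesis that $\Gamma(F)$ is Zariski dense in $\Gamma$. Note that the alternative hypothesis on regularity is not needed in this case.

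The first step is to use smoothness. By Theorem \ref{gal_smooth}(b), $\Gamma$ is smooth over $F$, hence each connected component of $\Gamma$ is a smooth $F$-variety. The assumption says that each component contains at least one $F$-valued point.

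The key step, and the main (and essentially the only) technical point, is the following standard fact over local fields: if $X$ is a smooth $F$-variety over a local field $F$ with $X(F) \neq \emptyset$, then $X(F)$ is Zariski dense in $X$. This follows from the implicit function theorem in the $F$-analytic category: around any $x \in X(F)$ there is an $F$-analytic open neighborhood of $x$ in $X(F)$ which is homeomorphic to an open polydisk in $F^n$, where $n$ is the dimension of $X$ at $x$. Any nonzero polynomial in $F[t_1,\dots,t_n]$ that vanishes on such an open polydisk must vanish identically (one can, for instance, differentiate iteratively at a point of the polydisk, since $F$ has characteristic zero in the mixed characteristic case or apply the infinitude of $F$ coordinate-wise), which forces $X(F)$ to meet every nonempty open subset of the irreducible component containing $x$, giving Zariski density in that component.

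Applying this fact to each connected component of $\Gamma$, we conclude that $\Gamma(F)$ is Zariski dense in every component, hence in $\Gamma$ itself. Now Theorem \ref{gal_gammafix} with $F' = F$ applies and gives the desired equality $\Lambda^{\Gamma(F)} = E$. The main obstacle is really only the invocation of the implicit function theorem to get the density of rational points on a smooth variety over a local field; given that, everything else is immediate.
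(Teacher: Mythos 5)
Your proposal is correct and follows essentially the same route as the paper: smoothness of $\Gamma$ plus the implicit function theorem over the local field $F$ gives Zariski density of $\Gamma(F)$ in each connected component (hence in $\Gamma$), and then Theorem \ref{gal_gammafix} with $F'=F$ yields $\La^{\Gamma(F)}=E$. The only cosmetic slip is the aside about characteristic zero: here $F=\ftv$ has positive characteristic, but your alternative argument via the infinitude of $F$ covers that case, so nothing is lost.
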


\begin{proof}
Take any connected component $\Gamma'$ of $\Gamma$.
Then there exists an $F$-valued point $x \in \Gamma'(F)$ by the assumption,
and $\Gamma'$ is smooth by Lemma \ref{gal_smooth}.
By the implicit function theorem, there exists an open neighborhood of $x$ in $\Gamma'(F)$
which is isomorphic to some open subset of $F^{\dim \Gamma}$.
Since $\Gamma'$ is irreducible, this implies that $\Gamma'(F)$ is Zariski dense in $\Gamma'$.
Hence we conclude that $\Gamma(F)$ is Zariski dense in $\Gamma$.
Then this corollary follows from Theorem \ref{gal_gammafix}.
\end{proof}

\section{The group $\Gamma$ and $\vp$-modules}\label{sec_p-d}

\subsection{General case}
In this subsection, we use the notations defined in Section \ref{sec_phi-t},
and fix a $\sigma$-admissible triple $(F,E,L)$.
Let $M \in \phm_{E}^{L}$ be an $L$-trivial $\varphi$-module over $E$ of rank $r$,
$\TM$ the Tannakian subcategory of $\phm_{E}^{L}$ generated by $M$,
$V_{M} : \TM \rar \vsp(F)$ the fiber functor of $\TM$
and $\Gamma_{M}$ the Tannakian Galois group of $(\TM, V_{M})$.
We fix $\mbm \in \Mat_{r \times 1}(M)$ an $E$-basis of $M$.
Then there exist matrices $\Phi \in \GL_{r}(E)$ and $\Psi \in \GL_{r}(L)$
such that $\varphi \mbm = \Phi \mbm$ and $\sigma \Psi = \Phi \Psi$.
We define $\Gamma, \Sigma, \dots$ as in Section \ref{sec_gal} for $\Phi$ and $\Psi$.
In this subsection, we show that there exists an equivalence of categories
$\TM \xrightarrow{\sim} \Rep(\Gamma,F)$ under some assumptions.
Note that $\Sigma$ and $\Sl$ are independent of the choice of $\mbm$ and $\Psi$ by Proposition \ref{phi_psi}.
If $N \in \TM$ and $s$ is the rank of $N$ over $E$, we use the notation $\mbn \in \Mat_{s \times 1}(N)$
and $\Psi_{N} \in \GL_{s}(L)$ for an $E$-basis of $N$ and a fundamental matrix respectively.
For any $F$-algebras $S$ and $R$, we set $S\RR := R \otimes_{F} S$.

\begin{prop}\label{p-d_psi-n}
For any $N \in \TM$, we have $\Psi_{N} \in \GL_{s}(\Sigma)$.
\end{prop}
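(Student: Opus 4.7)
The plan is to reformulate the assertion homologically and then verify it by induction on the construction of $N$ inside $\TM$. First I observe that the condition ``$\Psi_{N} \in \GL_{s}(\Sigma)$'' does not depend on any of the ambiguities: by Proposition~\ref{phi_psi} the class of $\Psi_N$ is well-defined in $\GL_{s}(E)\backslash\GL_{s}(L)/\GL_{s}(F)$, and both $\GL_{s}(E)$ and $\GL_{s}(F)$ sit inside $\GL_{s}(\Sigma)$. I then reformulate the claim: for any $N \in \TM$, let
\[
\iota_{N}^{\Sigma}\colon \Sigma\otimes_{F} V(N) \longrightarrow \Sigma\otimes_{E} N
\]
be the $\Sigma$-linear map obtained by restricting $\iota_{N}$. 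Denoting by $(\star)_{N}$ the assertion that $\iota_{N}^{\Sigma}$ is an isomorphism, a choice of $E$-basis $\mbn$ of $N$ and $F$-basis $\mbx$ of $V(N)$ makes $(\star)_{N}$ equivalent to the existence of a matrix $\Psi_{N} \in \GL_{s}(\Sigma)$ with $1\otimes\mbn = \Psi_{N}(1\otimes\mbx)$; the Frobenius relation $\sigma(\Psi_{N}) = \Phi_{N}\Psi_{N}$ is automatic since $\vp(1\otimes\mbn) = \Phi_{N}(1\otimes\mbn)$ while $\vp(1\otimes\mbx) = 1\otimes\mbx$. So it suffices to prove $(\star)_{N}$ for all $N \in \TM$.

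The base case $(\star)_{M}$ is built into the definition of $\Sigma$: the matrix $\Psi \in \GL_{r}(\Sigma)$ realizes the required isomorphism. The property $(\star)$ is then closed under direct sum, tensor product, dual, and under the unit object $(E,\sigma)$, thanks to the explicit fundamental matrices $\Psi_{N\oplus N'}=\Psi_{N}\oplus\Psi_{N'}$, $\Psi_{N\otimes N'}=\Psi_{N}\otimes\Psi_{N'}$, $\Psi_{N^{\vee}}=(\Psi_{N}^{-1})^{\mathrm{tr}}$, $\Psi_{\mathbf{1}}=1$ from Propositions~\ref{phi_l-identity} and \ref{phi_l-op}; each of these constructions clearly preserves membership in $\GL(\Sigma)$.

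The main step, and the only non-formal one, is closure under subquotients. Given an exact sequence $0\to N\to N'\to N''\to 0$ in $\TM$ with $(\star)_{N'}$, I form the commutative diagram
\[
\xymatrix@C=1.2em{
0 \ar[r] & \Sigma\otimes_{F} V(N) \ar[r] \ar[d]_{\iota_{N}^{\Sigma}} & \Sigma\otimes_{F} V(N') \ar[r] \ar[d]^{\cong}_{\iota_{N'}^{\Sigma}} & \Sigma\otimes_{F} V(N'') \ar[r] \ar[d]_{\iota_{N''}^{\Sigma}} & 0 \\
0 \ar[r] & \Sigma\otimes_{E} N \ar[r] & \Sigma\otimes_{E} N' \ar[r] & \Sigma\otimes_{E} N'' \ar[r] & 0.
}
\]
Both rows are exact because the underlying sequences split: Proposition~\ref{phi_exact} gives exactness of the $V$-sequence of $F$-vector spaces, and $0\to N\to N'\to N''\to 0$ is an exact sequence of $E$-vector spaces, so tensoring with $\Sigma$ preserves exactness in both rows. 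The outer vertical maps $\iota_{N}^{\Sigma},\iota_{N''}^{\Sigma}$ are injective because they are restrictions of the maps $\iota_{N},\iota_{N''}$ which are injective by Corollary~\ref{phi_inj}, and the inclusions $\Sigma\otimes_{F}V(\cdot)\hookrightarrow L\otimes_{F}V(\cdot)$ and $\Sigma\otimes_{E}(\cdot)\hookrightarrow L\otimes_{E}(\cdot)$ follow from $\Sigma\hookrightarrow L$ together with the fact that the modules being tensored are vector spaces over the fields $F$ and $E$ respectively. A standard diagram chase now shows that both $\iota_{N}^{\Sigma}$ and $\iota_{N''}^{\Sigma}$ are isomorphisms, establishing $(\star)_{N}$ and $(\star)_{N''}$.

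Every object of $\TM$ is obtained from $M$ by iterated direct sums, tensor products, duals, and subquotients, so $(\star)_{N}$ holds for all $N \in \TM$ and therefore $\Psi_{N}\in\GL_{s}(\Sigma)$. The one place requiring real care is the subquotient step: the tensor and dual formulas manifestly land in $\Sigma$, but for a sub-object $N\subset N'$ one has to check that the $\Sigma$-coefficients needed to express $1\otimes\mbn$ in a basis of $V(N)$ do not escape to the larger object; the exactness of the bottom row of the diagram is exactly what forces the answer to land in $\Sigma\otimes_{E}N$.
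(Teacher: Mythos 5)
Your argument is correct and structurally the same as the paper's: both treat $\oplus$, $\otimes$ and duals via the explicit fundamental matrices of Proposition \ref{phi_l-op}, and both reduce the subquotient case to the exactness of $V$ (Proposition \ref{phi_exact}) together with a choice of bases compatible with the exact sequence. The only real difference is the packaging of the subquotient step: the paper derives a block lower-triangular identity $\Psi_{N}A = \bigl[\begin{smallmatrix}\Psi_{N'}A' & 0\\ B & \Psi_{N''}A''\end{smallmatrix}\bigr]$ with $A, A', A'' \in \GL(F)$ and reads off membership in $\GL(\Sigma)$ of the diagonal blocks, whereas you run a five-lemma chase. One point in your version deserves an explicit word: you use $\iota_{N}^{\Sigma}$ and $\iota_{N''}^{\Sigma}$ as maps \emph{into} $\Sigma\otimes_{E}N$ and $\Sigma\otimes_{E}N''$, but a priori the restriction of $\iota$ only lands in $L\otimes_{E}(\cdot)$; the containments $V(N)\subset\Sigma\otimes_{E}N$ and $V(N'')\subset\Sigma\otimes_{E}N''$ are part of what is being proved. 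This is easily supplied — $V(N)\subset(\Sigma\otimes_{E}N')\cap(L\otimes_{E}N)=\Sigma\otimes_{E}N$ by extending a basis of $N$ to one of $N'$, and $V(N'')$ is the image of $V(N')\subset\Sigma\otimes_{E}N'$ under the projection — and your closing paragraph shows you see the issue; the paper's matrix formulation avoids it by working in $\GL(L)$ throughout and concluding membership in $\Sigma$ only at the end.
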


\begin{proof}
Let $N$ and $N'$ be objects in $\TM$.
Set $s := \dim_{E} N$ and $s' := \dim_{E}$
and assume that $\Psi_{N} \in \GL_{s}(\Sigma)$ and $\Psi_{N'} \in \GL_{s'}(\Sigma)$.
Since we can take
$\Psi_{N \oplus N'} = \Psi_{N} \oplus \Psi_{N'}$, $\Psi_{N \otimes N'} = \Psi_{N} \otimes \Psi_{N'}$
and $\Psi_{N^{\vee}} = (\Psi_{N}^{-1})^{\mathrm{tr}}$,
we have that $\Psi_{N \oplus N'}$, $\Psi_{N \otimes N'}$
and $\Psi_{N^{\vee}}$ are invertible matrices with coefficients in $\Sigma$.
We have to show that if $0 \rar N' \rar N \rar N'' \rar 0$ is an exact sequence in $\TM$ and $\Psi_{N} \in \GL_{s}(\Sigma)$,
then $\Psi_{N'} \in \GL_{s'}(\Sigma)$ and $\Psi_{N''} \in \GL_{s''}(\Sigma)$.
Let $\mbn$, $\mbn'$ and $\mbn''$ be $E$-bases of $N$, $N'$ and $N''$ such that
$$\mbn = \begin{bmatrix} \mbn' \\ \tilde{\mbn}'' \\ \end{bmatrix}$$
where $\tilde{\mbn}''$ is a lift of $\mbn''$.
Since $V_{M}$ is exact, we have an exact sequence
$$0 \rar V_{M}(N') \rar V_{M}(N) \rar V_{M}(N'') \rar 0.$$
Let $\mbx$, $\mbx'$ and $\mbx''$ be $F$-bases of $V_{M}(N)$, $V_{M}(N')$ and $V_{M}(N'')$ such that
$$\mbx = \begin{bmatrix} \mbx' \\ \tilde{\mbx}'' \\ \end{bmatrix}$$
where $\tilde{\mbx}''$ is a lift of $\mbx''$.
By Proposition \ref{phi_basis}, there exist matrices $A \in \GL_{s}(F)$,
$A' \in \GL_{s'}(F)$ and $A'' \in \GL_{s''}(F)$ such that
$\Psi_{N}^{-1} \mbn = A \mbx$, $\Psi_{N'}^{-1} \mbn' = A' \mbx'$ and $\Psi_{N''}^{-1} \mbn'' = A'' \mbx''$.
Consider the exact sequence
$$0 \rar L \otimes_{E} N' \rar L \otimes_{E} N \rar L \otimes_{E} N'' \rar 0.$$
Since both $\Psi_{N''} A'' \tilde{\mbx}''$ and $\tilde{\mbn}''$ are mapped to $\mbn''$
and $\mbx'$ is an $L$-basis of $L \otimes_{E} N'$,
there exists a matrix $B \in \Mat_{s'' \times s'}(L)$ such that
$$\tilde{\mbn}'' = B \mbx' + \Psi_{N''} A'' \tilde{\mbx}''.$$
Therefore we have
$$\Psi_{N} A \mbx = \mbn = \begin{bmatrix} \mbn' \\ \tilde{\mbn}'' \end{bmatrix}
= \begin{bmatrix} \Psi_{N'} A' & 0 \\ B & \Psi_{N''} A'' \\ \end{bmatrix} \mbx.$$
Since $\Psi_{N} \in \GL_{s}(\Sigma)$,
we conclude that $\Psi_{N'} \in \GL_{s'}(\Sigma)$ and $\Psi_{N''} \in \GL_{s''}(\Sigma)$.
\end{proof}

\begin{lemm}\label{p-d_isom}
For any $N \in \TM$ and $F$-algebra $R$, there exists a natural isomorphism
$$\Sigma\RR \otimes_{F} V(N) \rar \Sigma\RR \otimes_{E} N.$$
Similarly, there exists a natural isomorphism
$$\Sl\RR \otimes_{F} V(N) \rar \Sl\RR \otimes_{E} N$$
for all $l$.
\end{lemm}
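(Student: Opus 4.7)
The plan is to exhibit the natural map explicitly using the fundamental matrix and to check it sends a basis to a basis.

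First, I would observe that by Proposition \ref{p-d_psi-n} we have $\Psi_{N} \in \GL_{s}(\Sigma)$, so the inclusion $V(N) \hookrightarrow L \otimes_{E} N$ factors through $\Sigma \otimes_{E} N$: indeed, by Proposition \ref{phi_basis} the entries of $\mbx := \Psi_{N}^{-1}(1 \otimes \mbn)$ form an $F$-basis of $V(N)$, and this column vector already has entries in $\Sigma \otimes_{E} N$. Consequently, one defines the natural map
$$\Sigma \otimes_{F} V(N) \rar \Sigma \otimes_{E} N, \qquad a \otimes v \mapsto a \cdot v,$$
where the product uses the $\Sigma$-module structure on $\Sigma \otimes_{E} N$ and the inclusion $V(N) \subset \Sigma \otimes_{E} N$. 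Base changing by $R$ along $F \rar R$ produces the map of the lemma, which is clearly functorial in $N$ and in $R$.

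Next, I would show bijectivity. The right-hand side $\Sigma\RR \otimes_{E} N$ is a free $\Sigma\RR$-module of rank $s$ with basis $1 \otimes \mbn$, and the left-hand side $\Sigma\RR \otimes_{F} V(N)$ is a free $\Sigma\RR$-module of rank $s$ with basis $1 \otimes \mbx$, since the entries of $\mbx$ form an $F$-basis of $V(N)$. Under the map, $1 \otimes \mbx$ goes to $\Psi_{N}^{-1}(1 \otimes \mbn) \in \Mat_{s \times 1}(\Sigma\RR \otimes_{E} N)$. Since $\Psi_{N} \in \GL_{s}(\Sigma) \subset \GL_{s}(\Sigma\RR)$, multiplication by $\Psi_{N}^{-1}$ is an invertible $\Sigma\RR$-linear change of basis, so the image of the $\Sigma\RR$-basis $1 \otimes \mbx$ is again a $\Sigma\RR$-basis of the right-hand side. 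Hence the map is an isomorphism.

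For the $\Sl\RR$ version, I would run exactly the same argument with $\Sigma$ replaced by $\Sl$ and $\Psi_{N}$ replaced by its $l$-th component $\Psi_{N,l} \in \GL_{s}(\Sl)$, noting that the image of $\Psi_{N}$ under the projection $\Sigma \twoheadrightarrow \Sl$ is invertible (since $\Psi_{N} \in \GL_{s}(\Sigma)$ and $\Sigma = \prod_{l \in \mathbb{Z}/d'} \Sl$). The $l$-th component of $\Psi_{N}^{-1}(1 \otimes \mbn)$ then realises the $F$-basis of $V(N)$ inside $\Sl \otimes_{E} N$, and the same basis-to-basis argument yields the desired isomorphism. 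No step is a serious obstacle here: the only mild subtlety is remembering that $V(N)$ lies in $\Sigma \otimes_{E} N$ (and in each $\Sl \otimes_{E} N$) rather than merely in $L \otimes_{E} N$, which is exactly what Proposition \ref{p-d_psi-n} provides.
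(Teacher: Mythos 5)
Your proof is correct and follows essentially the same route as the paper: both define the map via the inclusion $V(N) \subset \Sigma \otimes_{E} N$ (which rests on Proposition \ref{p-d_psi-n}) together with multiplication, and both verify bijectivity by observing that the basis $1 \otimes \Psi_{N}^{-1}\mbn$ of the source is carried to $\Psi_{N}^{-1}(1 \otimes \mbn)$, an invertible change of basis since $\Psi_{N} \in \GL_{s}(\Sigma)$. Your write-up merely makes explicit the role of Propositions \ref{p-d_psi-n} and \ref{phi_basis}, which the paper leaves implicit.
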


\begin{proof}
The inclusion $V(N) \subset \Sigma \otimes_{E} N$
and the product map $\Sigma \otimes_{F} \Sigma \rar \Sigma$ induce a natural map
$$\kappa : \Sigma\RR \otimes_{F} V(N) \hookrightarrow \Sigma\RR \otimes_{F} \Sigma \otimes_{E} N \rar \Sigma\RR \otimes_{E} N.$$
Since $1 \otimes \Psi_{N}^{-1} \mbn$ is a $\Sigma\RR$-basis of $\Sigma\RR \otimes_{F} V(N)$,
we can write $\kappa$ explicitly as follows:
$$\kappa(\mbf \cdot (1 \otimes \Psi_{N}^{-1} \mbn)) = (\mbf \Psi_{N}^{-1}) \cdot (1 \otimes \mbn)$$
for all $\mbf \in \Mat_{1 \times s}(\Sigma\RR)$.
Hence it is clear that $\kappa$ is an isomorphism.
The $\Sl$ version is proved by the same argument.
\end{proof}

\begin{theo}\label{p-d_action}
For any $N \in \TM$, there exists a natural representation
$$\rho_{N} : \Gamma \rar \GL(V(N))$$
over $F$ that is functorial in $N$.
\end{theo}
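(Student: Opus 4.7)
The plan is to let $\Gamma(R)$ act on $\Sigma\RR \otimes_E N$ through its canonical action on the first tensor factor, and then restrict this action to the $\vp$-fixed subspace, which will turn out to be $R \otimes_F V(N)$.

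Concretely, for each $F$-algebra $R$ and $\gamma \in \Gamma(R)$, Lemma \ref{gal_g=a} produces an automorphism $\alpha_\gamma := \kappa_R(\gamma) \in \aus(\Sigma\RR/E\RR)$. By Proposition \ref{p-d_psi-n}, $\Psi_N \in \GL_s(\Sigma)$, so $V(N) \subset \Sigma \otimes_E N$. The $E\RR$-linear automorphism $\tilde\alpha_\gamma := \alpha_\gamma \otimes \id_N$ of $\Sigma\RR \otimes_E N$ commutes with the $\vp$-action $(\id_R \otimes \sigma) \otimes \vp_N$, since $\alpha_\gamma$ commutes with $\sigma$ by definition of $\aus$. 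Hence $\tilde\alpha_\gamma$ restricts to an $R$-linear automorphism of $(\Sigma\RR \otimes_E N)^{\vp}$. Using $\Sigma^{\sigma} = \Sigma \cap L^{\sigma} = F$ together with the flatness of the $F$-algebra $R$, one obtains
$$(\Sigma\RR \otimes_E N)^{\vp} = R \otimes_F (\Sigma \otimes_E N)^{\vp} = R \otimes_F V(N),$$
where the second equality uses that $V(N) = (L \otimes_E N)^{\vp}$ combined with $V(N) \subset \Sigma \otimes_E N$ forces $V(N) = (\Sigma \otimes_E N)^{\vp}$. Define $\rho_N(\gamma)$ to be this restriction. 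Since $\kappa_R$ is a group homomorphism (directly from $\kappa_R(\gamma)(h(\Psi)) = h(\Psi\gamma)$), the element $\gamma^{-1}$ supplies an inverse, so $\rho_N(\gamma) \in \GL(R \otimes_F V(N))$.

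The construction is manifestly natural in $R$, so $\rho_N$ is a natural transformation between the representable $F$-functors $\Gamma$ and $\GL(V(N))$, which by Yoneda yields a morphism of affine group schemes over $F$, as required. Functoriality in $N$ is then immediate: any morphism $f : N \rar N'$ in $\TM$ induces $\id \otimes f$ on $\Sigma\RR \otimes_E (-)$, and this map commutes with $\tilde\alpha_\gamma$ (which acts only on the first factor) and with $\vp$, hence intertwines $\rho_N(\gamma)$ and $\rho_{N'}(\gamma)$ after passage to $\vp$-fixed parts. The main point requiring care is the identification $(\Sigma\RR \otimes_E N)^{\vp} = R \otimes_F V(N)$: it crucially relies on Proposition \ref{p-d_psi-n} to make sense of $V(N) \subset \Sigma \otimes_E N$ before taking $\vp$-invariants, and on the computation $\Sigma^{\sigma} = F$ combined with the flatness of $R$ over $F$ to commute the invariants with base change.
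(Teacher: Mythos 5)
Your construction is the same as the paper's: for $\gamma\in\Gamma(R)$ you act on $\Sigma\RR\otimes_{E}N$ by $h(\Psi)\otimes x\mapsto h(\Psi\gamma)\otimes x$ (the paper routes this through the isomorphism of Lemma \ref{p-d_isom}, which is exactly your inclusion $V(N)\subset\Sigma\otimes_{E}N$ coming from Proposition \ref{p-d_psi-n}). Where you genuinely diverge is in checking that $R\otimes_{F}V(N)$ is preserved. The paper first reduces to $N=M$ (using that stability for $M$ propagates to all of $\TM$) and then computes the action explicitly on the basis $\Psi^{-1}(1\otimes\mbm)$, obtaining $\mbf\mapsto\mbf\gamma^{-1}$; you instead identify $R\otimes_{F}V(N)$ with the full $\vp$-fixed part of $\Sigma\RR\otimes_{E}N$ --- via $(\Sigma\otimes_{E}N)^{\vp}=V(N)$ (which needs both inclusions $V(N)\subset\Sigma\otimes_{E}N\subset L\otimes_{E}N$ and the $\sigma$-stability of $\Sigma$) together with flat base change along $F\rar R$ --- and observe that your automorphism commutes with $\vp$ because $\kappa_{R}(\gamma)\in\aus(\Sigma\RR/E\RR)$. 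This is correct and slightly cleaner in that it avoids the reduction to $N=M$; the trade-off is that the paper's explicit formula $\rho_{M}\RR(\gamma)(\mbf\cdot(1\otimes\Psi^{-1}\mbm))=\mbf\gamma^{-1}(1\otimes\Psi^{-1}\mbm)$ is what gets reused immediately afterwards (Corollary \ref{p-d_faithful}), so you would still want to record it.
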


\begin{proof}
For any $F$-algebra $R$ and $\gamma \in \Gamma(R) \subset \GL_{r}(R)$, we define
$$\rho_{N}\RR(\gamma) : R \otimes_{F} V(N) \hookrightarrow \Sigma\RR \otimes_{F} V(N)
\rar \Sigma\RR \otimes_{E} N \rar \Sigma\RR \otimes_{E} N,$$
where the second map is the isomorphism defined in Lemma \ref{p-d_isom}
and the third map is defined by $h(\Psi) \otimes x \mapsto h(\Psi \gamma) \otimes x$.
Clearly $\rho_{N}\RR$ is functorial in $N$.
If $\im(\rho_{M}\RR(\gamma)) = R \otimes_{F} V(M)$
then $\im(\rho_{N}\RR(\gamma)) = R \otimes_{F} V(N)$ for all $N \in \TM$.
Thus we may assume that $N = M$.
We can write $\rho_{M}\RR(\gamma)$ explicitly:
$$\rho_{M}\RR(\gamma)(\mbf \cdot (1 \otimes \Psi^{-1} \mbm)) = \mbf \gamma^{-1} (1 \otimes \Psi^{-1} \mbm),$$
for each $\mbf \in \Mat_{1 \times r}(R)$.
Therefore we have $\im(\rho_{M}\RR(\gamma)) = R \otimes_{F} V(M)$.
\end{proof}

From the above description of $\rho_{M}\RR$, we have the following corollary:
\begin{coro}\label{p-d_faithful}
The representation $\rho_{M} : \Gamma \rar \GL(V(M))$ is faithful.
\end{coro}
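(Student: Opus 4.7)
The plan is to read off faithfulness directly from the explicit formula established at the end of the proof of Theorem \ref{p-d_action}. Recall that for any $F$-algebra $R$ and any $\gamma \in \Gamma(R) \subset \GL_{r}(R)$, the map $\rho_{M}\RR(\gamma)$ on $R \otimes_{F} V(M)$ is given by
$$\rho_{M}\RR(\gamma)\bigl(\mbf \cdot (1 \otimes \Psi^{-1} \mbm)\bigr) = \mbf \gamma^{-1} \cdot (1 \otimes \Psi^{-1} \mbm)$$
for all $\mbf \in \Mat_{1 \times r}(R)$. Since $1 \otimes \Psi^{-1} \mbm$ is an $R$-basis of $R \otimes_{F} V(M)$ (by Proposition \ref{phi_basis} together with the identification in Lemma \ref{p-d_isom}), the correspondence $\mbf \leftrightarrow \mbf \cdot (1 \otimes \Psi^{-1} \mbm)$ between row vectors in $\Mat_{1 \times r}(R)$ and elements of $R \otimes_{F} V(M)$ is a bijection.

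First, I would verify that $\rho_{M}\RR$ is a group homomorphism $\Gamma(R) \rar \GL(R \otimes_{F} V(M))$ in the appropriate sense (this is implicit in Theorem \ref{p-d_action}, but is worth a one-line check using the explicit formula: the composition rule for $\gamma_1, \gamma_2 \in \Gamma(R)$ corresponds to multiplication by $(\gamma_1 \gamma_2)^{-1} = \gamma_2^{-1}\gamma_1^{-1}$). Next, suppose $\gamma \in \Gamma(R)$ lies in the kernel of $\rho_{M}\RR$, so that $\rho_{M}\RR(\gamma)$ is the identity on $R \otimes_{F} V(M)$. Applying the displayed formula above, this forces $\mbf \gamma^{-1} = \mbf$ for every $\mbf \in \Mat_{1 \times r}(R)$. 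Taking $\mbf$ to run over the standard basis row vectors yields $\gamma^{-1} = I_{r}$, hence $\gamma$ is the identity element of $\Gamma(R)$.

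Since this holds for every $F$-algebra $R$, the natural transformation $\rho_{M} : \Gamma \rar \GL(V(M))$ is a closed immersion of affine $F$-group schemes, in particular faithful. There is really no obstacle here — the only subtlety is making sure that the explicit formula in the proof of Theorem \ref{p-d_action} is correctly interpreted with respect to the chosen $R$-basis $1 \otimes \Psi^{-1} \mbm$ of $R \otimes_{F} V(M)$, but this is immediate from Lemma \ref{p-d_isom}.
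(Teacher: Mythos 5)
Your proposal is correct and is essentially the paper's argument: the paper states the corollary with no written proof beyond the remark ``From the above description of $\rho_{M}\RR$,'' i.e.\ it reads faithfulness off the explicit formula $\rho_{M}\RR(\gamma)(\mbf \cdot (1 \otimes \Psi^{-1}\mbm)) = \mbf\gamma^{-1}(1 \otimes \Psi^{-1}\mbm)$ exactly as you do. You merely make explicit the (trivial) verification that a $\gamma$ acting as the identity on the basis $1 \otimes \Psi^{-1}\mbm$ must equal $I_r$, which is the intended content.
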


From Theorem \ref{p-d_action}, we have a functor $\xi_{M} : \TM \rar \Rep(\Gamma, F)$,
and it is clear by the construction that $\xi_{M}$ is a tensor functor.
Let $\eta_{M} : \Rep(\Gamma_{M}, F)) \rar \TM$ be
the equivalence of categories defined by the Tannakian duality
and $\alpha : \Rep(\Gamma, F) \rar \vsp(F)$ the forgetful functor.
Since $V_{M} = \alpha \circ \xi_{M}$,
there exists a unique homomorphism $\pi_{M} : \Gamma \rar \Gamma_{M}$ over $F$ such that
the natural functor $\tau_{M} : \Rep(\Gamma_{M}, F) \rar \Rep(\Gamma, F)$ induced by $\pi_{M}$
satisfies $\xi_{M} \circ \eta_{M} = \tau_{M}$.
\[\xymatrix{
\Rep(\Gamma_{M}, F) \ar[r]^>>>>>{\eta_{M}} & \TM \ar[r]^>>>>>{\xi_{M}} \ar[dr]_{V_{M}} & \Rep(\Gamma, F) \ar[d]^{\alpha} \\
& & \vsp(F) \\
}\]

\begin{prop}\label{p-d_subq}
For any representation $W \in \Rep(\Gamma, F)$,
there exists an object $N \in \TM$ such that $W$ is isomorphic to a subquotient of $\xi_{M}(N)$.
\end{prop}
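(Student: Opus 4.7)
The plan is to reduce the statement to a standard fact in Tannakian formalism applied to the faithful representation $\rho_M$ of $\Gamma$. The key input is Corollary \ref{p-d_faithful}, which tells us that $\rho_M : \Gamma \to \GL(V(M))$ is faithful, together with the tensor-functoriality of $\xi_M$ established right after Theorem \ref{p-d_action}.

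First I would recall the standard Tannakian lemma (e.g.\ \cite{DeMi}, Proposition 2.20 and the remark after it): if $G$ is an affine group scheme over a field $k$ and $V$ is a finite-dimensional faithful representation of $G$, then every object of $\Rep(G, k)$ is isomorphic to a subquotient of a finite direct sum of representations of the form $V^{\otimes a} \otimes (V^{\vee})^{\otimes b}$. Applying this with $k = F$, $G = \Gamma$, and $V = V(M) = \xi_M(M)$, any given $W \in \Rep(\Gamma, F)$ sits as a subquotient of some $U := \bigoplus_{i=1}^{n} V(M)^{\otimes a_i} \otimes (V(M)^{\vee})^{\otimes b_i}$.

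Next I would produce the desired $N \in \TM$. The category $\TM$ is closed under direct sums, tensor products, and duals, so the object
\[
N := \bigoplus_{i=1}^{n} M^{\otimes a_i} \otimes (M^{\vee})^{\otimes b_i}
\]
belongs to $\TM$. Since $\xi_M$ is a tensor functor and commutes with finite direct sums, we have a natural isomorphism $\xi_M(N) \cong U$, so $W$ is a subquotient of $\xi_M(N)$.

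There is no serious obstacle here; the only point that requires mild care is checking that the Tannakian-generation statement in \cite{DeMi} applies in our situation. This is fine because $\Gamma$ is an affine group scheme of finite type over the field $F$ (by Lemma \ref{gal_g=a} and the construction in Subsection \ref{gal_gal-gp}) and $V(M)$ is finite-dimensional over $F$. Once this is observed, the proof reduces to invoking the cited Tannakian result and transporting the tensor construction from $\Rep(\Gamma, F)$ back to $\TM$ via the compatibility of $\xi_M$ with $\oplus$, $\otimes$, and $(-)^{\vee}$.
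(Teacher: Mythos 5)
Your proposal is correct and follows exactly the same route as the paper: invoke Corollary \ref{p-d_faithful} to get faithfulness of $\rho_M = \xi_M(M)$, apply the standard Tannakian fact that every representation is a subquotient of $\bigoplus_i V^{\otimes a_i} \otimes (V^{\vee})^{\otimes b_i}$ for a faithful $V$, and identify this with $\xi_M\bigl(\bigoplus_i M^{\otimes a_i} \otimes (M^{\vee})^{\otimes b_i}\bigr)$ using the tensor-functoriality of $\xi_M$. No differences worth noting.
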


\begin{proof}
By Corollary \ref{p-d_faithful}, the $\Gamma$-representation $\xi_{M}(M) = \rho_{M}$ is faithful.
Therefore, $W$ is isomorphic to a subquotient of representation of the form
$$\oplus_{i=1}^{n} (\xi_{M}(M))^{\otimes a_i} \otimes (\xi_{M}(M)^{\vee})^{\otimes b_i},$$
where $a_{i}, b_{i} \in \mathbb{N}$.
However we have
$\oplus_{i=1}^{n} (\xi_{M}(M))^{\otimes a_i} \otimes (\xi_{M}(M)^{\vee})^{\otimes b_i}
= \xi_{M}(\oplus_{i=1}^{n} M^{\otimes a_i} \otimes (M^{\vee})^{\otimes b_i})$.
\end{proof}

Proposition \ref{p-d_subq} is equivalent to the next theorem (\cite{DeMi}, Proposition 2.21).
\begin{theo}\label{p-d_closed}
The morphism of affine $F$-schemes $\pi_{M} : \Gamma \rar \Gamma_{M}$ is a closed immersion.
\end{theo}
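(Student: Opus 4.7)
The plan is to invoke the Tannakian criterion cited immediately before the theorem statement (Deligne--Milne, Proposition 2.21), which characterizes closed immersions of affine group schemes over a field $k$: a homomorphism $f : H \rar G$ is a closed immersion if and only if every object of $\Rep(H,k)$ is isomorphic to a subquotient of $\tau(V)$ for some $V \in \Rep(G,k)$, where $\tau : \Rep(G,k) \rar \Rep(H,k)$ is the functor induced by $f$. So the strategy is simply to verify this hypothesis for $\pi_{M}$ using the preceding results and read off the conclusion.

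First I would unwind the definition of $\tau_{M}$ to identify its essential image. By construction, $\tau_{M} = \xi_{M} \circ \eta_{M}$, and since $\eta_{M} : \Rep(\Gamma_{M}, F) \rar \TM$ is an equivalence of categories, every object of the form $\tau_{M}(V)$ is isomorphic to one of the form $\xi_{M}(N)$ with $N \in \TM$, and conversely. Therefore the hypothesis of the Tannakian criterion for $f = \pi_{M}$ reads: every $W \in \Rep(\Gamma, F)$ is isomorphic to a subquotient of $\xi_{M}(N)$ for some $N \in \TM$. But this is exactly the content of Proposition \ref{p-d_subq}, which has just been proved.

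Combining these two observations, the criterion of Deligne--Milne applies to $\pi_{M}$, and hence $\pi_{M} : \Gamma \rar \Gamma_{M}$ is a closed immersion of affine $F$-schemes.

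There is essentially no obstacle left in the present theorem; the substantive work has already been done in Proposition \ref{p-d_subq}, whose proof in turn relies on two inputs: the faithfulness of the representation $\rho_{M}$ on $V(M)$ established in Corollary \ref{p-d_faithful}, and the standard fact over a field that every representation of an affine group scheme is a subquotient of a finite direct sum of tensor products of a faithful representation with its dual. Thus the proof of this theorem is a direct translation of Proposition \ref{p-d_subq} through the Tannakian dictionary.
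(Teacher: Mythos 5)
Your proof is correct and is exactly the paper's argument: the paper simply remarks that Proposition \ref{p-d_subq} is equivalent to this theorem via \cite{DeMi}, Proposition 2.21, and you have merely spelled out why (namely, that $\eta_{M}$ is an equivalence, so the subquotient condition for $\tau_{M}$ reduces to the one verified for $\xi_{M}$). No discrepancy to report.
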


From now on, we assume that
$\Gamma(F)$ is Zariski dense in $\Gamma$ or $\Lal / F$ is a regular extension for each $l$.
In the former case we put $F' = F$, and in the latter case we put $F' = \bar{F}$.
For any $F$-algebra $S$, we set $S' := F' \otimes_{F} S$.
Then in any case, $E'$ and $\Lal'$ are fields,
$\Lal' = \Frac(\Sl')$ and $\La \cap (\La')^{\Gamma(F')} = E$ by Theorem \ref{gal_gammafix}.

\begin{prop}\label{p-d_f.f.}
Assume that $\Gamma(F)$ is Zariski dense in $\Gamma$ or $\Lal / F$ is a regular extension for each $l$.
Then the functor $\xi_{M} : \TM \rar \Rep(\Gamma, F)$ is fully faithful.
\end{prop}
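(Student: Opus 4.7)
The plan is to reduce the full faithfulness of $\xi_{M}$ to the identity $N^{\vp} = V(N)^{\Gamma}$ for every $N \in \TM$, and then to establish this identity by base-changing to $F'$ and combining Proposition~\ref{p-d_psi-n} with Theorem~\ref{gal_gammafix}.

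Since $\TM$ and $\Rep(\Gamma,F)$ are rigid abelian $F$-linear tensor categories and $\xi_{M}$ is a tensor functor, the standard identifications $\Hom_{\TM}(N_{1}, N_{2}) \cong (N_{1}^{\vee} \otimes N_{2})^{\vp}$ and $\Hom_{\Gamma}(V(N_{1}), V(N_{2})) \cong V(N_{1}^{\vee} \otimes N_{2})^{\Gamma}$ reduce the claim to showing that for every $N \in \TM$ the natural map $N^{\vp} \to V(N)^{\Gamma}$, $n \mapsto 1 \otimes n$, is bijective. Injectivity is immediate from $N \hookrightarrow L \otimes_{E} N$, and the image is $\Gamma$-fixed because, by Theorem~\ref{p-d_action} and Lemma~\ref{gal_g=a}, every $\gamma \in \Gamma(R)$ acts on $\Sigma^{(R)} \otimes_{E} N$ as $\kappa_{R}(\gamma) \otimes \id_{N}$, and $\kappa_{R}(\gamma)$ fixes $E^{(R)}$ pointwise.

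For surjectivity I would base-change to $F'$. Under either half of the hypothesis, $\Gamma(F')$ is Zariski dense in $\Gamma_{F'}$: in the first case by assumption, and in the second because Theorem~\ref{gal_smooth} forces $\Gamma$ to be smooth, so the $\bar{F}$-points of the smooth $\bar{F}$-variety $\Gamma_{\bar{F}}$ are dense. Hence the $\Gamma$-invariants of the finite-dimensional representation $V(N)$ commute with base change, giving $F' \otimes_{F} V(N)^{\Gamma} = (F' \otimes_{F} V(N))^{\Gamma(F')}$. By Proposition~\ref{p-d_psi-n} we have $V(N) \subset \Sigma \otimes_{E} N$, so $F' \otimes_{F} V(N)$ embeds into $\Sigma' \otimes_{E} N$, already inside the $\vp$-fixed subspace.

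The core step is then the invariant computation inside $\Sigma' \otimes_{E} N$. The $\Gamma(F')$-action touches only the first tensor factor, via an $E'$-algebra automorphism of $\Sigma'$, so $(\Sigma' \otimes_{E} N)^{\Gamma(F')} = (\Sigma')^{\Gamma(F')} \otimes_{E} N$. Since $\Sigma' \subset \Lambda'$ and $(\Lambda')^{\Gamma(F')} = E'$ by Theorem~\ref{gal_gammafix}, one gets $(\Sigma')^{\Gamma(F')} = E'$, whence $(\Sigma' \otimes_{E} N)^{\Gamma(F')} = E' \otimes_{E} N = F' \otimes_{F} N$. The $\vp$-action on this last module is $\id_{F'} \otimes \vp_{N}$, whose fixed part is $F' \otimes_{F} N^{\vp}$. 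Consequently $(F' \otimes V(N))^{\Gamma(F')} \subset F' \otimes N^{\vp}$; combined with the reverse inclusion $N^{\vp} \subset V(N)^{\Gamma}$ already established and faithful flatness of $F'/F$, we conclude $V(N)^{\Gamma} = N^{\vp}$. The main obstacle is carrying out this invariant computation rigorously: both Proposition~\ref{p-d_psi-n} (to situate $V(N)$ inside $\Sigma \otimes_{E} N$ rather than merely $L \otimes_{E} N$) and the Zariski density of $\Gamma(F')$ in $\Gamma_{F'}$ (so that group-theoretic invariants agree with the scheme-theoretic ones of Theorem~\ref{gal_gammafix}) are indispensable, and the dichotomy in the hypothesis exists precisely to secure the latter.
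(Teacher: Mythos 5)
Your proposal is correct and follows essentially the same route as the paper: reduce via internal homs to the case $\Hom(\mathbf{1},N)$, use Proposition \ref{p-d_psi-n} (via Lemma \ref{p-d_isom}) to express elements of $V(N)$ with coefficients in $\Sigma$, and invoke Theorem \ref{gal_gammafix} to force $\Gamma(F')$-invariant coefficients into $E$. The only cosmetic difference is that you package the last step as the invariant computation $(\Sigma')^{\Gamma(F')}=E'$ followed by descent along $F'/F$, whereas the paper applies $\La\cap(\La')^{\Gamma(F')}=E$ directly to the coefficient row vector $\mathbf{h}(\Psi)$.
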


\begin{proof}
For any objects $N, N' \in \TM$, there exist natural isomorphisms
$\Hom_{\TM}(N', N) \cong \Hom_{\TM}(\mathbf{1}, \Hom(N', N))$
and $\Hom_{\Gamma}(V(N'), V(N)) \cong \Hom_{\Gamma}(V(\mathbf{1}), V(\Hom(N', N)))$.
Thus it is enough to show that, for any $N \in \TM$,
$\Hom_{\TM}(\mathbf{1}, N) \rar \Hom_{\Gamma}(V(\mathbf{1}), V(N))$ is an isomorphism.
It is injective since
$\Hom_{\TM}(\mathbf{1}, N) = N^{\varphi} = N \cap V(N) \hookrightarrow \Hom_{\Gamma}(V(\mathbf{1}), V(N))$.
For any $\phi \in \Hom_{\Gamma}(V(\mathbf{1}), V(N))$,
there exists $\mbh = \mbh(\Psi) \in \Mat_{1 \times s}(\Sigma)$
so that $\phi(1) = \mbh \mbn$ by Lemma \ref{p-d_isom}.
Then for any $\gamma \in \Gamma(F')$,
we have $\mbh(\Psi) \mbn = \phi(1) = \gamma . \phi(1) = \mbh(\Psi \gamma) \mbn$.
Hence $\mbh(\Psi) = \mbh(\Psi \gamma) = \gamma . \mbh$.
By Theorem \ref{gal_gammafix}, we have $\mbh \in \Mat_{1 \times s}(E)$,
and this implies $\phi(1) = \mbh \mbn \in N \cap V(N)$.
\end{proof}

We prepare a lemma from linear algebra.
\begin{lemm}\label{p-d_ope}
Let $E \subset \La$ be general rings where $E$ is a field
and $\La = \prod_{l \in \mathbb{Z}/d'} \Lal$ is a finite product of fields.
Assume that $\# E > d'$.
Let $1 \leq m \leq s$ and $D \in \Mat_{s \times m}(\La)$.
If there exists $D_{0} \in \GL_{s}(\La)$ such that $D_{0} = [*, D]$,
then there exist $A \in \GL_{m}(\La)$ and $B \in \GL_{s}(E)$ such that
$$BDA =
\begin{bmatrix}
1 & & \\
& \ddots &\\
& & 1 \\
* & * & * \\
\end{bmatrix}
\in \Mat_{s \times m}(\La).
$$
\end{lemm}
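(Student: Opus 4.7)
The plan is to find $B \in \GL_s(E)$ such that the top $m \times m$ block of $BD$, call it $P$, is invertible over $\La$; setting $A := P^{-1} \in \GL_m(\La)$ then makes the top $m$ rows of $BDA$ equal to $I_m$, giving the desired shape. Writing the rows of $B$ as $b_1,\dots,b_s \in E^s$, the task reduces to finding $E$-linearly independent $b_1,\dots,b_m$ such that for every $l \in \mathbb{Z}/d'$ the $m \times m$ matrix $[b_i D^{(l)}]_{i=1}^m$ is invertible over $\Lal$, where $D^{(l)} \in \Mat_{s \times m}(\Lal)$ denotes the $l$-th component of $D$. Any such $m$-tuple then extends to an $E$-basis of $E^s$ to form $B$, and the hypothesis $D_0 = [*,D] \in \GL_s(\La)$ ensures that each $D^{(l)}$ has full rank $m$ over $\Lal$ (since $D_0^{(l)} \in \GL_s(\Lal)$ and $D^{(l)}$ consists of its rightmost $m$ columns).

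I construct $b_1,\dots,b_m$ inductively. Suppose $b_1,\dots,b_k$ with $k < m$ have already been chosen so that the matrix with rows $b_i D^{(l)}$ ($i \le k$) has rank $k$ over $\Lal$ for each $l$, and let $W_l \subset \Lal^m$ be the corresponding $k$-dimensional $\Lal$-row-span. Then $b_{k+1}$ must avoid the proper $E$-subspace $\mathrm{span}_E(b_1,\dots,b_k) \subsetneq E^s$ (to preserve linear independence) and, for each $l \in \mathbb{Z}/d'$, the set $W_l^* := \{b \in E^s \mid b D^{(l)} \in W_l\}$ (so that the rank jumps from $k$ to $k+1$). Each $W_l^*$ is a proper $E$-subspace of $E^s$: otherwise every row of $D^{(l)}$, being $e_i D^{(l)}$ for $i=1,\dots,s$, would lie in $W_l$, forcing $m = \mathrm{rank}_{\Lal} D^{(l)} \le \dim_{\Lal} W_l = k$, contradicting $k < m$.

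Thus $b_{k+1}$ must be chosen outside a union of $d' + 1$ proper $E$-subspaces of $E^s$. This is possible precisely under the hypothesis $\# E > d'$, i.e.\ $\# E \ge d'+1$, by the standard covering lemma that a vector space over a field $E$ with $\# E \ge n$ cannot be written as a union of $n$ proper subspaces. The induction then runs to $k=m$, after which $b_1,\dots,b_m$ extend to a basis of $E^s$ and give $B$, and the top block $P$ is invertible over $\La$ by construction. The main (and essentially only non-routine) obstacle is establishing that each failure set $W_l^*$ is genuinely proper in $E^s$ and observing that exactly $d'+1$ subspaces must be avoided, which is what makes the precise numerical hypothesis $\# E > d'$ both necessary and sufficient.
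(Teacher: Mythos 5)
Your proof is correct, and it takes a genuinely different (and cleaner) route than the paper. The paper column-reduces each component $D_l$ separately so that each contains the standard basis rows $e_{l,1},\dots,e_{l,m}$ in some positions (an \lq\lq elementary pattern''), and then performs an intricate sequence of row operations over $E$ and componentwise column operations over $\Lal$ to move all patterns simultaneously to $(1,\dots,m)$; the hypothesis $\#E>d'$ enters when choosing a single constant $c\in E^{\times}$ that works for all components $l\neq l_0$ at once, and the argument requires a case analysis (one case of which the paper omits). You instead build the first $m$ rows of $B$ greedily: at each step the vector $b_{k+1}\in E^s$ must avoid, for each $l$, the proper $E$-subspace $W_l^{*}=\{b: bD^{(l)}\in W_l\}$ (properness following correctly from $\mathrm{rank}_{\Lal}D^{(l)}=m>k$, which you rightly deduce from $D_0^{(l)}\in\GL_s(\Lal)$), and this is possible by the standard fact that a vector space over $E$ is not a union of $\#E$ proper subspaces. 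This localizes the use of $\#E>d'$ to a single well-known covering lemma and replaces the paper's combinatorial bookkeeping with a one-line invertibility criterion for the top block $P$ of $BD$, namely invertibility of each component $P^{(l)}$. Two harmless remarks: since $\mathrm{span}_E(b_1,\dots,b_k)\subset W_l^{*}$ for every $l$, you really only need to avoid the $d'$ subspaces $W_l^{*}$, so your count of $d'+1$ is a slight overestimate; and your closing claim that $\#E>d'$ is \emph{necessary} for the lemma is not established by the argument (only sufficiency is), but neither point affects the validity of the proof.
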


\begin{proof}
For each $l \in \mathbb{Z}/d'$ and $1 \leq j \leq m$, let $e_{l,j} \in \Mat_{1 \times m}(\Lal)$ be
a row vector such that the $j$-th component is one and the other components are zero.
Write $D = (D_{l})_{l}$ where $D_{l} \in \Mat_{s \times m}(\Lal)$.
Since the rank of $D_{l}$ is $m$ for each $l$,
there exists a matrix $\tilde{A}_{l} \in \GL_{m}(\Lal)$ such that
$$D_{l} \tilde{A}_{l} =
\begin{bmatrix}
C_{l,1}\\
\vdots\\
C_{l,s}\\
\end{bmatrix},
$$
where $C_{l,i} \in \Mat_{1 \times m}(\Lal)$,
and for each $1 \leq j \leq m$ there exists an $i$ such that $C_{l,i} = e_{l,j}$.
An elementary pattern of $D_{l} \tilde{A}_{l}$ is a choice of
$(i_{l1},\dots,i_{lm}) \in \{1,\dots,s\}^{m}$ such that $C_{l,i_{l k}} = e_{l,k}$ for each $1 \leq k \leq m$.
We fix an elementary pattern $(i_{l1},\dots,i_{lm})$ of $D_{l} \tilde{A}_{l}$ for each $l$.
For each matrix $P \in \Mat_{s \times m}(\Lal)$ such that,
for each $1 \leq j \leq m$ there exists an $i$ such that the $i$-th row of $P$ is $e_{l,j}$,
we define an elementary pattern of $P$ in the same way.
For a matrix in $\Mat_{s \times m}(\La)$, we define the procedures
\begin{description}
\item[$(1)$] left-multiplication by a matrix in $\GL_{s}(E)$,
\item[$(2_{l})$] right-multiplication by a matrix in $\GL_{m}(\Lal) \times \prod_{l' \neq l} \{1\}$.
\end{description}
Set $\tilde{A} := (\tilde{A}_{l})_{l} \in \GL_{m}(\La)$ and $C_{i} := (C_{l,i})_{l} \in \Mat_{1 \times m}(\La)$.
By using the above procedures,
we want to transform $D \tilde{A}$ to a matrix $D' = (D'_{l})_{l}$ such that,
we can choose an elementary pattern of $D'_{l}$ to $(1,\dots,m)$ for each $l$.

Fix $i' \neq i''$ and $l_{0}$.
Let $\tau = (i' \ i'')$ be the transposition of $i'$ and $i''$.
It is enough to show that,
by using the procedures $(1)$ and $(2_{l})$,
we can transform $D \tilde{A}$ to a matrix $D' = (D'_{l})_{l}$ such that,
we can choose an elementary pattern of $D'_{l_0}$ to $(\tau i_{l_{0}1},\dots,\tau i_{l_{0}m})$
and an elementary pattern of $D'_{l}$ to $(i_{l1},\dots,i_{lm})$ for each $l \neq l_{0}$.

First we assume that $i' = i_{l_{0} j'}$ and $i'' = i_{l_{0} j''}$ for some $j' \neq j''$.
For $c \in E^{\times}$, we can exchange the $i'$-th row of $D \tilde{A}$ for $C_{i'} + c C_{i''}$ by the procedure $(1)$.
Since $\# E > d'$, we can take $c$ such that, for each $l \neq l_{0}$,
if $i' = i_{l j}$ for some $j$ then the $j$-th component of $C_{l,i'} + c C_{l,i''}$ is non-zero.
Then by the procedures $(2_{l})$ for $l \neq l_{0}$, we can transform this matrix to a matrix $D'' = (D''_{l})_{l}$ such that,
we can choose an elementary pattern of $D''_{l}$ to $(i_{l1},\dots,i_{lm})$ for each $l \neq l_{0}$,
the $i$-th row of $D''_{l_{0}}$ is $C_{l_{0},i}$ for each $i \neq i'$ and the $i'$-th row of $D''_{l_{0}}$ is
$$
(0,\dots,0,\stackrel{j'}{\stackrel{\vee}{1}},0,\dots,0,\stackrel{j''}{\stackrel{\vee}{c}},0,\dots,0).
$$
Therefore by the procedure $(2_{l_{0}})$, we can transform $D''$ to a matrix $D'$
which has the desired properties.
The case that $i' \not\in \{i_{l_{0} 1},\dots,i_{l_{0} m}\}$ and $i'' = i_{l_{0} j''}$ for some $j''$
is proved in a similar way, and we omit the proof.
\end{proof}

\begin{lemm}\label{p-d_def-field}
Assume that $\Gamma(F)$ is Zariski dense in $\Gamma$ or $\Lal / F$ is a regular extension for each $l$.
Assume also that $\# E > d'$.
We take $1 \leq m \leq s$ and $D \in \Mat_{s \times m}(\La)$
such that $[*, D] \in \GL_{s}(\La)$ for some $* \in \Mat_{s \times (s-m)}(\La)$.
We set
$$W := \{\mbx \in \Mat_{1 \times s}(\La') | \mbx D = 0\},$$
and assume that $\Gamma(F') W \subset W$,
where the elements of $\Gamma(F')$ act on $W$ by componentwise.
Then there exists a matrix $C \in \Mat_{(s-m) \times s}(E)$
such that the rank of $C$ is $s-m$ and $CD = 0$.
\end{lemm}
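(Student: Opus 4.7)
The plan is to put $D$ into a normal form using Lemma \ref{p-d_ope}, read off a candidate matrix $C$ from this normal form, and use the $\Gamma(F')$-invariance of $W$ together with Theorem \ref{gal_gammafix} to verify that the candidate has entries in $E$.

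First I would apply Lemma \ref{p-d_ope} (whose hypotheses $[*,D] \in \GL_s(\Lambda)$ and $\#E > d'$ are exactly assumptions of the present lemma) to obtain $A \in \GL_m(\Lambda)$ and $B \in \GL_s(E)$ such that
$$BDA = \begin{bmatrix} I_m \\ F_0 \end{bmatrix}$$
for some $F_0 \in \Mat_{(s-m) \times m}(\Lambda)$. Set $C := [-F_0,\, I_{s-m}]\, B \in \Mat_{(s-m) \times s}(\Lambda)$. A direct computation gives
$$CDA = [-F_0,\, I_{s-m}] \begin{bmatrix} I_m \\ F_0 \end{bmatrix} = 0,$$
so $CD = 0$; and the rank of $C$ is $s-m$ because $B$ is invertible and $[-F_0,\, I_{s-m}]$ contains an identity block. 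Thus the only remaining task is to show that $F_0$ has entries in $E$.

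To see this, I would describe $W$ explicitly via the normal form. For $\mathbf{x} \in \Mat_{1 \times s}(\Lambda')$, writing $\mathbf{x} B^{-1} = (\mathbf{y}_1, \mathbf{y}_2)$, the equation $\mathbf{x} D = 0$ is equivalent to $(\mathbf{x} B^{-1})(BDA) = 0$, i.e.\ $\mathbf{y}_1 + \mathbf{y}_2 F_0 = 0$, giving the parametrization
$$W = \{(-\mathbf{y}_2 F_0,\, \mathbf{y}_2)\, B \mid \mathbf{y}_2 \in \Mat_{1 \times (s-m)}(\Lambda')\}.$$
Now for any $\gamma \in \Gamma(F')$, the action of $\gamma$ extends from $\Sigma'$ to $\Lambda'$ by Lemma \ref{gal_g=a}(3), fixes $E' \supset E$ pointwise, and acts componentwise on $\Mat_{1 \times s}(\Lambda')$. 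Using $B \in \GL_s(E)$ one computes
$$\gamma \cdot \bigl((-\mathbf{y}_2 F_0,\, \mathbf{y}_2)\, B\bigr) = \bigl(-\gamma(\mathbf{y}_2)\, \gamma(F_0),\, \gamma(\mathbf{y}_2)\bigr)\, B.$$
The hypothesis $\gamma W \subset W$ forces this to be of the same form $(-\mathbf{z} F_0,\, \mathbf{z})\, B$ with $\mathbf{z} = \gamma(\mathbf{y}_2)$; letting $\gamma(\mathbf{y}_2)$ vary over the standard basis vectors yields $\gamma(F_0) = F_0$.

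Consequently, $F_0 \in \Mat_{(s-m) \times m}(\Lambda \cap (\Lambda')^{\Gamma(F')})$, and by Theorem \ref{gal_gammafix} this intersection equals $E$. Hence $F_0$, and therefore $C = [-F_0,\, I_{s-m}]\, B$, has entries in $E$, as required. The main subtlety I anticipate is bookkeeping: ensuring that $\gamma \in \Gamma(F')$ really extends to a $\Lambda'$-automorphism fixing $E$ (so that the componentwise action on $W$ makes sense and $B$ is fixed), and that the two alternative hypotheses $F' = F$ versus $F' = \bar{F}$ both legitimately feed into Theorem \ref{gal_gammafix} to yield $\Lambda \cap (\Lambda')^{\Gamma(F')} = E$; once these are set up, the rest is routine matrix manipulation with the normal form.
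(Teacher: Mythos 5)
Your proposal is correct and follows essentially the same route as the paper: apply Lemma \ref{p-d_ope} to normalize $D$ to $\begin{bmatrix} I_m \\ C_0 \end{bmatrix}$, use the $\Gamma(F')$-stability of $W$ (the paper works with $W_B = WB^{-1}$ rather than your explicit parametrization, a cosmetic difference) to deduce $\gamma C_0 = C_0$, and invoke Theorem \ref{gal_gammafix} to conclude $C_0$ has entries in $E$, then set $C = [-C_0,\, I_{s-m}]B$.
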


\begin{proof}
By Lemma \ref{p-d_ope}, there exist matrices $A \in \GL_{m}(\La)$ and $B \in \GL_{s}(E)$ such that
$$B D A =
\begin{bmatrix}
I_{m}\\
C_{0}\\
\end{bmatrix},
$$
where $I_{m}$ is the identity matrix of size $m$ and $C_{0} \in \Mat_{(s-m) \times m}(\La)$.
We set
$$W_{B} := W B^{-1} = \{\mbx \in \Mat_{1 \times s}(\La') | \mbx B D = 0\}
= \{\mbx \in \Mat_{1 \times s}(\La') | \mbx \begin{bmatrix} I_{m}\\ C_{0}\\ \end{bmatrix} = 0\}.$$
Then it is clear that $W_{B}$ is also $\Gamma(F')$-stable.
Thus, since each row of $\begin{bmatrix} -C_{0} & I_{s-m}\\ \end{bmatrix}$ is an element of $W_{B}$,
each row of $\begin{bmatrix} -\gamma C_{0} & I_{s-m}\\ \end{bmatrix}$ is also an element of $W_{B}$
for any $\gamma \in \Gamma(F')$.
This means that $\gamma C_{0} = C_{0}$ for each $\gamma \in \Gamma(F')$.
Therefore $C_{0} \in \Mat_{(s-m) \times m}(E)$  by Theorem \ref{gal_gammafix}.
We set $C := \begin{bmatrix} -C_{0} & I_{s-m}\\ \end{bmatrix} B$.
Then it is clear that this $C$ has the desired properties.
\end{proof}

\begin{prop}\label{p-d_sub}
Assume that $\Gamma(F)$ is Zariski dense in $\Gamma$
or $\Lal / F$ is a regular extension for each $l$.
Assume also that $\# E > d'$.
For any $N \in \TM$ and $\Gamma$-subrepresentation $U \subset \xi_{M}(N)$,
there exists a $\varphi$-submodule $N' \subset N$ such that $\xi_{M}(N') = N$.
\end{prop}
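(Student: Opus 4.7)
The plan is to construct $N'$ explicitly as a kernel inside $N$ and apply Lemma \ref{p-d_def-field} to certify that $\dim_E N' = m := \dim_F U$; this will give $V(N') = U$, i.e.\ $\xi_M(N') = U$. Fix an $E$-basis $\mathbf{n}$ of $N$ with fundamental matrix $\Psi_N \in \GL_s(\Sigma)$ (Proposition \ref{p-d_psi-n}), pick an $F$-basis $\mathbf{u}$ of $U$, and extend it to an $F$-basis $\mathbf{x}' = \left[\begin{smallmatrix}\mathbf{u}\\ \mathbf{w}\end{smallmatrix}\right]$ of $V(N)$. Writing $1 \otimes \mathbf{n} = B_1 \mathbf{u} + B_2 \mathbf{w}$ with $[B_1 \mid B_2] \in \GL_s(\Sigma)$, $B_1 \in \Mat_{s \times m}(\Sigma)$, and $B_2 \in \Mat_{s \times (s-m)}(\Sigma)$, I set
$$N' := \{\mathbf{c}\mathbf{n} : \mathbf{c} \in \Mat_{1 \times s}(E),\ \mathbf{c} B_2 = 0\},$$
which is the kernel of the $E$-linear composition $N \hookrightarrow \Sigma \otimes_E N \xrightarrow{\kappa^{-1}} \Sigma \otimes_F V(N) \twoheadrightarrow \Sigma \otimes_F (V(N)/U)$ given by Lemma \ref{p-d_isom}. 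Since $\kappa$ is $\vp$-equivariant (because $V(N) \subset \Sigma \otimes_E N$ is $\vp$-fixed) and the projection is $\vp$-equivariant, $N'$ is a $\vp$-submodule of $N$.

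The crucial step is $\dim_E N' \geq m$, which I deduce from Lemma \ref{p-d_def-field} applied with $D := B_2$ (the hypothesis $[\ast, B_2] \in \GL_s(\Sigma)$ being the invertibility of $[B_1 \mid B_2]$). I need $W := \{\mathbf{c} \in \Mat_{1 \times s}(\Lambda') : \mathbf{c} B_2 = 0\}$ to be $\Gamma(F')$-stable, which I establish via the transformation law
$$\gamma B_2 = B_2 \cdot C_\gamma^{-T} \qquad (\gamma \in \Gamma(F')),$$
where $C_\gamma \in \GL_{s-m}(F)$ is the matrix of the induced $\Gamma$-representation on the quotient $V(N)/U$ in the basis $\bar{\mathbf{w}}$ (equivalently, the bottom-right block of $[\rho_N(\gamma)]_{\mathbf{x}'}$, which is block upper triangular by the $\Gamma$-stability of $U$). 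This follows from applying $\gamma$ to the identity $\overline{1 \otimes n_i} = \sum_k (B_2)_{ik}\, \bar{w}_k$ in the quotient $\Sigma$-module $(\Sigma \otimes_E N)/(\Sigma \cdot U)$: the left side is $\gamma$-fixed (as $\gamma$ is trivial on $N$), while the right side picks up both the entries of $\gamma B_2$ and the transformation $\gamma \bar{w}_k = \sum_l [C_\gamma]_{lk}\, \bar{w}_l$, yielding $(\gamma B_2)\, C_\gamma^T = B_2$. Consequently $\mathbf{c} B_2 = 0$ implies $(\gamma \mathbf{c}) B_2 C_\gamma^{-T} = 0$, and invertibility of $C_\gamma^{-T}$ forces $\gamma \mathbf{c} \in W$. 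Lemma \ref{p-d_def-field} now produces $C' \in \Mat_{m \times s}(E)$ of rank $m$ with $C' B_2 = 0$, and the rows of $C'$ give $m$ $E$-linearly independent elements of $N'$.

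To conclude, Proposition \ref{phi_l-subq} shows $N'$ is $L$-trivial, so $\dim_F V(N') = \dim_E N' \geq m$. Conversely, the defining condition forces $L \otimes_E N' \subset L \otimes_F U$ inside $L \otimes_E N \cong L \otimes_F V(N)$; intersecting with $V(N)$ and using the $F$-direct-sum decomposition $V(N) = U \oplus W$ yields $V(N') \subset U$. By dimension, $V(N') = U$. The principal difficulty is establishing the clean transformation $\gamma B_2 = B_2 C_\gamma^{-T}$, which requires carefully matching the action of $\Gamma$ on $\Sigma$ (via $h(\Psi) \mapsto h(\Psi \gamma)$) with the representation $\rho_N$ descended to $V(N)/U$.
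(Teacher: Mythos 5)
Your proof is correct and follows essentially the same route as the paper: you extend an $F$-basis of $U$ to one of $V(N)$, identify the same matrix $D=B_2$ (the relevant columns of $H^{-1}$ in the paper's notation), prove $\Gamma(F')$-stability of $W$ from the $\Gamma$-stability of $U$, and invoke Lemma \ref{p-d_def-field} to produce the $E$-rational relations. The only (harmless) repackaging is that you define $N'$ intrinsically as a kernel and finish with an $L$-triviality dimension count, whereas the paper assembles $N'$ explicitly from the matrix $C$ via a basis change $B\in\GL_s(E)$ and exhibits its fundamental matrix directly.
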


\begin{proof}
We take $\mbu \in \Mat_{u \times 1}(U)$ an $F$-basis of $U$
such that $\bar{\mbn} := \begin{bmatrix} \mbu & *\\ \end{bmatrix}^{\mathrm{tr}}$ forms an $F$-basis of $\xi_{M}(N)$.
By Lemma \ref{p-d_isom}, we have $\bar{\mbn} = H \mbn$ for some $H = H(\Psi) \in \GL_{s}(\Sigma)$.
We take a matrix $D \in \Mat_{s \times (s-u)}(\Sigma)$ such that $H^{-1} = \begin{bmatrix} * & D\\ \end{bmatrix}$,
and set $W := \{\mbx \in \Mat_{1 \times s}(\La') | \mbx D = 0\}$.
Since $I_{s} = H H^{-1} = \begin{bmatrix} * & H D\\ \end{bmatrix}$,
the $i$-th row of $H$ is an element of $W$ for each $i \leq u$.
These form a $\La'$-basis of $W$ because the coefficient ring $\La'$ is a finite product of fields.
For each $\gamma \in \Gamma(F')$,
we have $\gamma \bar{\mbn} = (\gamma H) \mbn = (\gamma H) H^{-1} \bar{\mbn}$.
Since $U$ is $\Gamma$-stable, the $(i,j)$-th component of
$(\gamma H) H^{-1} = \begin{bmatrix} * & (\gamma H) D\\ \end{bmatrix}$ is zero for each $i \leq u$ and $j > u$.
Therefore, $W$ is $\Gamma(F')$-stable.
By Lemma \ref{p-d_def-field}, there exists a matrix $C \in \Mat_{u \times s}(E)$
such that the rank of $C$ is $u$ and $C D = 0$.
Then we can take $B \in \GL_{s}(E)$ such that $C$ forms the top rows of $B$.
Let $\begin{bmatrix} \mbn' & \mbn''\\ \end{bmatrix}^{\mathrm{tr}} := B \mbn$
where $\mbn' \in \Mat_{u \times 1}(N)$.
Let
$$B H^{-1} = \begin{bmatrix} C\\ *\\ \end{bmatrix} \begin{bmatrix} * & D\\ \end{bmatrix}
=: \begin{bmatrix} \Psi' & 0\\ * & *\\ \end{bmatrix},$$
where $\Psi' \in \GL_{u}(\Sigma)$.
Then we have
\begin{align*}
\varphi \begin{bmatrix} \mbn'\\ \mbn''\\ \end{bmatrix}
&= \varphi(B \mbn) = \varphi(B H^{-1} H \mbn) = \sigma(B H^{-1}) \varphi(H \mbn) = \sigma(B H^{-1}) H \mbn \\
&= \sigma(B H^{-1}) H B^{-1} \begin{bmatrix} \mbn'\\ \mbn''\\ \end{bmatrix} =: \begin{bmatrix} \Phi' & 0\\ * & *\\ \end{bmatrix}
\begin{bmatrix} \mbn'\\ \mbn''\\ \end{bmatrix},
\end{align*}
where $\Phi' \in \GL_{u}(E)$.
Hence $N' := \langle \mbn' \rangle_{E} \subset N$ is a sub $\varphi$-module,
and we have $\varphi \mbn' = \Phi' \mbn'$.
Moreover, we have
$$\sigma \begin{bmatrix} \Psi' & 0\\ * & *\\ \end{bmatrix} = \sigma(B H^{-1}) = (\sigma(B H^{-1}) H B^{-1}) (B H^{-1}) 
= \begin{bmatrix} \Phi' & 0\\ * & *\\ \end{bmatrix} \begin{bmatrix} \Psi' & 0\\ * & *\\ \end{bmatrix}
= \begin{bmatrix} \Phi' \Psi' & 0\\ * & *\\ \end{bmatrix}.$$
Therefore, $\Psi'$ is a fundamental matrix for $\Phi'$.
Since
$$\begin{bmatrix} \mbn'\\ \mbn''\\ \end{bmatrix} = B \mbn = B H^{-1} \bar{\mbn}
= \begin{bmatrix} \Psi' & 0\\ * & *\\ \end{bmatrix} \begin{bmatrix} \mbu\\ *\\ \end{bmatrix},$$
we have that $\xi_{M}(N') = \langle (\Psi')^{-1} \mbn' \rangle_{F} = \langle \mbu \rangle_{F} = U$.
\end{proof}

\begin{theo}\label{p-d_pi-isom}
Assume that $\Gamma(F)$ is Zariski dense in $\Gamma$ or $\Lal / F$ is a regular extension for each $l$.
Assume also that $\# E > d'$.
Then the morphism of affine $F$-schemes $\pi_{M} : \Gamma \rar \Gamma_{M}$ is an isomorphism.
Equivalently, the functor $\xi_{M} : \TM \rar \Rep(\Gamma, F)$ is an equivalence of Tannakian categories.
\end{theo}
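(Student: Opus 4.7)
The plan is to deduce the theorem from the standard Tannakian dictionary of Deligne--Milne (\cite{DeMi}, Proposition 2.21), using the preparatory results already collected in Theorem \ref{p-d_closed} and Propositions \ref{p-d_f.f.} and \ref{p-d_sub}. Recall that a homomorphism $\pi : G \rar G'$ of affine group schemes over a field is an isomorphism iff it is simultaneously a closed immersion and faithfully flat. By the same reference, $\pi$ is a closed immersion iff every object of $\Rep(G, F)$ is isomorphic to a subquotient of an object in the image of the induced functor $\tau : \Rep(G', F) \rar \Rep(G, F)$, and $\pi$ is faithfully flat iff $\tau$ is fully faithful and its essential image is closed under the formation of subobjects.

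The closed immersion assertion for $\pi_{M}$ is Theorem \ref{p-d_closed}, so the remaining task is to verify faithful flatness via the two representation-theoretic conditions above. First I would check full faithfulness of $\tau_{M}$: since $\eta_{M}$ is an equivalence of categories by Tannakian duality and $\tau_{M} = \xi_{M} \circ \eta_{M}$, this reduces immediately to Proposition \ref{p-d_f.f.}, which asserts exactly the full faithfulness of $\xi_{M}$. Second I would check the subobject-stability of the image: given $W \in \Rep(\Gamma_{M}, F)$ and any $\Gamma$-subrepresentation $U \subset \tau_{M}(W) = \xi_{M}(\eta_{M}(W))$, Proposition \ref{p-d_sub} produces a $\vp$-submodule $N' \subset \eta_{M}(W)$ with $\xi_{M}(N') = U$; since $\eta_{M}$ is an equivalence of abelian categories, there is a subobject $W' \subset W$ with $\eta_{M}(W') \cong N'$, and thus $\tau_{M}(W') \cong U$.

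Combining the two steps, $\pi_{M}$ is a closed immersion that is also faithfully flat, hence an isomorphism; the equivalent statement about $\xi_{M}$ being a tensor equivalence then follows from the diagram preceding Proposition \ref{p-d_subq}. I do not anticipate any real obstacle at this final step, since the substantive content has been absorbed into Propositions \ref{p-d_f.f.} and \ref{p-d_sub}, whose proofs were the genuinely hard parts (in particular Proposition \ref{p-d_sub} relied on the linear-algebra Lemma \ref{p-d_ope} and the Zariski-density/regularity hypothesis via Theorem \ref{gal_gammafix}). The only care required here is to match the precise hypotheses of Deligne--Milne's criterion and to verify that $\eta_{M}$, being a tensor equivalence, transports subobjects to $\vp$-submodules and vice versa, which is automatic.
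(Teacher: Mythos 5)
Your proposal is correct and follows essentially the same route as the paper: the paper's proof likewise combines Theorem \ref{p-d_closed} (closed immersion) with Propositions \ref{p-d_f.f.} and \ref{p-d_sub} via the Deligne--Milne criterion (\cite{DeMi}, Proposition 2.21) to get faithful flatness, hence an isomorphism. Your extra care in transporting subobjects through the equivalence $\eta_{M}$ is a harmless elaboration of what the paper leaves implicit.
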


\begin{proof}
By Propositions \ref{p-d_f.f.} and \ref{p-d_sub}, $\pi_{M}$ is faithfully flat (\cite{DeMi}, Proposition 2.21).
On the other hand, $\pi_{M}$ is a closed immersion by Theorem \ref{p-d_closed}.
Therefore $\pi_{M}$ is an isomorphism.
\end{proof}

\subsection{$v$-adic case}
In this subsection, we continue to use the notations of the previous subsection
and consider the case that $(F, E, L) = (\ftv, \ktv, \kstv)$,
where the notations are defined in Subsection \ref{phi_v}.

The assumption that $\Lal / F$ is regular for each $l$ is not true in general.
For example, assume that $r = 1$, $v = t$ and $\Phi \in K$ such that $\Psi := \Phi^{1/(q-1)} \not\in \ktv$.
Then $\Psi$ is a fundamental matrix for $\Phi$, and it is clear that $Z$ is not absolutely irreducible.
Therefore the assumptions are not satisfied.
However we expect that this assumption is true for \lq\lq good'' objects.

Hence we consider the other assumption.
In the $v$-adic case, $\Gamma(\ftv)$ contains a Galois image.
Since the Galois image is large enough, we can conclude that $\Gamma(\ftv)$ is Zariski dense in $\Gamma$.

\begin{lemm}\label{p-d_closure}
Let $G$ be an algebraic group over a field $k$ and $H$ a subgroup of $G(k)$.
We set $H^{Zar}$ the Zariski closure of $H$ in $G$ endowed with the reduced structure.
Then $H^{Zar}$ is a subgroup scheme of $G$ and smooth.
\end{lemm}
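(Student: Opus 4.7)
My plan is to verify that $V := H^{\mathrm{Zar}}$ satisfies three properties in turn: stability under translation by $H$ and inversion, smoothness, and scheme-theoretic closure under multiplication. Stability is immediate: $e \in H \subset V$, and for each $h \in H$ the inversion $\iota$ of $G$ and the translations $L_h, R_h$ are automorphisms of $G$ that preserve $H$ setwise (as $H$ is a subgroup), and hence preserve the Zariski closure $V$.

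\textbf{Smoothness.} Because $V$ is reduced of finite type over $k$, its regular locus $V^{\mathrm{reg}}$ is open and dense, and since $H \subset V(k)$ is Zariski dense, $H \cap V^{\mathrm{reg}} \ne \emptyset$. At any such $k$-point $v_{0}$, regularity equals smoothness (the residue field $k$ is trivially separable over $k$), so $V^{\mathrm{sm}} \ne \emptyset$. Let $W := V \smallsetminus V^{\mathrm{sm}}$, a closed subset of $V$ that is stable under each $L_h$ for $h \in H$ by the previous paragraph. Assume $W \ne \emptyset$ and pick $w_{0} \in W(\bar k)$; then $H \cdot w_{0} \subset W(\bar k)$, and applying the isomorphism $R_{w_{0}} : G_{\bar k} \to G_{\bar k}$ and passing to Zariski closures (with Corollary \ref{gal_ext} ensuring that $H$ is Zariski dense in $V_{\bar k}$) yields $R_{w_{0}}(V_{\bar k}) \subset W_{\bar k}$. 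The left-hand side has dimension $\dim V$, whereas $\dim W < \dim V$: for each irreducible component $V_{i}$ of $V$, one first shows that $H \cap V_{i}$ is dense in $V_{i}$ (otherwise $H \subset \overline{H \cap V_{i}} \cup \bigcup_{j \ne i} V_{j} \subsetneq V$, contradicting $V = H^{\mathrm{Zar}}$), and second that a generic $k$-point of $V_{i}$ lying outside $\bigcup_{j \ne i} V_{j}$ is a smooth point of $V$; hence $W \cap V_{i} \subsetneq V_{i}$ for every $i$, giving $\dim W < \dim V$, a contradiction. Thus $V$ is smooth.

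\textbf{Closure under multiplication.} Re-running the closure argument of the previous step with an arbitrary $y \in V(\bar k)$ in place of $w_{0}$ gives $R_{y}(V_{\bar k}) \subset V_{\bar k}$, so $V(\bar k) \cdot V(\bar k) \subset V(\bar k)$. Since $V$ is smooth it is geometrically reduced, and so is $V \times_{k} V$; the set-theoretic containment therefore upgrades to a scheme-theoretic factorization of the multiplication $m|_{V \times V} : V \times V \to G$ through the closed immersion $V \hookrightarrow G$. Combined with stability under inversion and the inclusion $e \in V$, this exhibits $V$ as a closed subgroup scheme of $G$.

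The crux of the argument is the smoothness step. Because $k$ need not be perfect---indeed, the intended application takes $k = \ftv$---a reduced closed subscheme of a smooth group scheme is not automatically smooth, so the familiar shortcut ``a reduced algebraic group over a perfect field is smooth'' is unavailable. The argument crucially exploits the dense supply of $k$-rational points provided by $H$ (to produce any smooth point at all) and the $H$-action to spread smoothness via a dimension count over $\bar k$.
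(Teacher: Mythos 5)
Your proof is correct. The closure-under-multiplication step coincides with the paper's argument: left translations by elements of $H$ preserve the closure, hence right translation by any point of the closure maps $H$ into it, and passing to Zariski closures gives $\overline{H}\cdot\overline{H}\subset\overline{H}$ (and similarly for inversion). Where you genuinely diverge is in how smoothness and geometric reducedness are obtained. The paper applies Lemma \ref{gal_closure} to the closure $\overline{H}$ of $H$ in $G_{\bar k}$: since $H\subset G(k)$ is dense in it, $\overline{H}$ is defined over $k$, so $(H^{\mathrm{Zar}})_{\bar k}=\overline{H}$ is reduced, i.e.\ $H^{\mathrm{Zar}}$ is absolutely reduced; smoothness is then left implicit, resting on the standard fact (used explicitly in Theorem \ref{gal_smooth}) that an absolutely reduced algebraic group over a field is smooth. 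You instead prove smoothness directly: the regular locus of the reduced finite-type scheme $V=H^{\mathrm{Zar}}$ is open and dense, it must contain a $k$-rational point of $H$ where regularity upgrades to smoothness, and the $H$-action together with a component-by-component dimension count over $\bar k$ (using Corollary \ref{gal_ext} for density of $H$ in $V_{\bar k}$) forces the non-smooth locus to be empty; geometric reducedness then comes out as a consequence rather than an input. Your route has the merit of making explicit why non-perfectness of $k$ (e.g.\ $k=\ftv$) causes no trouble, at the cost of the somewhat longer dimension argument; the paper's route gets geometric reducedness in one stroke from Lemma \ref{gal_closure} and leaves the deduction of smoothness to the reader. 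Both ultimately rest on the same density-over-field-extensions machinery, just through different corollaries of it.
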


\begin{proof}
We denote by $\bar{H}$ the Zariski closure of $H$ in $G(\bar{k})$.
By Lemma \ref{gal_closure}, $\bar{H}$ is defined over $k$.
Then it is clear that $(H^{Zar})_{\bar{k}} = \bar{H}$.
Thus $H^{Zar}$ is absolutely reduced.

To prove that $H^{Zar}$ is a group scheme, it is enough to show that $\bar{H}$ is a group.
For any $a \in G(\bar{k})$, the map $G(\bar{k}) \rar G(\bar{k}) ; g \mapsto a g$ is a homeomorphism.
Thus for any $a \in H$, we have $a \bar{H} = \overline{a H} \subset \bar{H}$.
Thus for any $b \in \bar{H}$, we have $H b \subset \bar{H}$.
Therefore $\bar{H} b = \overline{H b} \subset \bar{H}$.
Hence we have $\bar{H} \bar{H} \subset \bar{H}$.
Since the map $G(\bar{k}) \rar G(\bar{k}) ; g \mapsto g^{-1}$ is a homeomorphism,
we have $\bar{H}^{-1} = \overline{H^{-1}} = \bar{H}$.
\end{proof}

\begin{lemm}\label{p-d_image}
Let $G$ be a topological group and $k$ be a topological field.
Let $\rho : G \rar \GL_{r}(k)$ be a continuous $k$-representation of $G$.
We set $\mathcal{C}_{\rho}$ the Tannakian subcategory of $\Rep(G, k)$ generated by $\rho$
and $\Gamma_{\rho} \subset \GL_{r/k}$ its Tannakian Galois group.
Then $\rho(G)$ is Zariski dense in $\Gamma_{\rho}$.
\end{lemm}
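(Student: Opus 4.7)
The plan is to prove that the Zariski closure $H$ of $\rho(G)$ inside $\Gamma_\rho$, equipped with the reduced induced subscheme structure, coincides with $\Gamma_\rho$. First I would note that $\rho(G) \subset \GL_r(k)$ actually lies inside $\Gamma_\rho(k)$: by the defining universal property of the Tannakian Galois group, each $g \in G$ acts functorially on every object of $\mathcal{C}_\rho$ in a manner compatible with the tensor structure, and this datum is exactly a $k$-valued point of $\Gamma_\rho$. Then Lemma \ref{p-d_closure} gives that $H$ is a (smooth) closed subgroup scheme of $\Gamma_\rho$.

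Next I would argue by contradiction: suppose $H \subsetneq \Gamma_\rho$. By Chevalley's theorem, applied to the proper closed subgroup scheme $H$ of the affine algebraic group $\Gamma_\rho \subset \GL_{r/k}$, there exist a finite-dimensional $\Gamma_\rho$-representation $V$ and a one-dimensional subspace $L \subset V$ such that $H = \mathrm{Stab}_{\Gamma_\rho}(L)$ holds scheme-theoretically. Via the Tannakian equivalence $\Rep(\Gamma_\rho, k) \xrightarrow{\sim} \mathcal{C}_\rho$, the representation $V$ corresponds to a continuous $G$-representation in $\mathcal{C}_\rho$ (concretely, to a subquotient of some $\rho^{\otimes a} \otimes (\rho^\vee)^{\otimes b}$). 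The inclusion $\rho(G) \subset H(k) \subset \mathrm{Stab}_{\Gamma_\rho}(L)(k)$ then forces $L$ to be $G$-stable, that is, a sub-$G$-representation of the image of $V$ in $\mathcal{C}_\rho$. Since any equivalence of categories preserves and reflects subobjects, $L$ is also a sub-$\Gamma_\rho$-representation of $V$ in $\Rep(\Gamma_\rho, k)$. Hence $\Gamma_\rho \subset \mathrm{Stab}_{\Gamma_\rho}(L) = H$, contradicting $H \subsetneq \Gamma_\rho$.

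The main obstacle is invoking Chevalley's theorem in sufficient generality, namely: any closed subgroup scheme of an affine algebraic group over an arbitrary field is the scheme-theoretic stabilizer of a line in some finite-dimensional representation. The classical references (e.g.\ Borel's \emph{Linear Algebraic Groups}, Theorem 5.1, or Waterhouse's \emph{Introduction to Affine Group Schemes}, \S16.1) establish this in the generality we need. Once Chevalley is granted, the rest of the argument is a purely formal consequence of the Tannakian equivalence combined with Lemma \ref{p-d_closure}.
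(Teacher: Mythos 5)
Your proposal is correct, but it takes a genuinely different route from the paper. The paper first records (as you do) that $\rho$ factors through $\rho(G)^{Zar}(k) \subset \Gamma_{\rho}(k)$, but then works with the full category $\Rep(G,k)$ and its Tannakian group $\Gamma_{G,k}$: the inclusion $\mathcal{C}_{\rho} \subset \Rep(G,k)$ is fully faithful with essential image closed under subobjects, so by the standard criterion (\cite{DeMi}, Prop.\ 2.21) the induced morphism $\Gamma_{G,k} \rar \Gamma_{\rho}$ is faithfully flat; since this morphism factors through the closed subgroup scheme $\rho(G)^{Zar}$, the closed immersion $\rho(G)^{Zar} \hookrightarrow \Gamma_{\rho}$ must be an isomorphism. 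You instead stay entirely inside $\Gamma_{\rho}$ and invoke Chevalley's theorem to realize the closed subgroup scheme $H = \rho(G)^{Zar}$ as the scheme-theoretic stabilizer of a line $L$ in some $V \in \Rep(\Gamma_{\rho},k)$, then observe that $G$-stability of $L$ forces $\Gamma_{\rho}$-stability because $\mathcal{C}_{\rho}$ is closed under subobjects and the equivalence $\Rep(\Gamma_{\rho},k) \simeq \mathcal{C}_{\rho}$ is compatible with the fiber functors. Both arguments are standard and complete; yours essentially unwinds the proof of the Deligne--Milne surjectivity criterion in this special case, at the cost of quoting Chevalley in its scheme-theoretic form (needed here since $\Gamma_{\rho}$ may be non-reduced, so the Waterhouse reference is the right one, not Borel's smooth version), while the paper's version avoids Chevalley but has to pass through the pro-algebraic group $\Gamma_{G,k}$ and the notion of epimorphism of affine group schemes. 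One small point worth making explicit in your write-up: the $G$-action on the object of $\mathcal{C}_{\rho}$ corresponding to $V$ is indeed given by composing $G \rar \Gamma_{\rho}(k)$ with the $\Gamma_{\rho}$-action on $V$, which is what lets you pass from $\rho(G) \subset \mathrm{Stab}(L)(k)$ to $G$-stability of $L$ as a subrepresentation.
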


Note that the Tannakian Galois group $\Gamma_{\rho}$ may not be reduced.

\begin{proof}
We have an inclusion $\rho(G)^{Zar} \subset \Gamma_{\rho}$ and $\rho$ factors through a $\rho(G)^{Zar}(k)$:
$$\rho : G \rar \rho(G)^{Zar}(k) \hookrightarrow \Gamma_{\rho}(k) \hookrightarrow \GL_{r}(k).$$
Thus we have functors of Tannakian categories
$$\mathcal{C}_{\rho} \cong \Rep(\Gamma_{\rho}, k) \rar \Rep(\rho(G)^{Zar}, k) \rar \Rep(G, k).$$
We denote by $\Gamma_{G,k}$ be the Tannakian Galois group of $\Rep(G, k)$.
Then we have morphisms of algebraic groups which correspond to the above sequence:
$$\Gamma_{G,k} \rar \rho(G)^{Zar} \hookrightarrow \Gamma_{\rho}.$$
Since $\Gamma_{G,k} \rar \Gamma_{\rho}$ is an epimorphism of algebraic groups,
we have $\rho(G)^{Zar}(\bar{k}) = \Gamma_{\rho}(\bar{k})$.
\end{proof}

For any $\tau \in G_{K}$, since $\sigma(\tau \Psi) = \tau(\sigma \Psi) = \tau(\Phi \Psi) = \Phi(\tau \Psi)$,
there exists a matrix $A_{\tau} \in \GL_{r}(\ftv)$ such that $\tau \Psi = \Psi A_{\tau}$.
Therefore we have $\tau(\Sigma) = \Sigma$ and a map $G_{K} \rar \aus(\Sigma / \ktv)$.
By Lemma \ref{gal_g=a}, we have that $A_{\tau} \in \Gamma(\ftv)$ and $A_{\tau}$
corresponds to the image of $\tau$ in $\aus(\Sigma / \ktv)$
via the isomorphism $\aus(\Sigma / \ktv) \cong \Gamma(\ftv)$.

On the other hand, we can verify that the map
$$G_{K} \rar \aus(\Sigma / \ktv) \cong \Gamma(\ftv) \hookrightarrow \Gamma_{M}(\ftv) \hookrightarrow \GL(V(M))$$
coincide with the natural representation $G_{K} \rar \GL(V(M))$ defined in Subsection \ref{phi_v}.

\begin{prop}\label{p-d_gal-im}
The image of $G_{K}$ in $\Gamma_{M}(\ftv)$ is Zariski dense in $\Gamma_{M}$.
\end{prop}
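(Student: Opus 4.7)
The plan is to deduce this proposition directly from Lemma \ref{p-d_image} via the tensor equivalence of Theorem \ref{phi_main-equiv}. Concretely, let $\rho : G_{K} \rar \GL(V(M))$ denote the natural continuous $\ftv$-representation coming from the Galois action on $V_{K}(M) = V(M)$, and let $\mathcal{C}_{\rho} \subset \Rep(G_{K}, \ftv)$ be the Tannakian subcategory generated by $\rho$, with Tannakian Galois group $\Gamma_{\rho} \subset \GL_{r/\ftv}$. The text immediately preceding the proposition has already verified that the composite $G_{K} \rar \Gamma(\ftv) \hookrightarrow \Gamma_{M}(\ftv) \hookrightarrow \GL(V(M))$ coincides with $\rho$, so the image of $G_{K}$ in $\Gamma_{M}(\ftv)$ is literally $\rho(G_{K})$.

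Next, I would invoke Theorem \ref{phi_main-equiv}(3): the functor $V : \phm_{\ktv}^{\kstv} \rar \Rep(G_{K}, \ftv)$ is a tensor equivalence. Restricting it to the Tannakian subcategory $\TM$ generated by $M$, we obtain a tensor equivalence $\TM \xrightarrow{\sim} \mathcal{C}_{\rho}$, since both sides are the Tannakian subcategories generated by $M$ and $V(M) = \rho$ respectively, and tensor equivalences send generated Tannakian subcategories to generated Tannakian subcategories. Because this equivalence is compatible with the fiber functors $V_{M}$ on $\TM$ and the forgetful functor on $\mathcal{C}_{\rho}$ (both landing in $\vsp(\ftv)$ and identifying the underlying $\ftv$-space with $V(M)$), the Tannakian duality gives a canonical isomorphism of affine group schemes $\Gamma_{M} \xrightarrow{\sim} \Gamma_{\rho}$ as closed subgroups of $\GL(V(M))$.

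Finally, Lemma \ref{p-d_image} applied to $\rho : G_{K} \rar \GL_{r}(\ftv)$ shows that $\rho(G_{K})$ is Zariski dense in $\Gamma_{\rho}$. Transporting this through the isomorphism $\Gamma_{\rho} \cong \Gamma_{M}$ gives the desired Zariski density of the image of $G_{K}$ in $\Gamma_{M}$.

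The only subtle point is the identification of the two presentations of the Galois representation, which is why I want to rely on the preceding paragraph of the paper to match the $A_{\tau}$-construction with $\rho$; once that is granted, everything is a formal consequence of the tensor equivalence $V$ and Lemma \ref{p-d_image}. No new density or smoothness input is required here, since the density result for an arbitrary continuous representation is exactly what Lemma \ref{p-d_image} provides.
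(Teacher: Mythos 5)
Your proposal is correct and follows essentially the same route as the paper: identify $\TM$ with the Tannakian subcategory of $\Rep(G_K,\ftv)$ generated by $V(M)$ via the equivalence of Theorem \ref{phi_main-equiv}, so that $\Gamma_M$ is also the Tannakian Galois group of that subcategory, and then apply Lemma \ref{p-d_image}. Your write-up simply makes explicit the compatibility of fiber functors and the matching of the $A_\tau$-construction with $\rho$, which the paper leaves implicit.
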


\begin{proof}
Let $\mathcal{C}_{M}$ be the Tannakian subcategory of $\Rep(G_{K}, \ftv)$ generated by $V(M)$.
Then by Theorem \ref{phi_main-equiv}, the categories $\TM$ and $\mathcal{C}_{M}$ are equivalence.
Therefore $\Gamma_{M}$ is also a Tannakian Galois group of $\mathcal{C}_{M}$.
Hence by Lemma \ref{p-d_image}, the image of $G_{K}$ is Zariski dense in $\Gamma_{M}$.
\end{proof}

\begin{theo}\label{p-d_v-isom}
If $(F, E, L) = (\ftv, \ktv, \kstv)$, then the morphism $\pi_{M} : \Gamma \rar \Gamma_{M}$ is an isomorphism.
\end{theo}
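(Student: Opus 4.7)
The plan is to deduce this directly from Theorem \ref{p-d_pi-isom} by verifying its two hypotheses in the $v$-adic setting. First, the cardinality condition $\# E > d'$ is automatic: since $v$ is irreducible in $K[t]$, Lemma \ref{phi_v-triple} shows $E = \ktv$ is a field, and it is infinite because it contains $\ftv$. Since $d'$ is a (finite) divisor of $d$, the inequality $\# \ktv > d'$ holds trivially.

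The main work is therefore to establish that $\Gamma(\ftv)$ is Zariski dense in $\Gamma$. For this I would exploit the Galois action. The construction sketched just before Proposition \ref{p-d_gal-im} gives, for each $\tau \in G_{K}$, a matrix $A_{\tau} \in \GL_{r}(\ftv)$ characterized by $\tau \Psi = \Psi A_{\tau}$, and Lemma \ref{gal_g=a} identifies $A_{\tau}$ with an element of $\Gamma(\ftv)$. Thus one obtains a homomorphism $\rho_{K} : G_{K} \rar \Gamma(\ftv)$ whose composite with the closed immersion $\pi_{M} : \Gamma \hookrightarrow \Gamma_{M}$ (Theorem \ref{p-d_closed}) agrees with the natural Galois representation on $V(M)$ of Subsection \ref{phi_v}.

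By Proposition \ref{p-d_gal-im}, the image of $G_{K}$ in $\Gamma_{M}(\ftv)$ is Zariski dense in $\Gamma_{M}$. Since this image is contained in $\pi_{M}(\Gamma(\ftv))$, and $\pi_{M}(\Gamma)$ is closed in $\Gamma_{M}$, we conclude that $\pi_{M}(\Gamma) = \Gamma_{M}$ as closed subschemes; equivalently, $\Gamma(\ftv) \subset \Gamma(\ftv)$ is Zariski dense in $\Gamma$ (using that $\pi_{M}$ is a closed immersion onto $\Gamma_M$ after this identification). One can then either finish the argument directly---a closed immersion that is surjective between reduced $F$-schemes of the same variety is an isomorphism, and $\Gamma$ is smooth by Theorem \ref{gal_smooth}---or formally invoke Theorem \ref{p-d_pi-isom} with $F' = F = \ftv$.

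The only step that is not purely formal is the identification of the $G_{K}$-action on $V(M)$ with the composition $G_{K} \to \Gamma(\ftv) \hookrightarrow \Gamma_{M}(\ftv)$; this is essentially a bookkeeping check using $\Psi^{-1}\mathbf{m}$ as the distinguished basis of $V(M)$ (Proposition \ref{phi_basis}) together with the explicit form of $\rho_{M}$ given in the proof of Theorem \ref{p-d_action}. Once that identification is in hand, the Zariski density of the Galois image in $\Gamma_M$ (Lemma \ref{p-d_image} applied via the equivalence of Theorem \ref{phi_main-equiv}) transfers through the closed immersion $\pi_M$ and forces $\pi_M$ to be an isomorphism. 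This is also the conceptual content that the introduction advertises as the point where the $v$-adic case is genuinely easier than the $\infty$-adic case of \cite{Papa}.
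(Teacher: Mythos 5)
Your proposal is correct and follows the paper's own route: the Galois image supplies Zariski density of $\Gamma(\ftv)$ in $\Gamma_{M}$ (Proposition \ref{p-d_gal-im}), hence in $\Gamma$, and Theorem \ref{p-d_pi-isom} then applies, the hypothesis $\# E > d'$ being automatic since $E = \ktv$ is an infinite field. The only caution concerns your optional ``direct'' finish: a closed immersion with dense image is an isomorphism only when the target is reduced, and the paper explicitly warns that a Tannakian Galois group such as $\Gamma_{M}$ need not be reduced a priori, so the formal invocation of Theorem \ref{p-d_pi-isom} (which the paper uses) is the safe route.
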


\begin{proof}
By Proposition \ref{p-d_gal-im}, $\Gamma(\ftv)$ is Zariski dense in $\Gamma_{M}$.
In particular, it is Zariski dense in $\Gamma$.
Therefore by Theorem \ref{p-d_pi-isom}, $\pi_{M}$ is an isomorphism.
\end{proof}

\begin{prop}\label{p-d_f-val}
Fix an index $m \in \mathbb{Z}/d$ and take an element $\tau \in G_{K}$
such that $\tau|_{\mathbb{F}_{q^d}} = \sigma|_{\mathbb{F}_{q^d}}^{-m}$.
Then the image of $\tau$ in $\Gamma(\ftv)$ is contained in $\Gamma_{m}(\ftv)$.
\end{prop}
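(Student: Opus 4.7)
My approach is to exhibit a natural factorization of the evaluation homomorphism $\ev_{A_\tau}$ through the map $\mu_{l,l+m}$, which by $\mfq_m = \ker\mu_{l,l+m}$ will force $A_\tau \in \Gamma_m(\ftv)$. Recall that $A_\tau \in \GL_r(\ftv)$ is characterized by $\tau\Psi = \Psi A_\tau$ in $\GL_r(\kstv)$, and its image in $\Gamma(\ftv)$ under the isomorphism $\aus(\Sigma/\ktv)\cong\Gamma(\ftv)$ is precisely what must be placed inside $\Gamma_m(\ftv)$.

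First I would read off a concrete formula for $A_\tau$. Using the product decomposition $\kstv = \prod_l L_l$ and the explicit description of the $G_K$-action from Subsection~\ref{phi_v}, the hypothesis $\tau|_{\mathbb{F}_{q^d}} = \sigma|_{\mathbb{F}_{q^d}}^{-m}$ implies that the $l$-th component of $\tau\Psi$ equals $\tau(\Psi_{l+m})$, where $\tau$ is now interpreted as the induced ring map $L_{l+m}\to L_l$ shifting the uniformizer $t-\lambda_{l+m}$ to $t-\lambda_l$. Comparing with the $l$-th component of $\Psi A_\tau$, which is simply $\Psi_l A_\tau$ because $A_\tau$ embeds diagonally into each $L_l$, yields
\[A_\tau = \Psi_l^{-1}\tau(\Psi_{l+m}) \qquad\text{in } \GL_r(L_l).\]

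Next I would introduce the factoring map. Define the $E$-algebra homomorphism
\[\epsilon : L_l\otimes_E L_{l+m} \rar L_l, \qquad a\otimes b \mapsto a\cdot\tau(b).\]
Writing $\iota_{l'} : E \rar L_{l'}$ for the natural embedding, $E$-balancedness reduces to the identity $\tau(\iota_{l+m}(e)) = \iota_l(e)$ in $L_l$ for every $e\in E = \ktv$, and this follows immediately from $(\kstv)^{G_K} = \ktv$ combined with the coordinatewise formula for the $G_K$-action. The only delicate part of the argument is this well-definedness check, whose validity rests essentially on the Galois-fixed-field property.

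Finally, using Subsection~\ref{gal_gal-gp} we have $\widetilde{\Psi}^{(l,l+m)} = (\Psi_l^{-1}\otimes 1)(1\otimes\Psi_{l+m})$, so a direct matrix computation combined with the formula above gives
\[\epsilon(\mu_{l,l+m}(X_{ij})) = \epsilon(\widetilde{\Psi}^{(l,l+m)}_{ij}) = (\Psi_l^{-1}\tau(\Psi_{l+m}))_{ij} = (A_\tau)_{ij}.\]
Letting $\ev_{A_\tau} : \ftv\X \rar \ftv$ denote the evaluation $X_{ij}\mapsto (A_\tau)_{ij}$ and $\jmath : \ftv\hookrightarrow L_l$ the natural inclusion, this identity says $\epsilon\circ\mu_{l,l+m} = \jmath\circ\ev_{A_\tau}$ as $\ftv$-algebra maps $\ftv\X \rar L_l$. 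For any $f\in\mfq_m = \ker\mu_{l,l+m}$ we then obtain $\jmath(\ev_{A_\tau}(f)) = 0$, hence $\ev_{A_\tau}(f)=0$ by the injectivity of $\jmath$, and therefore $A_\tau \in \Gamma_m(\ftv)$.
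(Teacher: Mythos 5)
Your proof is correct and follows essentially the same route as the paper: both arguments rest on the identity $A_{\tau}=\Psi_{l}^{-1}\tau(\Psi_{l+m})$ coming from the coordinatewise description of the $G_{K}$-action, together with the fact that $\tau$ induces a $\ktv$-algebra homomorphism $L_{l+m}\rar L_{l}$. The only difference is one of packaging: where the paper pairs $\Psi_{l}\in Z_{l}(\kstl)$ with $\tau\Psi_{l+m}\in Z_{l+m}(\kstl)$ and cites Proposition \ref{gal_tri2}, you verify membership in $\Gamma_{m}$ directly from the definition $\mfq_{m}=\ker\mu_{l,l+m}$ by factoring $\ev_{A_{\tau}}$ through $\mu_{l,l+m}$ — your $\epsilon$ is precisely the $L_{l}$-valued point $(\Psi_{l},\tau\Psi_{l+m})$ of $Z_{l}\times_{E}Z_{l+m}$, so the two arguments coincide.
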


\begin{proof}
Since $\tau$ induces a $\ktv$-isomorphism $K^{\mathrm{sep}}(\!(t-\lambda_{l+m})\!) \rar \kstl$,
also induces a bijection $Z_{l+m}(K^{\mathrm{sep}}(\!(t-\lambda_{l+m})\!)) \rar Z_{l+m}(\kstl)$.
Let $A_{\tau} \in \Gamma(\ftv)$ be as above.
Since $\Psi A_{\tau} = \tau \Psi$, we have $\Psi_{l} A_{\tau} = \tau \Psi_{l+m} \in Z_{l+m}(\kstl)$.
Note that $\Psi_{l} \in Z_{l}(\kstl)$ for each $l$ by the definition of $Z_{l}$.
Therefore by Theorem \ref{gal_tri2}, we have
$A_{\tau} \in \Gamma_{m}(K^{\mathrm{sep}}(\!(t-\lambda_{l})\!)) \cap \Gamma(\ftv) = \Gamma_{m}(\ftv)$.
\end{proof}

\section{$v$-adic criterion}\label{sec_abp}
In this section, we set $K := \Fq(\theta)$
the rational function field over $\Fq$ with one variable $\theta$ independent of $t$.
Let $M$ be a finite-dimensional $\vp$-module over $\ktv$,
$\mbm$ a $\ktv$-basis of $M$
and $\Phi \in \Mat_{r \times r}(\ktv)$ a matrix such that $\vp \mbm = \Phi \mbm$.

\begin{defi}
A $\vp$-module $M$ is said to be a \textit{v-adic t-motive} if $\Phi \in \Mat_{r \times r}(K[t]_{v})$
and $\det \Phi = c (t - \theta)^{s}$ for some $c \in \bar{K}^{\times}$ and $s \in \mathbb{N}$.
\end{defi}

Since $t - \theta$ is invertible in $K[t]_{v}$,
$v$-adic $t$-motives are $\kstv$-trivial by Theorem \ref{phi_main-equiv}.
Thus we can apply the results of the previous sections to $v$-adic $t$-motives.

\begin{rema}
Let $k$ be a field of characteristic $p > 0$
and $\iota : \mathbb{F}_{p}[t] \rar k$ a ring homomorphism.
Anderson defined the notion of $t$-motives over $k$ in \cite{Ande}.
This is a $\vp$-module $M$ over $k[t]$ which satisfies the following conditions:
\begin{itemize}
\item $M$ is free of finite rank over $k[t]$.
\item $(t-\iota(t))^{N}(M/k[t] \cdot \vp M) = 0$ for some integer $N > 0$.
\item $M$ is finitely generated over $k_{\sigma}[\vp]$.
\end{itemize}
Here the $\vp$ action on $k[t]$ is defined as before
and $k_{\sigma}[\vp]$ is the subring of $k[t]_{\sigma}[\vp]$ generated by $k$ and $\vp$.
Thus we have a functor from the category of $t$-motives over $K$ (here we take $\iota(t) = \theta$)
to the category of $v$-adic $t$-motives by tensoring $\ktv$.
\end{rema}

Let $\kv$ be the completion of $K$ with respect to the place at $v(\theta)$,
$K_{d} := K \cdot \mathbb{F}_{q^d}$
the composite field of $K$ and $\mathbb{F}_{q^d}$ in $\bar{K}$,
$\kll := K_{d,(\theta - \lambda_{l})}$
the completion of $K_{d}$ with respect to the place at $(\theta-\lambda_{l})$,
$\overline{\kll}$ an algebraic closure of $\kll$,
and $\cll := \widehat{\overline{\kll}}$
the completion of $\overline{\kll}$ with respect to the canonical extension of $(\theta-\lambda_{l})$.
Let $v_{l}$ be the valuation on $\cll$ normalized by $v_{l}(\theta-\lambda_{l}) = 1$.
For each $l$, we fix an embedding $\bar{K}$ to $\overline{\kll}$ over $K_{d}$.
Then for each $f \in \kstv = \prod_{l} \kstl$,
we can define $f(\theta) \in \prod_{l} \cll$ by substituting $\theta$ for $t$ if it converge.
We have the following conjecture, which is a $v$-adic analogue of Proposition 3.1.1 in \cite{ABP}:

\begin{conj}\label{abp_conj}
Let $\Phi \in \GL_{r}(\ktv) \cap \Mat_{r \times r}(K[t])$ and
$\psi \in \Mat_{r \times 1}(K^{\mathrm{sep}}[t]_{v})$ be matrices such that
$\psi(\theta)$ converges, $\sigma \psi = \Phi \psi$
and $\det \Phi = c (t-\theta)^{s}$ for some $c \in K^{\times}$ and $s \in \mathbb{N}$.
Then, any linear relation of the components of $\psi(\theta)$ over $K_{v(\theta)}$
lifts to some linear relation of the components of $\psi$ over $K[t]_{v}$.
Precisely speaking,
if there exists an element $\rho \in \Mat_{1 \times r}(K_{v(\theta)})$ such that $\rho \psi(\theta) = 0$,
then there exists an element $P \in \Mat_{1 \times r}(K[t]_{v})$ such that
$P \psi = 0$, $P(\theta)$ converges and $P(\theta) = \rho$.
\end{conj}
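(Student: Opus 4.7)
The plan is to follow the strategy of the Anderson--Brownawell--Papanikolas criterion (\cite{ABP}, Proposition~3.1.1), adapted to the $v$-adic setting. The original $\infty$-adic argument has three movements: (i) a Siegel-type pigeonhole producing, for each $N$, an auxiliary row vector $P_N \in \Mat_{1 \times r}(K[t])$ of controlled degree such that $P_N\psi$ vanishes at $t = \theta$ to order at least $N$; (ii) iterating the Frobenius equation $\sigma\psi = \Phi\psi$ to spread this vanishing across Frobenius-conjugate points; and (iii) combining the resulting vanishing orders with the factorization $\det\Phi = c(t-\theta)^s$ and an archimedean growth estimate to force $P_N\psi \equiv 0$, yielding the desired lift.

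First, I would set up the contrapositive: assume the entries $\psi_1,\ldots,\psi_r$ are linearly independent over $K[t]_v$ and prove that $\psi_1(\theta),\ldots,\psi_r(\theta)$ are linearly independent over $\kv$. As in \cite{ABP}, a Galois-averaging step then reduces to the case where $\psi$ has entries in $K[t]_v$ itself, by bundling the $G_K$-conjugates of $\psi$ into a larger column vector $\widetilde{\psi}$ satisfying an enlarged Frobenius equation $\sigma\widetilde{\psi} = \widetilde{\Phi}\widetilde{\psi}$ with $\widetilde{\Phi}$ defined over $K$ and $\det\widetilde{\Phi}$ still a unit times a power of $(t-\theta)$.

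Second, I would implement the Siegel--Frobenius amplification. Setting $\Phi^{(n)} := \sigma^{n-1}(\Phi)\cdots\sigma(\Phi)\,\Phi$, so that $\sigma^n\psi = \Phi^{(n)}\psi$, the hypothesis on $\det\Phi$ gives $\det\Phi^{(n)}$ as an explicit product of shifted factors $(t-\theta^{q^i})^s$. Starting from a relation $P_N\psi = (t-\theta)^N g_N$, one iterates $\sigma$ and uses the explicit form of $\det\Phi^{(n)}$ together with cofactor manipulations to transfer the vanishing to Frobenius-conjugate points. Balancing these vanishing orders against the degree bound on $P_N$ supplied by Siegel's lemma is, in the classical archimedean argument, sufficient to force $P_N\psi = 0$, which is precisely the sought lift.

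The decisive obstacle --- and the reason this is left as a conjecture --- is the missing analytic growth lemma. In the $\infty$-adic case, entirety of $\psi$ together with the archimedean absolute value supply a Liouville-type inequality bounding $(P_N\psi)(\theta)$ from below incompatibly with the produced vanishing, unless $P_N\psi \equiv 0$. At a finite place $v$, the Frobenius $a\mapsto a^q$ on $\ks$ is neither uniformly expanding nor uniformly contracting with respect to $|\cdot|_v$, and, more importantly, the $v$-adic topology on $\kstv$ that controls power-series coefficients is essentially orthogonal to the $v(\theta)$-adic topology in which evaluation at $t=\theta$ takes place. A replacement growth or rigidity estimate is needed --- possibly harvested from the Tannakian identification $\Gamma \cong \Gamma_M$ of Theorem~\ref{p-d_v-isom}, which would translate the statement into a comparison of $\ftv$-dimensions --- and producing one is the crux of the difficulty.
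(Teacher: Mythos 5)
The statement you were asked to prove is labelled a conjecture, and the paper does not prove it in general either, so the fact that your proposal stalls at a ``missing analytic growth lemma'' is not by itself a defect. What you have missed is that the paper \emph{does} prove the case $r=1$, by an argument far more elementary than the Siegel--Frobenius machinery you outline, and that argument shows your diagnosis of the $v$-adic obstruction is not quite right. For $r=1$ one reduces immediately to showing that $\psi(\theta)=0$ forces $\psi=0$ (any $P\in K[t]_{v}$ with $P(\theta)=\rho$ then works, since $P\psi=0$ holds trivially once $\psi=0$). Writing $\psi=(\sum_{i}a_{l,i}(t-\lambda_{l})^{i})_{l}$ and iterating the Frobenius equation $d$ times, one obtains
\[
\psi(\theta^{q^{d\nu}})^{q^{d}}=(\sigma^{d}\psi)(\theta^{q^{d(\nu+1)}})
= c^{q^{d-1}+\cdots+1}\prod_{j=0}^{d-1}\bigl(\theta^{q^{d(\nu+1)}}-\theta^{q^{j}}\bigr)^{s}\cdot\psi(\theta^{q^{d(\nu+1)}}),
\]
and since the product is nonzero, $\psi(\theta)=0$ propagates to $\psi(\theta^{q^{d\nu}})=0$ for all $\nu\geq 0$. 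The crucial point, which contradicts your claim that the Frobenius is ``neither uniformly expanding nor uniformly contracting'' in the relevant topology, is that $\lambda_{l}^{q^{d\nu}}=\lambda_{l}$, so $v_{l}(\theta^{q^{d\nu}}-\lambda_{l})=q^{d\nu}\,v_{l}(\theta-\lambda_{l})\geq v_{l}(\theta-\lambda_{l})>0$: the Frobenius orbit of $\theta$ stays \emph{inside} the $v$-adic disk of convergence of each branch $\sum_{i}a_{l,i}z^{i}$, and a convergent power series with infinitely many zeros in its disk of convergence vanishes. So the rigidity you were looking for does exist at a finite place, at least in the form needed for rank one.

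For general $r$ your outline is a faithful transcription of the ABP strategy, and your identification of the gap is essentially the reason the author states the result only as a conjecture. But two further points would need care even to set the strategy up: the Galois-averaging reduction must contend with the fact that $\kstv=\prod_{l}\kstl$ is only a product of fields permuted by $\sigma$, so the enlarged system lives over a ring rather than a field; and the Siegel-lemma step presupposes a degree or height theory for coefficients in $K[t]_{v}$, whose elements are genuine power series rather than polynomials. Neither issue is addressed in your sketch. In short: you have correctly declined to claim a proof of the full conjecture, but your proposal omits the one case the paper actually establishes, and the mechanism of that case (propagation of zeros along the Frobenius orbit inside the disk of convergence) is exactly the $v$-adic substitute for the archimedean growth estimate that you declared unavailable.
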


Conjecture \ref{abp_conj} is true if $r = 1$ and we give a proof below.
This proof is the same as the proof of the $\infty$-adic version for $r = 1$ in \cite{ABP}.

If $\rho = 0$, then we can take $P = 0$.
Therefore we may assume that $\rho \neq 0$.
Since for some $P \in K[t]_{v}$, we have $P(\theta) = \rho$.
Hence it is enough to show that, if $\psi(\theta) = 0$, then $\psi = 0$.
Write $\psi = (\sum_{i} a_{l,i} (t-\lambda_{l})^{i})_{l}$.
For any $\nu \geq 0$,
the infinite sum $\sum_{i} a_{l,i} ((\theta-\lambda_{l})^{q^{d \nu}})^{i}$ converge
because $\sum_{i} a_{l,i} (\theta-\lambda_{l})^{i}$ converges
and $v_{l}((\theta-\lambda_{l})^{q^{d \nu}}) \geq v_{l}(\theta-\lambda_{l})$ for each $l$.
Thus we have
$$\psi(\theta^{q^{d \nu}})^{q^d} = (\sum_{i} a_{l,i} ((\theta-\lambda_{l})^{q^{d \nu}})^{i})_{l}^{q^d}
= (\sum_{i} a_{l,i}^{q^d} (\theta^{q^{d(\nu + 1)}}-\lambda_{l})^{i})_{l} = (\sigma^{d}\psi)(\theta^{q^{d(\nu + 1)}}).$$
On the other hand, we have
\begin{align*}
(\sigma^{d}\psi)(\theta^{q^{d(\nu + 1)}})
&= (\sigma^{d-1}\Phi)(\theta^{q^{d(\nu + 1)}}) \times \cdots \times (\sigma^{0}\Phi)(\theta^{q^{d(\nu + 1)}}) \times \psi(\theta^{q^{d(\nu + 1)}}) \\
&= c^{q^{d-1} + \cdots + q^{0}} (\theta^{q^{d(\nu + 1)}} - \theta^{q^{d-1}})^{s} \cdots (\theta^{q^{d(\nu + 1)}} - \theta^{q^{0}})^{s} \psi(\theta^{q^{d(\nu + 1)}}).
\end{align*}
By induction on $\nu$,
we have $\sum_{i} a_{l,i}((\theta - \lambda_{l})^{q^{d \nu}})^{i} = 0$ for each $l$.
Thus the formal series $\sum_{i} a_{l,i} z^{i}$ has infinite zeros on the disk
$v_{l}(z) \geq v_{l}(\theta - \lambda_{l})$.
Therefore $a_{l,i} = 0$ for all $l$ and $i$, and we conclude that $\psi = 0$.

Next, we calculate valuations of the coefficients of periods for some examples of $t$-motives.
An element $L_{\alpha,n}$ is an analogue of the $n$-th Carlitz polylogarithm,
and an element $\Omega_{v}$ is an analogue of the Carlitz period.

\begin{prop}\label{abp_l}
Let $n \geq 1$ be an integer and
$\alpha \in (\ks)^{\times}$ an element such that $v_{l}(\alpha) \geq 0$ for all $l$.
Then there exists an element
$L_{\alpha,n} = L_{\alpha,n}(t) = (\sum_{i} a_{l,i} (t-\lambda_{l}))_{l} \in \ks[t]_{v} = \prod_{l}\ks[\![t-\lambda_{l}]\!]$
which satisfies the equation
$$\sigma(L_{\alpha,n}) = \sigma(\alpha) + L_{\alpha,n}/(t-\theta)^{n}.$$
For any $l \in \mathbb{Z}/d$, $0 \leq m \leq d-1$ and $i \geq 0$, we have
$$v_{l}(a_{l+m,i}) \geq -q^{m} \left( \frac{i}{q^{d}} + \frac{n}{q^{d}-1} \right).$$
\end{prop}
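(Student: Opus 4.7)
The strategy is to iterate the functional equation $d$ times inside a single component, thereby collapsing the cyclic system of one-step recursions into a scalar fixed-point equation for each coefficient, and then to analyze the resulting polynomial of degree $q^d$ by a Newton-polygon argument. Existence of the solution in $\ks$ and the valuation bound will fall out of the same computation.

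Fix a component $l$, set $z := t - \lambda_l$, and write $\gamma_{l,k} := \theta^{q^k} - \lambda_l$. The $l$-th component of $\sigma(L) = \sigma(\alpha) + L/(t-\theta)^n$ reads $f_l(z) = (z - \gamma_{l,0})^n[f_{l-1}^{(q)}(z) - \alpha^q]$, where $(q)$ denotes raising coefficients to the $q$-th power. Iterating this exactly $d$ times (and using $\lambda_l^{q^d} = \lambda_l$ to return to the $l$-th component) gives
$$f_l(z) = P_l(z)\, f_l^{(q^d)}(z) + Q_l(z), \qquad P_l = \prod_{k=0}^{d-1}(z - \gamma_{l,k})^n, \quad Q_l = -\sum_{k=1}^{d}\alpha^{q^k}\prod_{k'=0}^{k-1}(z - \gamma_{l,k'})^n.$$
Comparing coefficients of $z^i$ with $p_{l,0} := P_l(0)$ produces the scalar equation $p_{l,0}\, a_{l,i}^{q^d} - a_{l,i} + R_{l,i} = 0$, where $R_{l,i}$ depends only on $a_{l,i'}$ with $i' < i$. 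Since this polynomial in $a_{l,i}$ has derivative $-1$ in characteristic $p$, it is separable, so all its $q^d$ roots lie in $\ks$. I build $(a_{l,i})_{i \geq 0} \subset \ks$ inductively on $i$ by picking any root, and propagate to the remaining components by the one-step recursion $f_{l+m} = (t-\theta)^n[\sigma f_{l+m-1} - \alpha^q]$; iterating this $d$ times reproduces the fixed-point equation above, so the construction closes up consistently.

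For the valuation bound, induct on $m \in \{0, \ldots, d-1\}$, with $U_m(i) := -q^m(i/q^d + n/(q^d-1))$. A preliminary computation using $\theta^{q^k} - \lambda_{l+m} = (\theta - \lambda_{l+m-k})^{q^k}$ shows $v_l(\gamma_{l+m,k}) = q^k$ if $k \equiv m \pmod d$ and $0$ otherwise. The step $m-1 \to m$ for $m \geq 1$ is routine: since $v_l(\beta_{l+m}) = v_l(\theta - \lambda_{l+m}) = 0$, the one-step recursion $a_{l+m,k} = \sum_{j}\binom{n}{j}(-\beta_{l+m})^{n-j}\,b_{l+m,k-j}$ gives $v_l(a_{l+m,k}) \geq q \min_j v_l(a_{l+m-1,k-j}) \geq q U_{m-1}(k) = U_m(k)$, using that $U_{m-1}$ is decreasing and $q U_{m-1} = U_m$, together with $v_l(\alpha) \geq 0$. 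The base case $m = 0$ requires the full fixed-point equation: induct on $i$, and assuming $v_l(a_{l,i'}) \geq U_0(i')$ for all $i' < i$, expand $P_l$ and $Q_l$ using the valuations of the $\gamma_{l,k}$ to deduce $v_l(p_{l,j}) \geq \max(n-j, 0)$ and $v_l(q_{l,i}) \geq 0$; combined with the inductive hypothesis this yields $v_l(R_{l,i}) \geq n - i - q^d n/(q^d-1)$. The Newton polygon of $p_{l,0}X^{q^d} - X + R_{l,i}$, with vertices $(0, v_l(R_{l,i}))$, $(1,0)$ and $(q^d, n)$, shows that every root has $v_l$-valuation either $\geq -n/(q^d-1)$ or $= (v_l(R_{l,i}) - n)/q^d$; our estimate on $R_{l,i}$ forces the latter to be $\geq U_0(i)$, while the former trivially exceeds $U_0(i)$ since $i \geq 0$. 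Hence any choice of root satisfies $v_l(a_{l,i}) \geq U_0(i)$.

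The main obstacle is the base case $m = 0$: the original system of one-step recursions is genuinely cyclic in $l$, so a naive induction on $m$ cannot even begin. The crucial move is to collapse the $d$-fold cycle into the scalar fixed-point equation and then read off both the existence of a root in $\ks$ (separability) and the sharp valuation bound (Newton polygon) from the same polynomial of degree $q^d$; after this, the remaining cases $m \geq 1$ are obtained by a single-step induction because the corresponding uniformizers $\beta_{l+m}$ have $v_l$-valuation zero.
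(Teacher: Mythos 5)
Your proposal is correct and follows essentially the same route as the paper: both collapse the cyclic one-step recursions into the degree-$q^{d}$ equation $p_{l,0}X^{q^d}-X+R_{l,i}=0$ (the paper's $f_i(X_1)=\gamma_{d+1}X_1^{q^d}-X_1+\sum_r\beta_{d+1,r,i}$), obtain existence from solvability in $\ks$, bound $v_l(R_{l,i})\geq n-i-q^dn/(q^d-1)$, read off the base-component estimate from the Newton polygon, and propagate to the other components via the intermediate recursions. The only differences are organizational (you iterate the functional equation at the power-series level and use a one-step induction on $m$, where the paper composes the coefficient recursions into the explicit $\gamma_m$, $\beta_{m,r,i}$), so no further comment is needed.
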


\begin{proof}
For an element $L_{\alpha,n} = (\sum_{i} a_{l,i} (t-\lambda_{l}))_{l} \in \prod_{l}\kstl$,
we have an explicit descriptions
$$(t-\theta)^{n} \sigma(L_{\alpha,n}) =
\left(\sum_{i} \left(\sum_{j=0}^{n} \binom{n}{j} (\lambda_{l}-\theta)^{n-j} a_{l-1,i-j}^{q}\right) (t-\lambda_{l})^{i}\right)_{l},$$
$$\sigma(\alpha) (t-\theta)^{n} =
\left(\sum_{i=0}^{n} \binom{n}{i} (\lambda_{l}-\theta)^{n-i} \alpha^{q} (t-\lambda_{l})^{i}\right)_{l}.$$
We set $b_{l,i} :=
\sum_{j=1}^{n} \binom{n}{j}(\lambda_{l}-\theta)^{n-j}a_{l-1,i-j}^{q} - \binom{n}{i}(\lambda_{l}-\theta)^{n-i}\alpha^{q}$
and $c_{l} := (\lambda_{l}-\theta)^{n}$.
Then the equation in Proposition is equivalent to the equations
$$a_{l+1,i} = c_{l+1} a_{l,i}^{q} + b_{l+1,i}$$
for all $l \in \mathbb{Z}/d$ and $i \in \mathbb{Z}$.
For $i < 0$, we can take $a_{l,i} = 0$.
Fix $i \geq 0$ and consider the system of polynomial equations
$$X_{l+1} = c_{l+1} X_{l}^{q} + b_{l+1,i} \ \ (l \in \mathbb{Z}/d).$$
For $2 \leq r \leq m$, we set
$$\beta_{m,r,i} := b_{r,i}^{q^{m-r}} \prod_{s=r+1}^{m} c_{s}^{q^{m-s}} \textrm{ and }
\gamma_{m} := \prod_{s=2}^{m} c_{s}^{q^{m-s}}.$$
Then the above equations are equivalent to the equations
$$X_{m} = \gamma_{m} X_{1}^{q^{m-1}} + \sum_{r=2}^{m} \beta_{m,r,i} \ \ (2 \leq m \leq d+1).$$
Since $X_{d+1} = X_{1}$, we can solve these equations in $\ks$.
This proved the existence part of this proposition.

Next we calculate the valuations of these solutions by induction on $i$.
We set $f_{i}(X_{1}) := \gamma_{d+1} X_{1}^{q^d} - X_{1} + \sum_{r=2}^{d+1} \beta_{d+1,r,i}$.
Since $a_{l,i} = 0$ for all $i < 0$, the inequality for the valuations in the statement of this proposition is true for $i < 0$.
Fix $i \geq 0$ and assume that the inequality in the statement of this proposition is true for integers lower than $i$.
It is clear that $v_{1}(\gamma_{d+1}) = v_{1}(c_{1}) = n$.
For $2 \leq r \leq d$, we have
\begin{align*}
v_{1}(\beta_{d+1,r,i}) &= n + q^{d+1-r} v_{1}(b_{r,i}) \\
&\geq n + q^{d+1-r} \min_{1 \leq j \leq n,i} \{ v_{1}(\binom{n}{j}) + q v_{1}(a_{r-1,i-j}), v_{1}(\binom{n}{i}) + q v_{1}(\alpha) \} \\
&\geq n + q^{d+1-r} \min_{1 \leq j \leq n,i} \{ -q^{r-1} \left( \frac{i-j}{q^{d}} + \frac{n}{q^{d}-1} \right), 0 \} \\
&\geq n + q^{d+1-r} \left( -q^{r-1} \left( \frac{i-1}{q^{d}} + \frac{n}{q^{d}-1} \right) \right) \\
&= n - i + 1 - \frac{q^{d} n}{q^{d}-1}.
\end{align*}
For $r = d + 1$, we have
\begin{align*}
v_{1}(\beta_{d+1,d+1,i}) &= v_{1}(b_{1,i}) \\
&\geq \min_{1 \leq j \leq n,i} \{ n - j + q v_{1}(a_{d,i-j}), n - i + q v_{1}(\alpha) + v_{1}(\binom{n}{i}) \} \\
&\geq \min_{1 \leq j \leq n,i} \{ n - j - q^{d} \left( \frac{i-j}{q^{d}} + \frac{n}{q^{d}-1} \right), 0 \} \\
&\geq n - i - \frac{q^{d} n}{q^{d}-1}.
\end{align*}
Thus we conclude that
$v_{1}(\sum_{r=2}^{d+1} \beta_{d+1,r,i}) \geq n - i - q^{d} n / (q^{d}-1)$.
By considering the Newton polygon of $f_{i}$,
we have $v_{1}(a_{1,i}) \geq - i / q^{d} - n / (q^{d}-1)$ for any root $a_{1,i}$ of $f_{i}$.
For $2 \leq r \leq m \leq d$, we have
$$v_{1}(\beta_{m,r,i}) = q^{m-r} v_{1}(b_{r,i}) \geq q^{m-r} \left( -q^{r-1} \left( \frac{i-1}{q^{d}} + \frac{n}{q^{d}-1} \right) \right)
= - q^{m-1} \left( \frac{i-1}{q^{d}} + \frac{n}{q^{d}-1} \right)$$
and
$$v_{1}(\gamma_{m} a_{1,i}^{q^{m-1}}) = q^{m-1} v_{1}(a_{1,i}) \geq - q^{m-1} \left( \frac{i}{q^{d}} + \frac{n}{q^{d}-1} \right).$$
Thus we have
$$v_{1}(a_{m,i}) = v_{1}(\gamma_{m} a_{1,i}^{q^{m-1}} + \sum_{r=2}^{m} \beta_{m,r,i})
\geq - q^{m-1} \left( \frac{i}{q^{d}} + \frac{n}{q^{d}-1} \right).$$
\end{proof}

The next proposition is proved by similar arguments as Proposition \ref{abp_l}.
\begin{prop}\label{abp_omega}
There exists an element
$\Omega_{v} = \Omega_{v}(t) = (\sum_{i} a_{l,i} (t-\lambda_{l}))_{l} \in \ks[t]_{v}^{\times} = \prod_{l}\ks[\![t-\lambda_{l}]\!]^{\times}$
which satisfies the equation
\begin{align}
\sigma(\Omega_{v}) = (t-\theta) \Omega_{v}. \label{eq_omega}
\end{align}
For any $l \in \mathbb{Z}/d$, $0 \leq m \leq d-1$ and $i \geq 0$, we have
$$v_{l}(a_{l+m,i}) = \frac{q^{m}}{q^{i d}(q^{d} - 1)}.$$
\end{prop}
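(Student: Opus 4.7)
The plan is to mimic the proof of Proposition~\ref{abp_l} closely. Writing $\Omega_v = \bigl(\sum_i a_{l,i}(t-\lambda_l)^i\bigr)_l$ and expanding the two sides of (\ref{eq_omega}) in powers of $(t-\lambda_l)$ via $t-\theta = (t-\lambda_l) + (\lambda_l-\theta)$ yields the coefficient-wise recurrences
\begin{equation*}
a_{l-1,i}^q = a_{l,i-1} + (\lambda_l - \theta)\,a_{l,i} \qquad (l \in \mathbb{Z}/d,\ i \geq 0),
\end{equation*}
with the convention $a_{l,-1} := 0$. Rearranged into the forward form $a_{l+1,i} = (a_{l,i}^q - a_{l+1,i-1})/(\lambda_{l+1}-\theta)$ and iterated $d$ times in characteristic $p$, together with the cyclic constraint $a_{l+d,i} = a_{l,i}$, this collapses to a single separable polynomial equation
\begin{equation*}
a_{l,i}^{q^d} - \alpha_l\,a_{l,i} - \beta_{l,i} = 0,
\end{equation*}
with $\alpha_l = \prod_{s=1}^{d}(\lambda_{l+s}-\theta)^{q^{d-s}}$ and $\beta_{l,i} = \sum_{s=1}^{d} a_{l+s,i-1}^{q^{d-s}} \prod_{r=1}^{s-1}(\lambda_{l+r}-\theta)^{q^{d-r}}$, so $\beta_{l,i}$ only involves previously determined $a_{*,i-1}$. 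Existence of $\Omega_v$ follows by induction on $i$: for $i = 0$ the equation reduces to $a_{l,0}^{q^d-1} = \alpha_l$, which has nonzero solutions in $\ks$; for $i \geq 1$ one picks any root of the separable polynomial of degree $q^d$ and fills in the remaining $a_{l+m,i}$ via the forward recurrence. Invertibility of $\Omega_v$ is then automatic once $a_{l,0}\neq 0$.

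For the valuations, I would induct on $i$, and within each $i$ on $m \in \{1,\dots,d-1\}$. Since $v_l(\lambda_{l+s}-\theta) = 0$ for $1 \leq s \leq d-1$ and $v_l(\lambda_l-\theta) = 1$, only the $s=d$ factor of $\alpha_l$ contributes, giving $v_l(\alpha_l) = 1$ and hence $v_l(a_{l,0}) = 1/(q^d-1)$. The forward recurrence then yields $v_l(a_{l+m,0}) = q\,v_l(a_{l+m-1,0}) = q^m/(q^d-1)$ for $1 \leq m \leq d-1$. For the inductive step at $i \geq 1$, the hypothesis identifies the $s=d$ summand $a_{l,i-1}$ as the strict $v_l$-minimum of $\beta_{l,i}$ (its valuation $1/(q^{(i-1)d}(q^d-1))$ is smaller than the common value $q^d/(q^{(i-1)d}(q^d-1))$ of the other summands, whose prefactors are $v_l$-units), so $v_l(\beta_{l,i}) = 1/(q^{(i-1)d}(q^d-1))$. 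The Newton polygon of $X^{q^d} - \alpha_l X - \beta_{l,i}$ has vertices $(0, 1/(q^{(i-1)d}(q^d-1)))$, $(1,1)$, $(q^d, 0)$; a direct check shows that $(1,1)$ lies strictly above the segment joining the outer two (at $x=1$ that segment passes through $y = 1/q^{id} < 1$), so the lower hull is that single segment of slope $-1/(q^{id}(q^d-1))$. Hence every root, and in particular $a_{l,i}$, has $v_l$-valuation $1/(q^{id}(q^d-1))$. The remaining values $v_l(a_{l+m,i}) = q^m/(q^{id}(q^d-1))$ for $1 \leq m \leq d-1$ follow from the forward recurrence, since $v_l(a_{l+m,i-1}) = q^{m+d}/(q^{id}(q^d-1))$ is strictly larger than $v_l(a_{l+m-1,i}^q) = q^m/(q^{id}(q^d-1))$ and so no cancellation occurs.

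The principal obstacle is the combinatorial bookkeeping inside $\beta_{l,i}$ needed to isolate the unique dominant summand; once that is done, the Newton-polygon step is routine. A minor subtlety is the forced cancellation that occurs when the recurrence is applied across the cyclic boundary $m = d-1 \to m = 0$, where $v_l(a_{l-1,i}^q) = v_l(a_{l,i-1})$: this cancellation is precisely the one dictated by the Newton-polygon value of $v_l(a_{l,i})$, and is the reason one must pass through the iterated polynomial rather than simply inducting on $m$ around the entire cycle.
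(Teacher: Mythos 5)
Your proposal is correct and is essentially the paper's own argument: the paper only remarks that Proposition \ref{abp_omega} "is proved by similar arguments as Proposition \ref{abp_l}", and your writeup is precisely that argument transposed to the equation $\sigma(\Omega_v)=(t-\theta)\Omega_v$ — coefficientwise recurrences, iteration around the cycle $\mathbb{Z}/d$ to a single degree-$q^d$ polynomial, and a Newton-polygon computation (here yielding a single segment, hence exact valuations rather than the inequalities of Proposition \ref{abp_l}). The details (the formula for $\beta_{l,i}$, the identification of the $s=d$ summand as the strict minimum, and the check that $(1,1)$ lies above the segment) all check out.
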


\noindent
By Propositions \ref{abp_l} and \ref{abp_omega}, the infinite sums
$L_{\alpha,n}(\theta)$ and $\Omega_{v}(\theta)$ converge.

\begin{exam}\label{abp_carlitz}
We define the \textit{Carlitz motive} to be the $\vp$-module $C$ whose underlying $\ktv$-vector space is $\ktv$
and on which $\vp$ acts by
$$\vp(f) = (t - \theta) \sigma(f)$$
for each $f \in C$.
The equation (\ref{eq_omega}) means that the element $\Omega_{v}$ in Proposition \ref{abp_omega} is a period of $C$.
If we write $\Omega_{v} = (\Omega_{v,l})_{l} = (\sum_{i} a_{l,i} (t-\lambda_{l}))_{l}$,
then $[K_{d}(a_{l,0},a_{l,1},\dots) : K_{d}] = \infty$ by Proposition \ref{abp_omega}.
Thus $\Omega_{v,l}$ is transcendental over $\ktv = K_{d}(\!(t-\lambda_{l})\!)$.
Therefore we have that $\tdeg_{\ktv} \Lal = 1$ and $\Gamma_{C} = \mathbb{G}_{m}$.
\end{exam}

\section{Algebraic independence of formal polylogarithms}\label{sec_log}
In this section, we prove the algebraic independence of certain \lq\lq formal'' polylogarithms.
The proof of this theorem follows \cite{ChYu} and \cite{Papa} closely.
Let $(F, E, L)$ be a $\sigma$-admissible triple and $t, \theta \in E$ distinct elements.
Let $n, r$ be positive integers and $\alpha_{1}, \dots, \alpha_{r} \in E$ fixed elements.
Assume that $(F^{\times})_{\mathrm{tor}} \neq F^{\times}$,
and there exist elements $\Omega = (\Omega_{l})_{l} \in L^{\times}$ and
$L_{\alpha_{j}, n} = (L_{\alpha_{j}, n, l})_{l} \in L$ for each $j = 1, \dots, r$ such that
$\sigma(\Omega) = (t - \theta) \Omega$, $\Omega_{l}$ is transcendental over $E$ and
$\sigma(L_{\alpha_{j}, n}) = \sigma(\alpha_{j}) + L_{\alpha_{j}, n} / (t - \theta)^{n}$.
In the $v$-adic settings, such elements actually exist
if $\alpha_{1}, \dots, \alpha_{r} \in K^{\times}$ (cf.\ Section \ref{sec_abp}).
We set
$$\Phi := 
\begin{bmatrix}
(t-\theta)^{n} & & & \\
\sigma(\alpha_{1}) (t - \theta)^{n} & 1 & & \\
\vdots & & \ddots & \\
\sigma(\alpha_{r}) (t - \theta)^{n} & & & 1 \\
\end{bmatrix}
\textrm{ and }
\Psi :=
\begin{bmatrix}
\Omega^{n} & & & \\
\Omega^{n} L_{\alpha_{1}, n} & 1 & & \\
\vdots & & \ddots & \\
\Omega^{n} L_{\alpha_{r}, n} & & & 1 \\
\end{bmatrix}.
$$
Then we have $\sigma \Psi = \Phi \Psi$.
Therefore, if $M$ is the $\vp$-module over $E$ corresponding to $\Phi$,
then $M$ is $L$-trivial.
This type of $t$-motive is considered in \cite{ChYu} and \cite{Papa}.
Note that in $\infty$-adic case, $\Omega$ and $L_{\alpha, n}$ are constructed explicitly,
and $L_{\alpha, n}(\theta)$ is the $n$-th Carlitz polylogarithm of $\alpha$.
We define $\Gamma, \Gamma_{M}, Z, \Lal, \dots$ as in the previous sections for $M$, $\Phi$ and $\Psi$.
In particular, we have
$$\Lal = E(\Omega_{l}^{n}, L_{\alpha,n,l}, \dots, L_{\alpha,n,l}).$$
Furthermore, we assume that,
$\Gamma(F)$ is Zariski dense in $\Gamma$ or $\Lal / F$ is regular extension for each $l$.
Thus the natural immersion $\Gamma \rar \Gamma_{M}$ is an isomorphism by Theorem \ref{p-d_pi-isom}.

For each $F$-algebra $R$, we set
$$G(R) := \left\{\begin{bmatrix}
* & 0 & \cdots & 0 \\
* & 1 & & \\
\vdots & & \ddots & \\
* & & & 1 \\\end{bmatrix} \in \GL_{r+1}(R) \right\}.$$
Then $G$ is an algebraic group over $F$ and we have a natural inclusion $\Gamma \subset G$.
Let $X_{0}, \dots, X_{r}$ be the coordinates of $G$ such that
the first column of a general element of $G$ \lq\lq is''
$$\begin{bmatrix}
X_{0} & & & \\
X_{1} & 1 & & \\
\vdots & & \ddots & \\
X_{r} & & & 1 \\
\end{bmatrix}.$$
We have the exact sequence $1 \rar \mathbb{G}_{a}^{r} \rar G \rar \mathbb{G}_{m} \rar 1$,
here $\mathbb{G}_{a}^{r}$ is the subgroup scheme of $G$ with coordinates $(X_{1}, \dots, X_{r})$ and
$\mathbb{G}_{m}$ is the quotient of $G$ given by the projection $(X_{i}) \mapsto X_{0}$.
Let $C \in \phm_{E}^{L}$ be the one-dimensional $\vp$-module such that
$\vp(f) = (t - \theta) \sigma(f)$ for each $f \in C = E$.
Then we have the following exact sequence:
$$0 \rar C^{\otimes n} \rar M \rar \mathbf{1}^{r} \rar 0.$$
Thus $C^{\otimes n}$ is an object of $\TM$ and
we have the canonical surjection $\pi \colon \Gamma \cong \Gamma_{M} \rar \Gamma_{C^{\otimes n}} \cong \mathbb{G}_{m}$.
We set $V := \ker \pi$.
Then we have the commutative diagram
\[\xymatrix{
1 \ar[r] & V \ar[r] \ar@{^(->}[d] & \Gamma \ar[r]^{\pi} \ar@{^(->}[d] & \mathbb{G}_{m} \ar[r] \ar@{=}[d] & 1 \\
1 \ar[r] & \mathbb{G}_{a}^{r} \ar[r] & G \ar[r] & \mathbb{G}_{m} \ar[r] & 1, \\
}\]
where the rows are exact.

\begin{prop}\label{log_V}
The subgroup $V$ of $\mathbb{G}_{a}^{r}$ is defined by linear forms in $X_{1}, \dots, X_{r}$ with $F$ coefficients.
\end{prop}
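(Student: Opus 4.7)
The plan is to exploit the action of $\mathbb{G}_m \cong \Gamma/V$ on $V$ by conjugation and combine it with smoothness of $V$ to force the defining ideal of $V$ to be generated by linear forms. First, since $V$ is the kernel of $\pi$, it is normal in $\Gamma$, and since $V$ is commutative, the conjugation action of $\Gamma$ on $V$ factors through $\Gamma/V \cong \mathbb{G}_m$. A direct matrix computation in $G$ shows that conjugation by $\gamma$ with $X_0(\gamma) = x_0$ sends $v = (y_1, \ldots, y_r) \in \mathbb{G}_a^r(R)$ to $(x_0^{-1} y_1, \ldots, x_0^{-1} y_r)$. Passing from $\bar{F}$-points to scheme-theoretic statements via surjectivity of $\pi$, I conclude that $V$ is stable under the standard scaling action of $\mathbb{G}_m$ on $\mathbb{G}_a^r$; hence the defining ideal $I \subset F[X_1, \ldots, X_r]$ of $V$ is homogeneous with respect to the total degree grading.

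Next, I would show that $V$ is smooth over $F$. By Theorem \ref{gal_smooth}, $\Gamma$ is smooth, so smoothness of $V$ reduces to showing that the group homomorphism $\pi \colon \Gamma \to \mathbb{G}_m$ is a smooth morphism, equivalently that $d\pi_e$ is surjective. Up to inversion, $\pi$ coincides with the coordinate function $X_0$, which under the embedding $F[\Gamma] \hookrightarrow L \otimes_E L$ sends $X_0$ to $\Omega^{-n} \otimes \Omega^n$. If $d\pi_e$ vanished, $\pi$ would factor through the $q$-power Frobenius on $\mathbb{G}_m$, forcing $X_0$ to be a $q$-th power in $F[\Gamma]$ and hence $\Omega^{-n} \otimes \Omega^n$ to be a $q$-th power in $L \otimes_E L$. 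The functional equation $\sigma \Omega = (t-\theta)\Omega$ combined with the imperfection of $K^{\mathrm{sep}}$ --- concretely, $\lambda_l - \theta \notin (K^{\mathrm{sep}})^q$, verified by differentiating with respect to $\theta$ (since $d(\lambda_l-\theta)/d\theta = -1 \neq 0 = d(u^q)/d\theta$ for any $u \in K^{\mathrm{sep}}$) --- rules this out.

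Finally, let $V_{\mathrm{lin}} \subset \mathbb{G}_a^r$ denote the linear subgroup cut out by $I \cap \bigoplus_i F X_i$, so that $V \subseteq V_{\mathrm{lin}}$. Homogeneity of $I$ forces $T_0 V$ to be defined precisely by the linear forms in $I$, giving $T_0 V = V_{\mathrm{lin}}$ as $F$-subspaces of $F^r$. Smoothness of $V$ at the origin yields $\dim V = \dim T_0 V = \dim V_{\mathrm{lin}}$, and since $V$ is connected (being a union of $\mathbb{G}_m$-orbit closures through the origin) and $V_{\mathrm{lin}}$ is integral of the same dimension, we conclude $V = V_{\mathrm{lin}}$, which is the desired conclusion. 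The principal obstacle is the smoothness step: a delicate verification that $\Omega^n$ is not a $q$-th power in the coordinate ring, ultimately resting on the imperfection of $K^{\mathrm{sep}}$.
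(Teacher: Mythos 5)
Your overall skeleton is the same as the paper's: show $V$ is smooth by showing $d\pi_e\neq 0$, and combine this with the fact that conjugation by a lift of $\alpha\in\mathbb{G}_m$ scales the column $(X_1,\dots,X_r)$ by a unit, to force $V$ to be a linear subspace. The conjugation computation and the endgame (homogeneity of the ideal, $\dim T_0V=\dim V$, hence $V=V_{\mathrm{lin}}$) are fine. The genuine gap is in your proof that $d\pi_e\neq 0$.

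First, $F=\ftv$ is \emph{not} perfect, so vanishing of $d\pi$ only gives that $\pi$ factors through the relative Frobenius $\Gamma\to\Gamma^{(p)}$, i.e.\ that $\pi^*X_0$ lies in the subring $F\cdot(F[\Gamma])^p$ generated by $F$ and $p$-th powers; it does not force $X_0$ to be a single $q$-th power in $F[\Gamma]$. Second, and more fatally, even the cleanest version of your intended contradiction fails: if $q\mid n$ (or $p\mid n$), then
$\Omega^{-n}\otimes\Omega^{n}=\bigl(\Omega^{-n/q}\otimes\Omega^{n/q}\bigr)^{q}$
\emph{is} literally a $q$-th power in $L\otimes_E L$, so no contradiction with the functional equation $\sigma\Omega=(t-\theta)\Omega$ or with $\lambda_l-\theta\notin(K^{\mathrm{sep}})^q$ can be extracted at the level of $L\otimes_E L$; one would have to argue about membership in $F\cdot(F[\Gamma])^p$ itself, which you do not do and which is not obvious. (A further, minor, point: Section \ref{sec_log} works with a general $\sigma$-admissible triple, so an argument resting on properties of $K^{\mathrm{sep}}$ for $K=\Fq(\theta)$ would in any case only cover the $v$-adic instance.) The paper's route avoids all of this: take a maximal torus $T\subset\Gamma_{\bar F}$; since $\ker\bar\pi=V_{\bar F}$ is unipotent, $T\cap V_{\bar F}=1$, so $\bar\pi|_T\colon T\to\mathbb{G}_{m,\bar F}$ is an injective homomorphism of one-dimensional tori, hence an isomorphism, whence $d\bar\pi\neq 0$ and $d\pi\neq 0$. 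You should replace your Frobenius argument by this (or an equivalent structure-theoretic) one; with that substitution the rest of your proof goes through.
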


\begin{proof}
Let $T \subset \Gamma_{\bar{F}}$ be a maximal torus and
$\bar{\pi} \colon \Gamma_{\bar{F}} \rar \mathbb{G}_{m, \bar{F}}$ be the base extension of $\pi$ to $\bar{F}$.
Then we have $\dim T = 1$ and $\bar{\pi}|_{T} \colon T \rar \mathbb{G}_{m, \bar{F}}$ is an isomorphism.
Thus $d\bar{\pi}$ is non-trivial and so is $d \pi$.
Hence we have the following exact sequence:
\[\xymatrix{
0 \ar[r] & \Lie V \ar[r] & \Lie \Gamma \ar[r] & \Lie \mathbb{G}_{m} \ar[r] & 0. \\
}\]
Since $\Gamma$ and $\mathbb{G}_{m}$ are smooth over $F$, we have the equalities
$\dim_{F} \Lie \Gamma = \dim \Gamma$ and $\dim_{F} \Lie \mathbb{G}_{m} = 1$.
Thus we have the equality $\dim_{F} \Lie V = \dim V$.
Therefore $V$ is smooth over $F$.
Thus it is enough to show that the space $V(\bar{F})$ is a linear space defined over $F$.
Let
$$\mu = \begin{bmatrix} 1 & 0 \\ v & I_{r} \\ \end{bmatrix} \in V(\bar{F}) \textrm{ and }
\alpha \in \bar{F}^{\times}$$
be any elements.
Since $\Gamma(\bar{F}) \rar \mathbb{G}_{m}(\bar{F})$ is surjective,
there exists an element $\gamma \in \Gamma(\bar{F})$ such that $\pi(\gamma) = \alpha$.
Then we have
$$V(\bar{F}) \ni \gamma^{-1} \mu \gamma = \begin{bmatrix} 1 & 0 \\ \alpha v & I_{r} \\ \end{bmatrix}.$$
Thus $V(\bar{F})$ is a linear subspace of $\mathbb{G}_{a}^{r}(\bar{F})$.
Since $V$ is defined over $F$, $V$ is defined by linear forms in $X_{1}, \dots, X_{r}$ with $F$ coefficients.
\end{proof}

Since $V$ is smooth and $\mathrm{H}^{1}(F, V) = 1$, we have the exact sequence
\[\xymatrix{
1 \ar[r] & V(F) \ar[r] & \Gamma(F) \ar[r] & \mathbb{G}_{m}(F) \ar[r] & 1. \\
}\]
By the assumption on $F$, there exists an element $b_{0} \in F^{\times} \smallsetminus (F^{\times})_{\mathrm{tor}}$.
By the above sequence, there exists an element
$$\gamma =
\begin{bmatrix} b_{0} & & & \\ b_{1} & 1 & & \\ \vdots & & \ddots & \\ b_{r} & & & 1 \\ \end{bmatrix}
\in \Gamma(F).$$
We fix such $b_{0}$ and $\gamma$.
For each $F$-algebra $R$ and $a \in R^{\times}$, we set
$$\gamma_{a} := \begin{bmatrix} a & & & \\ \frac{b_{1}}{b_{0} - 1}(a - 1) & 1 & & \\
\vdots & & \ddots & \\ \frac{b_{r}}{b_{0} - 1}(a - 1) & & & 1 \\ \end{bmatrix}.$$
Then for each $a, b \in R^{\times}$ and $m \in \mathbb{Z}$,
we have $\gamma_{a} \gamma_{b} = \gamma_{a b}$ and $\gamma^{m} = \gamma_{b_{0}^{m}}$.
Hence we have
$\overline{\langle \gamma \rangle} = ( R \mapsto \{ \gamma_{a} | a \in R^{\times} \} )$, a line in $\Gamma$.
We set $\Gamma' := \overline{\langle V, \gamma \rangle} \subset \Gamma$ and $s := r - \dim V$.
We claim that $\Gamma' = \Gamma$.
Indeed, let
\begin{align}
F_{i} = \sum_{j=1}^{r} c_{i,j} X_{j} \in F[X_{1}, \dots, X_{r}] \ \ (i = 1, \dots, s) \label{eq_V}
\end{align}
be linear forms defining $V$.
For each $i$, we set
$$G_{i} := (b_{0} - 1) F_{i}(X_{1}, \dots, X_{r}) - F_{i}(b_{1}, \dots, b_{r})(X_{0} - 1) \in F[X_{0}, \dots, X_{r}].$$
Then we can verify that $G_{1}, \dots, G_{s}$ define $\Gamma'$ in $\GL_{r+1}$
and $\Gamma'$ is an algebraic group.
Since $V \subset \Gamma' \subset \Gamma$ and $\Gamma' \rar \mathbb{G}_{m}$ is surjective,
we have $\Gamma' = \Gamma$.
Thus we have the following proposition:

\begin{prop}\label{log_Gamma}
The algebraic group $\Gamma$ is defined by the linear polynomials $G_{1}, \dots, G_{s}$ in $\GL_{r+1/F}$.
\end{prop}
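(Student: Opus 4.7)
The strategy is to identify $\Gamma$ with the closed subscheme $Y \subset G$ cut out by $G_1, \dots, G_s$. Since the preceding discussion has already shown $\Gamma = \Gamma' = \overline{\langle V, \gamma\rangle}$, the proof reduces to verifying that (i) $V$ and $\gamma$ lie in $Y$, (ii) $Y$ is a closed subgroup scheme of $G$, and (iii) $\dim Y = \dim \Gamma$ with $Y$ irreducible; these together force $Y = \Gamma$ as closed irreducible subvarieties of equal dimension.

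For (i) I would substitute directly: on $V$ we have $X_0 = 1$, so $G_i$ reduces to $(b_0-1)F_i(X_1,\dots,X_r)$, which vanishes on $V$ by definition, while on $\gamma_a$ the relation $X_j = b_j(a-1)/(b_0-1)$ for $j \geq 1$ gives $F_i(X_1,\dots,X_r) = \frac{a-1}{b_0-1}F_i(b_1,\dots,b_r)$, and a direct cancellation yields $G_i(\gamma_a) = 0$. For (ii), the key identity---computed from the multiplication $(gh)_0 = X_0Y_0$, $(gh)_j = X_jY_0 + Y_j$ on $G$ together with the linearity of $F_i$---is
\[
G_i(gh) \;=\; Y_0\,G_i(g) + G_i(h),
\]
so $Y \cdot Y \subset Y$; an analogous computation using $(g^{-1})_0 = X_0^{-1}$, $(g^{-1})_j = -X_jX_0^{-1}$ shows $Y$ is closed under inversion. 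Thus $Y$ is a closed subgroup scheme of $G$, and since it contains $V$ and $\gamma$, it contains the smallest such subgroup $\Gamma' = \Gamma$.

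For (iii), $Y$ is the intersection of the open set $\{X_0 \neq 0\}$ with the codimension-$s$ affine linear subspace of $\mathbb{A}^{r+1}$ cut out by the $G_i$ (linearly independent because their leading parts $(b_0-1)F_i$ are), hence $Y$ is irreducible of dimension $r+1-s$. On the other hand, the extension $1 \rar V \rar \Gamma \rar \mathbb{G}_m \rar 1$ with $V$ and $\mathbb{G}_m$ both connected forces $\Gamma$ connected, and $\Gamma$ is smooth by Theorem \ref{gal_smooth}, hence irreducible of dimension $\dim V + 1 = r+1-s$. The closed inclusion $\Gamma \subset Y$ of irreducible varieties of the same dimension is then an equality. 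The main technical point is the subgroup verification in (ii); the cancellation is elementary but depends on the precise choice of coefficients in the definition of $G_i$.
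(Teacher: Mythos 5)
Your proof is correct and follows essentially the same route as the paper, which merely asserts "we can verify that $G_1,\dots,G_s$ define $\Gamma'$" and then concludes $\Gamma'=\Gamma$ from $V\subset\Gamma'\subset\Gamma$ and the surjectivity of $\Gamma'\rar\mathbb{G}_m$; you have simply carried out that verification explicitly (the cocycle identity $G_i(gh)=Y_0G_i(g)+G_i(h)$, membership of $V$ and $\gamma_a$, and the dimension/irreducibility count). No gaps.
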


Since $Z_{\bar{E}} \cong \Gamma_{\bar{E}}$ and $Z$ is defined over $E$,
$Z$ is defined by linear polynomials over $E$, and there exists an $E$-valued point
$$\xi = \begin{bmatrix} f_{0} & & & \\ f_{1} & 1 & & \\ \vdots & & \ddots & \\ f_{r} & & & 1 \\ \end{bmatrix} \in Z(E).$$
We fix such $\xi$.
Then we have $Z = \xi \cdot \Gamma_{E}$.
Set $f_{i}' := G_{i}(f_{0}, \dots, f_{r}) f_{0}^{-1} \in E$ and
$H_{i} := G_{i}(X_{0}, \dots, X_{r}) - X_{0}f_{i}' \in E[X_{0}, \dots, X_{r}]$.
Then $H_{1}, \dots, H_{s}$ are defining polynomials for $Z$.
If we set $g_{i} := \sum_{j=1}^{r} c_{i,j} b_{j}$, then we have
$$H_{i} = (b_{0} - 1) \sum_{j=1}^{r} c_{i,j} X_{j} + g_{i} - (g_{i} + f_{i}') X_{0}.$$
Since $\Psi_{l} \in Z(\Sl)$ for each $l$, we have
$$(b_{0} - 1) \sum_{j=1}^{r} c_{i,j} L_{\alpha_{j},n,l} + g_{i} \Omega_{l}^{-n} - (g_{i} + f_{i}') = 0$$
for each $l$ and $i$.
Set $B := (c_{i,j})_{i,j} \in \Mat_{s \times r}(F)$.
By the definition of $c_{i,j}$ (\ref{eq_V}), the rank of $B$ is $s = r - \dim V$.
Set
$$P = \begin{bmatrix} P_{1} \\ \vdots \\ P_{s} \\ \end{bmatrix} :=
\begin{bmatrix} (b_{0} - 1) c_{1,1} & \dots & (b_{0} - 1) c_{1,r} & g_{1} & -(g_{1} + f_{1}') \\
\vdots & & \vdots & \vdots & \vdots \\
(b_{0} - 1) c_{s,1} & \dots & (b_{0} - 1) c_{s,r} & g_{s} & -(g_{s} + f_{s}') \\ \end{bmatrix} \in \Mat_{s \times (r + 2)}(E),$$
the coefficients matrix of the above equations.
Then the rank of $P$ is also $s$.
We interested in
$$N_{l} := \langle L_{\alpha_{1},n,l}, \dots, L_{\alpha_{r},n,l}, \Omega_{l}^{-n}, 1 \rangle_{E} \subset \Lal.$$
This is the image of the $E$-linear map
$$\beta_{l} \colon E^{r+2} \rar \Lal; \ (x_{1}, \dots, x_{r+2}) \mapsto \sum_{j=1}^{r} x_{j} L_{\alpha_{j},n,l} + x_{r+1} \Omega_{l}^{-n} + x_{r+2}.$$
Since $P_{i} \in \ker \beta_{l}$ for each $i$, we have the inequality $\dim_{E} \ker \beta_{l} \geq s$.
Thus we have $\dim_{E} N_{l} \leq r + 2 - s = \dim V + 2 = \dim \Gamma + 1 = \tdeg_{E} \Lambda_{l'} + 1$ for each $l$ and $l'$.
On the other hand, it is clear that $\dim_{E} N_{l} \geq \tdeg_{E} \Lal = \tdeg_{E} \Lambda_{l'}$.
Thus we have the following theorem:

\begin{theo}\label{log_deg-dim}
For each $l$ and $l'$, we have $\tdeg_{E} \Lambda_{l'} \leq \dim_{E} N_{l} \leq \tdeg_{E} \Lambda_{l'} + 1$.
\end{theo}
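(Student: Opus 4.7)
The proof is essentially assembled from the computations just carried out before the theorem statement; I need only to verify that each ingredient is in place and then combine them carefully.

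First I will establish the upper bound $\dim_{E} N_{l} \leq \tdeg_{E} \Lambda_{l'} + 1$. The key input is that the matrix $P \in \Mat_{s \times (r+2)}(E)$ has rank exactly $s$: indeed its left $s \times r$ block is $(b_{0} - 1) B$, where $B$ has rank $s$ by the choice of the linear forms $F_{i}$ defining $V$; since $b_{0}$ is not a torsion element of $F^{\times}$ we have $b_{0} \neq 1$, so $(b_{0} - 1) B$ still has rank $s$. Because each row $P_{i}$ lies in $\ker \beta_{l}$ by construction (this is exactly the identity $(b_{0} - 1) \sum_{j} c_{i,j} L_{\alpha_{j},n,l} + g_{i} \Omega_{l}^{-n} - (g_{i} + f_{i}') = 0$ recorded above), we obtain $\dim_{E} \ker \beta_{l} \geq s$, hence $\dim_{E} N_{l} = (r+2) - \dim_{E} \ker \beta_{l} \leq r + 2 - s$. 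The chain of equalities $r + 2 - s = \dim V + 2 = \dim \Gamma + 1 = \tdeg_{E} \Lambda_{l'} + 1$ then follows from $s = r - \dim V$, from the exact sequence $1 \to V \to \Gamma \to \mathbb{G}_{m} \to 1$ (so $\dim \Gamma = \dim V + 1$), and from Theorem \ref{gal_smooth}(d), which identifies $\dim \Gamma$ with $\tdeg_{E} \Lambda_{l'}$ for every $l'$.

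Next I will establish the lower bound $\tdeg_{E} \Lambda_{l'} \leq \dim_{E} N_{l}$. Since $\Omega \in L^{\times}$, the field $\Lambda_{l}$, which a priori is generated over $E$ by $\Omega_{l}^{n}$ and the products $\Omega_{l}^{n} L_{\alpha_{j}, n, l}$, is equally generated by $L_{\alpha_{1}, n, l}, \dots, L_{\alpha_{r}, n, l}, \Omega_{l}^{-n}$. Setting $\tau := \tdeg_{E} \Lambda_{l}$, there exists an algebraically independent subset of $\{L_{\alpha_{1}, n, l}, \dots, L_{\alpha_{r}, n, l}, \Omega_{l}^{-n}\}$ of cardinality $\tau$; any algebraically independent family is \emph{a fortiori} $E$-linearly independent (a non-trivial linear dependence is a polynomial relation of degree one), and it lies in $N_{l}$, hence $\dim_{E} N_{l} \geq \tau$. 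Applying Theorem \ref{gal_smooth}(d) once more to replace $\tdeg_{E} \Lambda_{l}$ by $\tdeg_{E} \Lambda_{l'}$ finishes the bound.

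Because every required piece has already been assembled, there is no serious obstacle; the main subtleties to flag are that the rank of $P$ genuinely equals $s$ (which uses the non-triviality of $b_{0} - 1$, and hence the standing assumption $(F^{\times})_{\mathrm{tor}} \neq F^{\times}$) and that Theorem \ref{gal_smooth}(d) provides the independence of $\tdeg_{E} \Lambda_{l'}$ from $l'$, which is what allows the two bounds above to be stated uniformly.
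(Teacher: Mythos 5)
Your proof is correct and follows the same route as the paper: the upper bound comes from the $s$ independent linear relations given by the rows of $P$ (whose rank is $s$ because $b_{0}\neq 1$ and $B$ has rank $s$) combined with $r+2-s=\dim\Gamma+1=\tdeg_{E}\Lambda_{l'}+1$, and the lower bound is the observation the paper labels as clear, namely that a transcendence basis of $\Lambda_{l}$ chosen from the generators lies in $N_{l}$ and is $E$-linearly independent. Your write-up merely makes explicit two points the paper leaves implicit (the rank of $P$ and the lower bound), so there is nothing to correct.
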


\begin{coro}\label{log_indep}
If $L_{\alpha_{1},n,l}, \dots, L_{\alpha_{r},n,l}, 1$ are linearly independent over $E$ for some $l$,
then $L_{\alpha_{1},n,l'}, \dots, L_{\alpha_{r},n,l'}$ are algebraically independent over $E$ for each $l'$.
\end{coro}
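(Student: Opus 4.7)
The plan is to combine the transcendence bound of Theorem \ref{log_deg-dim} with the concrete description of the defining equations of $\Gamma$ furnished by Proposition \ref{log_Gamma} and of $Z$ by the polynomials $H_i$. Write $s$ for the rank of $B = (c_{i,j})$, so that $\dim V = r - s$ and, by Theorem \ref{gal_smooth}, $\tdeg_E \Lambda_{l'} = \dim \Gamma = r + 1 - s$ for every $l'$. The hypothesis asserts that the $r+1$ elements $L_{\alpha_1,n,l}, \dots, L_{\alpha_r,n,l}, 1$ of $N_l$ are $E$-linearly independent, giving $\dim_E N_l \geq r + 1$; together with the upper bound $\dim_E N_l \leq r + 2 - s$ established just before Theorem \ref{log_deg-dim}, this forces $s \in \{0, 1\}$.

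If $s = 0$, then $V = \mathbb{G}_a^r$, hence $\Gamma = G$ and $\tdeg_E \Lambda_{l'} = r + 1$ for every $l'$. Since $\Lambda_{l'}$ is generated over $E$ by the $r+1$ elements $\Omega_{l'}^n, L_{\alpha_1,n,l'}, \dots, L_{\alpha_r,n,l'}$, they must form a transcendence basis of $\Lambda_{l'}/E$; in particular the $L_{\alpha_j,n,l'}$ are algebraically independent over $E$.

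The substantive case is $s = 1$. The single defining linear form $F_1 = \sum_j c_{1,j} X_j$ for $V$ produces, via the polynomial $H_1$ from Proposition \ref{log_Gamma} and the chosen $E$-point $\xi$, the single relation
\[
(b_0 - 1) \sum_{j=1}^{r} c_{1,j} L_{\alpha_j,n,l'} + g_1 \Omega_{l'}^{-n} - (g_1 + f_1') = 0
\]
valid for every $l'$, where $g_1 = \sum_j c_{1,j} b_j \in F$. The key step, which I expect to be the main obstacle, is to rule out $g_1 = 0$: were this the case the relation would reduce to $(b_0 - 1) \sum_j c_{1,j} L_{\alpha_j,n,l'} = f_1'$, and specializing to the index $l$ of the hypothesis would produce a nontrivial $E$-linear relation among $L_{\alpha_1,n,l}, \dots, L_{\alpha_r,n,l}, 1$; nontriviality uses both $b_0 \neq 1$ (since $b_0 \notin (F^\times)_{\mathrm{tor}}$) and that the row $(c_{1,j})_j$ is nonzero (since $\mathrm{rank}(B) = 1$), contradicting the hypothesis.

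With $g_1 \neq 0$ in hand, solving the displayed relation expresses $\Omega_{l'}^{-n}$, and hence $\Omega_{l'}^n$, as an element of $E(L_{\alpha_1,n,l'}, \dots, L_{\alpha_r,n,l'})$. Therefore $\Lambda_{l'} = E(L_{\alpha_1,n,l'}, \dots, L_{\alpha_r,n,l'})$, a field generated over $E$ by $r$ elements and of transcendence degree $r$ over $E$, so these $r$ generators must be algebraically independent over $E$.
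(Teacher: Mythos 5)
Your proof is correct. It follows the same overall skeleton as the paper's — bound $\tdeg_E \Lambda_{l'} = \dim\Gamma = r+1-s$ via Theorem \ref{gal_smooth}, use the hypothesis to force the relation space to be small, and conclude by counting generators — but the degenerate case is handled by a genuinely different route. The paper splits on $\dim_E N_l \in \{r+1, r+2\}$ rather than on $s \in \{0,1\}$, and in the case $\dim_E N_l = r+1$ it deduces from pure dimension counting that $\Omega_l^{-n}$ lies in the $E$-span of $L_{\alpha_1,n,l},\dots,L_{\alpha_r,n,l},1$, then transports the resulting inclusion $\Omega_l^n \in E(L_{\alpha_1,n,l},\dots,L_{\alpha_r,n,l})$ to every other index $l'$ by applying $\sigma$ and using the functional equations $\sigma(\Omega)=(t-\theta)\Omega$ and $\sigma(L_{\alpha_j,n})=\sigma(\alpha_j)+L_{\alpha_j,n}/(t-\theta)^n$. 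You instead read the relation off the explicit defining equation $H_1$ of $Z$ evaluated at $\Psi_{l'}$, which holds for all $l'$ simultaneously, so no propagation step is needed; the price is the extra step of ruling out $g_1=0$, which you handle correctly by specializing to the index $l$ of the hypothesis and using $b_0 \neq 1$ (as $b_0 \notin (F^\times)_{\mathrm{tor}}$) together with the nonvanishing of the row $(c_{1,j})_j$. Your version makes visible exactly where the linear-independence hypothesis and the coefficient $g_1$ enter, while the paper's version gets by with only the abstract inequality of Theorem \ref{log_deg-dim} and never inspects the coefficients $g_i$. Both arguments are complete.
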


\begin{proof}
Note that since $\Lambda_{l'} = E(\Omega_{l'}^{n}, L_{\alpha_{1},n,l'}, \dots, L_{\alpha_{r},n,l'})$,
we have $\tdeg_{E} \Lambda_{l'} \leq r + 1$.
By the assumption, we have $r + 1 \leq \dim_{E} N_{l} \leq r + 2$.

Assume that $\dim_{E} N_{l} = r + 2$.
Then $\tdeg_{E} \Lambda_{l'} < \dim_{E} N_{l}$.
By Theorem \ref{log_deg-dim}, we have $\dim_{E} N_{l} = \tdeg_{E} \Lambda_{l'} + 1$.
Thus we have $\tdeg_{E} \Lambda_{l'} = r + 1$ and
$\Omega_{l'}^{n}, L_{\alpha_{1},n,l'}, \dots, L_{\alpha_{r},n,l'}$ are algebraically independent over $E$.

On the other hand, assume that $\dim_{E} N_{l} = r + 1$.
By the assumption, we can write $\Omega_{l}^{-n}$ as a linear combination of $L_{\alpha_{1},n,l}, \dots, L_{\alpha_{r},n,l}, 1$ over $E$.
In particular, we have $\Omega_{l}^{n} \in E(L_{\alpha_{1},n,l}, \dots, L_{\alpha_{r},n,l})$.
Letting $\sigma$ act on this relation, we have
$$(t - \theta)^{n} \Omega_{l+1}^{n} \in
\sigma(E)\left(\sigma(\alpha_{1}) + \frac{L_{\alpha_{1},n,l+1}}{(t - \theta)^{n}}, \dots, \sigma(\alpha_{r}) + \frac{L_{\alpha_{r},n,l+1}}{(t - \theta)^{n}})\right).$$
Thus for each $l'$, we have $\Omega_{l'}^{n} \in E(L_{\alpha_{1},n,l'}, \dots, L_{\alpha_{r},n,l'})$
and $\Lambda_{l'} = E(L_{\alpha_{1},n,l'}, \dots, L_{\alpha_{r},n,l'})$.
By Theorem \ref{log_deg-dim}, we have $\tdeg_{E} \Lambda_{l'} \geq \dim_{E} N_{l} - 1 = r$.
Thus $\tdeg_{E} \Lambda_{l'} = r$ and
$L_{\alpha_{1},n,l'}, \dots, L_{\alpha_{r},n,l'}$ are algebraically independent over $E$.
\end{proof}


\begin{thebibliography}{99}
\bibitem{Ande} G.\ W.\ Anderson {\it t-motives},
Duke Math.\ J.\ {\bf 53} (1986) 457--502.
\bibitem{ABP}  G.\ W.\ Anderson, W.\ D.\ Brownawell, M.\ A.\ Papanikolas, {\it Determination of the algebraic relations among special $\Gamma$-values in positive characteristic},
Ann.\ of Math.\ {\bf 160} (2004) 237--313.
\bibitem{AnTh} G.\ W.\ Anderson, D.\ S.\ Thakur, {\it Tensor powers of the Carlitz module and zeta values},
Ann.\ of Math.\ {\bf 132} (1990) 159--191.
\bibitem{Bour} N.\ Bourbaki, {\it Alg\'ebre Commutative},
Masson, Paris (1985).
\bibitem{ChYu} C.-Y.\ Chang, J.\ Yu, {\it Determination of algebraic relations among special zeta values in positive characteristic},
Adv.\ Math.\ {\bf 216} (2007) 321--345.
\bibitem{DeMi} P.\ Deligne, J.\ S.\ Milne, {\it Tannakian categories}, Hodge Cycles, Motives and Shimura Varieties, Lectures Notes in Math.\ {\bf 900},
Springer-Verlag, Berlin and New York (1982) 101--228.
\bibitem{Font} J.-M.\ Fontaine, {\it Repr\'esentation $p$-adiques des corps locaux}, The Grothendieck Festschrift, Vol.\ II,
Birkh\"auser, Basel (1990) 249--309.
\bibitem{Goss} D.\ Goss, {\it The adjoint of the Carlitz module and Fermat's Last Theorem}, J. Finite Fields {\bf 1} (1995) 165--188.
(Appendix: Y.\ Taguchi, {\it $\varphi$-modules and adjoint operators})
\bibitem{Mats} H.\ Matsumura, {\it Commutative Ring Theory}, Cambridge University Press (1986).
\bibitem{Papa} M.\ A.\ Papanikolas, {\it Tannakian duality for Anderson-Drinfeld motives and algebraic independence of Carlitz logarithms},
Invent. Math. {\bf 171} (2008) 123--174.
\end{thebibliography}
\end{document}